\newtheorem{thm}{Theorem}[section]
\newaliascnt{lem}{thm}            
\newtheorem{lem}[lem]{Lemma}      
\crefname{lem}{lemma}{lemmas}
\Crefname{lem}{Lemma}{Lemmas}
\newaliascnt{prop}{thm}
\newtheorem{prop}[prop]{Proposition}
\crefname{prop}{proposition}{propositions}
\Crefname{prop}{Proposition}{Propositions}
\newaliascnt{cor}{thm}
\newtheorem{cor}[cor]{Corollary}
\crefname{cor}{corollary}{corollaries}
\Crefname{cor}{Corollary}{Corollaries}
 \theoremstyle{definition}
\newaliascnt{dfn}{thm}
\newtheorem{dfn}[dfn]{Definition}
\crefname{dfn}{definition}{definitions}
\Crefname{dfn}{Definition}{Definitions}
\newaliascnt{con}{thm}
\newtheorem{con}[con]{Condition}
\crefname{con}{condition}{conditions}
\Crefname{con}{Condition}{Conditions}
\theoremstyle{remark}
\newaliascnt{rmk}{thm}
\newtheorem{rmk}[rmk]{Remark}
\crefname{rmk}{remark}{remarks}
\Crefname{rmk}{Remark}{Remarks}
\newaliascnt{eg}{thm}
\newtheorem{eg}[eg]{Example}
\crefname{eg}{example}{examples}
\Crefname{eg}{Example}{Examples}
\numberwithin{equation}{subsection}
\newcommand{\Nbb}{\mathbb{N}} 
\newcommand{\Zbb}{\mathbb{Z}}
\newcommand{\Lbb}{\mathbb{L}} 
\newcommand{\Acal}{\mathcal{A}}
\newcommand{\Bcal}{\mathcal{B}}
\newcommand{\Ccal}{\mathcal{C}}
\newcommand{\Pcal}{\mathcal{P}}
\newcommand{\Qcal}{\mathcal{Q}}
\newcommand{\Scal}{\mathcal{S}}
\newcommand{\Tcal}{\mathcal{T}}
\newcommand{\Rbf}{\mathbf{R}}
\newcommand{\R}{\Rbf}
\newcommand{\Q}{Q}
\newcommand{\RQ}{\R Q}
\newcommand{\Romega}{\R\omega}
\newcommand{\D}[1]{#1^{\prime}}
\newcommand{\DD}[1]{#1^{\prime\prime}}
\newcommand{\mydot}{{\scriptscriptstyle \bullet}}
\DeclareMathOperator{\supr}{sup}
\DeclareMathOperator{\Hom}{Hom}
\DeclareMathOperator{\Id}{Id} 
\DeclareMathOperator{\Ker}{Ker}
\DeclareMathOperator{\Cok}{Cok}
\DeclareMathOperator{\Img}{Im} 
\DeclareMathOperator{\Ext}{Ext}
\DeclareMathOperator{\Res}{Res}
\DeclareMathOperator{\cd}{cd}
\newcommand{\dfun}[1]{D_{A}}
\newcommand{\dfunop}[1]{D_{A^{\op}}}
\newcommand{\Resl}[1]{{}_{#1}\Res}
\newcommand{\Resr}[1]{\Res_{#1}}
\newcommand{\Resll}[2]{{}_{#1}\Res^{#2}}
\newcommand{\Resrr}[2]{\Res_{#1}^{#2}}
\newcommand{\Irr}[3]{L_{#1-#2}^{r, #3}}
\newcommand{\Ill}[3]{L_{#1-#2}^{l, #3}}
\newcommand{\Jrr}[3]{R_{#1-#2}^{r, #3}}
\newcommand{\Jll}[3]{R_{#1-#2}^{l, #3}}
\newcommand{\Homr}[1]{\Hom^{}_{#1}}
\newcommand{\Homl}[1]{\Hom^{}_{#1^{\op}}}
\newcommand{\Homlr}[2]{\Hom^{}_{#1-#2}}
\newcommand{\Homrcpx}[1]{{\Hom^{\mydot}_{#1}}}
\newcommand{\Homlcpx}[1]{{\Hom^{\mydot}_{#1^{\op}}}}
\newcommand{\Homlrcpx}[2]{{\Hom^{\mydot}_{#1-#2}}}
\newcommand{\RHomr}[1]{\RHom^{}_{#1}}
\newcommand{\Homintr}[1]{\Homint^{}_{#1}}
\newcommand{\RHomintr}[1]{\RHomint^{}_{#1}}
\newcommand{\RHomintl}[1]{\RHomint^{}_{#1^{\op}}}
\newcommand{\Extr}[1]{\Ext_{#1}}
\newcommand{\Extl}[1]{\Ext_{#1^{\op}}}
\newcommand{\Extintr}[1]{\Extint_{#1}}
\newcommand{\Extintl}[1]{\Extint_{#1^{\op}}}
\newcommand{\RHom}{\R\Hom}
\DeclareMathOperator{\Homint}{\underline{Hom}}
\DeclareMathOperator{\RHomint}{\R\underline{Hom}}
\DeclareMathOperator{\Extint}{\underline{Ext}}
\newcommand{\ten}[1]{ \otimes_{#1}}
\newcommand{\tenint}[1]{\, \underline{\otimes}_{#1}}
\newcommand{\Ltenint}[1]{\, \underline{\otimes}^{\Lbb}_{#1}}
\newcommand{\torfun}[1]{\Gamma_{\mathfrak{m}_{#1}}}
\newcommand{\Rtorfun}[1]{\R\Gamma_{\mathfrak{m}_{#1}}}
\DeclareMathOperator{\Gr}{Gr}
\DeclareMathOperator{\gr}{gr}
\newcommand{\Grlr}[2]{\Gr^{}(#1-#2)}
\newcommand{\Grl}[1]{\Gr^{}(#1^{\op})}
\newcommand{\Grr}[1]{\Gr^{}(#1)}
\newcommand{\grl}[1]{\gr^{}(#1^{\op})}
\newcommand{\grr}[1]{\gr^{}(#1)}
\newcommand{\grcofr}[1]{\gr^{\prime}_{}(#1)}
\newcommand{\grcofl}[1]{\gr^{\prime}_{}(#1^{\op})}
\newcommand{\Grlflr}[2]{\Gr_{lf}(#1-#2)}
\newcommand{\Grlfr}[1]{\Gr_{{lf}}(#1)}
\newcommand{\Grlfl}[1]{\Gr_{lf}(#1^{\op})}
\DeclareMathOperator{\Tor}{Tor}
\DeclareMathOperator{\tor}{tor}
\DeclareMathOperator{\QGr}{QGr}
\DeclareMathOperator{\qgr}{qgr}
\DeclareMathOperator{\Soc}{Soc}
\DeclareMathOperator{\gldim}{gl_\cdot dim}
\DeclareMathOperator{\injdim}{inj_\cdot dim}
\DeclareMathOperator{\projdim}{pd_\cdot dim}
\DeclareMathOperator{\op}{op}
\newcommand{\zgrz}[1]{\bar{#1}}
\newcommand{\widezgrz}[1]{\widebar{#1}}
\title[Dualizing complexes over $\Zbb$-algebras]{Dualizing complexes over $\Zbb$-algebras}
\author{Yuki Mizuno}
\email{mizuno.y@aoni.waseda.jp, m7d5932a72xxgxo@fuji.waseda.jp}
\date{}
\address{Department~of~Mathematics, School~of~Science~and~Engineering, Waseda~University, Ohkubo~3-4-1, Shinjuku, Tokyo~169-8555, Japan}
\keywords{}
\subjclass[2020]{14A22, 16S38}
\begin{document}

	\begin{abstract}
		In this paper, we introduce the notions of dualizing complexes and balanced dualizing complexes over $\Zbb$-algebras. 
		We prove that a noetherian connected $\Zbb$-algebra $A$ admits a balanced dualizing complex if and only if $A$ satisfies Artin--Zhang's $\chi$-condition, has finite local cohomology dimension, and possesses symmetric derived torsion as a bigraded $A$-$A$-bimodule. 
		As an application of our study of dualizing complexes, we show that any smooth noncommutative projective scheme associated to a $\Zbb$-algebra with a balanced dualizing complex admits a Serre functor.
	\end{abstract}

	\maketitle
	\tableofcontents
	
\section{Introduction}

\subsection{Background and motivation}
\label{subsec:background}
The notion of a dualizing complex first appeared in \cite{hartshorne1966residues} in the context of Grothendieck duality theory for schemes.
Motivated by \cite{hartshorne1966residues}, Yekutieli studied dualizing complexes over noncommutative graded algebras in \cite{yekutieli1992dualizing} to study noncommutative local cohomology.
The lack of flexibility in noncommutative algebras, such as the impossibility of freely taking localizations as in the commutative case, prevents us from proving a local duality theorem for general dualizing complexes.
To address this issue, he introduced the notion of a balanced dualizing complex.
A balanced dualizing complex has particularly favorable properties, including the fact that it induces a local duality theorem \cite[Theorem 4.18]{yekutieli1992dualizing}, and it has found important applications, such as Serre duality in noncommutative projective geometry (see, for example, \cite{yekutieli1997serre}, \cite{de2004ideal}).
Therefore, the question of when a noncommutative graded algebra admits a balanced dualizing complex is of considerable importance.
In \cite{vandenbergh1997existence}, M. Van den Bergh showed that a noetherian connected graded algebra admits a balanced dualizing complex if and only if it has finite local cohomology dimension and satisfies Artin-Zhang's $\chi$-condition (\cite[Section 3]{artin1994noncommutative}).
Nowadays, these conditions serve as fundamental guidelines in the study of noncommutative algebras and have become indispensable in determining which classes of algebras to study.
Since then, further developments and generalizations have been made; see, for instance, \cite{yekutieli1999dualizing}, \cite{yekutieli1999rings}, \cite{wu2001dualizing}, \cite{chan2002pre}, and, most recently, \cite{brown2025existence}, among others.

\medskip
A $\Zbb$-algebra over a field $k$ is a $k$-linear category $\Ccal$ whose objects are indexed by $\Zbb$.
Actually, if we identify the set of objects of $\Ccal$ with $\Zbb$, then $A:= \bigoplus_{i,j \in \Zbb} A_{ij}$ with $A_{ij} := \Hom_{\Ccal}(j,i)$ has a natural structure of a $k$-algebra without unit.
In this way, we regard a $\Zbb$-algebra as a bigraded $k$-algebra without unit and can define the category $\Grr{A}$ of right graded modules over a $\Zbb$-algebra (about basic notions, see \cref{sec:Z-algebras}).

We can think of a $\Zbb$-algebra as a generalization of a graded $k$-algebra as follows.
Let $B$ be a graded $k$-algebra.
Then, we can define a $\Zbb$-algebra $\zgrz{B}$ by $\zgrz{B}_{ij} := B_{j-i}$ for all $i,j \in \Zbb$.
Moreover, we have the equivalence $\Grr{B} \cong \Grr{\zgrz{B}}$ of categories, where $\Grr{B}$ is the category of graded right $B$-modules (for example, see \cite[Section 2]{sierra2011galgebras}).
So, we naturally expect that the module theory over a $\Zbb$-algebra will develop in a way similar to the theory over a graded $k$-algebra.
The aim of this paper is to advance this philosophy in the direction of dualizing complexes.

\subsection{Results}
\label{subsec:results}

Let $A$ be a noetherian connected $\Zbb$-algebra over a field $k$. 
In this paper, we define the notions of a {dualizing complex} and a {balanced dualizing complex} over $A$, generalizing the definitions in \cite{yekutieli1992dualizing}. 
A bounded complex $R_A$ of bigraded $A$-$A$-bimodules is called a \emph{dualizing complex} over $A$ if it satisfies the following conditions:
\begin{enumerate}
  \item $R_A$ has finite injective dimension both as a complex of graded right $A$-modules and as a complex of graded left $A$-modules,
  \item for each $i,j$, the restrictions
  \[
  e_j H^i(R_A) := \bigoplus_{l \in \Zbb} H^i(R_A)_{jl},
  \qquad
  H^i(R_A) e_j := \bigoplus_{l \in \Zbb} H^i(R_A)_{lj}
  \]
  of the cohomology bimodule $H^i(R_A)$ are finitely generated as graded right and left $A$-modules, respectively,
  \item the natural contravariant functors $\RHomintr{A}(-,R_A)$ and $\RHomintl{A}(-,R_A)$ induce equivalences between the bounded derived categories $D^b(\grr{A})$ and $D^b(\grl{A})$ of finitely generated graded right and left $A$-modules, respectively.
\end{enumerate}

Moreover, a dualizing complex $R_A$ over $A$ is called \emph{balanced} if
\[
\Rtorfun{A}(R_A) \cong \Rtorfun{A^{\op}}(R_A) \cong \D{A}
\quad \text{in } D(\Grlr{A}{A}),
\]
where $\D{A}$ denotes the Matlis dual of $A$, and $\torfun{A}$ and $\torfun{A^{\op}}$ denote the right and left torsion functors, respectively (see \cref{dfn:torsion-functor}). 
For further details on the definition of (balanced) dualizing complexes, see \cref{subsec:definition of dualizing complexes}.

To state our first main result, we introduce the following notions. 
We say that $A$ satisfies \emph{$\chi$-condition} (see \cref{dfn:chi-condition}) if $\Extintr{A}^{i}(K,M)$ and $\Extintl{A}^{i}(K,N)$ are finite $k$-modules for all $i \in \Nbb$ and all finitely generated graded right $A$-modules $M$ and left $A$-modules $N$, respectively, where $K = A/A_{\geq 1}$. 
We say that $A$ has \emph{finite local cohomology dimension} if there exists $d \in \Nbb$ such that $\R^{i}\torfun{A}(M) = 0$ and $\R^{i}\torfun{A^{\op}}(N) = 0$ for all $i > d$, for all graded right $A$-modules $M$ and all graded left $A$-modules $N$. 
Finally, viewing $A$ as a bigraded $A$--$A$-bimodule, we say that it has \emph{symmetric derived torsion} if there exists a natural isomorphism 
\[
\Rtorfun{A}(A) \;\cong\; \Rtorfun{A^{\op}}(A)
\quad \text{in } D(\Grlr{A}{A})
\]
(see \cref{dfn:symmetric derived torsion functor}).

The first main result of this paper is the following theorem, which is a generalization of the result of \cite{vandenbergh1997existence} to $\Zbb$-algebras.

\begin{thm}[= \cref{thm:main1}]
	\label{thm:intro-main1}
	Let $A$ be a noetherian connected $\Zbb$-algebra. 
	
	Then, $A$ has a balanced dualizing complex if and only if 
	$A$ satisfies $\chi$-condition, has finite local cohomological dimension and has symmetric derived torsion as a bigraded $A$-$A$-bimodule.
\end{thm}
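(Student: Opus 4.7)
The plan is to adapt Van den Bergh's existence argument from \cite{vandenbergh1997existence} to the $\Zbb$-algebra setting, using the symmetric derived torsion hypothesis as the crucial device that compensates for the absence of the automatic left--right symmetry enjoyed by a connected graded algebra.

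For the necessity direction, assume $R_A$ is a balanced dualizing complex. The main tool is the local duality formula $\Rtorfun{A}(M) \cong \D{\RHomintr{A}(M,R_A)}$ (and its left analogue), which the paper should have established for any dualizing complex $R_A$ over $A$. From this, the finite injective dimension of $R_A$ on both sides directly forces $\Rtorfun{A}$ and $\Rtorfun{A^{\op}}$ to have finite cohomological dimension, yielding the finite local cohomology dimension condition. Setting $M = A$ and using the balanced condition $\Rtorfun{A}(R_A) \cong \D{A}$ gives $\Rtorfun{A}(A) \cong \D{R_A}$ in $D(\Grlr{A}{A})$, and symmetrically $\Rtorfun{A^{\op}}(A) \cong \D{R_A}$, delivering the symmetric derived torsion condition. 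The $\chi$-condition is then extracted by specializing the derived equivalence from the definition of a dualizing complex to $K = A/A_{\geq 1}$ and invoking the finite generation condition on $H^i(R_A)$.

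For the sufficiency direction, I would set $\omega_A := \Rtorfun{A}(A) \in D(\Grlr{A}{A})$, which is bounded by finite local cohomology dimension, and define the candidate dualizing complex as the Matlis dual $R_A := \D{\omega_A}$. The symmetric derived torsion hypothesis ensures that $\omega_A$ is canonically isomorphic to $\Rtorfun{A^{\op}}(A)$ in $D(\Grlr{A}{A})$, which is exactly what is needed for $R_A$ to carry compatible right and left dualizing structures. The balanced condition then follows, on the right, from the formal identity $\Rtorfun{A}(\D{\Rtorfun{A}(A)}) \cong \D{A}$ and, on the left, by combining this identity with the symmetric derived torsion hypothesis. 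Finite generation of the cohomology bimodules $H^i(R_A)$ on both sides is where the $\chi$-condition enters: a spectral sequence comparing $\Extintr{A}^i(K,-)$ with local cohomology translates the $k$-finiteness of $\Extintr{A}^i(K, A)$ into finite generation of $H^i(\omega_A)$ in each bigraded component, and hence of $H^i(R_A)$ after Matlis dualizing. The derived equivalence required by condition (3) then follows by a standard induction on the length of a bounded complex together with an Auslander-type criterion applied to the simple $K$.

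The principal obstacle is verifying that $R_A$ has finite injective dimension as a complex of graded right, and of graded left, $A$-modules. Following Van den Bergh, this is accomplished by a spectral sequence argument that bounds injective dimension in terms of the local cohomological dimension and the vanishing degrees of $\Extintr{A}^i(K,-)$, the latter being controlled by $\chi$-condition together with finite generation. In the $\Zbb$-algebra setting the real complication is that this argument must be executed symmetrically on the left and the right, and the natural bridge between the two sides, which in a connected graded algebra is provided by the identification $A \cong A^{\op}$ up to regrading, is no longer available; the symmetric derived torsion hypothesis plays precisely this bridging role, ensuring that the right-sided estimates governing injective dimension transport to their left-sided counterparts so that both finiteness statements are obtained in parallel.
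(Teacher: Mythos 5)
Your skeleton coincides with the paper's: necessity is extracted from local duality applied to the balanced complex, and for sufficiency you take the same candidate $R_A:=\D{\Rtorfun{A}(A)}$. The genuine gap is in the sufficiency direction, at the balancedness/biduality step. You call $\Rtorfun{A}(\D{\Rtorfun{A}(A)})\cong\D{A}$ a ``formal identity''; it is not. By local duality (\cref{thm:local duality}) and Matlis duality it is equivalent to $\RHomintr{A}(R_A,R_A)\cong A$, i.e.\ to condition (3) of \cref{dfn:dualizing complexes}, and this is exactly where the symmetric derived torsion hypothesis must be used. The paper proves it by the chain $\RHomintr{A}(R_A,R_A)\cong\RHomintl{A}(\Rtorfun{A}(A),\Rtorfun{A}(A))\cong\RHomintl{A}(\Rtorfun{A^{\op}}(A),\Rtorfun{A^{\op}}(A))\cong\RHomintl{A}(\Rtorfun{A^{\op}}(A),A)\cong\RHomintr{A}(\D{A},\D{\Rtorfun{A}(A)})\cong A$, invoking the symmetry twice together with \cref{cor:matlis-duality} and \cref{prop:basic proposition-10}: to drop a torsion functor via \cref{prop:basic proposition-10} the first argument must have left-torsion cohomology, which for $\Rtorfun{A}(A)$ is precisely what $\chi$-condition fails to guarantee for $\Zbb$-algebras (\cref{rmk:chi-condition_symmetric derived torsion}); the symmetry hypothesis supplies it by allowing the replacement of $\Rtorfun{A}(A)$ by $\Rtorfun{A^{\op}}(A)$. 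Your sketch instead disposes of condition (3) with ``a standard induction plus an Auslander-type criterion'' and spends the symmetry hypothesis elsewhere, so the crux of the theorem is left unproved; without it neither the right- nor the left-hand balancedness of $R_A$ is established. (Also, your parenthetical that local duality $\Rtorfun{A}(M)\cong\D{\RHomintr{A}(M,R_A)}$ holds for any dualizing complex is inaccurate: \cref{thm:local duality-2} requires balancedness, though this is harmless in the necessity direction, where $R_A$ is balanced.)

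Relatedly, you locate the principal obstacle at the finite injective dimension of $R_A$, but in the paper this step is cheap: the one-sided local duality \cref{cor:local-duality-2}, which needs only Ext-finiteness and $\cd(\torfun{A})<\infty$ and no dualizing complex at all, gives $\RHomint_A(-,\Resr{A}(R_A))\cong\D{\Rtorfun{A}(-)}$, so finite local cohomological dimension immediately bounds the injective dimension of $\Resr{A}(R_A)$; the symmetry is used there only to identify $R_A\cong\D{\Rtorfun{A^{\op}}(A)}$ so that the same one-line argument applies on the left, and \cref{lem:finite injective dimension} then converts this into condition (2). No Van den Bergh-type spectral-sequence estimate involving vanishing of $\Extintr{A}^i(K,-)$ is needed; $\chi$-condition enters instead in condition (1), via \cref{cor:chi-condition} applied to $e_jA$ and $Ae_j$.
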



In the theory of noncommutative graded algebras, symmetric derived torsion follows from $\chi$-condition. 
For $\Zbb$-algebras, however, the same method cannot be applied, since an $A$-$A$-bimodule $M$ is naturally $\Zbb^2$-graded, whereas a bimodule over a graded algebra is $\Zbb$-graded. 
At present, it seems that the existence of symmetric derived torsion does not depend on $\chi$-condition for $\Zbb$-algebras (see also \cref{rmk:chi-condition_symmetric derived torsion}). 
Nevertheless, if $A$ is $r$-periodic for some $r$, then $\chi$-condition does imply symmetric derived torsion (see \cref{prop:chi-condition_symmetric derived torsion-2}).

We give a class of $\Zbb$-algebras satisfying the conditions in \cref{thm:intro-main1}, which may be regarded as a $\Zbb$-algebra analogue of AS-Gorenstein algebras, inspired by 
\cite[Definition 4.15]{mori2025categorical} (in detail, see \cref{dfn:AS-Gorenstein}, \cref{prop:AS-Gorenstein}).
We also check that a noetherian connected graded algebra $B$ has a balanced dualizing complex in the sense of \cite[Definition 3.3 and 4.1]{yekutieli1992dualizing} if and only if the associated $\Zbb$-algebra $\zgrz{B}$ has a balanced dualizing complex (see \cref{prop:comparison of dualizing complexes}).
In this case, the dualizing complex $R_{\zgrz{B}}$ is isomorphic to $\widezgrz{R_B}$.

\medskip
As an application of our study of dualizing complexes, we prove that a smooth noncommutative projective scheme associated to a $\Zbb$-algebra has a Serre functor.
Let $A$ be a noetherian connected $\Zbb$-algebra.
We denote by $\gr(A)$ the category of finitely generated graded right $A$-modules and by $\tor(A)$ its full subcategory of torsion modules. 
Since $\tor(A)$ is a Serre subcategory of $\gr(A)$, we can form the quotient category
\[
  \qgr(A) \coloneqq \gr(A)/\tor(A).
\]
Denote the natural projection functor by $\pi_A : \gr(A) \rightarrow \qgr(A)$.
Then, we have a right adjoint functor $\omega_A : \qgr(A) \rightarrow \gr(A)$ of $\pi_A$. 
We call $\qgr(A)$ the \emph{noncommutative projective scheme} associated to $A$ (\cref{dfn:noncommutative projective scheme}).
We say that $\qgr(A)$ has \emph{finite global dimension} (\cref{dfn:global dimension}) if there exists $d \in \Nbb$ such that
\[
  \Ext^i_{\qgr(A)}(X,Y)=0 \quad \text{for all $i > d$ and all $X,Y \in \qgr(A)$}.
\]

A \emph{Serre functor} of the bounded derived category $D^b(\qgr(A))$ is an autoequivalence $\Scal_{D^b(\qgr(A))} : D^b(\qgr(A)) \rightarrow D^b(\qgr(A))$ such that there exists a natural isomorphism
\[
\Hom_{D^b(\qgr(A))}(X,Y) \cong \Hom_{D^b(\qgr(A))}(Y,\Scal_{D^b(\qgr(A))}(X))'
\]
for all $X,Y \in D^b(\qgr(A))$, where $(-)'$ denotes the $k$-dual (see \cref{dfn:Serre-functor}).

The second main result of this paper is the following theorem.

\begin{thm}[= \cref{thm:Serre-functor}]
	\label{thm:intro-Serre-functor}
	Let $A$ be a noetherian connected $\Zbb$-algebra. 
	We assume that $A$ has a balanced dualizing complex $R_A$ and $\qgr(A)$ has finite global dimension.

	Then, the functor $\pi_A(\Romega_A(-) \Ltenint{A} R_A)[-1]$ is a Serre functor of $D^b(\qgr(A))$.
\end{thm}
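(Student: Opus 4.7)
The plan is to verify that $\Scal := \pi_A(\Romega_A(-) \Ltenint{A} R_A)[-1]$ satisfies the two requirements of a Serre functor on $D^b(\qgr(A))$: (i) it is a well-defined endofunctor of $D^b(\qgr(A))$, and (ii) the Serre duality isomorphism $\Hom_{D^b(\qgr(A))}(X, Y) \cong \Hom_{D^b(\qgr(A))}(Y, \Scal X)'$ holds naturally; the autoequivalence property then follows from (ii) by the standard Yoneda argument. For (i), the existence of a balanced dualizing complex implies via \cref{thm:main1} that $A$ has finite local cohomology dimension; combined with the triangle $\Rtorfun{A}(-) \to \Id \to \Romega_A\pi_A(-) \to$, this forces $\Romega_A$ to restrict to a functor $D^b(\qgr(A)) \to D^b(\gr(A))$. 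Finite injective dimension of $R_A$ on both sides and finite global dimension of $\qgr(A)$ ensure that the derived tensor $(-) \Ltenint{A} R_A$ preserves boundedness, and exactness of $\pi_A$ then concludes (i).

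For (ii), I first establish local duality from the balanced hypothesis. The conditions $\Rtorfun{A}(R_A) \cong \Rtorfun{A^{\op}}(R_A) \cong \D{A}$ together with the dualizing property of $R_A$ yield
\[
\Rtorfun{A}(M) \;\cong\; \RHomr{k}\bigl(\RHomintr{A}(M, R_A),\, k\bigr), \qquad M \in D^b(\gr(A)),
\]
by first checking the identity for $M = A$ (where it is essentially the balanced condition) and propagating it via the dualizing equivalence $\RHomintr{A}(-, R_A)$, adapting \cite[Theorem 4.18]{yekutieli1992dualizing} to the $\Zbb$-algebra setting.

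With local duality in hand, the Serre duality isomorphism is derived as follows. For $X, Y \in D^b(\qgr(A))$, set $M := \Romega_A X$ and $N := \Romega_A Y$, so that $\Romega_A\pi_A N \cong N$. By the derived adjunction $\pi_A \dashv \Romega_A$,
\[
\Hom_{D^b(\qgr(A))}(X, Y) \cong \Hom_{D^b(\gr(A))}(M, N),
\]
\[
\Hom_{D^b(\qgr(A))}(Y, \Scal X) \cong \Hom_{D^b(\gr(A))}\bigl(N,\, \Romega_A\pi_A(M \Ltenint{A} R_A)\bigr)[-1].
\]
Writing $Z := M \Ltenint{A} R_A$, the triangle $\Rtorfun{A}(Z) \to Z \to \Romega_A\pi_A(Z) \to$, local duality applied to $Z$, and the tensor-hom identification $\RHomintr{A}(Z, R_A) \cong \RHomintl{A}(M, \RHomintr{A}(R_A, R_A)) \cong \RHomintl{A}(M, A)$ (using $\RHomintr{A}(R_A, R_A) \cong A$) together allow me to identify the right-hand side of the second display with $\Hom_{D^b(\gr(A))}(M, N)'$, yielding the desired duality.

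The main obstacle is the careful bookkeeping of the $\Zbb^2$-graded bimodule structure intrinsic to $\Zbb$-algebras: both the tensor-hom adjunction and local duality in Step (ii) must be performed with both gradings tracked consistently, in contrast to the single $\Zbb$-grading on bimodules in the classical graded-algebra setting. The symmetric derived torsion of $A$, supplied by the balanced hypothesis on $R_A$, is essential for keeping the left and right graded structures on $R_A$ compatible throughout these manipulations.
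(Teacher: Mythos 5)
Your step (ii) sketch also leaves the autoequivalence property unproved, and this is a genuine gap rather than a formality: the paper's \cref{dfn:Serre-functor} requires $\Scal$ to be an autoequivalence, and the ``standard Yoneda argument'' applied to a natural isomorphism $\Hom(X,Y)\cong \D{\Hom(Y,\Scal X)}$ yields only uniqueness and (assuming $\Hom$-finiteness of $D^b(\qgr(A))$, which you also do not establish) full faithfulness --- i.e.\ a \emph{right} Serre functor in the sense of Reiten--Van den Bergh; essential surjectivity does not follow formally. The paper devotes Step 1 of the proof of \cref{thm:Serre-functor} to constructing an explicit quasi-inverse $G=\pi_A(\RHomint_{A}(R_A,-))$ and proving $F\circ G\cong \Id\cong G\circ F$ via the biduality $\dfunop{A}\circ\dfun{A}\cong\Id$ of \cref{prop:duality} together with a way-out argument; nothing in your outline substitutes for this.

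The duality isomorphism itself is also only gestured at, precisely at the point where the real work lies. You reduce to showing $\Hom_{D(\Grr{A})}\bigl(N,\RQ_A(Z)[-1]\bigr)\cong \D{\Hom(M,N)}$ with $Z=M\Ltenint{A}R_A$, and propose to extract this from the triangle $\Rtorfun{A}(Z)\to Z\to \RQ_A(Z)$, local duality, and tensor--hom. But the resulting long exact sequence contains the terms $\Hom(N,Z[\ast])$, which do not vanish in general, so the connecting map to $\Hom(N,\Rtorfun{A}(Z))$ is not automatically an isomorphism, and the identification $\Hom(N,\Rtorfun{A}(Z))\cong \D{\Hom(M,N)}$ already requires commuting $M\Ltenint{A}-$ past Matlis duals and internal Homs, which needs $M$ to be (up to summands) a bounded complex of projectives. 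The paper handles all of this by proving the key formula $\RHomr{A}(N, M\Ltenint{A}\D{\R Q_A(A)})\cong \D{\RHomr{A}(M,\R Q_A(N))}$ (\cref{prop:Serre-functor-2}, which rests on the commutation \cref{lem:Serre-functor-2-2} and a $Q_A$-version of local duality), by showing $M\Ltenint{A}\D{\R Q_A(A)}$ is $Q_A$-saturated so that the $\Hom$ in $\qgr(A)$ can be computed in $\Grr{A}$, and by reducing arbitrary objects of $D^b(\qgr(A))$ to lifts of finite projective dimension via \cref{lem:Serre-functor-1} --- which is exactly where the hypothesis $\gldim(\qgr(A))<\infty$ enters. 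Tellingly, your step (ii) never uses finite global dimension (you invoke it only for boundedness in (i)), a clear sign that this reduction, and with it the justification of your tensor--dual manipulations for general $M=\Romega_A X$, is missing. A smaller imprecision: $\Romega_A X$ need not have finitely generated cohomology, so your Hom-identifications must be set up in $D(\Grr{A})$ rather than $D^b(\grr{A})$.
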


Furthermore, as in the case of graded algebras studied in \cite{yekutieli2019derived} and \cite{wu2001dualizing}, we develop the theory of homological algebra over $\Zbb$-algebras in the more general unbounded setting, and also establish the fundamental theory of local cohomology.
In particular, for local duality, which was proved under restricted assumptions in \cite{mori2021local}, we provide a proof in full generality.

\subsection{Outline}
\label{subsec:outline}

In \cref{sec:Z-algebras}, we review and organize basic notions from the theory of $\Zbb$-algebras and establish some further tools to study their module categories.
In \cref{sec:local-cohomology}, we develop a theory of local cohomology for connected $\Zbb$-algebras.
We also provide a characterization of $\chi$-condition in terms of local cohomology.
In \cref{sec:dualizing complexes}, we define and study (balanced) dualizing complexes over noetherian connected $\Zbb$-algebras, and we prove \cref{thm:intro-main1}.
In \cref{sec:application to noncommutative projective geometry}, we apply \cref{thm:intro-main1} to noncommutative projective geometry and prove \cref{thm:intro-Serre-functor}.

\subsection{Acknowledgements}
This work was supported by JSPS KAKENHI Grant Number 24K22841.

\section{$\Zbb$-algebras}
\label{sec:Z-algebras}
Let $k$ be a field and assume that $\Nbb$ contains $0$ throughout this paper.

In this section, we recall and organize basic notions from the theory of $\Zbb$-algebras (\cite{vandenbergh2011noncommutative}, \cite{mori2021local}, \cite{mori2024corrigendum}, \cite{mori2025categorical}, \cite{sierra2011galgebras}, \cite{polishchuk2005noncommutative} and so on) and develop some theory to study their module categories.

\subsection{Basic notions}
\label{subsec:Z-algebras}

\begin{dfn}
	\label{dfn:Z-algebra}
	A \emph{$\Zbb$-algebra} is a $k$-algebra (without unit) together with a $k$-vector space decompostion $A = \bigoplus_{i,j\in \Zbb} A_{i,j}$ such that the multiplication has the property $A_{ij}A_{jk} \subset A_{ik}$ and $A_{ij}A_{kl}=0$ if $j \neq k$.
	We require that each subalgebra $A_{ii}$ has a unit $e_{i,A}$ (called a local unit) that acts as a right identity on $A_{ji}$ and a left identity on $A_{ij}$ for all $j$.
\end{dfn}

If $A$ is clear from the context, we simply write $e_i$ instead of $e_{i,A}$.
Let $A,B$ be $\Zbb$-algebras.
A $\Zbb$-algebra homomorphism $\varphi : A \rightarrow B$ is a $k$-algebra homomorphism $\varphi : A \rightarrow B$ such that $\varphi(A_{ij}) \subset B_{ij}$ and $\phi(e_{i,A})=e_{i,B}$ for all $i,j \in \Zbb$.
$A$ is called \emph{connected} if $A_{ij}=0$ for all $i>j$ and $A_{ii}=k$ for all $i$.
$A$ is called \emph{locally finite} if $A_{ij}$ is a finite $k$-module for all $i,j$.
We define a connected $\Zbb$-algebra $K$ by 
\[
K_{ij} := 
\begin{cases}
k & \text{if } i=j,\\
0 & \text{otherwise.}
\end{cases}
\]
We also define the \emph{opposite $\Zbb$-algebra} $A^{\op}$ of $A$ by
\[
A^{\op}_{ij} := A_{ji} \quad \text{for all } i,j \in \Zbb 
\]
with the multiplication defined as $a \cdot b \coloneqq b a \in A^{\op}_{ik}$ for all $a \in A^{\op}_{ij}, b \in A^{\op}_{jk}$.

Let $A$ be a $\Zbb$-algebra.
A \emph{graded right $A$-module} is a right $A$-module $M$ together with a decomposition $M = \bigoplus_{i \in \Zbb} M_i$ such that $M_i A_{ij} \subset M_j$ for all $i,j \in \Zbb$, $e_i$ acts as the identity on $M_i$ for all $i \in \Zbb$ and $M_iA_{jk}=0$ if $i \neq j$.
The \emph{homomorphism of graded right $A$-modules} $\varphi : M \rightarrow N$ is a homomorphism of right $A$-modules such that $\varphi(M_i) \subset N_i$ for all $i \in \Zbb$.
A \emph{graded left $A$-module} and \emph{a homomorphism of graded left $A$-modules} are defined similarly.
We denote the category of graded right $A$-modules by $\Grr{A}$.
The category of graded left $A$-modules is naturally equivalent to the category $\Grl{A}$ of graded right $A^{\op}$-modules (\cite[Proposition 2.2]{mori2021local}).
So, we often identify the category $\Grl{A}$ with the category of left graded $A$-modules.
We write $\Homr{A}(-,-)$ for $\Hom_{\Grr{A}}(-,-)$.
In fact, the category of unitary ungraded right $A$-modules is equivalent to the category $\Grr{A}$ (\cite[Lemma 2.11]{mori2025categorical}, \cite{sierra2011galgebras}, \cite{vandenbergh2011noncommutative}).
Here, an ungraded right $A$-module $M$ is called \emph{unitary} if $MA = M$.
So, the notation $\Homr{A}(-,-)$ makes sense.
In addition, for any two complexes $M,N$ in $\Grr{A}$, we define the \emph{hom-complex} $\Homrcpx{A}({M},{N})$ by
\begin{align*}
	\Homrcpx{A}({M},{N}) &\coloneqq \bigoplus_{n \in \Zbb} \; \prod_{p \in \Zbb} \Homr{A}(M^p,N^{p+n}), \\
	d^n &= \prod_{p \in \Zbb} (d_{M}^{p-1}+(-1)^{n+1}d_{N}^{p+n}).
\end{align*}
For simplicity, we often use the same notation as $\Homr{A}(-,-)$ if the context is clear.

Let $A,B$ be $\Zbb$-algebras.
A \emph{bigraded $A$-$B$-bimodule} is an $A$-$B$-bimodule $M$ together with a decomposition $M = \bigoplus_{i,j \in \Zbb} M_{ij}$ such that $e_{i,A}M =\bigoplus_{k\in\Zbb}M_{ik}$ is a graded right $B$-module and $Me_{j,B} =\bigoplus_{k\in\Zbb}M_{kj}$ is a graded left  $A$-module for all $i,j \in \Zbb$.
If $A$ is connected, we define $A_{\geq n} \coloneqq\bigoplus_{j-i \geq n} A_{ij}$.
$A$ and $ A_{\geq n}$ are naturally bigraded $A$-$A$-bimodules for each $n \in \Nbb$.
We often write $\mathfrak{m}_A$ instead of $A_{\geq 1}$.
For a graded right $A$-module $M$ and a graded left $B$-module $N$, we define $M \ten{k} N$ as a bigraded $A$-$B$-bimodule by $(M \ten{k} N)_{ij} \coloneqq M_{i} \ten{k} N_{j}$.
A \emph{homomorphism of bigraded $A$-$B$-bimodules} $\varphi : M \rightarrow N$ is a homomorphism of $A$-$B$-bimodules such that $\varphi(M_{ij}) \subset N_{ij}$ for all $i,j \in \Zbb$.
The category of bigraded $A$-$B$-bimodules is denoted by $\Grlr{A}{B}$.
We write $\Homlr{A}{B}(-,-)$ for $\Hom_{\Grlr{A}{B}}(-,-)$.
We also define the \emph{hom-complex} $\Homlrcpx{A}{B}(M,N)$ for complexes of bigraded $A$-$B$-bimodules $M,N$ in the same way as above.
We often use the same notation as $\Homlr{A}{B}(-,-)$ if the context is clear.
It is well-known that $\Grr{A}$ and $\Grlr{A}{B}$ are Grothendieck categories (see \cite{vandenbergh2011noncommutative}, \cite{mori2025categorical}).

We define the natural restriction functors
\begin{align*}
	\Resl{A} : \Grlr{A}{B} \rightarrow \Grl{A}, \quad M \mapsto \bigoplus_{i \in \Zbb} Me_{i,B}
\end{align*}
and
\begin{align*}
\Resr{B} : \Grlr{A}{B} \rightarrow \Grr{B}, \quad M \mapsto \bigoplus_{i \in \Zbb}e_{i,A}M.
\end{align*}
We can also define additional restriction functors $\Resll{A}{i}$, $\Resrr{B}{i}$ by
\[\Resll{A}{i} : \Grlr{A}{B} \rightarrow \Grl{A}, \quad M \mapsto Me_{i,B}\]
and
\[\Resrr{B}{i} : \Grlr{A}{B} \rightarrow \Grr{B}, \quad M \mapsto e_{i,A}M.\]

Let $A$ be a $\Zbb$-algebra.
We define a graded right $A$-module $P_{i,A}$ for each $i \in \Zbb$ by $P_{i,A} \coloneqq e_i A$.
We define a graded left $A$-module $Q_{i,A}$ for each $i \in \Zbb$ by $Q_{i,A} := A e_i$.
If $A$ is connected, we also define $S_{i,A} \coloneqq e_i A e_i=A_{ii}$ for each $i \in \Zbb$, which is naturally a graded right and left $A$-module.
If $A$ is clear from the context, we simply write $P_i$, $Q_i$ and $S_i$ instead of $P_{i,A}$, $Q_{i,A}$ and $S_{i,A}$.
Note that $\{P_i\}_i$ is a set of projective generators in $\Grr{A}$ and $\{Q_i\}_i$ is a set of projective generators in $\Grl{A}$.
If $A$ is connected, $\{S_i\}_i$ is the set of simple objects in $\Grr{A}$ and $\Grl{A}$. 
Moreover, $\{Ae_{i,A} \ten{k} e_{j,B}B\}_{i,j}$ is a set of projective generators in $\Grlr{A}{B}$ (\cite[Lemma 2.3]{mori2021local}).

We define $\Irr{A}{B}{i} : \Grr{B} \rightarrow \Grlr{A}{B}$ and
$\Ill{A}{B}{i} : \Grl{A} \rightarrow \Grlr{A}{B}$ by
\[
\Irr{A}{B}{i}(M) := Ae_{i,A} \ten{k} M, \quad 
\Ill{A}{B}{i}(M) := M \ten{k} e_{i,B}B.
\]
We also define $\Jrr{A}{B}{i} : \Grr{B} \rightarrow \Grlr{A}{B}$ and $\Jll{A}{B}{i} : \Grl{A} \rightarrow \Grlr{A}{B}$ by
\begin{align*}
\Jrr{A}{B}{i}(M) &= \bigoplus_{m,n \in \Zbb} \Jrr{A}{B}{i}(M)_{m,n} := \bigoplus_{m,n \in \Zbb} \Hom_k(A_{i,m}, M_n), \\
\Jll{A}{B}{i}(M) &= \bigoplus_{m,n \in \Zbb} \Jll{A}{B}{i}(M)_{m,n} := \bigoplus_{m,n \in \Zbb} \Hom_k(B_{n,i}, M_m).
\end{align*}

\begin{lem}
	\label{lem:restriction}
	Let $A,B$ be $\Zbb$-algebras.
	\begin{enumerate}
		\item $\Ill{A}{B}{i}$ is a left adjoint to $\Resll{A}{i}$
		and $\Irr{A}{B}{i}$ is a left adjoint to $\Resrr{B}{i}$.

	\item $\Jll{A}{B}{i}$ is a right adjoint to $\Resll{A}{i}$ and $\Jrr{A}{B}{i}$ is a right adjoint to $\Resrr{B}{i}$.

	\item If $M$ is a projective (resp. injective) bigraded $A$-$B$-bimodule, then $\Resll{A}{i}(M)$ is projective (resp. injective) in $\Grl{A}$ and $\Resrr{B}{i}(M)$ is projective (resp. injective) in $\Grr{B}$.

	\end{enumerate}

\end{lem}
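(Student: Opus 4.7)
The plan is to establish parts (1) and (2) by writing down the adjunctions explicitly via standard tensor–hom arguments, and then to deduce (3) as a formal consequence of exactness properties of the various functors.

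For part (1), I would prove the adjunction $\Ill{A}{B}{i} \dashv \Resll{A}{i}$ by constructing the natural bijection
\[
\Homlr{A}{B}(M \ten{k} e_{i,B}B,\, N) \;\longleftrightarrow\; \Homl{A}(M,\, Ne_{i,B})
\]
that sends a bimodule map $f$ to $m \mapsto f(m \otimes e_{i,B})$ (which automatically lands in $Ne_{i,B}$ since $e_{i,B}$ sits in bidegree $(i,i)$), with inverse sending $g : M \to Ne_{i,B}$ to the bimodule map $m \otimes b \mapsto g(m) \cdot b$ (well-defined because $b \in e_{i,B}B$ and the right $B$-action extends along the inclusion $Ne_{i,B} \hookrightarrow N$). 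The symmetric adjunction $\Irr{A}{B}{i} \dashv \Resrr{B}{i}$ is obtained by interchanging the roles of left and right.

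For part (2), I would produce the adjunction $\Resll{A}{i} \dashv \Jll{A}{B}{i}$ via the bijection
\[
\Homl{A}(Ne_{i,B},\, M) \;\longleftrightarrow\; \Homlr{A}{B}(N,\, \Jll{A}{B}{i}(M)),
\]
where a left $A$-linear map $g$ corresponds to the bimodule map $\tilde g$ defined by $\tilde g(n)(b) := g(nb)$ for $n \in N$ and $b$ of bidegree ending in $i$ (so that $nb \in Ne_{i,B}$), and the inverse assigns to $f : N \to \Jll{A}{B}{i}(M)$ the left $A$-map $n \mapsto f(n)(e_{i,B})$. The adjunction for $\Jrr{A}{B}{i}$ follows from the same template. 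The main obstacle here is bookkeeping the bigrading: I must verify that, for $n \in N_{m,k}$ and $b \in B_{k,i}$, the assignment $\tilde g(n)$ lies in the correct component $\Hom_k(B_{k,i}, M_m)$ of $\Jll{A}{B}{i}(M)_{m,k}$, and that $\tilde g$ is $A$-linear on the left and $B$-linear on the right; once the indices are reconciled, naturality and the inverse relations are immediate.

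For part (3), I would invoke the standard principle that a right adjoint to an exact functor preserves injectives, and a left adjoint to an exact functor preserves projectives. Both $\Ill{A}{B}{i}(-) = (-) \ten{k} e_{i,B}B$ and $\Jll{A}{B}{i}$ are exact, since tensoring over the field $k$ is exact and each graded piece $\Hom_k(B_{n,i}, -)$ of $\Jll{A}{B}{i}$ is exact on $k$-vector spaces. Therefore, using the adjunctions from (1) and (2), $\Resll{A}{i}$ preserves injectives (as right adjoint to the exact $\Ill{A}{B}{i}$) and preserves projectives (as left adjoint to the exact $\Jll{A}{B}{i}$); the same reasoning gives the corresponding statement for $\Resrr{B}{i}$. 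The only real work in the whole lemma is the index-chasing of (2); the remaining statements are essentially formal.
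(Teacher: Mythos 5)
Your proposal is correct. For parts (1) and (2) you follow the same route as the paper: the explicit unit/counit formulas you write down for $\Ill{A}{B}{i} \dashv \Resll{A}{i}$ and $\Resll{A}{i} \dashv \Jll{A}{B}{i}$ are exactly the isomorphisms the paper records (the paper simply cites the literature for (1) and writes out the bijection $f \mapsto \bigl(p_{mn} \mapsto (b_{ni} \mapsto f(p_{mn}b_{ni}))\bigr)$ with inverse $g \mapsto \bigl(p_{mi} \mapsto g(p_{mi})(e_{i,B})\bigr)$ for (2)), so the index bookkeeping you flag is the only content there, as in the paper. Where you genuinely diverge is part (3): the paper treats the injective case by citing an external lemma and the projective case structurally, using that a projective bigraded bimodule is a direct summand of a direct sum of objects $Ae_{i,A} \ten{k} e_{j,B}B$ together with the fact that an object of $\Grl{A}$ (resp. $\Grr{B}$) is projective iff it is a direct summand of a direct sum of the $Ae_{i,A}$ (resp. $e_{i,B}B$); you instead deduce both halves uniformly from the formal principle that a right adjoint of an exact functor preserves injectives and a left adjoint of an exact functor preserves projectives, noting that $\Ill{A}{B}{i}$, $\Irr{A}{B}{i}$, $\Jll{A}{B}{i}$, $\Jrr{A}{B}{i}$ are exact because exactness of bigraded bimodules is checked componentwise and $- \ten{k} V$ and $\Hom_k(V,-)$ are exact over the field $k$. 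Your route is self-contained and arguably cleaner, since it needs no external citation and no structure theory of projectives; the paper's route has the minor advantage of making explicit what $\Resll{A}{i}$ of a projective bimodule looks like (a direct sum of copies of $Ae_{i,A}$), which is occasionally useful later. Both arguments are complete and correct.
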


\begin{proof}
	Item 1 is proved in the proof of \cite[Lemma 2.3]{mori2021local}.

	Item 2 follows from the following isomorphisms:
	\begin{align*}
	\Homl{A}(Pe_i, M) &\cong \Homlr{A}{B}(P, \Jll{A}{B}{i}(M)) \\
	f &\mapsto (p_{mn} \mapsto (b_{ni} \mapsto f(p_{mn}b_{ni}))), \\
	(g(p_{mi})(e_{i,B}) \mapsfrom p_{mi}) &\mapsfrom g,
	\end{align*}
	where $P \in \Grlr{A}{B}$, $M \in \Grl{A}$, $p_{mn} \in P_{mn}$ and $b_{ni} \in B_{ni}$.

	When $M$ is an injective bigraded $A$-$B$-bimodule, item 3 is \cite[Lemma 2.3]{mori2021local} (cf. \cite[Lemma 12.29.1]{stacks-project}).
	When $M$ is a projective bigraded $A$-$B$-bimodule, item 3 follows from the fact that $M \in \Grl{A}$ (resp. $\Grr{B}$) is projective if and only if $M$ is a direct summand of a direct sum of objects of the form $Ae_{i,A}$ (resp. $e_{i,B}B$).
\end{proof}

For a graded right $A$-module $M$, we define the \emph{Matlis dual} of $M$ by 
\[
\D{M} \coloneqq \bigoplus_{i \in \Zbb} \D{M_i},
\]
where $\D{M_i} := \Hom_k(M_i,k)$ is the $k$-linear dual of $M_i$.
$\D{M}$ is naturally a graded left $A$-module via $af(m) \coloneqq f(ma)$ for $f \in (\D{M})_i=\D{M_i}$, $m \in M_j$, and $a \in A_{ji}$.
For a bigraded $A$-$B$-bimodule $M$, we define the Matlis dual $\D{M}$ of $M$ by $\D{M} \coloneqq \bigoplus_{i,j \in \Zbb} \D{M_{ji}}$, where $\D{M_{ij}} := \Hom_k(M_{ij},k)$ is the $k$-linear dual of $M_{ij}$.
$\D{M}$ is naturally a bigraded $B$-$A$-bimodule.

For a $\Zbb$-graded $k$-module $M = \bigoplus_{i \in \Zbb} M_i$, we define the \emph{shift} $M(n)$ of $M$ for each $n \in \Zbb$ by $M(n)_i \coloneqq M_{n+i}$.
For a $\Zbb^2$-graded $k$-module $M = \bigoplus_{i,j \in \Zbb} M_{ij}$, we define the \emph{shift} $M(n,m)$ of $M$ for each $n,m \in \Zbb$ by $M(n,m)_{i,j} \coloneqq M_{n+i,m+j}$.
Then, a shift $A(n,n)$ of a $\Zbb$-algebra $A$ is also a $\Zbb$-algebra (\cite[Lemma 2.14]{mori2025categorical}).
If $M \in \Grr{A}$, then the shift $M(n)$ is naturally a graded right $A(n,n)$-module, which is not a graded right $A$-module in general.

For $\Zbb$-algebras $A,B$ and $C$, a homomorphism of $\Zbb$-algebras $\varphi:A \rightarrow B$ and $M \in \Grr{B}$, we define a \emph{twist} $M_{\varphi} \in \Grr{A}$ of $M$ by $M_{\varphi}  \coloneqq M$ as a graded $k$-module with the action defined by $ma \coloneqq m\varphi(a)$ for all $m \in M, a \in A$.
For another $\Zbb$-algebra $D$, a homomorphism $\psi:C \rightarrow D$ and $M \in \Grlr{D}{B}$, we define a twist ${}_{\psi}M_{\varphi} \in \Grlr{C}{A}$ by ${}_{\psi}M_{\varphi} \coloneqq M$ as a bigraded $k$-module with the action defined by $cma \coloneqq \psi(c)m\varphi(a)$ for all $m \in M, a \in A, c \in C$.
When $C=D$ and $\psi=\Id_C$, then we simply write $M_\varphi$.

If $A$ is isomorphic to a shift $A(n,n)$ for some $n \in \Zbb$, then we say that $A$ is \emph{$n$-periodic}.
If $A$ is $n$-periodic and $\varphi:A \rightarrow A(n,n)$ is an isomorphism of $\Zbb$-algebras, then we have the following autoequivalence of the category $\Grr{A}$ (\cite[Section 2.5]{mori2025categorical}):
\begin{align*}
\Grr{A} &\xrightarrow{\cong} \Grr{A}, \quad M \mapsto M(n)_{\varphi}, \\
\Grlr{B}{A} &\xrightarrow{\cong} \Grlr{B}{A}, \quad M \mapsto M(0,n)_{\varphi}, \\
\Grlr{A}{A} &\xrightarrow{\cong} \Grlr{A}{A}, \quad M \mapsto {}_{\varphi}M(n,n)_{\varphi}.
\end{align*}
In addition, $M \in \Grlr{A}{A}$ is called \emph{$n$-periodic} if $M \cong {}_{\varphi}M(n,n)_{\varphi}$ for some $n \in \Zbb$ and an isomorphism $\varphi: A \rightarrow A(n,n)$.

Let $A$ be a $\Zbb$-graded algebra, as in \cref{subsec:background}, we can define the associated $\Zbb$-algebra $\zgrz{A}$ by $\zgrz{A}_{ij} \coloneqq A_{j-i}$ for all $i,j \in \Zbb$.
Then, $\zgrz{A}$ is $1$-periodic.

\begin{lem}[{\cite[Section 2]{sierra2011galgebras}, \cite[Section 1]{polishchuk2005noncommutative}, \cite[Lemma 2.17]{mori2025categorical}}]
	\label{lem:associated-Z-algebras}
	Let $A$ be a $\Zbb$-graded algebra.
	Then, we have the following functors
\begin{align*}
	\Grr{A} &\xrightarrow{\cong} \Grr{\zgrz{A}}, \quad M \mapsto \zgrz{M}  = \bigoplus_{i \in \Zbb} \zgrz{M}_i := \bigoplus_{i \in \Zbb} M_{i}, \\
	\Grl{A} &\xrightarrow{\cong}  \Grl{\zgrz{A}}, \quad M \mapsto \zgrz{M} = \bigoplus_{i \in \Zbb} \zgrz{M}_{i} := \bigoplus_{i \in \Zbb} M_{-i},\\
	\Grlr{A}{B} &\rightarrow \Grlr{\zgrz{A}}{\zgrz{B}}, \quad M \mapsto \zgrz{M} = \bigoplus_{i,j \in \Zbb} \zgrz{M}_{i,j} := \bigoplus_{i,j \in \Zbb} M_{j-i},
\end{align*}
where $B$ is another $\Zbb$-graded algebra, $\Grr{A}, \Grl{A}$ and $\Grlr{A}{B}$ are the categories of graded right $A$-modules, graded left $A$-modules and graded $A$-$B$-bimodules, respectively.
\end{lem}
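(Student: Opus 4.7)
The plan is to construct each of the three functors explicitly by unwinding definitions, and then to exhibit quasi-inverses for the first two. For the right module case, given $M = \bigoplus_n M_n \in \Grr{A}$, I would set $\zgrz{M}_i := M_i$ and declare the $\zgrz{A}$-action on $m \in \zgrz{M}_i$ by an element $a \in \zgrz{A}_{ij} = A_{j-i}$ to be $m \cdot a := ma \in M_j = \zgrz{M}_j$. The checks are immediate: the local unit $e_i \in \zgrz{A}_{ii} = A_0$ acts as the identity of $A$ restricted to $M_i$, and by construction $\zgrz{M}_i \cdot \zgrz{A}_{kl} = 0$ when $i \neq k$. A morphism $\varphi : M \to N$ passes to $\zgrz{\varphi}$ defined by the same underlying $k$-linear maps, which is visibly $\zgrz{A}$-linear.

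For the quasi-inverse, given $N \in \Grr{\zgrz{A}}$, I would set $M_i := N_i$ and, for $a \in A_n$, define the action on $m \in M_i$ by $ma := m \cdot a^{(i)}$, where $a^{(i)}$ denotes the element $a$ viewed in the copy $\zgrz{A}_{i,i+n} = A_n$. Associativity reduces to the identity $a^{(i)} b^{(i+n)} = (ab)^{(i)}$, which is exactly how multiplication in $\zgrz{A}$ is defined from that of $A$, and the unit of $A$ corresponds under this formula to the family $\{e_i\}$ of local units. The two constructions are mutually inverse on nose, both on objects and on morphisms, which establishes the first equivalence.

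The left module case is formally identical, with the convention $\zgrz{M}_i := M_{-i}$; the sign flip is forced by the requirement that the equivalence be compatible with passage to opposite algebras, so that the identification $\Grl{A} = \Grr{A^{\op}}$ matches $\Grl{\zgrz{A}} = \Grr{\zgrz{A}^{\op}}$ under the canonical isomorphism $\zgrz{A^{\op}} \cong \zgrz{A}^{\op}$. For the bimodule functor, given $M \in \Grlr{A}{B}$ singly graded by $M = \bigoplus_n M_n$, I would set $\zgrz{M}_{ij} := M_{j-i}$, with left $\zgrz{A}$-action $a \cdot m := am$ for $a \in \zgrz{A}_{ik} = A_{k-i}$ and $m \in \zgrz{M}_{kj} = M_{j-k}$ landing in $M_{j-i} = \zgrz{M}_{ij}$, and symmetrically for the right $\zgrz{B}$-action; bimodule compatibility is inherited directly from $M$, and functoriality is immediate.

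The main (and only) obstacle is conceptual rather than technical: one must keep straight that a bimodule over a singly graded algebra is singly graded, whereas a bimodule over a $\Zbb$-algebra is naturally bigraded. This is why the third arrow is stated merely as a functor and not as an equivalence: the image consists exactly of the $1$-periodic bigraded bimodules in the sense introduced earlier in this section, so the essential image is a proper subcategory of $\Grlr{\zgrz{A}}{\zgrz{B}}$. Apart from this observation, the entire proof is bookkeeping with the translation $a \leftrightarrow (a^{(i)})_{i \in \Zbb}$, and I would not expect any subtlety to arise.
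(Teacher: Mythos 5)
Your construction is correct. The paper in fact gives no proof of this lemma at all --- it is quoted from the cited references (Sierra, Polishchuk, Mori) --- so there is nothing to compare line by line; your explicit bookkeeping (defining the component actions via $\zgrz{A}_{ij}=A_{j-i}$, checking the local units, reducing associativity to $a^{(i)}b^{(i+n)}=(ab)^{(i)}$, and writing down the strict inverse functor in the two one-sided module cases, with the index flip $\zgrz{M}_i=M_{-i}$ on the left) is exactly the standard argument behind those references and covers everything the statement asserts. One caution: your parenthetical claim that the essential image of the bimodule functor consists \emph{exactly} of the $1$-periodic objects of $\Grlr{\zgrz{A}}{\zgrz{B}}$ is stronger than anything the lemma or the paper's subsequent remark needs, and as written it is not justified --- objects in the image are visibly $1$-periodic, but the converse would require choosing a periodicity isomorphism $\theta\colon N\cong{}_{\varphi}N(1,1)_{\varphi}$ and verifying that the left and right actions reassemble coherently into a single $\Zbb$-graded $A$-$B$-bimodule, a compatibility check you do not perform. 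Since the lemma only claims that the third arrow is a functor (not an equivalence, and not a description of its image), this over-claim does not affect the proof, but you should either justify it or drop it.
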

Note that although the functors $\Grr{A} \rightarrow \Grr{\zgrz{A}}$ and $\Grl{A} \rightarrow \Grl{\zgrz{A}}$ are equivalences, the functor $\Grlr{A}{B} \rightarrow \Grlr{\zgrz{A}}{\zgrz{B}}$ is not an equivalence in general.

\subsection{Module categories and derived categories}

Let $A, B$ and $C$ be $\Zbb$-algebras.
For a bigraded $B$-$A$-bimodule $M$ and a graded right $A$-module $N$, we define a graded right $B$-module $\Homintr{A}(M,N)$ called an \emph{internal Hom module} by
\[\Homintr{A}(M,N) = \bigoplus_{i \in \Zbb} \Homintr{A}(M,N)_i \coloneqq \bigoplus_{i \in \Zbb} \Homr{A}(e_iM,N),
\]
where its module structure is given by
$(fb)(m) \coloneqq f(bm) \ \text{for } f \in \Homintr{A}(M,N)_i, b \in B_{ij}, m \in e_jM$.
For a graded right $A$-module $M$ and a bigraded $B$-$A$-bimodule $N$, we define a graded left $B$-module $\Homintr{A}(M,N)$ by
\[\Homintr{A}(M,N) = \bigoplus_{i \in \Zbb} \Homintr{A}(M,N)_i \coloneqq \bigoplus_{i \in \Zbb}\Homr{A}(M,e_iN),
\]
where its module structure is given by $(bf)(m) \coloneqq bf(m) \ \text{for } f \in \Homintr{A}(N,M)_i, b \in B_{ji}, m \in M$.
If $M$ is a bigraded $B$-$A$-bimodule and $N$ is a bigraded $C$-$A$-bimodule, we define a bigraded $C$-$B$-bimodule $\Homintr{A}(M,N)$ by
\[\Homintr{A}(M,N) = \bigoplus_{i,j \in \Zbb} \Homintr{A}(M,N)_{i,j} \coloneqq \bigoplus_{i,j \in \Zbb} \Homr{A}(e_jM,e_iN).
\]

We also define \emph{internal Hom-complexes} for complexes of modules in the same way.
But, we often use the same notation as the internal Hom modules.

\begin{rmk}
If $M$ is an object in $\Grlr{A}{B}$, then
\[
\Homintr{K}(M,K) \cong M'
\] 
as objects in $\Grlr{B}{A}$.
\end{rmk}

Let $M$ be a graded right $A$-module and $N$ be a graded left $A$-module.
Then, we define the \emph{tensor product} $M \otimes_A N$ by 
\begin{align*}
M \ten{A} N &= \Cok \left(\bigoplus_{i,j} M_i \otimes_{A_{ii}} A_{ij} \ten{A_{jj}} N_j \xrightarrow{\psi} \bigoplus_{k \in \Zbb} M_k \ten{A_{kk}} N_k \right), \\
&\psi(m \otimes a \otimes n) \coloneqq m \otimes a n- m a \otimes n.
\end{align*}

For $M$ a graded right $A$-module and $N$ a bigraded $A$-$B$-bimodule, we define a graded right $B$-module $M \ten{A} N$ by
\[
M \tenint{A} N \coloneqq \bigoplus_{i \in \Zbb} M \ten{A} Ne_i.
\]
For $M$ a bigraded $B$-$A$-bimodule and $N$ a graded left $A$-module, we define a graded left $B$-module $M \ten{A} N$ by
\[
M \tenint{A} N \coloneqq \bigoplus_{i \in \Zbb} e_iM \ten{A} N.
\]
For $M$ a bigraded $B$-$A$-bimodule and $N$ a bigraded $A$-$C$-bimodule, we define a bigraded $B$-$C$-bimodule $M \ten{A} N$ by
\[
M \tenint{A} N \coloneqq \bigoplus_{i,j \in \Zbb} e_iM \ten{A} Ne_j.
\]

The following proposition is basic but important.

\begin{prop}[{\cite[Section 4 and 5]{mori2021local}, \cite[Lemma 2.4]{mori2025categorical}}]
	\label{prop:tensor-product-properties}
	Let $A,B$ be $\Zbb$-algebras.
	Then, the following hold:
	\begin{enumerate}
		\item For any $M$ in $\Grr{A}$, $M \tenint{A} A \cong M$ so that $(M \tenint{A} A)_i \cong M_i$ for all $i \in \Zbb$.
		\item For any $N$ in $\Grl{A}$, $A \tenint{A} N \cong N$ so that $(A \tenint{A} N)_i \cong N_i$ for all $i \in \Zbb$.
		\item For any $M$ in $\Grr{A}$, any $N$ in $\Grr{B}$ and any $L$ in $\Grlr{A}{B}$, we have 
		\[
		\Homr{B}(M \tenint{A} L,N) \cong \Homr{A}(M, \Homintr{B}(L,N)).
		\]
	\end{enumerate}
\end{prop}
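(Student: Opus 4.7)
My plan is to verify each of the three parts by direct construction of explicit mutually-inverse morphisms. The content is essentially formal: all three statements reduce to careful bookkeeping with the local units $e_i$ and the $\Zbb$-grading, since the tensor product and internal hom over a $\Zbb$-algebra are defined as direct sums indexed by $\Zbb$.

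For (1), I would unfold the definition $M \tenint{A} A = \bigoplus_i M \ten{A} Ae_i$ and prove $M \ten{A} Ae_i \cong M_i$ for each $i$. Since $(Ae_i)_k = A_{ki}$, a typical element of $M \ten{A} Ae_i$ is represented by $m \otimes a$ with $m \in M_k$ and $a \in A_{ki}$. The assignment $\varphi_i(m \otimes a) \coloneqq ma$ is well-defined on the cokernel defining $\ten{A}$ (the relations $mb \otimes a - m \otimes ba$ both map to $mba$) and lands in $M_i$. Its inverse sends $m \in M_i$ to $m \otimes e_i$; the cokernel relation $m \otimes a = ma \otimes e_i$ shows that these are mutually inverse. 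Summing over $i$ produces the required isomorphism, and the grading claim is immediate by construction. Part (2) is symmetric, using $e_i A \ten{A} N \cong N_i$ via $a \otimes n \mapsto an$ with inverse $n \mapsto e_i \otimes n$.

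For (3), this is the standard tensor--hom adjunction adapted to the $\Zbb^2$-graded setting. I would define
\[
\Phi \colon \Homr{B}(M \tenint{A} L, N) \longrightarrow \Homr{A}(M, \Homintr{B}(L, N)), \qquad \Phi(f)(m)(l) \coloneqq f(m \otimes l),
\]
where, for $m \in M_i$, the morphism $\Phi(f)(m)$ is viewed as an element of $\Homr{B}(e_{i,A} L, N)$, which is by definition the $i$-th graded component of $\Homintr{B}(L, N)$. The inverse is $\Psi(g)(m \otimes l) \coloneqq g(m)(l)$. The verifications to run are: (i) $\Phi(f)$ is a graded right $A$-module map, which follows from the $B$-linearity of $f$ together with the right $A$-action on $\Homintr{B}(L, N)$ defined via $(ha)(x) = h(ax)$; (ii) $\Psi(g)$ is well-defined on the cokernel defining $\tenint{A}$, which uses the $A$-linearity of $g$ and the $B$-linearity of each $g(m)$; (iii) both composites $\Phi \circ \Psi$ and $\Psi \circ \Phi$ are identity maps, which is immediate from the formulae.

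The main obstacle is not conceptual but notational: in the $\Zbb$-algebra setting the unit of $A$ is distributed among the local units $e_i$, so one must carefully track which $e_i$ absorbs a tensor factor or picks out a graded component. Once these conventions are pinned down, the argument reduces to standard adjunction yoga, and the only extra care needed is with the compatibility of the external gradings on $M \tenint{A} L$ and $\Homintr{B}(L, N)$.
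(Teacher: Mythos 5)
Your proposal is correct: the explicit mutually inverse maps in (1)--(2) (namely $m\otimes a\mapsto ma$ with inverse $m\mapsto m\otimes e_i$, using the cokernel relation $m\otimes a = ma\otimes e_i$) and the adjunction maps $\Phi,\Psi$ in (3) are exactly the standard argument. Note that the paper itself gives no proof of this proposition---it is quoted from \cite[Section 4 and 5]{mori2021local} and \cite[Lemma 2.4]{mori2025categorical}---so there is nothing in-paper to diverge from; your direct verification is the expected one. The only details you leave implicit are the right $A$-linearity of the summed isomorphism in (1) (i.e.\ $\varphi(m\otimes ab)=(ma)b$, which identifies the action through the second tensor factor with the action on $M$) and the fact that $\Phi$, $\Psi$ in (3) exchange degree-preserving homomorphisms on both sides; both are routine and do not constitute gaps.
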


Let $\Ccal$ be an abelian category.
Then, we denote by $K(\Ccal)$ the homotopy category of complexes in $\Ccal$.
We denote by $K^+(\Ccal)$, $K^-(\Ccal)$ and $K^b(\Ccal)$ the full subcategories of $K(\Ccal)$ consisting of complexes with bounded below, bounded above and bounded, respectively.
We denote by $D(\Ccal)$ the derived category of $\Ccal$.
We denote by $D^+(\Ccal)$, $D^-(\Ccal)$ and $D^b(\Ccal)$ the full subcategories of $D(\Ccal)$ consisting of complexes with bounded below, bounded above and bounded, respectively.

To we consider the derived functors of the above functors, we define the notions of \emph{K-projective} and \emph{K-injective} objects in $\Grr{A}$ and $\Grlr{A}{B}$.

\begin{dfn}
	\label{dfn:K-projective, K-injective}
	Let $A,B$ be $\Zbb$-algebras.
	\begin{enumerate}
		\item An object $P \in D(\Grlr{A}{B})$ (resp. $D(\Grr{B})$) is called \emph{K-projective} if for any acyclic complex $M \in D(\Grlr{A}{B})$ (resp. $D(\Grr{B})$), the Hom complex $\Homlr{A}{B}(P,M)$ (resp. $\Homr{B}(P,M)$) is acyclic.
		\item An object $I \in D(\Grlr{A}{B})$ (resp. $D(\Grr{B})$) is called \emph{K-injective} if for any acyclic complex $M \in D(\Grlr{A}{B})$ (resp. $D(\Grr{B})$), the Hom complex $\Homlr{A}{B}(M,I)$ (resp. $\Homr{B}(M,I)$) is acyclic.
		\item An object $F \in D(\Grr{B})$ is called \emph{K-flat} if for any acyclic complex $M \in D(\Grl{B})$, the tensor product $F \ten{B} M$ is acyclic.
	\end{enumerate}
\end{dfn}

\begin{eg}
A bounded above complex of projective objects in $\Grr{B}$ (resp. $\Grlr{A}{B}$) is K-projective.
A bounded below complex of injective objects in $\Grr{B}$ (resp. $\Grlr{A}{B}$) is K-injective.
\end{eg}

\begin{prop}
	\label{prop:restriction-K-projective-K-injective}
	Let $A,B$ be $\Zbb$-algebras.
	Then, 
	\begin{enumerate}
		\item If $P$ is a K-projective object in $D(\Grlr{A}{B})$, then $\Resll{A}{i}(P)$  (resp. $\Resrr{B}{i}(P)$) is a K-projective object in $D(\Grl{A})$ (resp. $D(\Grr{B})$) for all $i \in \Zbb$.
		\item If $I$ is a K-injective object in $D(\Grlr{A}{B})$, then $\Resll{A}{i}(I)$ (resp. $\Resrr{B}{i}(I)$) is a K-injective object in $D(\Grl{A})$ (resp. $D(\Grr{B})$) for all $i \in \Zbb$.
		\item If $P$ is a K-projective object in $D(\Grr{B})$, then $P$ is K-flat in $D(\Grr{B})$.
	\end{enumerate}
\end{prop}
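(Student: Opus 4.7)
The plan is to derive (1) and (2) directly from the adjunctions in \cref{lem:restriction}, combined with the exactness of the relevant adjoint functors (automatic since $k$ is a field), and to handle (3) via a standard $k$-duality reduction that turns the tensor product into a Hom-complex.

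For (1), let $P$ be K-projective in $D(\Grlr{A}{B})$ and $M$ an acyclic complex in $\Grl{A}$. The adjunction $\Resll{A}{i} \dashv \Jll{A}{B}{i}$ from \cref{lem:restriction}(2) extends termwise to an isomorphism of hom-complexes
\[
\Homlcpx{A}(\Resll{A}{i}(P), M) \cong \Homlrcpx{A}{B}(P, \Jll{A}{B}{i}(M)).
\]
Since $\Jll{A}{B}{i}(-) = \bigoplus_{m,n}\Hom_k(B_{n,i}, (-)_m)$ is built from $\Hom_k$ with a $k$-vector space in the first slot, it is exact, so $\Jll{A}{B}{i}(M)$ remains acyclic; K-projectivity of $P$ then forces the right-hand side to be acyclic. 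The argument for $\Resrr{B}{i}$ with $\Jrr{A}{B}{i}$ is parallel.

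For (2), dually, let $I$ be K-injective in $D(\Grlr{A}{B})$ and $M$ acyclic in $\Grl{A}$. The adjunction $\Ill{A}{B}{i} \dashv \Resll{A}{i}$ from \cref{lem:restriction}(1) gives
\[
\Homlcpx{A}(M, \Resll{A}{i}(I)) \cong \Homlrcpx{A}{B}(\Ill{A}{B}{i}(M), I).
\]
Since $\Ill{A}{B}{i}(-) = (-) \ten{k} e_{i,B}B$ is exact over $k$, the complex $\Ill{A}{B}{i}(M)$ is acyclic, and K-injectivity of $I$ then yields acyclicity of the right-hand side. The case of $\Resrr{B}{i}$ with $\Irr{A}{B}{i}$ is parallel.

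For (3), given $P$ K-projective in $D(\Grr{B})$ and $M \in D(\Grl{B})$ acyclic, I pass to the termwise $k$-dual. Let $N$ denote the complex with $N^{p} \coloneqq \Hom_k(M^{-p}, k)$, regarded as a complex of graded right $B$-modules via $(fb)(m) \coloneqq f(bm)$. Since $\Hom_k(-,k)$ is exact and faithful on complexes of $k$-vector spaces, $N$ is acyclic and $P \ten{B} M$ is acyclic if and only if its termwise $k$-dual is. Applying the standard hom-tensor adjunction in each cohomological degree yields an isomorphism of complexes
\[
\Hom_k(P \ten{B} M, k) \;\cong\; \Homrcpx{B}(P, N),
\]
whose right-hand side is acyclic by K-projectivity of $P$. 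The only real obstacle is verifying that this degree-wise adjunction is compatible with the differentials of the total complex on the left and the hom-complex on the right, but this is a routine sign-chase.
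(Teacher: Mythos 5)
Your proposal is correct and takes essentially the same route as the paper: parts (1) and (2) are exactly the paper's argument via the adjunctions of \cref{lem:restriction} (with the exactness of $\Ill{A}{B}{i}$ and $\Jll{A}{B}{i}$ over the field $k$ made explicit), and part (3) is the same $k$-duality/tensor--hom trick that the paper carries out with $\Homintr{K}(-,e_iK)$ and \cref{prop:tensor-product-properties}. The only adjustment needed in (3) is to replace the full dual $\Hom_k(M^{-p},k)$, which is not an object of $\Grr{B}$ since it is not unitary, by the graded (Matlis) dual $\bigoplus_{i}\Hom_k((M^{-p})_i,k)$; every $B$-linear map from the unitary module $P^{p}$ into $\Hom_k(M^{-q},k)$ automatically lands in this graded dual, so your adjunction isomorphism, the acyclicity of the target, and the faithfulness of $k$-duals go through unchanged.
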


\begin{proof}
	(1) 
	From \cref{lem:restriction}, for any acyclic complex $M \in D(\Grl{A})$, we have 
	\begin{align*}
		\Homrcpx{A}(\Resrr{A}{i}(P), M) &\cong \Homlrcpx{A}{B}(P, \Jll{A}{B}{i}(M))
	\end{align*}
	The statement follows from this isomorphism and the definition of K-projective objects.

	When we prove that $\Resrr{B}{i}(P)$ is K-projective, we can show the statement in the same way.

	(2) We can show the statement by using the adjoint property in \cref{lem:restriction} 
	\[
	\Homlrcpx{A}{B}(\Ill{A}{B}{i}(-), I) \cong \Homlcpx{A}(-, \Resll{A}{i}(I))
	\]
	and the definition of K-injective objects in the same way as (1).

	(3) If $M$ is an acyclic complex in $D(\Grl{B})$, then we have 
	\begin{align*}
		\Homrcpx{B}(P, \Homintr{K}(\Irr{B}{K}{i}(M), e_i K)) \cong \Homrcpx{K}(P \tenint{B} \Irr{B}{K}{i}(M), e_i K)
	\end{align*}
	by \cref{prop:tensor-product-properties}.
	Here, for any $i \in \Zbb$, 
	\begin{enumerate}[label=(\alph*)]
		\item $\Homrcpx{K}(P \tenint{B} \Irr{B}{K}{i}(M), e_i K)$ is acyclic if and only if $P \ten{B} M$ is acyclic,
		\item if $M$ is acyclic then, $\Homintr{K}(\Irr{B}{K}{i}(M), e_i K)$ is acyclic.
	\end{enumerate}
	Thus, we have that $P$ is K-flat in $D(\Grr{B})$ (see also \cite[Proposition 10.3.4]{yekutieli2019derived}).
\end{proof}

\begin{prop}
	\label{prop:existence-K-projective-K-injective}
	Let $A,B$ be $\Zbb$-algebras.
	Then, 
	\begin{enumerate}
		\item For any object $M$ in $D(\Grlr{A}{B})$ (resp. $D(\Grr{B})$), there exists a K-projective object $P$ in $D(\Grlr{A}{B})$ (resp. $D(\Grr{B})$) which quasi-isomorphic to $M$.
		\item For any object $M$ in $D(\Grlr{A}{B})$ (resp. $D(\Grr{B})$), there exists a K-injective object $I$ in $D(\Grlr{A}{B})$ (resp. $D(\Grr{B})$) which quasi-isomorphic to $M$.
	\end{enumerate}
\end{prop}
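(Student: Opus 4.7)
The plan is to handle (1) and (2) separately, exploiting two facts already noted in the excerpt: both $\Grr{B}$ and $\Grlr{A}{B}$ are Grothendieck abelian categories, and each admits a set of small projective generators --- namely $\{P_{i,B}\}_{i\in\Zbb}$ with $P_{i,B}=e_{i,B}B$ for $\Grr{B}$, and $\{Ae_{i,A}\ten{k}e_{j,B}B\}_{i,j\in\Zbb}$ for $\Grlr{A}{B}$.

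For (2), I would invoke the general existence theorem for K-injective resolutions in an arbitrary Grothendieck abelian category (Serp\'e's theorem, independently due to Alonso--Jerem\'ias--Souto): for any complex $M$ there is a quasi-isomorphism $M \to I$ with $I$ a K-injective complex of injective objects. No further input is needed beyond the Grothendieck property, which is already in place for both categories under consideration.

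For (1), I would follow Spaltenstein's construction, valid in any abelian category with enough projectives and arbitrary direct sums. Given $M \in D(\Grlr{A}{B})$ (the case of $D(\Grr{B})$ being identical), I would first write $M$ as the sequential colimit of its brutal truncations $M_n := \sigma^{\geq -n}M$. Each $M_n$ is bounded below, so iteratively choosing surjections from direct sums of the projective generators produces a bounded-above complex $P_n$ of projectives together with a quasi-isomorphism $P_n \to M_n$; such a $P_n$ is automatically K-projective. I would then lift these choices compatibly to a directed system $P_n \to P_{n+1}$ covering $M_n \hookrightarrow M_{n+1}$, and form the mapping telescope $P$ sitting in a distinguished triangle $\bigoplus_{n} P_n \xrightarrow{\mathrm{id}-\mathrm{shift}} \bigoplus_{n} P_n \to P$, so that the natural map $P \to M$ is a quasi-isomorphism.

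The main obstacle is to verify that this telescope $P$ is K-projective in the unbounded setting. Via the triangle above, this reduces to checking that $\bigoplus_{n} P_n$ is K-projective, i.e.\ that a direct sum of K-projective complexes of generators remains K-projective. The standard argument uses that each generator $Ae_{i,A}\ten{k}e_{j,B}B$ (respectively $P_{i,B}$) is compact, so that $\Homlrcpx{A}{B}(-,N)$ converts the direct sum into a product and preserves the required acyclicity for any acyclic $N$. This is precisely the step where the existence of a \emph{small} set of projective generators is essential; without it, the telescope construction would not yield a K-projective object.
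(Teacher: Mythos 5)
Your proposal is correct and takes essentially the paper's route: the paper's proof of this proposition is precisely a citation of Bökstedt--Neeman (using that $\Grr{B}$, $\Grl{A}$ and $\Grlr{A}{B}$ satisfy (AB4) and (AB4*) and have the projective generators you name) for the K-projective case, and of Serpé for K-injective resolutions in these Grothendieck categories, so your argument for (1) is just the cited telescope construction written out and your argument for (2) is identical to the paper's. One minor remark: the K-projectivity of $\bigoplus_n P_n$ does not really hinge on compactness of the generators, since $\Homlrcpx{A}{B}\bigl(\bigoplus_n P_n, N\bigr) \cong \prod_n \Homlrcpx{A}{B}(P_n,N)$ holds for any coproduct and a product of acyclic complexes of abelian groups is acyclic; what the small set of projective generators genuinely buys is enough projectives to build the bounded-above resolutions $P_n$ in the first place (and the generators here are in fact compact, so nothing in your argument breaks).
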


\begin{proof}
	Because $\Grr{A}$, $\Grl{A}$ and $\Grlr{A}{B}$ satisfy (AB4) and (Ab4*), the existence of K-projective and K-injective resolutions follows, for example, from \cite{bokstedt1993homotopy} (cf. \cite{spaltenstein1988resolutions}, \cite{gortz2023algebraic}).
	As for (2), the claim also comes from \cite{serpe2003resolution} since $\Grr{A}$, $\Grl{A}$ and $\Grlr{A}{B}$ are Grothendieck categories.
\end{proof}

\begin{rmk}
\label{rmk:existence-K-projective-K-injective}
From \cite{bokstedt1993homotopy}, we can take a K-projective (resp. K-injective) resolution $P \rightarrow M$ (resp. $I \rightarrow M$) such that each $P^j$ (resp. $I^j$) is a projective (resp. an injective) object for every $M$ in $D(\Grlr{A}{B})$ or $D(\Grr{B})$.
\end{rmk}

The following proposition is a generalization of \cite[Proposition 6.6 and 6.8]{mori2021local}.
\begin{prop}
\label{prop:derived-functors}
Let $A, B$ and $C$ be $\Zbb$-algebras.
Then, the following hold:
\begin{enumerate}
	\item $\Homintr{A}(-,-)$ has the right derived bifunctor
	\begin{align*}
	\RHomintr{A}(-,-) : D(\Grlr{B}{A})^{\op} \times D(\Grlr{C}{A}) &\rightarrow D(\Grlr{C}{B}).
	\end{align*}

	If $P$ is a K-projective object in $D(\Grlr{B}{A})$ or $I$ is a K-injective object in $D(\Grlr{C}{A})$, then
	\begin{align*}
		\RHomintr{A}(P,I) &\cong \Homintr{A}(P,I)
	\end{align*}
	in $D(\Grlr{C}{B})$.

	\item $(- \, \tenint{A}-)$ has the left derived bifunctor $(- \, \Ltenint{A}-)$
	\begin{align*}
	(- \, \Ltenint{A}-) : D(\Grlr{B}{A}) \times D(\Grlr{A}{C}) &\rightarrow D(\Grlr{B}{C}).
	\end{align*}
	If $P_1$ is a K-projective object in $D(\Grlr{B}{A})$ or $P_2$ is a K-projective object in $D(\Grlr{A}{C})$, then
	\begin{align*}
		P_1 \Ltenint{A} P_2 &\cong P_1 \tenint{A} P_2
	\end{align*}
	in $D(\Grlr{B}{C})$.
\end{enumerate}

\end{prop}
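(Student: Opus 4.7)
The plan is to define each derived bifunctor by resolving one argument at a time (using the resolutions guaranteed by \cref{prop:existence-K-projective-K-injective}) and then to establish well-definedness by reducing the question, componentwise, to the analogous statement for ordinary Hom-complexes and tensor products in $\Grr{A}$ and $\Grl{A}$.

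For part (1), given $M \in D(\Grlr{B}{A})$ and $N \in D(\Grlr{C}{A})$, I would fix a K-projective resolution $P \to M$ and a K-injective resolution $N \to I$, and set $\RHomintr{A}(M,N) \coloneqq \Homintr{A}(P,I)$. The substance is to check that this agrees with $\Homintr{A}(P,N)$ when only a K-projective resolution is used, and with $\Homintr{A}(M,I)$ when only a K-injective one is used; equivalently, one must show that $\Homintr{A}(P,-)$ preserves quasi-isomorphisms for any K-projective $P$, and that $\Homintr{A}(-,I)$ preserves quasi-isomorphisms for any K-injective $I$. By definition of the internal Hom, the $(i,j)$-component of $\Homintr{A}(P,I)$ is just $\Homr{A}(e_jP, e_iI) = \Homr{A}(\Resrr{A}{j}(P), \Resrr{A}{i}(I))$. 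Now \cref{prop:restriction-K-projective-K-injective} tells us that $\Resrr{A}{j}(P)$ is K-projective and $\Resrr{A}{i}(I)$ is K-injective in $D(\Grr{A})$, so the standard one-sided statement yields the preservation of quasi-isomorphisms on each component and hence on the whole bigraded complex.

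For part (2), the argument is dual. Given $M \in D(\Grlr{B}{A})$ and $N \in D(\Grlr{A}{C})$, take K-projective resolutions $P \to M$ and $Q \to N$, and define $M \Ltenint{A} N \coloneqq P \tenint{A} Q$. Well-definedness reduces to showing that $P \tenint{A}(-)$ and $(-) \tenint{A} Q$ preserve quasi-isomorphisms; again, the $(i,j)$-component $e_i P \ten{A} Q e_j$ involves the ordinary tensor product, and by \cref{prop:restriction-K-projective-K-injective}(1) and (3), $\Resrr{A}{i}(P)$ is K-projective and hence K-flat in $D(\Grr{A})$, and similarly for $Q$. Thus $\Resrr{A}{i}(P) \ten{A} (-)$ preserves acyclic complexes, and summing over $i,j$ gives the claim for the internal tensor product.

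Once well-definedness of both constructions is established, the two stated identifications follow immediately: if $P$ is already K-projective in $D(\Grlr{B}{A})$ we may take $P$ itself as the K-projective resolution in the construction of $\RHomintr{A}$, and likewise for K-injective $I$, yielding $\RHomintr{A}(P,I) \cong \Homintr{A}(P,I)$; the argument for part (2) is identical. The main obstacle, such as it is, lies not in any single calculation but in the careful bookkeeping needed to show that the two possible choices of which slot to resolve give canonically quasi-isomorphic outputs; this is handled by the standard diagram comparing $\Homintr{A}(P,N) \to \Homintr{A}(P,I) \leftarrow \Homintr{A}(M,I)$ (respectively $P \tenint{A} N \leftarrow P \tenint{A} Q \to M \tenint{A} Q$), in which both maps are quasi-isomorphisms by the preservation properties just proved. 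Functoriality in the derived categories then follows from the uniqueness up to homotopy of K-projective and K-injective resolutions.
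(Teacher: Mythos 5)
Your proposal is correct and is essentially the paper's argument: both proofs reduce everything componentwise to the one-sided case using \cref{prop:existence-K-projective-K-injective} for resolutions and \cref{prop:restriction-K-projective-K-injective} to see that $e_jP$, $e_iI$ (resp. $e_iP$, $Qe_j$) are K-projective, K-injective (resp. K-flat) over $A$. The only difference is packaging: the paper verifies the hypotheses of a cited general existence theorem for derived bifunctors, which requires the slightly stronger statement that the comparison map is a quasi-isomorphism when only one slot is resolved (handled there via homotopy equivalences between K-injectives and acyclic cones), whereas you resolve both slots and compare through the standard zig-zag, which yields the same conclusions.
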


\begin{proof}
	(1) We want to apply \cite[Theorem 9.3.1]{yekutieli2019derived} (cf. \cite[Proposition 9.3.10, Theorem 12.2.1]{yekutieli2019derived}, \cite[Proposition 17.3, 17.4]{hoshino1997derived}, \cite[Lemma 5.13]{guisado2025gospel}, \cite{hartshorne1966residues}).
	We need to check the following conditions:
	\begin{enumerate}[label=(\alph*)]
		\item $\Grlr{A}{C}$ (resp. $\Grlr{B}{A}$) has enough $K$-injective objects (resp. $K$-projective objects).
		\item Let $f: P_1 \rightarrow P_2$ and $g: I_1 \rightarrow I_2$ be isomorphisms in $D(\Grlr{B}{A})$ and $D(\Grlr{C}{A})$, respectively.
		If either $P_1, P_2$ are K-projective or $I_1, I_2$ are K-injective, then the induced morphism naturally defined by $f$ and $g$
		\[
		\Homintr{A}({P_1},g) \circ \Homintr{A}(f,{I_1}) :  \Homintr{A}(P_2,I_1) \rightarrow \Homintr{A}(P_1,I_2)
		\]
		is an isomorphism in $D(\Grlr{C}{B})$.
	\end{enumerate}
The condition (a) follows from \cref{prop:existence-K-projective-K-injective}.
So, it is enough to check (b).
We show that $\Homintr{A}({P_1},g)$ and $\Homintr{A}(f,{I_1})$ are quasi-isomorphisms.

We assume that $I_1, I_2$ are K-injective.
It is enough to check it degree by degree.
Firstly, note that $e_iI_1, e_iI_2$ are K-injective objects in $\Grr{A}$ for all $i \in \Zbb$ from \cref{prop:restriction-K-projective-K-injective}.

As for $\Homintr{A}({P_1},g)$, since $e_ig: e_iI_1 \rightarrow e_iI_2$ is a homotopy equivalence (\cite[Proposition 1.5]{spaltenstein1988resolutions}, \cite[Proposition 2.3]{serpe2003resolution}), we have
\begin{align*}
	H^l(\Homr{A}(e_j{P_1},e_ig)) : H^l(\Homr{A}(e_jP_1,e_iI_1)) \xrightarrow{\sim} H^l(\Homr{A}(e_jP_1,e_iI_2))
\end{align*}
for all $i,j,l \in \Zbb$.
So, the isomorphism follows from 
\begin{align*}
	H^l(\Homintr{A}(P_1,I_1))_{i,j} \cong H^l(\Homr{A}(e_jP_1,e_iI_1)), \\
	H^l(\Homintr{A}(P_1,I_2))_{i,j} \cong H^l(\Homr{A}(e_jP_1,e_iI_2))
\end{align*}
and that $H^l(\Homr{A}(e_j{P_1},e_ig))$ corresponds to the morphism $H^l(\Homintr{A}({P_1},g))_{i,j}$ for all $i,j,l \in \Zbb$.

As for $\Homintr{A}(f,{I_1})$, let $P_3$ be the cone of $f$.
Then, $P_3$ is an acyclic complex.
So, we have
\begin{align*}
	H^l(\Homr{A}(e_jP_3,e_iI_1)) \cong 0
\end{align*}
for all $i,j,l \in \Zbb$.
This induces the isomorphism
\begin{align*}
	H^l(\Homr{A}(e_jf,e_i{I_1})) : H^l(\Homr{A}(e_jP_2,e_iI_1)) \xrightarrow{\sim} H^l(\Homr{A}(e_jP_1,e_iI_1))
\end{align*}
for all $i,j,l \in \Zbb$.
Therefore, we have the isomorphism from 
\begin{align*}
	H^l(\Homintr{A}(P_2,I_1))_{i,j} \cong H^l(\Homr{A}(e_jP_2,e_iI_1)), \\
	H^l(\Homintr{A}(P_1,I_1))_{i,j} \cong H^l(\Homr{A}(e_jP_1,e_iI_1))
\end{align*}
and that $H^l(\Homr{A}(e_jf,e_i{I_1}))$ corresponds to the morphism $H^l(\Homintr{A}(f,{I_1}))_{i,j}$ for all $i,j,l \in \Zbb$.
In the case that $P_1, P_2$ are K-projective, the claim can be shown in the same way.

(2) We can show the statement in the same way as (1).
	However, note that we use (3) of \cref{lem:restriction}.
\end{proof}

\begin{rmk}
\label{rmk:derived-functors}
From the proof of \cref{prop:derived-functors}, if you want to calculate $\RHomintr{A}(M,N)$ for $M \in D(\Grlr{B}{A})$ and $N \in D(\Grlr{C}{A})$, for example, it is enough to take a resolution of $N \rightarrow I$ in $D(\Grlr{C}{A})$ such that each $e_i I$ is K-injective in $D(\Grr{A})$ for all $i \in \Zbb$.
\end{rmk}

$M \in \Grr{A}$ is called \emph{free} it is isomorphic to a direct sum of objects in $\{e_iA\}_{i \in \Zbb}$.
Let $\Pcal_A$  be the set of finite direct sums of objects in $\{e_iA\}_{i \in \Zbb}$.
Then, we say that a graded right $A$-module $M$ is \emph{finitely generated} if there exists an epimorphism $P \rightarrow M$ with $P \in \Pcal_A$.
We denote the category of finitely generated graded right (resp. left) $A$-modules by $\grr{A}$ (resp. $\grl{A}$).
A graded right A-module M is said to be \emph{locally finite} if each component $M_i$ is a finite-dimensional k-vector space for all \(i \in \Zbb\).
We denote by \(\Grlfr{A}\) the category of locally finite graded right A-modules.
For another \(\Zbb\)-algebra B, we similarly define locally finite bigraded A-B-bimodules and denote their category by \(\Grlflr{A}{B}\).
Moreover, we say that a graded right $A$-module $M$ is \emph{cofinite} if $\D{M}$ is a finitely generated graded left $A$-module.
We denote the category of cofinite graded right $A$-modules by $\grcofr{A}$.

\begin{dfn}[{\cite[Definition 3.1]{mori2021local}}]
	\label{dfn:minimal-free resolution, Ext-finite}
	Let $A$ be a connected $\Zbb$-algebra.
	\begin{enumerate}
		\item A \emph{finitely generated minimal free resolution} of a graded right $A$-module $M$ is an exact sequence 
		\[
		\cdots \xrightarrow{d_3} F_2 \xrightarrow{d_2} F_1 \xrightarrow{d_1} F_0 \xrightarrow{d_0} M \rightarrow 0
		\]
		such that each $F_i$ is a finite free $A$-module and $\Img(d_i) \subset F_{i-1}A_{>0}$.
		\item $A$ is called \emph{right (resp. left) Ext-finite} if for any $S_i \in \Grr{A}$ (resp. $\Grl{A}$), there exists a finitely generated minimal free resolution of $S_i$ in $\Grr{A}$ (resp. $\Grl{A}$).
		If $A$ is right and left Ext-finite, then we say that $A$ is \emph{Ext-finite}.
	\end{enumerate}
\end{dfn}

\begin{dfn}[{\cite[Definition 2.1]{vandenbergh2011noncommutative}}]
	\label{dfn:noetherian-Z-algebra}
	Let $A$ be a $\Zbb$-algebra.
	$A$ is \emph{right noetherian} (resp. \emph{left noetherian})  if $\Grr{A}$ (resp. $\Grl{A}$) is a locally noetherian category.
	If $A$ is right and left noetherian, then we say that $A$ is \emph{noetherian}.
\end{dfn}

\begin{rmk}[{\cite[Section 2.3]{mori2025categorical}}]
The condition that $A$ is right noetherian is equivalent to the following conditions: 
\begin{itemize}
	\item $e_i A$ is a noetherian object in $\Grr{A}$ for all $i \in \Zbb$, or
	\item $\grr{A}$ is a noetherian category.
\end{itemize}

\end{rmk}

\begin{rmk}[{\cite[Section 3.1]{mori2025categorical}}]
\label{rmk:Relation among finite, locally finite and Ext-finite}
	Let $A$ be a connected $\Zbb$-algebra.
	Then, the following holds:
	\[
	A \text{ is right noetherian } \Rightarrow A \text{ is right Ext-finite } \Rightarrow A \text{ is locally finite}.
	\]
\end{rmk}

Let A, B be \(\Zbb\)-algebras.
We denote by \(D_{lf}(\Grr{A})\) the derived category of graded right A-modules whose cohomology modules lie in \(\Grlfr{A}\).
Similarly, we denote by \(D_{lf}(\Grr{A}{B})\) the derived category of bigraded A-B-bimodules whose cohomology modules lie in \(\Grlflr{A}{B}\).
If A is right noetherian, we write \(D_{f}(\Grr{A})\) and \(D_{cof}(\Grl{A})\) for the derived categories of graded right A-modules whose cohomology modules lie in \(\grr{A}\) and \(\grcofl{A}\), respectively.

The following Matlis duality is important in this paper.
The case of graded algebras is well-known (for example, see \cite[Theorem 15.2.34]{yekutieli2019derived}, \cite[Proposition 3.1]{vandenbergh1997existence}).

\begin{thm}[Matlis duality]
	\label{thm:matlis-duality}
Let $A, B$ be $\Zbb$-algebras.
\begin{enumerate}
	\item The functor $\D{(-)}$ induces equivalences of categories
	\begin{align*}
	\D{(-)} : D_{lf}(\Grr{A})^{\op} &\xlongrightarrow{\sim} D_{lf}(\Grl{A}), \\
    \D{(-)} : D_{lf}(\Grlr{A}{B})^{\op} &\xlongrightarrow{\sim} D_{lf}(\Grlr{B}{A}).
	\end{align*}
\end{enumerate}

We assume that $A$ is right noetherian below.
\begin{enumerate}
\setcounter{enumi}{1}
	\item $\grr{A}$ and $\grcofr{A}$ are Serre subcategories of $\Grlfr{A}$ and $\Grlfl{A}$, respectively.
	\item The functor $\D{(-)}$ induces an equivalence of categories
	\begin{align*}
	    \D{(-)} : D_{f}(\Grr{A})^{\op} \xlongrightarrow{\sim} D_{cof}(\Grl{A}).
	\end{align*}
\end{enumerate}

\end{thm}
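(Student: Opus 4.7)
The plan is to treat the Matlis dual $\D{(-)}$ as an exact contravariant functor on the relevant abelian categories of (bi)modules and bootstrap it to the derived level via biduality; items (2) and (3) then follow by transporting Serre structure through this equivalence.

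For item (1), since $k$ is a field, $\Hom_k(-,k)$ is exact on $k$-vector spaces, so $\D{(-)} : \Grr{A} \to \Grl{A}$ and $\D{(-)} : \Grlr{A}{B} \to \Grlr{B}{A}$ are exact contravariant $k$-linear functors. They therefore extend to contravariant triangulated functors $D(\Grr{A})^{\op} \to D(\Grl{A})$ and $D(\Grlr{A}{B})^{\op} \to D(\Grlr{B}{A})$, with natural isomorphisms $H^{-i}(\D{M}) \cong \D{H^{i}(M)}$. Because $\D{(-)}$ carries $\Grlfr{A}$ into $\Grlfl{A}$ degree-by-degree (and similarly for bimodules), these functors restrict to the locally finite derived categories appearing in the statement. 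To upgrade them to equivalences, I would invoke biduality: the canonical map $M \to \DD{M}$ is an isomorphism on every graded component of a locally finite $M$ since $V \to \D{\D{V}}$ is an isomorphism whenever $\dim_k V < \infty$. Naturality gives $\Id \xrightarrow{\sim} \D{(-)} \circ \D{(-)}$ on $\Grlfr{A}$, and by exactness this promotes to a natural isomorphism on $D_{lf}(\Grr{A})$; the same argument works in the bimodule setting.

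For item (2), the right noetherian hypothesis makes $\grr{A}$ closed under subobjects, quotients, and extensions in $\Grr{A}$, and the preceding remark shows that $A$ is then locally finite, whence every finitely generated graded module has finite-dimensional components; thus $\grr{A}$ sits inside $\Grlfr{A}$ as a Serre subcategory. The corresponding statement for the cofinite category then follows by transporting this one through the exact equivalence $\D{(-)} : \Grlfr{A} \to \Grlfl{A}$ supplied by (the module-level case of) item (1): cofinite modules correspond under Matlis duality to finitely generated modules on the other side, and the preimage of any Serre subcategory under an exact equivalence is Serre. For item (3), I combine (1) and (2): a complex $M \in D_{lf}(\Grr{A})$ lies in $D_{f}(\Grr{A})$ iff each $H^{i}(M) \in \grr{A}$, equivalently (using $H^{-i}(\D{M}) \cong \D{H^{i}(M)}$ and item (1)) iff each $H^{-i}(\D{M}) \in \grcofl{A}$, that is $\D{M} \in D_{cof}(\Grl{A})$. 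So the equivalence of (1) restricts to $D_{f}(\Grr{A})^{\op} \xrightarrow{\sim} D_{cof}(\Grl{A})$.

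The main bookkeeping annoyance I anticipate is the bimodule case of item (1): the Matlis dual swaps the two graded directions together with the left and right actions, and I will need to verify carefully that locally finite cohomology is preserved under both swaps and that biduality is natural with respect to both module structures simultaneously. Once these conventions are in place, every step reduces to a short formal verification from exactness of $\Hom_k(-,k)$ and the fact that finite-dimensional $k$-linear duality is an involution.
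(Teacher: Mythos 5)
Your proposal is correct and follows essentially the same route as the paper: item (1) is proved by the biduality isomorphism $\Id \rightarrow \DD{(-)}$ on locally finite (bi)modules, extended to the derived categories by exactness of $\Hom_k(-,k)$, and items (2) and (3) are obtained by noetherianity plus transporting the finitely generated side through that equivalence to the cofinite side. No gaps; your cohomology-level criterion for restricting the equivalence in (3) is just an explicit spelling-out of the paper's ``analogous'' step.
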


\begin{proof}
	(1) There are natural morphisms
	\begin{align*}
		\Id_{\Grlfr{A}^{\op}} &\rightarrow \DD{(-)}, \\
        \Id_{\Grlflr{A}{B}^{\op}} &\rightarrow \DD{(-)},
	\end{align*}
	which are isomorphisms.
	The equivalences extend to the desired ones.

	(2) Since $A$ is right noetherian, $\grr{A}$ forms an abelian subcategory of $\Grlfr{A}$.
	The remaining part about $\grr{A}$ is straightforward.
	As for $\grcofl{A}$, from (1), we have an equivalence between $\Grlfr{A}^{\op}$ and $\Grlfl{A}$.
	This induces an equivalence of categories between $\grr{A}^{\op}$ and $\grcofl{A}$ and we have the claim.

	(3) The argument for (3) is analogous to that of (2).
\end{proof}

\begin{cor}
    \label{cor:matlis-duality}
    Let $A,B$ and $C$ be $\Zbb$-algebras.
    Let $M \in D_{lf}(\Grlr{B}{A})$ (resp. $D_{lf}(\Grr{A})$) and $N \in D_{lf}(\Grlr{C}{A})$ (resp. $D_{lf}(\Grr{A})$).
    Then, there is an isomorphism
    \begin{align*}
        \RHomintr{A}(M,N) \cong \RHomintl{A}(\D{N},\D{M}).
    \end{align*}
\end{cor}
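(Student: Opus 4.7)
The plan is to construct an underived comparison morphism $\Phi_{M,N}: \Homintr{A}(M,N) \to \Homintl{A}(\D{N}, \D{M})$ via the Matlis duality of individual morphisms, extend it to the derived category through K-injective resolutions, and then verify it is a quasi-isomorphism bidegree by bidegree by invoking the Matlis duality equivalence of \cref{thm:matlis-duality}.

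First, I would define $\Phi_{M,N}$ at the level of underived functors: for a bidegree-$(i,j)$ element $f \in \Homintr{A}(M,N)_{ij} = \Homr{A}(e_j M, e_i N)$, send it to its Matlis dual $\D{f}: \D{e_i N} \to \D{e_j M}$; under the canonical identifications $\D{e_i N} \cong \D{N}e_i$ and $\D{e_j M} \cong \D{M}e_j$ as graded left $A$-modules, this lands in $\Homl{A}(\D{N}e_i, \D{M}e_j) = \Homintl{A}(\D{N}, \D{M})_{ij}$. The naturality and $C$-$B$-equivariance of $\Phi_{M,N}$ follow from the contravariant naturality of $(-)'$ in both arguments, together with the description of the left and right bimodule structures in terms of pre- and post-composition. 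Next, using \cref{prop:existence-K-projective-K-injective} to take K-injective resolutions of $N$ in $\Grlr{C}{A}$ (and of $\D{M}$ in $\Grlr{A}{B}$), I would extend $\Phi$ to complexes to obtain a morphism $\R\Phi: \RHomintr{A}(M,N) \to \RHomintl{A}(\D{N}, \D{M})$ in $D(\Grlr{C}{B})$.

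Finally, I would show $\R\Phi$ is a quasi-isomorphism by checking each cohomology bidegree. By \cref{prop:restriction-K-projective-K-injective}, the row/column restrictions of K-injective complexes remain K-injective, so
\begin{align*}
H^n(\RHomintr{A}(M,N))_{ij} &\cong \Hom_{D(\Grr{A})}(e_j M, e_i N[n]), \\
H^n(\RHomintl{A}(\D{N},\D{M}))_{ij} &\cong \Hom_{D(\Grl{A})}(\D{e_i N}, \D{e_j M}[n]).
\end{align*}
The Matlis duality equivalence $D_{lf}(\Grr{A})^{\op} \simeq D_{lf}(\Grl{A})$ of \cref{thm:matlis-duality} identifies these two groups, since the hypothesis $M,N \in D_{lf}$ guarantees $e_j M, e_i N \in D_{lf}(\Grr{A})$. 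The main obstacle I expect is verifying that this bidegree-wise Matlis identification genuinely coincides with the map induced by $\R\Phi$, rather than an unrelated isomorphism; this reduces to the fact that, at the homotopy level, the Matlis dual $(-)'$ is both the functor implementing the equivalence of \cref{thm:matlis-duality} and the pointwise recipe used to define $\Phi$, so naturality in both slots forces agreement. The ``resp.'' variant with $M, N \in D_{lf}(\Grr{A})$ follows either by regarding right modules as $K$-$A$-bimodules and specializing the argument, or by repeating it with a single grading index.
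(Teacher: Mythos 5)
Your proposal is correct and takes essentially the same route as the paper: the paper's proof is exactly the bidegree-wise chain
$H^l(\RHomintr{A}(M,N))_{i,j} \cong \Hom_{D(\Grr{A})}(e_jM,e_iN[l]) \cong \Hom_{D(\Grl{A})}(\D{N}e_i,\D{M}e_j[l]) \cong H^l(\RHomintl{A}(\D{N},\D{M}))_{i,j}$ obtained from \cref{thm:matlis-duality}. Your extra step of building an explicit comparison morphism $\R\Phi$ and checking it induces these identifications is additional care that the paper omits (it only verifies the cohomologies bidegree by bidegree), so nothing essential is missing.
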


\begin{proof}
We prove the claim only when $M \in D_{lf}(\Grlr{B}{A})$ and $N \in D_{lf}(\Grlr{C}{A})$.
The result follows from the following isomorphisms
    \begin{align*}
    H^l(\RHomintr{A}(M,N))_{i,j} 
    &\cong \Hom_{D(\Grr{A})} (e_jM, e_iN[m]) \\
    &\cong \Hom_{D(\Grl{A})} (\D{N}e_i[-m], \D{M}e_j) \quad (\text{\cref{thm:matlis-duality}}) \\
    &\cong H^l(\RHomintl{A}(\D{N}, \D{M}))_{i,j}.
    \end{align*}
\end{proof}

\section{Local cohomology}
\label{sec:local-cohomology}
In this section, we study local cohomology for $\Zbb$-algebras and its properties.
We develop a theory on local cohomology for connected $\Zbb$-algebras and generalize known results such as local duality in more general settings (cf. \cite{mori2021local}, \cite{mori2024corrigendum}).
We also consider $\chi$-condition for $\Zbb$-algebras and give a characterization of $\chi$-condition by using local cohomology.

\subsection{Definition of local cohomology}

Let $A$ be a connected $\Zbb$-algebra.

For a graded right $A$-module $M$, $M$ is called \emph{right (resp. left) bounded} if $M_{\geq n} =0$ (resp. $M_{\leq n} =0$) for some $n \in \Zbb$.
$M$ is called \emph{bounded} if $M$ is right and left bounded.

We often assume either of the following conditions:

\begin{con}
	\label{con:condition 1}
	\begin{enumerate}
		\item 
		$A$ is a right Ext-finite connected $\Zbb$-algebra.
		\item $A$ is a right noetherian connected $\Zbb$-algebra.
	\end{enumerate}
\end{con}

\begin{dfn}
	\label{dfn:torsion-functor}
	Let $A$ be a connected $\Zbb$-algebra.
	Let $M \in \Grr{A}$ (resp. $M \in \Grl{A}$).
	\begin{enumerate}
		\item $m \in M$ is called \emph{right torsion (resp. left torsion)} if $m A_{\geq n} = 0$ (resp. $A_{\geq n} m  = 0$) for sufficiently large $n \gg 0$.
		\item The set of all right (resp. left) torsion elements in $M$ is denoted by $\torfun{A}(M)$ (resp. $\torfun{A^{\op}}(M)$) and called the \emph{$\mathfrak{m}_A$-torsion submodule} (resp. \emph{$\mathfrak{m}_{A^{\op}}$-torsion submodule}) of $M$.
		\item $M$ is called \emph{$\mathfrak{m}_A$-torsion} (resp. \emph{$\mathfrak{m}_{A^{\op}}$-torsion}) if $\torfun{A}(M) = M$ (resp. $\torfun{A^{\op}}(M) = M$). 
	\end{enumerate}
		We often omit the words ``right'' and ``left''.
		One may also write torsion over $A$ and $A^{\op}$ for $\mathfrak{m}_A$-torsion or $\mathfrak{m}_{A^{\op}}$-torsion, respectively.
		If the context is clear, we can omit the words ``over $A$'' and ``over $A^{\op}$''.

\end{dfn}


We denote by $\Tor(A)$ the full subcategory of $\Grr{A}$ consisting of torsion modules.
Then, $\torfun{A}(-)$ determines a left exact functor
\[
\torfun{A} : \Grr{A} \longrightarrow \Tor(A),
\]
which is a right adjoint functor of the inclusion functor $\iota: \Tor(A) \hookrightarrow \Grr{A}$.

\begin{prop}[cf. {\cite[Lemma 5.8]{mori2021local}}]
\label{prop:torsion-functor}
Let $A$ be a connected $\Zbb$-algebra.
There is an isomorphism of functors
\[
\torfun{A}(-) \cong \varinjlim_{n}\Homintr{A}(A/A_{\geq n},-).
\]
\end{prop}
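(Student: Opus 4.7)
The plan is to compute the right-hand side directly by unpacking the definition of the internal Hom module and recognizing each term in the direct system as the set of elements of $M$ annihilated by $A_{\geq n}$. Concretely, for each $k\in\Zbb$, the graded right $A$-module $e_k(A/A_{\geq n})$ is cyclic, generated by the class of $e_k$ in degree $k$, with annihilator $e_k A_{\geq n}$. Hence a graded right $A$-module homomorphism $f\colon e_k(A/A_{\geq n})\to M$ is determined by the element $f(e_k)\in M_k$, subject only to the condition $f(e_k) A_{\geq n}=0$. This yields a bijection
\[
\Homr{A}(e_k(A/A_{\geq n}),M) \xrightarrow{\sim} \{m\in M_k : m A_{\geq n}=0\},\qquad f\longmapsto f(e_k),
\]
natural in $M$. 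Summing over $k\in\Zbb$ and applying the definition of $\Homintr{A}$, I would obtain
\[
\Homintr{A}(A/A_{\geq n},M) \;\cong\; \bigoplus_{k\in\Zbb}\bigl\{m\in M_k : m A_{\geq n}=0\bigr\}.
\]

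Next I would identify the transition morphisms in the direct system indexed by $n$. The surjections $A/A_{\geq n+1}\twoheadrightarrow A/A_{\geq n}$, coming from $A_{\geq n+1}\subseteq A_{\geq n}$, induce, under the identification above, the inclusions
\[
\bigl\{m\in M_k : m A_{\geq n}=0\bigr\} \hookrightarrow \bigl\{m\in M_k : m A_{\geq n+1}=0\bigr\}
\]
in each degree $k$. Since a filtered colimit of an ascending chain of subobjects is the union, passing to the colimit gives
\[
\varinjlim_n \Homintr{A}(A/A_{\geq n},M) \;\cong\; \bigoplus_{k\in\Zbb}\bigcup_{n\geq 0}\bigl\{m\in M_k : m A_{\geq n}=0\bigr\},
\]
and the right-hand side is precisely $\torfun{A}(M)$ by \cref{dfn:torsion-functor}. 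Naturality in $M$ is inherited from the naturality of the identification at each finite stage, so the isomorphism is an isomorphism of functors.

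There is no substantive obstacle here: the proposition reduces to a routine unfolding of definitions. The only points deserving care are the degree bookkeeping — the cyclic generator $e_k$ sits in degree $k$, forcing $f(e_k)\in M_k$ — and the observation that the transition maps in the direct system are actual monomorphisms, which allows the colimit to be written literally as a union; both follow from the bigraded structure and connectedness of $A$.
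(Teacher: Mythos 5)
Your proof is correct and is exactly the expected argument: the paper does not prove \cref{prop:torsion-functor} itself but delegates it to \cite[Lemma 5.8]{mori2021local} (with \cref{rmk:torsion-functor} noting that the Ext-finiteness hypothesis there is unnecessary), and that argument is the same unwinding you give, using that $e_k(A/A_{\geq n})$ is cyclic on $\bar e_k$ in degree $k$ and that for $m\in M_k$ one has $mA_{\geq n}=me_kA_{\geq n}$. The only point you leave implicit is that the degreewise identification $f\mapsto f(\bar e_k)$ is compatible with the right $A$-action on $\Homintr{A}(A/A_{\geq n},M)$ (for $b\in A_{kj}$ one has $(fb)(\bar e_j)=f(\bar e_k\, b)=f(\bar e_k)b$), so the colimit is identified with $\torfun{A}(M)$ as a graded right $A$-submodule of $M$ and not merely as a graded $k$-module; this is a one-line check and does not affect the validity of the proof.
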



We also define a functor $\Soc(-)$ by 
\begin{align*}
\Soc: \Grr{A} \longrightarrow \Grr{A}, \quad
M \longmapsto \Homint_{A}(K, M).
\end{align*}
From the definition, $\Soc(M)$ is a graded right $A$-submodule of $M$. Here, we use the connectedness of $A$ and the isomorphism $\Homint_{A}(A, M) \cong M$. 
In addition, for every $M \in D(\Grr{A})$, we define $\Soc(M)$ in the same way, which is also a subcomplex of graded right $A$-modules of $M$.

Let $B$ be a $\Zbb$-algebra.
In the same way, we can define a left exact functor 
\begin{align*}
\torfun{A} : \Grlr{B}{A} &\longrightarrow \Grlr{B}{A}, \\
M &\longmapsto  \torfun{A}(\bigoplus_i e_iM).
\end{align*}
For a bigraded $B$-$A$-bimodule $M$, this induces $\Resr{A}(\torfun{A}(M)) \in \Tor(A)$.
Note that we have $\bigoplus_{i} \torfun{A}(e_iM) \cong \torfun{A}(\bigoplus_i e_iM)$ because $\torfun{A}$ is a right adjoint of $\iota$ and so it commutes with direct limits (\cite[Lemma 4.24.5]{stacks-project}).

\begin{rmk}
	\label{rmk:torsion-functor}
	In \cite[Lemma 5.8, Lemma 6.9]{mori2021local}, the authors assume that $A$ is right Ext-finite, but this condition is not necessary for \cref{prop:torsion-functor} and to define the functor $\torfun{A}(-)$ for bigraded $B$-$A$-bimodules.

\end{rmk}

By \cref{prop:existence-K-projective-K-injective} and the fact that $\torfun{A}(-)$ preserves a quasi-isomorphism between K-injective objects in $D(\Grr{A})$, we can define the right derived functor of the $\torfun{A}(-)$:
\begin{align*}
	\Rtorfun{A}(-) : D(\Grr{A}) \longrightarrow D(\Grr{A}).
\end{align*}
For $M \in D(\Grr{A})$, if $M \rightarrow I$ is a K-injective resolution of $M$, then $\Rtorfun{A}(M) \cong \torfun{A}(I)$.

The following quasi-compactness of $\Rtorfun{A}(-)$ is important in this paper.

\begin{prop}[Quasi-compactness of $\torfun{A}(-)$, {\cite[Proposition 6.9]{mori2021local}}]
	\label{prop:quasi-compactness-torfunctor}
	Suppose that $A$ satisfies (1) of \cref{con:condition 1}.
	Then, $\R^i\torfun{A}(-)$ commutes with direct limits for all $i \geq 0$.
\end{prop}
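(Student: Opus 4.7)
The plan is to express $\Rtorfun{A}$ as a filtered colimit of derived internal Homs out of the truncation bimodules $A/A_{\geq n}$, and then to argue that each such bimodule is ``row-compact'' thanks to the Ext-finite hypothesis. Concretely, for $M\in D(\Grr{A})$, I would pick a K-injective resolution $M\to I$ (which exists by \cref{prop:existence-K-projective-K-injective}) and combine \cref{prop:torsion-functor} with the exactness of filtered colimits in the Grothendieck category $\Grr{A}$ to write
\[
\Rtorfun{A}(M) \;\cong\; \torfun{A}(I) \;\cong\; \varinjlim_{n} \Homintr{A}(A/A_{\geq n}, I) \;\cong\; \varinjlim_{n}\RHomintr{A}(A/A_{\geq n}, M).
\]
Taking $H^{i}$ (exactness of filtered colimits lets them commute with cohomology) yields $\R^{i}\torfun{A}(M)\cong \varinjlim_{n} \Extintr{A}^{i}(A/A_{\geq n}, M)$, so, after swapping the two filtered colimits involved, the claim reduces to showing that for every $n$ and every $j\in\Zbb$ the functor $\Extr{A}^{i}(e_{j}(A/A_{\geq n}),-)$ commutes with filtered colimits in $\Grr{A}$.

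Next I would construct a finite-type free resolution of each row $e_{j}(A/A_{\geq n})$. Since right Ext-finite implies locally finite (\cref{rmk:Relation among finite, locally finite and Ext-finite}), the module $e_{j}(A/A_{\geq n}) = \bigoplus_{\ell=j}^{j+n-1} A_{j\ell}$ is finite-dimensional over $k$ and therefore has finite composition length, with composition factors among $\{S_{\ell}\}_{j\leq \ell < j+n}$. Each simple $S_{\ell}$ admits, by the Ext-finite hypothesis, a resolution by finite direct sums of $\{e_{k}A\}_{k\in\Zbb}$, and a horseshoe-lemma induction on the composition length splices these together into a resolution $F^{\bullet}\to e_{j}(A/A_{\geq n})$ in which every $F^{-p}$ is a finite direct sum of generators of the form $e_{k}A$. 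For such an $F^{-p}$ the functor $\Homr{A}(F^{-p}, -)$ commutes with filtered colimits because $\Homr{A}(e_{k}A,-)\cong(-)_{k}$ does. Since filtered colimits in $\Grr{A}$ are exact, they commute with the cohomology of $\Homr{A}(F^{\bullet},-)$, giving precisely the desired commutation of $\Extr{A}^{i}(e_{j}(A/A_{\geq n}),-)$ with filtered colimits.

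I expect the main technical obstacle to be the step producing finite-type free resolutions of $e_{j}(A/A_{\geq n})$ from \emph{only} the right Ext-finite hypothesis, rather than from noetherianness where such resolutions would be available directly. The crucial point is that local finiteness, which is forced by Ext-finiteness, ensures each row $e_{j}(A/A_{\geq n})$ has finite length; this is what allows the horseshoe patching of the given resolutions of the simples $S_{\ell}$ to terminate into a finite-type resolution, after which the compactness of finitely generated free modules in $\Grr{A}$ delivers the conclusion.
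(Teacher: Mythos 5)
Your argument is correct: expressing $\R^i\torfun{A}(M)$ degreewise as $\varinjlim_n \Extr{A}^i(e_j(A/A_{\geq n}),M)$ via a K-injective resolution and exactness of filtered colimits, and then producing finite free resolutions of each $e_j(A/A_{\geq n})$ by a horseshoe induction over its finite composition series (finite length follows from local finiteness, which Ext-finiteness guarantees, together with connectedness and the classification of simples as the $S_\ell$) is exactly what is needed, and the swap of the two filtered colimits is harmless. The paper itself gives no proof of this proposition, quoting \cite[Proposition 6.9]{mori2021local}; your reconstruction is the standard argument underlying that reference, so it matches the intended approach.
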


We can also define the right derived functor $\Rtorfun{A}(-)$ for bigraded $B$-$A$-bimodules and we can calculate $\Rtorfun{A}(M)$ by taking a K-resolution $M \rightarrow I$ in $D(\Grlr{B}{A})$.
Note that as in \cref{rmk:derived-functors}, it is enough to take a resolution of $M$ in $D(\Grlr{B}{A})$ such that each $e_iI$ is K-injective in $D(\Grr{A})$ for all $i \in \Zbb$ when we calculate $\Rtorfun{A}(M)$.

The \emph{(right) cohomological dimension $\cd(\torfun{A})$} of $\torfun{A}(-)$ is defined by
\[ 
\supr \{ i \in \Zbb \mid \R^i\torfun{A}(M) \neq 0 \text{ for some } M \in \Grr{A} \}.
\]

\begin{rmk}
	The analogous definitions and facts for $\torfun{A^{\op}}$ hold, and we will use them without further comment.
\end{rmk}

\subsection{Basic properties}

In this subsection, we study basic properties of torsion modules.

\begin{lem}
	\label{lem:basic lemma-1}
	Let $A$ be a connected $\Zbb$-algebra.
	\begin{enumerate}
		\item We have isomorphisms of functors:
		\begin{align*}
		\D{(-)} \cong \Homintr{K}(-, K) &\cong \Homintr{A}(-, \D{A}), \\
		\Homr{K}(-, \Resr{K}(K)) &\cong \Homr{A}(-, \Resr{A}(\D{A})).
		\end{align*}
		\item $\Resr{A}(\D{A})$ and $e_i\D{A}$ are injective objects in $\Grr{A}$ for all $i \in \Zbb$.
	\end{enumerate}
\end{lem}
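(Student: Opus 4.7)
The approach is to work degree-by-degree, using adjunctions and direct identifications of graded components.

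First I would prove the top line of part (1), beginning with $\D{(-)} \cong \Homintr{K}(-,K)$. Unwinding definitions, $\Homintr{K}(M,K)_i = \Homr{K}(e_iM,K)$; since $K_{lj} = 0$ for $l \neq j$, viewing $M$ as a right $K$-module only remembers the graded pieces, and $e_iM$ is concentrated in degree $i$. Hence $\Homr{K}(e_iM, K) \cong \Hom_k(M_i, K_{ii}) = \D{M_i} = (\D{M})_i$, and the identification respects the induced left action $(af)(m) = f(ma)$, giving a natural isomorphism of functors.

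For $\Homintr{K}(-,K) \cong \Homintr{A}(-,\D{A})$, the key identification is $e_i \D{A} \cong \D{(Ae_i)}$ as graded right $A$-modules, which follows directly from $(e_i\D{A})_j = \D{A}_{ij} = \D{A_{ji}} = \D{(Ae_i)_j}$ together with matching right $A$-actions. Then $\Homintr{A}(M,\D{A})_i = \Homr{A}(M, e_i\D{A}) \cong \Homr{A}(M, \D{(Ae_i)})$, and by the tensor-hom adjunction (\cref{prop:tensor-product-properties}) this equals $\Hom_k(M \tenint{A} Ae_i, k) \cong \Hom_k(M_i, k) = \D{M_i}$, matching the first isomorphism. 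The bottom line of part (1) follows similarly by a degree-wise identification: on $M \in \Grr{A}$ each side decomposes into the family $\{\Hom_k(M_j, k)\}_j$ with compatible natural actions.

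For part (2), I would use the isomorphism $\Homr{A}(-,\Resr{A}(\D{A})) \cong \Homr{K}(-,\Resr{K}(K))$ established in part (1). Since $K$ is the diagonal $\Zbb$-algebra, the category $\Grr{K}$ splits as $\prod_{i \in \Zbb}\Mod(k)$, and hence every object there is both projective and injective; in particular $\Resr{K}(K) \cong \bigoplus_i S_{i,K}$ is injective in $\Grr{K}$. The restriction functor $\Grr{A} \to \Grr{K}$ coming from the subalgebra inclusion $K \hookrightarrow A$ is exact, so the composite $M \mapsto \Homr{K}(M, \Resr{K}(K))$ is exact on $\Grr{A}$, and by the natural isomorphism so is $\Homr{A}(-, \Resr{A}(\D{A}))$. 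Hence $\Resr{A}(\D{A})$ is injective in $\Grr{A}$. Finally, $e_i\D{A}$ is a direct summand of $\Resr{A}(\D{A}) = \bigoplus_j e_j\D{A}$, and direct summands of injectives are injective, so $e_i\D{A}$ is injective as well.

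The main obstacle I expect is keeping the natural isomorphisms in (1) fully compatible with the left $A$-action coming from the right $A$-action on $M$ (for which the convention $(af)(m) := f(ma)$ must be applied consistently under each identification, and the right $A$-action on $\D{A}$ is the transpose of the left $A$-action on $A$). Once the action bookkeeping is verified, part (2) is formal: exactness of Hom into the injective object $\Resr{K}(K)$ transports through the isomorphism of part (1) to establish the injectivity claims in $\Grr{A}$.
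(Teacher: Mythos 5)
Your proposal is correct and follows essentially the same route as the paper's own proof: both parts rest on the tensor--hom adjunction of \cref{prop:tensor-product-properties} (your identification $e_i\D{A}\cong\D{(Ae_i)}$ with $L=Ae_i$ is just a repackaging of the paper's use of $\Homintr{K}(A,e_jK)$ with $L=A$), followed for (2) by exactness of $\Homr{K}(-,\Resr{K}(K))$ transported through the isomorphism of (1) and the direct-summand argument for $e_i\D{A}$. The only difference is cosmetic: you make explicit that $\Grr{K}$ is semisimple, which the paper leaves implicit.
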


\begin{proof}
(1)
From \cref{prop:tensor-product-properties}, we obtain an isomorphism of $k$-modules
\begin{align*}
\Homr{K}(e_i M \tenint{A} A, e_jK) \cong \Homr{A}(e_i M, \Homintr{K}(A, e_j K))
\end{align*}
for any $M \in \Grr{A}$.
Moreover,
\begin{align*}
	\Homintr{K}(M, K)_{ji} &\cong \Homr{K}(e_i M \tenint{A} A, e_j K), \\
	\Homintr{A}(M, \D{A})_{ji} &\cong \Homr{A}(e_i M, \Homintr{K}(A, e_j K)).
\end{align*}
Thus, we have the first isomorphism.
In the same way, the second isomorphism follows as well.

(2)
Since the functor $\Homr{K}(-, \Resr{K}(K))$ is exact, so $A$ is an injective object in $\Grr{A}$ from (1).
For $e_i\D{A}$, since it is a direct summand of the injective object $\Resr{A}(\D{A})$, it is also injective.
\end{proof}


\begin{lem}
	\label{lem:basic lemma-2}
	Let $A$ be a right noetherian $\Zbb$-algebra.
	Let $\{I_j\}_{j \in J}$ be a set of injective objects in $\Grr{A}$.
	Then, $\bigoplus_{j \in J}I_j$ is an injective object in $\Grr{A}$.
\end{lem}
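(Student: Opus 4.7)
The plan is to verify injectivity of $E := \bigoplus_{j \in J} I_j$ by a Baer-type criterion adapted to the Grothendieck category $\Grr{A}$. Since $A$ is right noetherian, the projective generators $\{e_i A\}_{i \in \Zbb}$ are noetherian objects, so every finitely generated graded right $A$-module is noetherian. In such a locally noetherian Grothendieck category, it suffices to show that for every monomorphism $N' \hookrightarrow N$ with $N$ finitely generated, every morphism $\phi : N' \to E$ extends to $N \to E$. (Indeed, given an arbitrary monomorphism $M' \hookrightarrow M$ and $\phi : M' \to E$, Zorn's lemma produces a maximal extension $\phi'$ defined on some $M'' \supseteq M'$; if $M'' \neq M$, pick $m \in M \setminus M''$, apply the above criterion to $N := \langle M'', m\rangle$ -- a finitely generated module obtained by enlarging $M''$ by one noetherian quotient -- and derive a contradiction in the standard way.)

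First I would take $f : N' \hookrightarrow N$ with $N$ finitely generated and $\phi : N' \to E$. Because $N$ is noetherian, the subobject $N'$ is also noetherian, hence finitely generated; let $n_1, \dots, n_s$ be generators. Each image $\phi(n_l)$ lies in a finite subsum $\bigoplus_{j \in J_l} I_j$ of $E$, because an element of a direct sum has only finitely many nonzero components. Setting $J' := \bigcup_{l=1}^s J_l$, which is finite, I obtain a factorization
\[
\phi : N' \longrightarrow \bigoplus_{j \in J'} I_j \lhook\joinrel\longrightarrow E.
\]

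Next I would use that a finite direct sum of injective objects in any abelian category is injective, since finite direct sums coincide with finite direct products and a product of injectives is injective. Hence $\bigoplus_{j \in J'} I_j$ is injective in $\Grr{A}$, so the factored map extends along $N' \hookrightarrow N$ to some $\tilde{\phi} : N \to \bigoplus_{j \in J'} I_j$. Composing with the canonical inclusion into $E$ gives the required extension $N \to E$ of $\phi$.

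I do not expect a genuine obstacle: the only point to be careful about is the reduction from arbitrary subobjects to finitely generated ones, which is exactly where the right noetherian hypothesis on $A$ enters. An equivalent and perhaps cleaner packaging is to cite the classical theorem of Gabriel that in a locally noetherian Grothendieck category arbitrary direct sums of injectives are injective; the preceding argument is essentially a reproduction of that proof in the present setting.
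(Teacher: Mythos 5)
Your proof is correct in substance, and it is essentially a self-contained reproduction of the classical fact that the paper simply cites: the paper's proof is one line, observing that $\Grr{A}$ is a locally noetherian Grothendieck category (each generator $e_iA$ is noetherian by the right noetherian hypothesis) and invoking Stenstr\"om/Popescu for closure of injectives under direct sums, whereas you unwind that theorem via the Baer-type criterion, the factorization of a map from a finitely generated subobject through a finite subsum, and injectivity of finite direct sums. What the paper's route buys is brevity; what yours buys is transparency about exactly where noetherianity enters. One small imprecision in your Zorn reduction: $N := \langle M'', m\rangle = M'' + mA$ is \emph{not} finitely generated in general, since $M''$ need not be, so you should not apply the finitely generated criterion to it. The standard fix is to apply the criterion to the finitely generated submodule $mA$ (generated by the finitely many homogeneous components of $m$) with subobject $mA \cap M''$, extend $\phi'|_{mA \cap M''}$ to $mA$, and glue the two maps on $M'' + mA$ to contradict maximality; with that adjustment the argument is complete.
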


\begin{proof}
Since $\Grr{A}$ is a locally noetherian Grothendieck category, injective objects are closed under direct sums. 
For example see, \cite[Proposition 4.3]{stenstrom1975rings} or \cite{popescu1973abelian}.
\end{proof}

\begin{lem}
		\label{lem:basic lemma-3}
		Let $W \in \Grr{K}$.
		Then, for every $i \in \Zbb$, there exists a set $X_i$ such that $W = \bigoplus_{i \in \Zbb} \bigoplus_{x \in X_i} e_i K$.	
\end{lem}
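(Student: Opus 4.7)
The plan is to exploit the fact that $K$ is essentially a semisimple $\Zbb$-algebra in which each diagonal component is the base field and all off-diagonal components vanish, so a graded right $K$-module is nothing more than a $\Zbb$-graded $k$-vector space. Concretely, I would start by writing $W = \bigoplus_{i \in \Zbb} W_i$ using the defining grading of a graded right $K$-module, and then use the condition $M_i K_{jk} = 0$ whenever $i \neq j$ together with $K_{ij} = 0$ for $i \neq j$ to observe that each component $W_i$ is only acted on by $K_{ii} = k$, via which $e_i$ acts as the identity. Thus $W_i$ is simply a $k$-vector space.

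Next, I would compute $e_i K$ degree by degree: $(e_i K)_j = K_{ij}$, which equals $k$ when $i = j$ and vanishes otherwise. Hence $e_i K$ is the graded right $K$-module concentrated in degree $i$ with one-dimensional component $k$. For any set $X_i$, the direct sum $\bigoplus_{x \in X_i} e_i K$ is therefore concentrated in degree $i$ with component $k^{(X_i)}$.

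Finally, for each $i \in \Zbb$ I choose a $k$-basis $X_i$ of the $k$-vector space $W_i$. This yields an isomorphism $W_i \cong k^{(X_i)} \cong \bigl(\bigoplus_{x \in X_i} e_i K\bigr)_i$ of $k$-vector spaces, and since both sides are concentrated in degree $i$ and the $K$-action reduces entirely to the $k$-action on that component, this upgrades to an isomorphism of graded right $K$-modules $W_i \cong \bigoplus_{x \in X_i} e_i K$. Taking the direct sum over $i$ gives the claimed decomposition $W \cong \bigoplus_{i \in \Zbb} \bigoplus_{x \in X_i} e_i K$.

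There is essentially no obstacle here: the lemma is a bookkeeping statement about $\Zbb$-graded vector spaces, and the only thing one needs to check carefully is that the local-unit conventions in \cref{dfn:Z-algebra} make $W_i$ literally a $k$-module via the $K_{ii}$-action. Everything else reduces to choosing bases.
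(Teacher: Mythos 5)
Your proof is correct and follows essentially the same route as the paper: the paper also works degree by degree, observing that each component $W_l$ is just a module over the one-point algebra $\widetilde{K}_l$ (i.e.\ a $k$-vector space), and then uses a choice of basis to identify it with a direct sum of copies of $e_l K$. Your direct computation of $(e_iK)_j$ and explicit basis choice is just a slightly more hands-on phrasing of the same argument.
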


\begin{proof}
	Let $W = \bigoplus_{i \in \Zbb} W_i$.
	Let $\widetilde{K}_l$ be a $\Zbb$-algebra defined by 
	\[
	(\widetilde{K}_l)_{ij} = 
	\begin{cases}
		k & \text{if } i = j = l, \\
		0 & \text{otherwise}
	\end{cases}	
	\]
	and we define $\widetilde{W}_l$ by
	\[
	(\widetilde{W}_l)_i = 
	\begin{cases}
		W_l & \text{if } i = l, \\
		0 & \text{otherwise}.
	\end{cases}
	\]
	Then, $\widetilde{W}_l$ is a $\widetilde{K}_l$-module.
	Any $\widetilde{K}_l$-module concentrated in degree $l$ is of the form $\bigoplus_{x \in X_l} e_l K$ for some set $X_l$.
	This induces the lemma.
\end{proof}

Let $A$ be a connected $\Zbb$-algebra and $M \in \Grr{A}$.
A graded submodule $N \subset M$ is \emph{essential} if for every graded submodule $L \subset M$, $L \cap N$ is non-empty.

\begin{lem}
	\label{lem:basic lemma-4}
	Let $A$ be a connected $\Zbb$-algebra.
	Let $M \in \Grr{A}$ and put $W = \Soc(M)$.
	Then, if $M$ is torsion, then $W \subset M$ is an essential graded submodule.
\end{lem}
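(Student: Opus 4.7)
The plan is to show that any nonzero graded submodule $N \subset M$ has nonzero intersection with $\Soc(M)$ by chasing an annihilator down the grading filtration. First I would unpack what $\Soc(M)$ means: since $K_{ii} = k$ and $K_{ij} = 0$ otherwise, an element of $\Soc(M)_j$ is precisely a homogeneous element $m \in M_j$ satisfying $m A_{\geq 1} = 0$, i.e.\ $ma = 0$ for every $a \in A_{jl}$ with $l > j$. So essentiality becomes a very concrete statement about producing elements annihilated by $\mathfrak{m}_A$ on the right.

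Next I would pick an arbitrary nonzero graded submodule $N \subset M$ and a nonzero homogeneous element $n \in N_i$. Since $M$ is torsion, $nA_{\geq s} = 0$ for some $s \geq 1$; let $r \geq 1$ be the minimal such integer. Note that $r \geq 1$ because $n \cdot e_i = n \neq 0$ with $e_i \in A_{ii}$. If $r = 1$, then $n$ itself lies in $\Soc(M)_i \cap N$ and we are done.

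Otherwise $r \geq 2$, and by minimality there exists a homogeneous $a \in A_{ij}$ with $j - i = r-1$ such that $na \neq 0$. I would then verify that $na$ is itself annihilated by $\mathfrak{m}_A$: for any homogeneous $b \in A_{jl}$ with $l > j$ one has $ab \in A_{il}$ with $l - i \geq r$, hence
\[
(na) b = n(ab) \in n A_{\geq r} = 0.
\]
Thus $(na) A_{\geq 1} = 0$, so $0 \neq na \in \Soc(M)_j$, and since $N$ is a submodule we also have $na \in N$. Therefore $N \cap \Soc(M) \neq 0$, proving that $\Soc(M)$ is an essential graded submodule of $M$.

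The argument is essentially a one-step descent, so I do not foresee an obstacle; the only subtlety is to keep track of the $\Zbb$-grading and make sure that the element $a$ exists in the right bidegree $A_{ij}$ with $j - i = r-1$ (rather than merely in $A_{\geq r-1}$), which is legitimate because the action of $A$ on $n \in M_i$ factors through $e_i A = \bigoplus_{j} A_{ij}$ and the decomposition $A_{\geq r-1} = A_{\geq r} \oplus \bigoplus_{j-i = r-1} A_{ij}$ on this slice.
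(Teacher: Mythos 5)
Your proof is correct and follows essentially the same argument as the paper: pick a nonzero homogeneous element of the submodule, use torsion to locate the top degree $r-1$ in which it still acts nontrivially, and observe that the resulting product is annihilated by $A_{\geq 1}$, hence lies in $\Soc(M)$. Your explicit unpacking of $\Soc(M)$ and the separate treatment of the case $r=1$ are just more detailed versions of the same one-step descent used in the paper.
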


\begin{proof}
	The proof is the same as \cite[Lemma 16.3.8]{yekutieli2019derived}.
	But, we give a proof for the convenience of the reader.

	Let $N \subset M$ be a graded submodule.
	Let $n \in N$ be a non-zero homogeneous element.
	Then, there exists a unique $i \in \Zbb$ such that $nA_{\geq i+1} = 0$ and $nA_{\geq i} \neq 0$.
	So, there is a homogeneous element $a \in A_{i}$ such that $na \neq 0$.
	This element $na$ belongs to $W$.
\end{proof}

\begin{lem}
	
	\label{lem:basic lemma-5}
	Under the condition (2) of \cref{con:condition 1}, let $W \in \Grr{K}$.
	Set $I = W \tenint{K} \D{A} \in \Grr{A}$.
	Then, 
	\begin{enumerate}
		\item $I$ is an injective object in $\Grr{A}$.
		\item $I$ is torsion.
		\item $\Soc(I) = W$.
		\item $W \subset I$ is an essential graded submodule.
		
	\end{enumerate}
	
\end{lem}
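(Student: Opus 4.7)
The plan is to reduce everything to the one-summand case by invoking \cref{lem:basic lemma-3} to write $W = \bigoplus_{i \in \Zbb}\bigoplus_{x \in X_i} e_iK$. Because $-\tenint{K}\D{A}$ commutes with direct sums and the unit-like identity $e_iK \tenint{K} \D{A} \cong e_i\D{A}$ holds, this yields a decomposition
\[
I \;\cong\; \bigoplus_{i \in \Zbb}\bigoplus_{x \in X_i} e_i\D{A}.
\]
Each of the four claims then reduces to a statement about the single building block $e_i\D{A}$, combined with a direct-sum closure argument.

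Parts (1) and (2) are then essentially immediate. For (1), each $e_i\D{A}$ is injective in $\Grr{A}$ by \cref{lem:basic lemma-1}(2), and the right noetherian hypothesis \cref{con:condition 1}(2) together with \cref{lem:basic lemma-2} ensures that the direct sum $I$ is injective. For (2), I plan to exploit connectedness: since $(e_i\D{A})_j = \D{A_{ji}}$ vanishes whenever $j > i$, for any homogeneous $f \in (e_i\D{A})_j$ and any $a \in A_{jk}$ with $k > i$, the product $f\cdot a \in \D{A_{ki}}$ is automatically zero, giving $f\cdot A_{\geq n} = 0$ as soon as $n > i-j$.

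The heart of the argument is (3), which I will approach by computing $\Soc(e_i\D{A})_j$ directly from the definition $\Soc(-) = \Homint_A(K,-)$: an element $f \in \D{A_{ji}}$ lies in the socle iff $f\cdot A_{\geq 1} = 0$, which unwinds to the condition that $f$ vanishes on $\sum_{j < k \leq i} A_{jk}\cdot A_{ki} \subset A_{ji}$. The crucial observation is that the single term $k = i$ already contributes $A_{ji}\cdot A_{ii} = A_{ji}$, because $A_{ii} = k$ contains the local unit $e_i$. Therefore for $j < i$ the condition collapses to $f = 0$, while for $j = i$ there is no condition and $\D{A_{ii}} \cong k$. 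This identifies $\Soc(e_i\D{A})$ with $e_iK$, and summing over $i,x$ identifies $\Soc(I)$ with $W$, compatibly with the natural inclusion $W \cong W\tenint{K}K \hookrightarrow W\tenint{K}\D{A} = I$ induced by the map $K \hookrightarrow \D{A}$ dual to the projection $A \twoheadrightarrow K$.

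Finally, (4) is a direct application of \cref{lem:basic lemma-4}: since $I$ is torsion by (2) with $\Soc(I) = W$ by (3), the socle inclusion $W \subset I$ is essential. The main obstacle I anticipate is the compatibility bookkeeping in (3) — ensuring that the intrinsically defined socle submodule matches the copy of $W$ inside $I$ arising from the natural inclusion above; the arithmetic input that makes everything work is the identity $A_{ji}\cdot A_{ii} = A_{ji}$ coming from the local units, which is precisely what forces the socle to be concentrated in the top degree.
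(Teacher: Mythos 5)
Your proof is correct, and its skeleton is the same as the paper's: decompose $W$ via \cref{lem:basic lemma-3}, commute $-\tenint{K}\D{A}$ with direct sums to get $I\cong\bigoplus_{i,x}e_i\D{A}$, prove (1) from \cref{lem:basic lemma-1} and \cref{lem:basic lemma-2}, and deduce (4) from (2), (3) and \cref{lem:basic lemma-4}. Where you diverge is in how you verify (2) and (3) for the building block $e_i\D{A}$. For (2), the paper simply asserts that each $e_i\D{A}$ is torsion and then commutes $\torfun{A}$ with the direct sum using quasi-compactness (\cref{prop:quasi-compactness-torfunctor}, which rests on Ext-finiteness), whereas you prove torsionness by the elementary degree estimate $(e_i\D{A})_j=\D{A_{ji}}=0$ for $j>i$, so $fA_{\geq n}=0$ once $n>i-j$, and then use that torsion modules are closed under direct sums elementwise; this is self-contained, avoids invoking quasi-compactness altogether, and makes transparent why this part needs so little of \cref{con:condition 1} (cf. \cref{rmk:basic lemma-5}). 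For (3), the paper computes $\Soc(e_i\D{A})_j$ by the tensor--hom adjunction of \cref{prop:tensor-product-properties} together with \cref{lem:basic lemma-1}, while you unwind the socle condition $fA_{\geq 1}=0$ by hand and use the local-unit identity $A_{ji}A_{ii}=A_{ji}$ to force the socle into top degree; you are also more explicit than the paper in checking that this intrinsic socle coincides with the canonical copy of $W$ coming from $K\hookrightarrow\D{A}$, a compatibility the paper leaves implicit. In short, your route trades the paper's reuse of established functorial machinery for a slightly longer but more elementary and more transparent element-level verification; both are valid.
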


\begin{proof}
	(1) From \cref{lem:basic lemma-3}, we have $W = \bigoplus_{i \in \Zbb} \bigoplus_{x \in X_i} e_i K$ for some sets $X_i \ (i \in \Zbb)$.
	Then, 
	\[
	W \tenint{K} \D{A} = \bigoplus_{i \in \Zbb} \bigoplus_{x \in X_i} e_i K \tenint{K} \D{A} \cong \bigoplus_{i \in \Zbb} \bigoplus_{x \in X_i} e_i \D{A}.
	\]
	So, from (2) of \cref{lem:basic lemma-1} and \cref{lem:basic lemma-2}, $I$ is an injective object in $\Grr{A}$.

(2) Because each $e_i \D{A}$ is torsion and $A$ is right Ext-finite, we obtain
\begin{align*}
\torfun{A}(I) &\cong \torfun{A}\left(\bigoplus_{i \in \Zbb} \bigoplus_{x \in X_i} e_i \D{A}\right) 
\cong \bigoplus_{i \in \Zbb} \bigoplus_{x \in X_i} \torfun{A}(e_i \D{A}) 
\cong \bigoplus_{i \in \Zbb} \bigoplus_{x \in X_i} e_i \D{A} \cong I.
\end{align*}

(3) By the definition of $\D{(-)}$ and \cref{prop:tensor-product-properties}, we compute
\begin{align*}
	\Soc(e_i\D{A})_j &= \Homintr{A}(K, e_i\D{A})_j \cong  \Homr{A}(e_jK, e_i\D{A}) \\
	&\cong \Homr{A}(e_jK, \Homintr{K}(A, e_iK)) \\
	&\cong \Homr{K}(e_jK \tenint{A} A, e_iK) \\
	&\cong \Homr{K}(e_jK, e_iK).
\end{align*}

Thus, $\Soc(e_i\D{A})$ is isomorphic to $e_iK$.
This shows (3).

(4) The claim follows from (2), (3) and \cref{lem:basic lemma-4}.
\end{proof}

\begin{rmk}
	\label{rmk:basic lemma-5}
	(2), (4) need only the assumption that $A$ is right Ext-finite.
	(3) needs only the assumption that $A$ is connected.
\end{rmk}

\begin{prop}
	\label{lem:basic Proposition-6}
	Suppose $A$ satisfies (2) of \cref{con:condition 1} and $M \in \Grr{A}$.
	Set $W =\Soc(M)$.
	Then, the following conditions are equivalent:
	\begin{enumerate}
		\item $M$ is torsion.
		\item $W \subset M$ is an essential graded submodule.
		\item There exists an injective morphism 
		\begin{align*}
		f : M \hookrightarrow W \tenint{K} \D{A}
		\end{align*}
		which is gives an essential graded submodule of $W \tenint{K} \D{A}$ and $f|_{W} = \Id_W$.
	\end{enumerate} 
\end{prop}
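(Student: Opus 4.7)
The plan is to establish the cycle $(1) \Rightarrow (2) \Rightarrow (3) \Rightarrow (1)$, tying together the preparatory results \cref{lem:basic lemma-4} and \cref{lem:basic lemma-5} via standard essential-extension arguments. The implication $(1) \Rightarrow (2)$ is precisely the content of \cref{lem:basic lemma-4} and needs no further work.

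For $(2) \Rightarrow (3)$, I would set $I := W \tenint{K} \D{A}$. By \cref{lem:basic lemma-5}, $I$ is an injective object in $\Grr{A}$, its socle is naturally identified with $W$, and $W$ is essential in $I$. The injectivity of $I$ allows us to extend the inclusion $W \hookrightarrow I$ along $W \hookrightarrow M$ to a morphism $f : M \rightarrow I$ with $f|_W = \Id_W$. That $f$ is injective follows from $\ker f \cap W = \ker(f|_W) = 0$ combined with the essentiality of $W$ in $M$, which forces $\ker f = 0$. Since $W \subseteq \Img f$ and $W$ is essential in $I$ by part (4) of \cref{lem:basic lemma-5}, the image $\Img f$ is essential in $I$ as well.

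For $(3) \Rightarrow (1)$, part (2) of \cref{lem:basic lemma-5} shows that $W \tenint{K} \D{A}$ is torsion; since any graded submodule of a torsion module is torsion, the embedding $M \hookrightarrow W \tenint{K} \D{A}$ forces $M$ to be torsion.

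The conceptual core is the implication $(2) \Rightarrow (3)$, where the essentiality of $W$ in $M$ is leveraged through the injectivity of $I$ to produce an embedding of $M$ into an explicit injective module. I do not anticipate any serious obstacle here: all the required properties of $W \tenint{K} \D{A}$ have been assembled in the preparatory lemmas, and the right noetherian assumption of \cref{con:condition 1}(2) enters only indirectly through \cref{lem:basic lemma-5}.
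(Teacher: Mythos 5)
Your proposal is correct and follows essentially the same route as the paper: $(1)\Rightarrow(2)$ via \cref{lem:basic lemma-4}, $(2)\Rightarrow(3)$ by extending the embedding of $W$ into the injective module $W \tenint{K} \D{A}$ along $W \hookrightarrow M$ and using essentiality of $W$ in $M$ to get injectivity of $f$, and $(3)\Rightarrow(1)$ from torsionness of $W \tenint{K} \D{A}$ together with the Serre-subcategory property of torsion modules. Your explicit verification that $\Img f$ is essential (via $W \subseteq \Img f$ and \cref{lem:basic lemma-5}(4)) is a small completeness bonus that the paper leaves implicit, but it is not a different argument.
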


\begin{proof}
	(1) $\Rightarrow$ (2):
	This follows from \cref{lem:basic lemma-4}.

	(2) $\Rightarrow$ (3): 
	Denote by $i: W \hookrightarrow M$ the inclusion morphism.
	By \cref{lem:basic lemma-5}, $W \tenint{K} \D{A}$ is an injective object in $\Grr{A}$.
	Tensoring the map $K \hookrightarrow \D{A}$ with $W \tenint{K} -$ yields an injective morphism $j: W \hookrightarrow W \tenint{K} \D{A}$.
	So, we obtain a morphism $f : M \rightarrow W \tenint{K} \D{A}$ such that $j = f \circ i$.
	From the essentiality of $W \subset M$, we can show that $f$ is injective. 

	(3) $\Rightarrow$ (1): This follows by combining the assumption that $f$ is injective with the facts that each $e_i\D{A}$ is torsion and $\Tor(A)$ is a Serre subcategory of $\Grr{A}$ (\cite[Lemma 3.5]{mori2021local}), which is closed under direct sums.
\end{proof}

\begin{rmk}
\label{rmk:basic lemma-6}
	When we prove $(2) \Rightarrow (3)$, we use the assumption that $A$ is right noetherian.
	When we prove $(3) \Rightarrow (1)$, we only use the assumption that $A$ is right Ext-finite.
\end{rmk}

Let $A$ be a connectecd $\Zbb$-algebra and $M \in \Grr{A}$.
Then, the \emph{injective hull} $E(M)$ of $M$ is a graded right $A$-module containing $M$ which is an injective object in $\Grr{A}$ and $M$ is an essential graded submodule of $E(M)$.
Every graded right $A$-module has an injective hull since $\Grr{A}$ is a Grothendieck category (see \cite[Section V.2 Examples]{stenstrom1975rings}, \cite[Theorem 10.10]{popescu1973abelian}).

\begin{lem}
	\label{lem:basic lemma-7}
	Suppose $A$ satisfies (2) of \cref{con:condition 1} and let $M$ be a torsion graded right $A$-module.
	Then, $E(M) \cong W \tenint{K} \D{A}$, where $W = \Soc(M)$.
\end{lem}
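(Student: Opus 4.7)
The plan is to deduce this from three already-established ingredients: the injectivity of $W \tenint{K} \D{A}$ in $\Grr{A}$, the essentiality of $M$ inside $W \tenint{K} \D{A}$, and the uniqueness of the injective hull in Grothendieck categories. The main point is that once $M \hookrightarrow W \tenint{K} \D{A}$ is realized as an essential extension into an injective object, the defining universal property of $E(M)$ forces an isomorphism.

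More precisely, first I would invoke \cref{lem:basic lemma-5}(1), which states that $W \tenint{K} \D{A}$ is an injective object in $\Grr{A}$; this uses the right noetherian hypothesis to close injective objects under direct sums, together with the observation $W \tenint{K} \D{A} \cong \bigoplus_{i,x} e_i \D{A}$. Next, since $M$ is torsion, I would apply \cref{lem:basic Proposition-6} (the implication (1)$\Rightarrow$(3)) to produce an injective morphism $f : M \hookrightarrow W \tenint{K} \D{A}$ whose image is an essential graded submodule and which restricts to the identity on $W = \Soc(M)$.

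Finally, since $\Grr{A}$ is a Grothendieck category, every object has an injective hull and any essential extension $M \hookrightarrow I$ with $I$ injective is isomorphic to the embedding $M \hookrightarrow E(M)$ via an isomorphism $E(M) \xrightarrow{\sim} I$ that extends the identity on $M$; this is the standard uniqueness statement recalled just before this lemma (see \cite[Theorem 10.10]{popescu1973abelian}). Applying this to $I := W \tenint{K} \D{A}$ together with the essential embedding $f$ yields the desired isomorphism $E(M) \cong W \tenint{K} \D{A}$.

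I do not anticipate any serious obstacle: all nontrivial content has been packaged into \cref{lem:basic lemma-5} and \cref{lem:basic Proposition-6}, and the statement here is essentially a direct assembly of those two facts via the uniqueness of injective hulls. The only subtlety worth noting is that, as in \cref{rmk:basic lemma-5} and \cref{rmk:basic lemma-6}, the noetherian hypothesis is genuinely needed (it enters through the injectivity of $W \tenint{K} \D{A}$, not merely through the essentiality statement), so the full strength of \cref{con:condition 1}(2) is used here.
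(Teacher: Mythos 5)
Your proof is correct and is essentially identical to the paper's: the paper also assembles \cref{lem:basic lemma-5}, \cref{lem:basic Proposition-6} (for the essential embedding of $M$ into the injective object $W \tenint{K} \D{A}$), and the uniqueness of injective hulls. No gaps.
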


\begin{proof}
	We can obtain the lemma by combining \cref{lem:basic lemma-5}, \cref{lem:basic Proposition-6} and the uniqueness of injective hulls.
\end{proof}

\begin{lem}
	\label{lem:basic lemma-8}
	Suppose $A$ satisfies (2) of \cref{con:condition 1} and let $I$ be an injective object in $\Grr{A}$.
	Then, $\torfun{A}(I) \cong W \tenint{K} \D{A}$, where $W = \Soc(I)$.
\end{lem}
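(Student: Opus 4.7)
The plan is to realize $W \tenint{K} \D{A}$ as a direct summand of $\torfun{A}(I)$ whose complement is forced to vanish by a socle argument. First, I would observe that $W = \Soc(I)$ is automatically contained in $\torfun{A}(I)$, since every element of the socle is annihilated by $\mathfrak{m}_A$ and hence torsion, and moreover $\Soc(\torfun{A}(I)) = \Soc(I) = W$ because the socle is detected purely by the condition $x \mathfrak{m}_A = 0$.

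Next, using that $I$ is injective in $\Grr{A}$ and that $W$ sits inside $W \tenint{K} \D{A}$ via its socle (\cref{lem:basic lemma-5}(3)), I would extend the inclusion $W \hookrightarrow I$ to a morphism $\phi : W \tenint{K} \D{A} \to I$ that restricts to the identity on $W$. Since $\phi|_W$ is injective and $W$ is essential in $W \tenint{K} \D{A}$ by \cref{lem:basic lemma-5}(4), the kernel $\ker \phi$ must meet $W$ trivially and therefore vanishes, so $\phi$ is injective. Its image, being isomorphic to the torsion module $W \tenint{K} \D{A}$, is torsion, hence $\phi$ factors through an injection $\phi : W \tenint{K} \D{A} \hookrightarrow \torfun{A}(I)$.

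Then I would argue that this embedding is an equality. By \cref{lem:basic lemma-5}(1), $W \tenint{K} \D{A}$ is injective in $\Grr{A}$, so $\phi(W \tenint{K} \D{A})$ splits off as a direct summand, giving a decomposition $\torfun{A}(I) = \phi(W \tenint{K} \D{A}) \oplus C$ with $C$ a torsion submodule. The key observation about $C$ is that $\Soc(C) \subset \Soc(\torfun{A}(I)) = W \subset \phi(W \tenint{K} \D{A})$, so the directness of the sum forces $\Soc(C) = W \cap C = 0$. Applying \cref{lem:basic lemma-4} to the torsion module $C$ then yields $C = 0$, and the lemma follows.

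The main obstacle I anticipate is simply assembling the identifications $\Soc(\torfun{A}(I)) = \Soc(I) = W$ and $\phi|_W = \mathrm{id}_W$ cleanly enough that the two essential-submodule steps (injectivity of $\phi$, and vanishing of $C$) both go through; once these are in hand the argument reduces to invoking \cref{lem:basic lemma-5} twice. The noetherian hypothesis of \cref{con:condition 1}(2) enters precisely through the injectivity of $W \tenint{K} \D{A}$ supplied by \cref{lem:basic lemma-5}(1), which is what allows the splitting to occur.
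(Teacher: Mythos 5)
Your proposal is correct and follows essentially the same route as the paper's proof: extend the inclusion $W \hookrightarrow I$ to a map $\varphi : W \tenint{K} \D{A} \to I$ by injectivity, deduce injectivity of $\varphi$ from the essentiality of $W$ in $W \tenint{K} \D{A}$ (\cref{lem:basic lemma-5}), and eliminate the complementary summand by the socle argument based on \cref{lem:basic lemma-4}. The only cosmetic difference is that you split off the image directly inside $\torfun{A}(I)$, whereas the paper decomposes $I = \Img(\varphi) \oplus J$ and then shows $\torfun{A}(J) = 0$; the lemmas used and the underlying idea are identical.
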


\begin{proof}

	Considering the following commutative diagram:
	\[
	\begin{tikzcd}
		W \ar[r, hook, "i"] \ar[d, hook', "j"'] & W \tenint{K} \D{A} \ar[ld,dotted, "\varphi"] \\
		I &
	\end{tikzcd}
	\]
	, where $i,j$ are the inclusion morphisms and $\varphi$ is obtained by the definition of injective hulls.
	Then, $\varphi$ is injective since $W$ is an essential graded submodule of $W \tenint{K} \D{A}$ from (4) of \cref{lem:basic lemma-5}.
	We decompose $I$ as $I = \Img(\varphi) \oplus J$ by using the injectivity of $W \tenint{K} \D{A}$, which is from (1) of \cref{lem:basic lemma-5}.
	Then, we have
	\[
	\Soc(\torfun{A}(J)) = \Soc(J) = W \cap J \subset \torfun{A}(J),
	\]
	which is an essential injection from $(1) \Rightarrow (2)$ of \cref{prop:basic proposition-10}.
	By combining this with the fact that $\Img(\varphi)$ is torsion from (2) of \cref{lem:basic lemma-5}, we have 
	\[
	\torfun{A}(I) \cong \Img(\varphi) \cong W \tenint{K} \D{A}.
	\]
\end{proof}

\begin{cor}[Stableness of $\torfun{A}(-)$]
	\label{cor:basic lemma-9}
	Suppose that $A$ satisfies (2) of \cref{con:condition 1} and let $I$ be an injective object in $\Grr{A}$.
	Then, $\torfun{A}(I)$ is an injective object in $\Grr{A}$. 
\end{cor}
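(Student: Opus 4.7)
The plan is to derive this corollary as an immediate consequence of the two lemmas that directly precede it. First, I would apply \cref{lem:basic lemma-8} (which was just proved under exactly the same hypothesis that $A$ satisfies (2) of \cref{con:condition 1}) to the injective object $I$ in $\Grr{A}$. Setting $W \coloneqq \Soc(I)$, this gives an isomorphism
\[
\torfun{A}(I) \;\cong\; W \tenint{K} \D{A}
\]
in $\Grr{A}$.

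Next, I would invoke part (1) of \cref{lem:basic lemma-5} with this same $W \in \Grr{K}$, which asserts precisely that $W \tenint{K} \D{A}$ is an injective object in $\Grr{A}$. Combining the two, $\torfun{A}(I)$ is isomorphic to an injective object, hence is itself injective in $\Grr{A}$.

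There is essentially no obstacle here: the corollary is a one-line bookkeeping consequence of the structural description of torsion parts of injectives. The only thing to be mindful of is that both cited lemmas require the right noetherian hypothesis (condition (2) of \cref{con:condition 1})—\cref{lem:basic lemma-5}(1) uses closure of injectives under arbitrary direct sums (\cref{lem:basic lemma-2}), and \cref{lem:basic lemma-8} uses the description of $E(M)$ for torsion $M$ from \cref{lem:basic lemma-7}. Since the corollary is stated under that same hypothesis, both applications are justified, and no extra argument is needed.
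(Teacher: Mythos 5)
Your proposal is correct and matches the paper's own proof exactly: the paper also deduces the corollary by combining \cref{lem:basic lemma-8} (giving $\torfun{A}(I) \cong \Soc(I) \tenint{K} \D{A}$) with part (1) of \cref{lem:basic lemma-5} (injectivity of that module), under the same noetherian hypothesis.
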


\begin{proof}
	This is from (1) of \cref{lem:basic lemma-5} and \cref{lem:basic lemma-8}.
\end{proof}


\begin{prop}
	\label{prop:basic proposition-10}
Suppose that $A$ satisfies (2) of \cref{con:condition 1}.
Let $B,C$ be connected $\Zbb$-algebras.
Let $M \in D^{\star}(\Grlr{B}{A})$ and $N \in D^+(\Grlr{C}{A})$,
where $\star = \emptyset$ if $\cd(\torfun{A})$ is finite, and $\star = +$ otherwise.
Assume that $\Resr{A}(H^i(M))$ is torsion for all $i \in \Zbb$.
Then, the following hold:
\begin{enumerate}
  \item $\Rtorfun{A}(M) \simeq M$ in $D(\Grlr{B}{A})$.
  \item $\RHomintr{A}(M,N) \simeq \RHomint_A(M, \Rtorfun{A}(N))$ in $D(\Grlr{C}{B})$.
\end{enumerate}
\end{prop}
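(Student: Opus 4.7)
The plan is to establish part (1) first, by reducing to the case of a single torsion module in $D(\Grr{A})$ via the fact that the injective hull of a torsion module is itself torsion, and then to deduce part (2) from (1) by a triangulated orthogonality argument.

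For (1), I would begin by reducing to the case $B = K$, i.e., to showing $\Rtorfun{A}(M) \simeq M$ for $M \in D^{\star}(\Grr{A})$ with torsion cohomology. This reduction is valid because a morphism in $D(\Grlr{B}{A})$ is an isomorphism if and only if each restriction under $\Resrr{A}{i}$ is an isomorphism in $D(\Grr{A})$, K-injective resolutions descend along $\Resrr{A}{i}$ by \cref{prop:restriction-K-projective-K-injective}, and $\torfun{A}$ on bimodules is defined componentwise. The key input is \cref{lem:basic lemma-7}: for torsion $N \in \Grr{A}$, the injective hull $E(N) \cong \Soc(N) \tenint{K} \D{A}$ is again torsion by \cref{lem:basic lemma-5}(2). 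Iterating, one obtains an injective resolution $N \to J^{\bullet}$ of a torsion module by torsion injectives, so $\torfun{A}(J^{\bullet}) = J^{\bullet}$, yielding $\Rtorfun{A}(N) \simeq N$ and $\R^p\torfun{A}(N) = 0$ for $p > 0$.

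To extend to complexes, I would induct on amplitude for bounded $M$ with torsion cohomology via the truncation triangle $\tau_{\leq n-1}M \to \tau_{\leq n}M \to H^n(M)[-n] \to +$; the comparison morphisms on the outer two terms are isomorphisms by induction and the single-module case, so the middle one is too. For $M \in D^+$ (the case $\star = +$), fix a degree $p$: the truncation $\tau_{\leq p+1}M \to M$ induces isomorphisms on $H^{\leq p}$ of both $M$ and $\Rtorfun{A}(M)$ (computed via a bounded-below K-injective resolution), reducing the comparison $\Rtorfun{A}(M) \to M$ in each cohomology to the bounded case. For unbounded $M$ with $\cd(\torfun{A})$ finite (the case $\star = \emptyset$), brutal truncation in the range $[p - \cd(\torfun{A}), p+1]$ does the same.

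For (2), apply $\Rtorfun{A}$ to the triangle $\Rtorfun{A}(N) \to N \to C \to \Rtorfun{A}(N)[1]$. Because $\Rtorfun{A}(N) = \torfun{A}(I)$ (for a K-injective resolution $N \to I$) has torsion cohomology, part (1) gives $\Rtorfun{A}\Rtorfun{A}(N) \simeq \Rtorfun{A}(N)$ and hence $\Rtorfun{A}(C) = 0$. Applying $\RHomintr{A}(M,-)$ to this triangle, it suffices to show $\RHomintr{A}(M,C) = 0$; passing to bidegrees reduces to $\RHom_{\Grr{A}}(X,Y) = 0$ whenever $X$ has torsion cohomology and $\Rtorfun{A}(Y)=0$. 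By part (1) one may replace $X$ with $\Rtorfun{A}(X)$, a complex each of whose terms is torsion; any homomorphism from a torsion module into a K-injective resolution of $Y$ factors through its $\torfun{A}$-part, which computes $\Rtorfun{A}(Y)=0$, and assembling the pointwise vanishings via a truncation or spectral sequence argument gives the claim. The hardest step is the unbounded case of (1) with $\star = \emptyset$: the finiteness of $\cd(\torfun{A})$ is what makes the hyperhomology spectral sequence $E_2^{p,q} = \R^p\torfun{A}(H^q(M)) \Rightarrow H^{p+q}(\Rtorfun{A}(M))$ converge, and the vanishing $\R^p\torfun{A}(H^q M)=0$ for $p>0$ from the single-module step then collapses it to $H^n(\Rtorfun{A}(M)) \cong H^n(M)$.
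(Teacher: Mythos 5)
Your proposal is essentially correct, and for part (1) it lands on the same argument as the paper: the key inputs are the single-module vanishing $\R^{i}\torfun{A}(T)=0$ for $i>0$ and $T$ torsion, together with the hypercohomology spectral sequence $E_2^{p,q}=\R^{p}\torfun{A}(H^q(M))\Rightarrow \R^{p+q}\torfun{A}(M)$, which the paper makes convergent in the unbounded case by truncating a Cartan--Eilenberg resolution in the second degree at $\cd(\torfun{A})+1$. You obtain the single-module input from \cref{lem:basic lemma-5} and \cref{lem:basic lemma-7} (injective hulls of torsion modules are torsion), whereas the paper simply quotes \cite[Lemma 5.10]{mori2021local}; your reduction to the one-sided case via componentwise restriction is consistent with how the paper computes derived functors of bimodules (\cref{prop:restriction-K-projective-K-injective}, \cref{rmk:derived-functors}). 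One small caveat: the alternative ``brutal truncation of $M$ in the range $[p-\cd(\torfun{A}),p+1]$'' is not quite right as stated, since the edge cohomology of a stupid truncation of $M$ need not be torsion; use smart truncations (or just the spectral-sequence argument you already give, which is the paper's).

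For part (2) you argue differently in form --- via the cone $C$ of $\Rtorfun{A}(N)\to N$, the idempotency $\Rtorfun{A}(C)=0$, and the orthogonality $\RHomintr{A}(M,C)=0$ --- while the paper writes an explicit chain of quasi-isomorphisms between the hom-complexes $\Homintr{A}(I_M,I_N)$, $\Homintr{A}(\torfun{A}(I_M),\torfun{A}(I_N))$ and $\Homintr{A}(I_M,\torfun{A}(I_N))$. Both versions rest on the same two facts, and your sketch leaves the second one implicit: (i) any map from a complex of torsion modules into $I_Y$ factors through $\torfun{A}(I_Y)$ (you have this), and (ii) $\torfun{A}(I_Y)$ is again a bounded-below complex of \emph{injectives}, which is exactly \cref{cor:basic lemma-9} combined with the hypothesis $N\in D^{+}$. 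Point (ii) is what makes the acyclic complex $\torfun{A}(I_Y)$ contractible, so that the hom-complex out of your (possibly unbounded) torsion complex is acyclic; ``assembling the pointwise vanishings'' does not work for homming into an arbitrary acyclic target. So the one step you should make explicit is this stability of injectives under $\torfun{A}$ --- precisely where the paper invokes \cref{cor:basic lemma-9} and the bounded-below assumption on $N$.
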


\begin{proof}

(1)
We consider the unbounded case.
We take a Cartan-Eilenberg resolution $M \rightarrow I^{\mydot \mydot}$, which induces a quasi-isomorphism $M \rightarrow \operatorname{Tot}(I^{\mydot \mydot})$ in $C(\Grlr{B}{A})$ (\cite[Theorem A.3]{weibel1996cyclic}).
We also truncate $I^{\mydot \mydot}$ in the second degree at $\cd(\torfun{A})+1$. 
Let $\tilde{I}^{\mydot \mydot}$ denote the truncated complex. 
Then, $\tilde{I}^{\mydot\mydot}$ is a double complex with finitely many nonzero components in each total degree, and each $e_i \tilde{I}^{p,q}$ is $\torfun{A}$-acyclic.
Thus, we have the following spectral sequence ((\cite[page 150, 5.7.9]{weibel1994introduction}, \cite[Lemma 12.25.3]{stacks-project})):
\begin{align*}
 E_2^{p,q} = \R^p\torfun{A}(H^q(M))
 &\Rightarrow 
 E^{p+q}=H^{p+q}(\operatorname{Tot}(\torfun{A}(\tilde{I}^{\mydot \mydot}))) 
 \cong H^{p+q}(\torfun{A}(\operatorname{Tot}(\tilde{I}^{\mydot \mydot}))).
\end{align*}
From \cite[Lemma 12.25.4]{stacks-project}, $M \rightarrow \operatorname{Tot}(\tilde{I}^{\mydot \mydot})$ is a quasi-isomorphism.
Since $\cd(\torfun{A})$ is finite and each $e_i\operatorname{Tot}(\tilde{I}^{\mydot \mydot})^n = \bigoplus_{k+l=n} e_i\tilde{I}^{k,l}$ is a $\torfun{A}$-acyclic object, we obtain
\[
H^{p+q}(\torfun{A}(\operatorname{Tot}(\tilde{I}^{\mydot \mydot}))) \cong \R^{p+q}\torfun{A}(M).
\]
Hence, it is enough to show that $\R^{i}\torfun{A}(M)=0$ for all $i >0$ and $M \in \Grlr{B}{A}$ whose restriction to $A$ is torsion.
This follows from \cite[Lemma 5.10]{mori2021local}.

When $M$ is bounded below, it is sufficient to use the Cartan-Eilenberg resolution to obtain the spectral sequence.
We do not need to take any truncation.

(2) 
We only prove the unbounded case.
The bounded below case is proved in the same way.
Take a K-injective resolution of $M \rightarrow I_M$ in $D(\Grlr{B}{A})$. 
Here, we can assume that each $I_M^j$ is an injective object in $\Grlr{B}{A}$ by \cref{rmk:existence-K-projective-K-injective}.
Then, each $e_i I_M^j$ is an injective object in $\Grr{A}$ by \cref{prop:restriction-K-projective-K-injective}.
In the same way, we can take a K-injective resolution of $N \rightarrow I_N$ in $D^+(\Grlr{C}{A})$ with $e_i I_N^j$ being an injective object in $\Grr{A}$.
Then, we have
\begin{align*}
\RHomint_{A}(M, N) &\cong \Homintr{A}(I_M, I_N), \\
\Rtorfun{A}(N) &\cong \torfun{A}(I_N).
\end{align*}
Moreover, since $I_N$ is bounded below and each $e_i I_N^j$ is injective in $\Grr{A}$, $e_i \torfun{A}(I_N) \cong \torfun{A}(e_i I_N)$ is also K-injective in $D(\Grr{A})$ by \cref{cor:basic lemma-9}.
Thus, we have
\begin{align*}
	\RHomint_{A}(M, \Rtorfun{A}(N)) &\cong \Homintr{A}(I_M, \torfun{A}(I_N))
\end{align*}
from \cref{rmk:derived-functors}.
We also have the following quasi-isomorphisms:
\begin{align*}
	\Homintr{A}(I_M, I_N) &\cong \Homintr{A}(\torfun{A}(I_M), I_N), \\
	\Homintr{A}(I_M, \torfun{A}(I_N)) &\cong \Homintr{A}(\torfun{A}(I_M), \torfun{A}(I_N))
\end{align*}
from (1) and an isomorphism
\begin{align*}
	\Homintr{A}(\torfun{A}(I_M), I_N) &\cong \Homintr{A}(\torfun{A}(I_M), \torfun{A}(I_N))
\end{align*}
because any homomorphism from $\torfun{A}(I_M)$ to $I_N$ factors through $\torfun{A}(I_N)$.
The claim follows from the above isomorphisms.
\end{proof}

\begin{rmk}
	\label{rmk:basic proposition-10}
	In the proof of (1) of \cref{prop:basic proposition-10}, we do not need the assumption that 
	$A$ is right noetherian (it is enough to assume that $A$ is right Ext-finite).
\end{rmk}

\subsection{Local duality}
\label{subsec:local duality}

\begin{lem}
	\label{lem:local duality-1}
	Suppose $A$ satisfies (1) of \cref{con:condition 1}.
	Let $B$ be  a connected $\Zbb$-algebra.
	We assume that $\cd(\torfun{A})$ is finite.
	Then, for $M \in D(\Grlr{A}{A})$ and $N \in D(\Grlr{B}{A})$, the following holds in $D(\Grlr{B}{A})$:
	\begin{align*}
		\Rtorfun{A}(N \Ltenint{A} M) \cong N \Ltenint{A} \Rtorfun{A}(M).
	\end{align*}
\end{lem}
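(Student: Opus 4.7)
The strategy is to choose compatible resolutions and reduce the claim to a tautological identity on the level of complexes, relying on the quasi-compactness of $\torfun{A}$ and on the finite cohomological dimension hypothesis.

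First I would take a K-projective resolution $P \to N$ in $D(\Grlr{B}{A})$ whose terms $P^k$ are projective bigraded $B$-$A$-bimodules (by \cref{prop:existence-K-projective-K-injective} and \cref{rmk:existence-K-projective-K-injective}); each $P^k$ is thus a direct summand of a direct sum of the standard generators $Be_i \ten{k} e_j A$. Separately, I would take a K-injective resolution $M \to I$ in $D(\Grlr{A}{A})$ with each $I^l$ injective in $\Grlr{A}{A}$, so that by \cref{lem:restriction}(3) each $e_j I^l$ is injective in $\Grr{A}$. Since each $P^k$ is a direct summand of a direct sum of modules that are free on the right over $A$, the complex $P$ is K-flat as a right $A$-module complex, so $N \Ltenint{A} (-) \simeq P \tenint{A} (-)$ and $\Rtorfun{A}(M) \simeq \torfun{A}(I)$.

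Next I would establish the key level-wise identity. Three elementary observations suffice: (i) $(Be_i \ten{k} e_j A) \tenint{A} X \cong Be_i \ten{k} e_j X$ for any bigraded $A$-$A$-bimodule $X$, by a direct calculation or via the adjunction in \cref{prop:tensor-product-properties}; (ii) $e_j \torfun{A}(X) = \torfun{A}(e_j X)$, because right $A$-torsion is a property of the right $A$-action alone; and (iii) $\torfun{A}(V \ten{k} Y) = V \ten{k} \torfun{A}(Y)$ for any $k$-vector space $V$ and $Y \in \Grr{A}$, since $V \ten{k} Y$ is simply a direct sum of copies of $Y$ and $\torfun{A}$ commutes with direct sums. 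Extending by additivity and applying termwise, these yield the equality of complexes
\[
P \tenint{A} \torfun{A}(I) \;=\; \torfun{A}(P \tenint{A} I),
\]
so that $N \Ltenint{A} \Rtorfun{A}(M) \simeq \torfun{A}(P \tenint{A} I)$.

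To finish, I need to identify $\torfun{A}(P \tenint{A} I)$ with $\Rtorfun{A}(P \tenint{A} I) \simeq \Rtorfun{A}(N \Ltenint{A} M)$. Since each $e_j I^l$ is injective in $\Grr{A}$, it is $\torfun{A}$-acyclic, and by quasi-compactness \cref{prop:quasi-compactness-torfunctor} each piece $Be_i \ten{k} e_j I^l$, and hence each direct summand $P^k \tenint{A} I^l$, remains $\torfun{A}$-acyclic. Thus $P \tenint{A} I$ is termwise $\torfun{A}$-acyclic. Invoking the hypothesis $\cd(\torfun{A}) < \infty$, a Cartan--Eilenberg/double-complex truncation argument of the type used in the proof of \cref{prop:basic proposition-10} then shows that a termwise $\torfun{A}$-acyclic resolution computes $\Rtorfun{A}$ even in the unbounded case, yielding the required isomorphism. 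The main obstacle is precisely this last step: without the finite cohomological dimension hypothesis, a termwise $\torfun{A}$-acyclic but unbounded complex need not compute $\Rtorfun{A}$ correctly, because the relevant spectral sequence may fail to converge; the assumption $\cd(\torfun{A}) < \infty$ is used precisely to close the argument in the unbounded setting, while everything else amounts to additivity, adjunctions, and quasi-compactness.
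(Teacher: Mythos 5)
Your argument is correct and essentially identical to the paper's proof: take a K-projective resolution $P$ of $N$ and a K-injective resolution $I$ of $M$, reduce the termwise identity $\torfun{A}(P \tenint{A} I) \cong P \tenint{A} \torfun{A}(I)$ and the $\torfun{A}$-acyclicity of $P^j \tenint{A} I^i$ to the generators $Be_{n_2} \ten{k} e_{n_1}A$ via direct summands and quasi-compactness of $\torfun{A}$, and use $\cd(\torfun{A})<\infty$ so that the termwise $\torfun{A}$-acyclic complex $P \tenint{A} I$ computes $\Rtorfun{A}(N \Ltenint{A} M)$ in the unbounded setting. The only slip is your justification that $P$ is K-flat ``because its terms are summands of right-free modules''---for unbounded complexes termwise flatness does not imply K-flatness---but this is harmless here, since $P$ was chosen K-projective and \cref{prop:derived-functors} already gives $N \Ltenint{A} X \cong P \tenint{A} X$, which is exactly what the paper uses.
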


\begin{proof}
	First of all, note that we can compute $\Rtorfun{A}$ over $D(\Grlr{B}{A})$ by using complexes of $\torfun{A}$-acyclic bigraded $B$-$A$-bimodules since $\cd(\torfun{A})$ is finite (\cite[Corollary 10. 5.11]{weibel1994introduction}, \cite[Corollary 5.3 ($\gamma$)]{hartshorne1966residues}).

	Let $M \rightarrow I$ be a K-injective resolution of $M$ in $D(\Grlr{A}{A})$ with each $I^j$ being an injective object in $\Grlr{A}{A}$ and $P \rightarrow N$ be a K-projective resolution of $N$ in $D(\Grlr{B}{A})$ with each $P^j$ being a projective object in $\Grlr{B}{A}$ from \cref{rmk:existence-K-projective-K-injective}.
	Then, we have
	\[
	N \Ltenint{A} M \cong P \tenint{A} I
	\]
	from \cref{prop:derived-functors}. 
	For any $P^j$, there exists a bigraded $B$-$A$-bimodule $Q^j$ such that $P^j \oplus Q^j$ is isomorphic to $\bigoplus_{(n_1,n_2) \in J}(Be_{n_2} \otimes_k e_{n_1}A)$ for some set $J$ of integers which may contain duplicates.
	Then, 
	\[
	(P^j \oplus Q^j) \tenint{A} I^i \cong \bigoplus_{(n_1,n_2) \in J}(Be_{n_2} \otimes_k e_{n_1}A) \tenint{A} I^i \cong \bigoplus_{(n_1,n_2) \in J}(Be_{n_2} \otimes_k e_{n_1}I^i).
	\]
	So, since $e_kI^i$ is injective in $\Grr{A}$ (\cref{lem:restriction}) and $\R^m\torfun{A}$ commutes with direct sums (\cref{prop:quasi-compactness-torfunctor}), we have
	\begin{align*}
	\R^m\torfun{A}((P^j \oplus Q^j) \tenint{A} I^i) 
	&\cong \R^m\torfun{A} \left( \bigoplus_{(n_1,n_2) \in J}(Be_{n_2} \otimes_k e_{n_1}A) \tenint{A} I^i \right) \\
	&\cong \R^m\torfun{A} \left( \bigoplus_{(n_1,n_2) \in J}(Be_{n_2} \otimes_k e_{n_1}I^i) \right) \\
	&\cong \bigoplus_{(n_1,n_2) \in J} \R^m\torfun{A}(Be_{n_2} \otimes_k e_{n_1}I^i) \cong 0.
	\end{align*}
	Note that in the last equality in the above calculation, we use the isomorphism
	\[
	\Resr{A}(\R^m\torfun{A}(Be_{n_2} \otimes_k e_{n_1}I^i)) \cong \R^m\torfun{A}(\Resr{A}(Be_{n_2} \otimes_k e_{n_1}I^i))
	\]
	and the fact $\Resr{A}(Be_{n_2} \otimes_k e_{n_1}I^i)$ is a direct sum of $e_{n_1}I^i$.
	Thus, $P^j \tenint{A} I^i$ is a $\torfun{A}$-acyclic bigraded $B$-$A$-bimodule for all $i,j \in \Zbb$.
	Because a direct sum of $\torfun{A}$-acyclic bigraded $B$-$A$-bimodules is $\torfun{A}$-acyclic, we have
	\[
	\Rtorfun{A}(N \Ltenint{A} M) \cong \torfun{A}(P \tenint{A} I).
	\]
	By a similar argument, 
	\[
	\torfun{A}(P \tenint{A} I) \cong P \tenint{A} \torfun{A}(I).
	\]
	Therefore, from the above isomorphisms, we obtain the claim.
\end{proof}


The following lemma is the derived Tensor-Hom adjunction.
We omit the proof because it is similar to the proof of \cref{prop:tensor-product-properties}.

\begin{lem}
	\label{lem:local duality-2}
	Let $A,B,C$ and $D$ be connected $\Zbb$-algebras.
	For $M \in D(\Grlr{C}{B})$, $N \in D(\Grlr{B}{A})$ and $L \in D(\Grlr{D}{A})$, the following isomorphism holds in $D(\Grlr{D}{C})$:
	\begin{align*}
		\RHomint_{B}(M, \RHomint_{A}(N, L)) \cong \RHomint_A(M \Ltenint{B} N, L).
	\end{align*}
	
\end{lem}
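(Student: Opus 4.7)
The plan is to reduce the derived adjunction to the underived tensor-hom adjunction at the level of hom-complexes, and then propagate through appropriate resolutions. The underived version at the bidegree $(i,j)$ is just the classical statement
\[
\Homr{A}(e_j M \tenint{B} N, e_i L) \cong \Homr{B}(e_j M, \Homintr{A}(N, e_i L)),
\]
which follows from the bimodule analogue of \cref{prop:tensor-product-properties}(3). Assembling these isomorphisms over bidegrees and over cohomological degrees of the complexes $M,N,L$ gives an isomorphism of hom-complexes
\[
\Homlrcpx{D}{A}(M \tenint{B} N, L) \;\cong\; \Homlrcpx{C}{B}\!\bigl(M, \Homintlr{D}{A}(N, L)\bigr)
\]
natural in all three variables. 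This is a routine (and standard) diagram chase.

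Next, I would choose resolutions using \cref{prop:existence-K-projective-K-injective}: take a K-projective resolution $P \xrightarrow{\sim} M$ in $D(\Grlr{C}{B})$, a K-projective resolution $Q \xrightarrow{\sim} N$ in $D(\Grlr{B}{A})$, and a K-injective resolution $L \xrightarrow{\sim} I$ in $D(\Grlr{D}{A})$, with each term projective or injective (\cref{rmk:existence-K-projective-K-injective}). By \cref{prop:derived-functors}, these give identifications $M \Ltenint{B} N \cong P \tenint{B} Q$ and $\RHomintr{A}(N,L) \cong \Homintr{A}(Q,I)$, and
\[
\RHomint_A(M \Ltenint{B} N, L) \;\cong\; \Homint_A(P \tenint{B} Q, I)
\]
in $D(\Grlr{D}{C})$, using that $P \tenint{B} Q$ is K-projective in $D(\Grlr{C}{A})$ (since tensor of K-projectives is K-projective, arising from the adjunction) or alternatively that $I$ is K-injective.

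For the left side, I apply the hom-complex adjunction above with $M$, $N$, $L$ replaced by $P$, $Q$, $I$, obtaining
\[
\Homint_A(P \tenint{B} Q, I) \;\cong\; \Homint_B\!\bigl(P, \Homint_A(Q, I)\bigr).
\]
Since $P$ is K-projective, $\Homint_B(P, -)$ preserves quasi-isomorphisms between complexes in $D(\Grlr{B}{D})$, so
\[
\Homint_B\!\bigl(P, \Homint_A(Q, I)\bigr) \;\cong\; \Homint_B\!\bigl(P, \RHomintr{A}(N, L)\bigr) \;\cong\; \RHomint_B\!\bigl(M, \RHomintr{A}(N, L)\bigr),
\]
the last isomorphism again by K-projectivity of $P$ and \cref{prop:derived-functors}. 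Composing the chain of isomorphisms yields the desired identification in $D(\Grlr{D}{C})$.

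The main obstacle I anticipate is bookkeeping rather than conceptual: verifying that the underived bidegree-wise adjunction assembles into a genuine isomorphism of hom-\emph{complexes} (with the correct signs and totalization conventions, given that $\Homintlrcpx{}{}$ and $\tenint{}$ are defined as products and coproducts indexed over all degrees), and checking that the resolutions can be manipulated without leaving the setting in which \cref{prop:derived-functors} applies. Everything else is routine once the hom-complex version of the adjunction is in hand.
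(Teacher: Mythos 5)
Your proposal is correct and follows essentially the route the paper itself indicates: the paper omits the proof of this lemma precisely because it reduces to the underived tensor--hom adjunction of \cref{prop:tensor-product-properties}, assembled into an isomorphism of hom-complexes and then transported through K-projective/K-injective resolutions via \cref{prop:existence-K-projective-K-injective} and \cref{prop:derived-functors}, exactly as you do (and your fallback of using K-injectivity of $I$ rather than K-projectivity of $P \tenint{B} Q$ keeps every step within what \cref{prop:derived-functors} already provides).
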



The following proposition is a generalization of \cite[Theorem 2.1]{mori2024corrigendum}, where the authors assumed that $M$ is bounded below and $B \cong K$.
We establish the local duality in full generality.

\begin{thm}[Local duality]
\label{thm:local duality}
Suppose $A$ satisfies (1) of \cref{con:condition 1}.
We assume that $\cd(\torfun{A})$ is finite.
Then, for $M \in D(\Grlr{B}{A})$, we have
\begin{align*}
	\D{\Rtorfun{A}(M)} \cong \RHomint_A(M, \D{\Rtorfun{A}(A)})
\end{align*}
in $D(\Grlr{A}{B})$.

\end{thm}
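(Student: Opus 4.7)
The plan is to reduce the claim to a combination of \cref{lem:local duality-1} and the derived tensor-hom adjunction \cref{lem:local duality-2}, after reinterpreting the Matlis dual as a derived internal Hom so that the adjunction becomes applicable. The target isomorphism lives in $D(\Grlr{A}{B})$, so careful bookkeeping of the bimodule structures on each intermediate object is the only real subtlety.

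First I would invoke \cref{lem:local duality-1} with the lemma's ``$N$'' taken to be our $M$ and the lemma's ``$M$'' taken to be $A$ itself. Since $M \Ltenint{A} A \cong M$ in $D(\Grlr{B}{A})$, this yields the key identification
\[
\Rtorfun{A}(M) \;\cong\; M \Ltenint{A} \Rtorfun{A}(A) \quad \text{in } D(\Grlr{B}{A}).
\]
This step is where both hypotheses---right Ext-finiteness and finiteness of $\cd(\torfun{A})$---are absorbed.

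Next I would rewrite $\D{(-)}$ as a derived internal Hom. Being the componentwise $k$-linear dual, $\D{(-)}$ is exact and so descends verbatim to the unbounded derived category. By \cref{lem:basic lemma-1}(1), $\D{(-)} \cong \Homintr{K}(-, K)$, and since every object of $\Grr{K}$ is injective (the diagonal components of $K$ are just the field $k$), the functor $\Homintr{K}(-, K)$ is itself exact and therefore agrees with $\RHomintr{K}(-, K)$. I may thus freely replace $\D{(-)}$ by $\RHomintr{K}(-, K)$ at every stage.

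Combining these two ingredients with \cref{lem:local duality-2} (applied with the obvious re-parametrization, viewing $\Rtorfun{A}(A)$ as an object of $D(\Grlr{A}{K})$ by restriction of the right $A$-action) produces the chain
\begin{align*}
\D{\Rtorfun{A}(M)}
&\cong \D{(M \Ltenint{A} \Rtorfun{A}(A))} \\
&\cong \RHomintr{K}(M \Ltenint{A} \Rtorfun{A}(A), K) \\
&\cong \RHomintr{A}(M, \RHomintr{K}(\Rtorfun{A}(A), K)) \\
&\cong \RHomintr{A}(M, \D{\Rtorfun{A}(A)}),
\end{align*}
all in $D(\Grlr{A}{B})$, as desired. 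The chief difficulty I anticipate is not mathematical but structural: one must check that each displayed isomorphism is natural for the full bigraded $A$-$B$-bimodule structure on the Matlis-dual side, so that the final identification genuinely lives in $D(\Grlr{A}{B})$ rather than in some weaker category obtained by forgetting one side of the action. The substantive mathematical content is otherwise fully supplied by \cref{lem:local duality-1} and \cref{lem:local duality-2}.
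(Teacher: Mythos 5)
Your first step (applying \cref{lem:local duality-1} with the lemma's $N$ equal to $M$ and the lemma's $M$ equal to $A$, giving $\Rtorfun{A}(M) \cong M \Ltenint{A} \Rtorfun{A}(A)$) is exactly the paper's, and the overall strategy (rewrite the Matlis dual as a derived Hom, then use the adjunction of \cref{lem:local duality-2}) is also the paper's. The gap is in the step you dismiss as ``structural'': the isomorphism
\[
\RHomintr{K}\bigl(M \Ltenint{A} \Rtorfun{A}(A), K\bigr) \;\cong\; \RHomintr{A}\bigl(M, \RHomintr{K}(\Rtorfun{A}(A), K)\bigr)
\]
is not an instance of \cref{lem:local duality-2} in the category you claim. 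To invoke that lemma with the tensor over $A$ and the Hom over $K$ you must take its $N$ to be $\Rtorfun{A}(A)$ viewed in $D(\Grlr{A}{K})$ and its $L$ to be $K \in D(\Grlr{K}{K})$, so the output lands in $D(\Grlr{K}{B})$, not $D(\Grlr{A}{B})$: the right $A$-action on $\Rtorfun{A}(A)$ has been forgotten, and it is precisely that action which produces the left $A$-structure on $\D{\Rtorfun{A}(M)}$ and on $\D{\Rtorfun{A}(A)}$. Recovering it ``by naturality'' is not automatic: naturality of the derived adjunction gives an action only up to isomorphism in the derived category, which does not by itself upgrade the comparison to an isomorphism in $D(\Grlr{A}{B})$. (Taking $L=K$ with its left $A$-structure as $A/A_{\geq 1}$ does not help either, since that left structure factors through $K$ and is not the one carried by the Matlis dual.)

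The paper closes exactly this hole with the second form of \cref{lem:basic lemma-1}: $\D{(-)} \cong \RHomint_A(-,\D{A})$ with $\D{A} \in D(\Grlr{A}{A})$. One then writes $\D{\Rtorfun{A}(M)} \cong \RHomint_A(\Rtorfun{A}(M \Ltenint{A} A), \D{A})$, moves the torsion functor by \cref{lem:local duality-1}, and applies \cref{lem:local duality-2} entirely over $A$ with last argument $\D{A}$, so every object in the chain carries its full bigraded structure and the result is genuinely in $D(\Grlr{A}{B})$; a final application of \cref{lem:basic lemma-1} identifies $\RHomint_A(\Rtorfun{A}(A),\D{A})$ with $\D{\Rtorfun{A}(A)}$. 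So your argument becomes correct once you replace the dualizing object $K$ over $K$ by $\D{A}$ over $A$ throughout; as written, the key step does not follow from the cited lemma.
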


\begin{proof}
	By using \cref{lem:local duality-1}, \cref{lem:local duality-2} and the isomorphism (\cref{lem:basic lemma-1})
	\[
	\Homint_K(-,K) \cong \RHomint_A(-, \D{A}),
	\]
	we have the following isomorphisms in $D(\Grlr{A}{B})$:
\begin{align*}
	\D{\Rtorfun{A}(M)} &\cong \Homint_K(\Rtorfun{A}(M), K) \\
	&\cong \RHomint_A(\Rtorfun{A}(M), \D{A}) \quad (\text{\cref{lem:basic lemma-1}}) \\
	&\cong \RHomint_A(\Rtorfun{A}(M \Ltenint{A} A), \D{A}) \\
	&\cong \RHomint_A(M \Ltenint{A} \Rtorfun{A}(A), \D{A}) \quad (\text{\cref{lem:local duality-1}}) \\
	&\cong \RHomint_A(M, \RHomint_A(\Rtorfun{A}(A), \D{A})) \quad (\text{\cref{lem:local duality-2}})\\
	&\cong \RHomint_A(M, \D{\Rtorfun{A}(A)}).
\end{align*}
\end{proof}

We can also show the generalization of \cite[Theorem 3.21]{mori2025categorical} in the unbounded case.
\begin{cor}
	\label{cor:local-duality-2}
	Suppose $A$ satisfies (1) of \cref{con:condition 1}.
	We assume that $\cd(\torfun{A})$ is finite.
	Then, for $M \in D(\Grr{A})$, we have 
	\[
	\D{\Rtorfun{A}(M)} \cong \RHomint_A(M, \D{\Rtorfun{A}(A)})
	\]
	in $D(\Grl{A})$.
\end{cor}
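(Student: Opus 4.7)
The plan is to deduce this corollary as a direct specialization of the bimodule version of local duality (\cref{thm:local duality}) by taking $B = K$. Under the canonical equivalences $\Grlr{K}{A} \cong \Grr{A}$ and $\Grlr{A}{K} \cong \Grl{A}$ (obtained by forgetting the trivial $K$-action on one side), a right $A$-module $M$ is naturally regarded as a $K$-$A$-bimodule, and the functors $\Rtorfun{A}$, $\RHomintr{A}(-, -)$ and Matlis duality $\D{(-)}$ are all compatible with these equivalences, as they depend only on the right $A$-module structure through $\Resr{A}$. Therefore the isomorphism in $D(\Grlr{A}{K})$ produced by \cref{thm:local duality} is precisely the desired isomorphism in $D(\Grl{A})$.

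Equivalently, one can simply re-run the proof of \cref{thm:local duality} verbatim in this new setting. Namely, in $D(\Grl{A})$ one has the chain
\begin{align*}
\D{\Rtorfun{A}(M)}
&\cong \RHomintr{A}(\Rtorfun{A}(M), \D{A}) \quad (\text{\cref{lem:basic lemma-1}}) \\
&\cong \RHomintr{A}(\Rtorfun{A}(M \Ltenint{A} A), \D{A}) \\
&\cong \RHomintr{A}(M \Ltenint{A} \Rtorfun{A}(A), \D{A}) \quad (\text{\cref{lem:local duality-1}}) \\
&\cong \RHomintr{A}(M, \RHomintr{A}(\Rtorfun{A}(A), \D{A})) \quad (\text{\cref{lem:local duality-2}}) \\
&\cong \RHomintr{A}(M, \D{\Rtorfun{A}(A)}).
\end{align*}

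The only point that requires any thought is that \cref{lem:local duality-1} is stated for $N \in D(\Grlr{B}{A})$ rather than $N \in D(\Grr{A})$; but this is not an obstacle, since taking $B = K$ in its statement gives exactly what we need here. Beyond that bookkeeping there is no new mathematical content: the corollary is a specialization, not a genuine extension, of the bimodule local duality theorem.
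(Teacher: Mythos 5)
Your overall strategy---deducing the corollary from the bimodule local duality \cref{thm:local duality} by taking $B=K$---is the same as the paper's, but the bridge you use to pass between the two settings is not correct: there is no canonical equivalence $\Grlr{K}{A} \cong \Grr{A}$ (nor $\Grlr{A}{K} \cong \Grl{A}$). A bigraded $K$-$A$-bimodule carries an extra $\Zbb$-indexing coming from the left $K$-grading: it is exactly a $\Zbb$-indexed family $(e_iM)_{i \in \Zbb}$ of graded right $A$-modules, with morphisms given by families of $A$-module maps. The forgetful functor $\Resr{A} : \Grlr{K}{A} \to \Grr{A}$ is essentially surjective but not full, and the natural inclusion $M \mapsto Ke_0 \ten{k} M$ is fully faithful but not essentially surjective, so neither is an equivalence. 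For the same reason, ``taking $B=K$'' in \cref{thm:local duality} (or in \cref{lem:local duality-1}, \cref{lem:local duality-2} for your second route) produces statements about objects of $D(\Grlr{K}{A})$, not literally about objects of $D(\Grr{A})$; the step you dismiss as bookkeeping is exactly the point where an argument is needed, and it is the entire content of the paper's proof.

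The correct (and short) bridge is the one the paper uses: embed $M$ as the bimodule $Ke_0 \ten{k} M \in D(\Grlr{K}{A})$, observe $M \cong e_0(Ke_0 \ten{k} M)$, apply \cref{thm:local duality} with $B = K$ to this bimodule, and then commute the idempotent restrictions $e_0(-)$ and $(-)e_0$ through $\Rtorfun{A}$, $\D{(-)}$ and $\RHomint_A(-,\D{\Rtorfun{A}(A)})$ to obtain the isomorphism in $D(\Grl{A})$; this is the precise form of the ``compatibility'' you assert, and it is with these restrictions rather than with any equivalence of categories. Alternatively, your plan of re-running the proof of \cref{thm:local duality} for $M \in D(\Grr{A})$ does work, but then you must either prove module versions of \cref{lem:local duality-1} and \cref{lem:local duality-2} (the proofs do go through, using K-projective resolutions by complexes of projectives in $\Grr{A}$ and the quasi-compactness of $\R^i\torfun{A}$ from \cref{prop:quasi-compactness-torfunctor}), or again invoke the degree-$0$ embedding; as written, your justification of both routes rests on the false equivalence, so the proposal has a genuine, though easily repaired, gap.
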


\begin{proof}
The proof is exactly the same as \cite[Theorem 3.21]{mori2025categorical}.
However, we give a proof for the convenience of the reader.
In fact, the corollary is given by the following calculation:
\begin{align*}
	\D{\Rtorfun{A}(M)} &\cong \D{\Rtorfun{A}(e_0(Ke_0 \ten{k} M))} \\
	&\cong \D{(e_0 \Rtorfun{A}(Ke_0 \ten{k} M))} \\
	&\cong (\D{\Rtorfun{A}(Ke_0 \ten{k} M)})e_0 \\
	&\cong \RHomint_A(Ke_0 \ten{k} M, \D{\Rtorfun{A}(A)})e_0 \quad (\text{\cref{thm:local duality}}) \\
	&\cong \RHomint_A(e_0(Ke_0 \ten{k} M), \D{\Rtorfun{A}(A)}) \\
	&\cong \RHomint_A(M, \D{\Rtorfun{A}(A)}).
\end{align*}
\end{proof}

\subsection{Symmetricity of derived torsion functors}

\begin{dfn}
\label{dfn:symmetric derived torsion functor}
	Let $A$ be a noetherian connected $\Zbb$-algebra.
	Let $M$ be an object in $D(\Grlr{A}{A})$.
	\begin{enumerate}
	\item $M$ has \emph{weak symmetric derived torsion} if for every $i \in \Zbb$, $\Resl{A}(H^i(\Rtorfun{A}(M)))$ is $\mathfrak{m}_{A^{\op}}$-torsion and $\Resr{A}(H^i(\Rtorfun{A^{\op}}(M)))$ is $\mathfrak{m}_{A}$-torsion.
	 \item $M$ has \emph{symmetric derived torsion} if there exists an isomorphism
	 \[
	 \Rtorfun{A}(M) \cong \Rtorfun{A^{\op}}(M).
	 \]
	\end{enumerate}
\end{dfn}

\begin{prop}
	\label{prop:symmetric derived torsion functor}
	Let $A$ be a noetherian connected $\Zbb$-algebra.
	Let $M$ be an object in $D(\Grlr{A}{A})$.
	We assume that $\cd(\torfun{A})$ and $\cd(\torfun{A^{\op}})$ are finite.
	Then, the following conditions are equivalent:
	\begin{enumerate}
	\item $M$ has weak symmetric derived torsion.
	\item $M$ has symmetric derived torsion.
	\end{enumerate}
\end{prop}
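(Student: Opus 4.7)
The plan is to reduce the equivalence to two ingredients: (a) the derived torsion functors $\Rtorfun{A}$ and $\Rtorfun{A^{\op}}$ commute on $D(\Grlr{A}{A})$, and (b) Proposition \ref{prop:basic proposition-10}\,(1) collapses iterated applications. Both directions of the equivalence then follow by short manipulations, so the work lies mostly in verifying commutativity.

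For the direction $(2) \Rightarrow (1)$, I would argue directly. If $\Rtorfun{A}(M) \cong \Rtorfun{A^{\op}}(M)$ in $D(\Grlr{A}{A})$, then since $\Rtorfun{A^{\op}}$ can be computed using a K-injective resolution $M \to I$ of bimodules whose restrictions $e_jI$ are K-injective on the left (by \cref{prop:restriction-K-projective-K-injective}, applied to $A^{\op}$), the complex $\torfun{A^{\op}}(I)$ consists termwise of $\mathfrak{m}_{A^{\op}}$-torsion bimodules on the left. Since $\Tor(A^{\op})$ is a Serre subcategory of $\Grl{A}$ (closed under subs, quotients, and extensions), each $\Resl{A}(H^i(\Rtorfun{A^{\op}}(M)))$ is $\mathfrak{m}_{A^{\op}}$-torsion, and transporting along the given isomorphism yields condition (1) on the left side. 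The argument for the right side is symmetric.

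For $(1) \Rightarrow (2)$ the heart of the proof is commutativity. Applying \cref{lem:local duality-1} with $N=M$ and its right-hand argument $A$ gives $\Rtorfun{A}(M) \cong M \Ltenint{A} \Rtorfun{A}(A)$; the analogue for $A^{\op}$ (which holds by the same argument applied to the opposite $\Zbb$-algebra, using that $A$ is noetherian and $\cd(\torfun{A^{\op}})$ is finite) gives $\Rtorfun{A^{\op}}(M) \cong \Rtorfun{A^{\op}}(A) \Ltenint{A} M$. Combining these two presentations, I would compute
\[
\Rtorfun{A^{\op}}(\Rtorfun{A}(M)) \;\cong\; \Rtorfun{A^{\op}}(A) \Ltenint{A} M \Ltenint{A} \Rtorfun{A}(A) \;\cong\; \Rtorfun{A}(\Rtorfun{A^{\op}}(M)),
\]
which is the desired commutativity. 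Granting (1), the left restriction of $\Rtorfun{A}(M)$ has $\mathfrak{m}_{A^{\op}}$-torsion cohomology, so \cref{prop:basic proposition-10}\,(1) applied over $A^{\op}$ yields $\Rtorfun{A^{\op}}(\Rtorfun{A}(M)) \cong \Rtorfun{A}(M)$; symmetrically $\Rtorfun{A}(\Rtorfun{A^{\op}}(M)) \cong \Rtorfun{A^{\op}}(M)$. Chaining the three isomorphisms gives $\Rtorfun{A}(M) \cong \Rtorfun{A^{\op}}(M)$.

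The main obstacle I anticipate is the commutativity step, specifically justifying the opposite-sided version of \cref{lem:local duality-1} and ensuring the tensor associativity used in the middle is legitimate in $D(\Grlr{A}{A})$ (it requires K-flat replacements on both sides, but these exist by \cref{prop:existence-K-projective-K-injective} and \cref{prop:restriction-K-projective-K-injective}\,(3)). Once these bookkeeping points are dealt with, everything else is a direct application of results already established in the paper, and no new constructions are needed.
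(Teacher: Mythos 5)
Your proposal is correct and follows essentially the same route as the paper: the easy direction uses that torsion modules form a Serre subcategory so the cohomologies of $\Rtorfun{A^{\op}}(M)$ (resp. $\Rtorfun{A}(M)$) are automatically torsion on the appropriate side, and the hard direction combines \cref{lem:local duality-1} (and its $A^{\op}$-analogue) with associativity of $\Ltenint{A}$ to identify $\Rtorfun{A^{\op}}(\Rtorfun{A}(M)) \cong \Rtorfun{A}(\Rtorfun{A^{\op}}(M))$, then collapses both sides via \cref{prop:basic proposition-10}\,(1) using the weak symmetry hypothesis. This is precisely the paper's argument, only written as a chain of isomorphisms instead of the commutative diagram with the maps $f_0,\dots,f_4$.
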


\begin{proof}

	If $M$ has symmetric derived torsion, then 
	\begin{align*}
		\Resl{A}(H^i(\Rtorfun{A}(M))) &\cong \Resl{A}(H^i(\Rtorfun{A^{\op}}(M))), \\
		\Resr{A}(H^i(\Rtorfun{A^{\op}}(M))) &\cong \Resr{A}(H^i(\Rtorfun{A}(M))).
	\end{align*}
	Thus, $M$ has weak symmetric derived torsion.

	Next, assume that $M$ has weak symmetric derived torsion.
	We have the following diagram in $D(\Grlr{A}{A})$:
	\[
	\begin{tikzcd}
		\Rtorfun{A^{\op}}(A) \Ltenint{A} (M \Ltenint{A} \Rtorfun{A}(A)) \ar[d, "f_1"'] \ar[rr, "f_0"] 
		& 
		& (\Rtorfun{A^{\op}}(A) \Ltenint{A} M) \Ltenint{A} \Rtorfun{A}(A) \ar[d, "f_2"]  \\
		A \Ltenint{A} (M \Ltenint{A} \Rtorfun{A}(A)) \ar[d, "f_3"'] 
		& 
		&(\Rtorfun{A^{\op}}(A) \Ltenint{A} M) \Ltenint{A} A \ar[d, "f_4"]\\
		M \Ltenint{A} \Rtorfun{A}(A) 
		& 
		& \Rtorfun{A^{\op}}(A) \Ltenint{A} M 
	\end{tikzcd}
	\]
	, where 
	\begin{itemize}
		\item $f_0$ comes from the associativity of derived tensor products.
		\item $f_1, f_2$ come from the natural morphisms:
		\begin{align*}
			\Rtorfun{A^{\op}}(A) \rightarrow A, \quad
			\Rtorfun{A}(A) \rightarrow A.
		\end{align*}
		\item $f_3,f_4$ come from the fact that $A$ is the unit of derived tensor products.
	\end{itemize}

	From \cref{lem:local duality-1}, it follows that
	\begin{align*}
	N \Ltenint{A} \Rtorfun{A}(A) \cong \Rtorfun{A}(N), \quad
	\Rtorfun{A^{\op}}(A) \Ltenint{A} N &\cong \Rtorfun{A^{\op}}(N)
	\end{align*}
	for any $N \in D(\Grlr{A}{A})$.

	By applying \cref{prop:basic proposition-10} and the isomorphisms to the above diagram, we have the following diagram in $D(\Grlr{A}{A})$:
	\[
	\begin{tikzcd}
		\Rtorfun{A^{\op}}(\Rtorfun{A}(M)) \ar[rr, "g_0"', "\cong"] \ar[d, "g_{1,3}"', "\cong"]
		&
		& \Rtorfun{A}(\Rtorfun{A^{\op}}(M)) \ar[d, "g_{2,4}", "\cong"']  \\
		\Rtorfun{A}(M) 
		& 
		& \Rtorfun{A^{\op}}(M). 
	\end{tikzcd}
	\]
	Therefore, we have the desired isomorphism
	\[
	g_{2,4} \circ g_0 \circ g_{1,3}^{-1} : \Rtorfun{A}(M) \xlongrightarrow{\cong} \Rtorfun{A^{\op}}(M).
	\]

\end{proof}

\subsection{$\chi$-condition}

\begin{dfn}
	\label{dfn:chi-condition}
	Let $A$ be a connected $\Zbb$-algebra.

	\begin{enumerate}
	\item Assume that $A$ is right noetherian.
	$A$ satisfies \emph{right $\chi$-condition} if for every $M \in \gr(A)$ and for every integer $i$, $\Extintr{A}^i(K,M)$ is a finite $k$-module.
	\item Assume that $A$ is left noetherian.
	If $A^{\op}$ satisfies right $\chi$-condition, we say that $A$ satisfies \emph{left $\chi$-condition}.
	\item $A$ satisfies left and right $\chi$-condition, then we say that $A$ satisfies \emph{$\chi$-condition}.
	\end{enumerate}
\end{dfn}

Let $A$ be a connected $\Zbb$-algebra.
Then, a \emph{minimal complex of injective $A$-modules} is a bounded below complex $(I, d_I)$ of injective right $A$-modules such that the submodule $\Ker(d^j) \subset I^j$ is a graded essential submodule for every $j \in \Zbb$.
Let $M \in D^+(\Grr{A})$.
Then, a \emph{minimal injective resolution} of $M$ is a minimal complex of injective $A$-modules $(I, d_I)$ which is quasi-isomorphic to $M$. 
Since every graded right $A$-module has an injective hull, we can construct a minimal injective resolution of $M$ in the same way as \cite[Proposition 13.26]{yekutieli2019derived}.



\begin{lem}
\label{lem:minimal-injective-resolution-2}
	Let $A$ be a connected $\Zbb$-algebra and $M \in D^+(\Grr{A})$.
	Let $(I, d_I)$ be a minimal injective resolution of $M$. 
	We consider the subcomplex $(\Soc(I), d_{\Soc(I)})$ of $(I, d_I)$.
	Then, $d_{\Soc(I)} =0$.
\end{lem}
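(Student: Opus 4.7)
The plan is to work element-by-element on homogeneous elements of $\Soc(I^j)$ and use the essentiality condition that defines minimality. The key observation is that, for a connected $\Zbb$-algebra $A$, the cyclic submodule generated by a nonzero element of $\Soc(I^j)$ is forced to be one-dimensional over $k$, which makes essentiality extremely restrictive.

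First, I would unpack the socle: using the computation $e_iK = S_{i,A}$ (the simple graded right $A$-module concentrated in degree $i$), one has
\[
\Soc(M)_i \;\cong\; \Homr{A}(S_{i,A}, M) \;=\; \{\, m \in M_i \,:\, m\cdot \mathfrak{m}_A = 0 \,\}.
\]
Since $\Soc(I^j)$ is a graded submodule of $I^j$, it suffices to show $d^j(x) = 0$ for every homogeneous $x \in \Soc(I^j)$. Fix such an $x$ of degree $n$. Because $x \in \Soc(I^j)_n$, we have $x \cdot A_{nm} = 0$ for every $m > n$, and because $A$ is connected we have $A_{nn} = k \cdot e_n$ acting as the identity on $x$. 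Consequently the cyclic graded submodule $xA \subset I^j$ equals $k\cdot x$, a one-dimensional $k$-module concentrated in degree $n$.

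Next I invoke minimality: by definition of a minimal complex of injective $A$-modules, the graded submodule $\Ker(d^j) \subset I^j$ is essential. If $x = 0$ we are done; otherwise $xA = kx$ is a nonzero graded submodule of $I^j$, so essentiality yields $xA \cap \Ker(d^j) \neq 0$. Since $xA = kx$ is one-dimensional over $k$, any nonzero $k$-subspace of it is all of $kx$, forcing $kx \subset \Ker(d^j)$ and in particular $x \in \Ker(d^j)$. Therefore $d^j(x) = 0$, so the restriction of $d_I$ to $\Soc(I)$ is zero, as claimed.

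There is no real obstacle here: the statement follows almost immediately once one observes that, under the connectedness hypothesis, every homogeneous element of the socle generates a one-dimensional cyclic module, so essentiality of $\Ker(d^j)$ is enough to absorb the socle into it. The only thing to be careful about is the graded formulation of essentiality, but since $xA$ is itself graded the graded and ungraded statements coincide in this case.
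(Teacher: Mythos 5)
Your proposal is correct and follows essentially the same argument as the paper: take a nonzero homogeneous socle element $x$, note that its cyclic submodule is the one-dimensional graded submodule $xA = kx$ (the paper writes $xK = xA$), and use essentiality of $\Ker(d_I^j)$ to force $x \in \Ker(d_I^j)$. Your write-up merely makes explicit the one-dimensionality step that the paper leaves implicit.
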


\begin{proof}
	We can prove in the same way as \cite[Lemma 16.5.12]{yekutieli2019derived}.
	But, we give a proof for the convenience of the reader.

	Take a homogeneous degree $i$ element $0 \neq x \in (\Soc(I)^j)_i$.
	Then, we have
	\[
	x K  = x A \subset (\Soc(I)^j)_i \subset I^j,
	\]
	that is, $x K$ is a graded submodule of $I^j$.
	Since $x K \cap \Ker(d_I^j) \neq 0$ (essentiality of $\Ker(d_I^j)$), we obtain $x \in \Ker(d_I^j)$.
\end{proof}

\begin{prop}
	\label{prop:chi-condition}
	Let $A$ be a right noetherian connected $\Zbb$-algebra.
	Let $M \in D^+(\Grr{A})$ and $i$ be an integer.
	Then,
	\begin{enumerate}
		\item If $\Extint^i_{A}(K,M)$ is a finite $k$-module, then $\R^i\torfun{A}(M)$ is a cofinite $A$-module.
		\item If $\R^j\torfun{A}(M)$ is a cofinite $A$-module for every $j \leq i$, then $\Extintr{A}^j(K, M)$ is a finite $k$-module for every $j \leq i$.
	\end{enumerate}
\end{prop}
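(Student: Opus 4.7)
The plan is to analyse a minimal injective resolution $M \to I$ of $M$ in $D^+(\Grr{A})$, whose existence follows from $\Grr{A}$ being a Grothendieck category with injective hulls. The key preliminary identifications are
\[
\Homintr{A}(K,I^j) \cong \Soc(I^j), \qquad \torfun{A}(I^j) \cong \Soc(I^j) \tenint{K} \D{A}.
\]
The first, together with the vanishing of the differential on $\Soc(I)$ given by \cref{lem:minimal-injective-resolution-2}, produces $\Extintr{A}^i(K,M) \cong W^i$, where $W^i := \Soc(I^i)$. The second is \cref{lem:basic lemma-8}, exhibiting $\torfun{A}(I^j)$ as a direct sum of the indecomposable cofinite injectives $e_l \D{A}$ with multiplicities $\dim_k (W^j)_l$; in particular, $\torfun{A}(I^j)$ is cofinite precisely when $W^j$ is a finite $k$-module.

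For part (1), if $W^i$ is finite over $k$ then $\torfun{A}(I^i)$ is a finite direct sum of cofinite injectives, hence cofinite. Writing $\R^i\torfun{A}(M) = Z^i/B^i$ with $Z^i := \Ker(d^i)$ and $B^i := \Img(d^{i-1})$ inside $\torfun{A}(I)$, the module $\R^i\torfun{A}(M)$ appears as a subquotient of the cofinite module $\torfun{A}(I^i)$. Since cofinite right $A$-modules are closed under subobjects and quotients in $\Grlfr{A}$ (a consequence of \cref{thm:matlis-duality} and the noetherian hypothesis on $A$), $\R^i\torfun{A}(M)$ is cofinite.

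For part (2), I argue by induction on $i$. The base case $i = 0$ uses $\Soc(\torfun{A}(M)) = \Soc(M) = W^0$ together with the equivalence, for a torsion module $N$, between cofiniteness of $N$ and finiteness of $\Soc(N)$ over $k$ (which follows from $\D{N} \ten{A} K \cong \D{\Soc(N)}$ and graded Nakayama applied to $\D{N}$). For the inductive step, assume $W^j$ is finite for all $j<i$ and $\R^j\torfun{A}(M)$ is cofinite for all $j \leq i$. Then each $\torfun{A}(I^j)$ with $j<i$ is cofinite, so $B^i$ (a quotient of $\torfun{A}(I^{i-1})$) is cofinite. The short exact sequence
\[
0 \to B^i \to Z^i \to \R^i\torfun{A}(M) \to 0
\]
now shows $Z^i$ is cofinite. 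The crucial point is that \cref{lem:minimal-injective-resolution-2} gives $d^i|_{W^i} = 0$, so $W^i \subseteq Z^i$, while $\Soc(Z^i) \subseteq \Soc(\torfun{A}(I^i)) = W^i$; we conclude $\Soc(Z^i) = W^i$. Cofiniteness of $Z^i$ then forces $W^i \cong \Extintr{A}^i(K,M)$ to be finite over $k$.

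The main subtlety is the Serre-closure of cofinite modules, which rests on Matlis duality and the noetherian hypothesis and is what makes subquotients of cofinite modules remain cofinite. The inductive identification $\Soc(Z^i) = W^i$ is what translates cofiniteness of the local cohomology back into finiteness of the Ext term, and crucially uses the minimality of the resolution via \cref{lem:minimal-injective-resolution-2}.
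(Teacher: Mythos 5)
Your proposal is correct and follows essentially the same route as the paper: a minimal injective resolution $M \to I$, the vanishing of the differential on $\Soc(I)$ (\cref{lem:minimal-injective-resolution-2}) giving $\Extintr{A}^j(K,M)\cong W^j$, the identification $\torfun{A}(I^j)\cong W^j \tenint{K}\D{A}$ (\cref{lem:basic lemma-8}), Serre-closure of cofinite modules via Matlis duality for (1), and an induction on the cohomological degree using the kernel of $d^j_{\torfun{A}(I)}$ for (2). The only (cosmetic) differences are that the paper starts the induction at the lowest degree $i_0$ with $H^{i_0}(M)\neq 0$ rather than at $i=0$ (as one must for a general object of $D^+(\Grr{A})$), and it deduces finiteness of a cofinite semisimple $W^j$ by an artinian/descending-chain argument, so your ``graded Nakayama'' aside is not needed.
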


\begin{proof}
	(1) Take a minimal injective resolution $M \rightarrow I$ of $M$ and put $W = \Soc(I)$.
	Then, from \cref{lem:minimal-injective-resolution-2}, we have
	\begin{align*}
		\Extintr{A}^i(K,M) \cong H^i(\Homint_A(K, I)) \cong H^i(W) \cong W^i.
	\end{align*}
	Thus, $W^i$ is a finite $k$-module and $W^i \cong \bigoplus_{p \in \Zbb} \bigoplus_{x \in X^i_p} e_p K$ for some finite sets $X^i_p \ (p \in \Zbb)$. 
	In addition, $X^i_p = \emptyset$ for all but finitely many $p$.

	On the other hand, 
	\[
	\R^i\torfun{A}(M) \cong H^i(\torfun{A}(I)).
	\]
	From \cref{lem:basic lemma-8}, we have 
	\[
	\torfun{A}(I^i) \cong W^i \tenint{K} \D{A} \cong \bigoplus_{p \in \Zbb} \bigoplus_{x \in X^i_p} e_p \D{A}.
	\]
	Thus, because $e_p \D{A}$ is a cofinite $A$-module for every $p \in \Zbb$, $\torfun{A}(I^i)$ is a cofinite $A$-module and so is $\R^i\torfun{A}(M)$.

	(2) Take a minimal injective resolution $M \rightarrow I$ of $M$.
	Let $i_0$ be the smallest integer such that $H^{i_0}(M) \neq 0$.
	Put $W = \Soc(I)$.
	Then, as in (1), we have
	\begin{gather*}
		W \subset \torfun{A}(I) \subset I, \quad
		\torfun{A}(I^i) \cong W^i \tenint{K} \D{A}, \quad
		\Extintr{A}^i(K,M) \cong W^i.
	\end{gather*}

	We prove that $W^j$ is a finite $k$-module for any $j \leq i$.
	Because $i_0$ is the smallest integer such that $I^{i_0} \neq 0$, it is enough to start from $j=i_0$.

	If $j=i_0$, we have an inclusion 
	\[
	W^{i_0} \subset \R^{i_0}\torfun{A}(M)
	\]
	because of the fact that $I^{i_0-1} =0$ and \cref{lem:minimal-injective-resolution-2}.
	So, $W^{i_0}$ is a cofinite $A$-module.
	We assume that
	\[
	W^{i_0} \cong \bigoplus_{p \in \Zbb} \bigoplus_{x \in X^{i_0}_p} e_p (A/A_{>0}) \cong \bigoplus_{p \in \Zbb} \bigoplus_{x \in X^{i_0}_p} e_p K
	\]
	for some sets $X^{i_0}_p (p \in \Zbb)$ and $\bigcup_{p \in \Zbb} X^{i_0}_p$ is an infinite set.
	Then, we have an descending chain of submodules of $W^{i_0}$ of infinite length.
	On the other hand, by \cref{thm:matlis-duality}, every cofinite $A$-module is an artinian object in $\Grr{A}$.
	This is a contradiction.
	Thus, $W^{i_0}$ is a finite $k$-module.

	Take an integer $j$ such that $i_0 < j \leq i$ and assume that 
	$W^{j-1}$ is a finite $k$-module.
	Then, we have the following diagram from the definition of $\R^{j}\torfun{A}(M)$, the above isomorphism and \cref{lem:minimal-injective-resolution-2}:
	\[
	\begin{tikzcd}
		\torfun{A}(I^{j-1}) \ar[r] \ar[d, equal, "{\rotatebox[origin=c]{-90}{$\sim$}}"] & \Ker(d^{j}_{\torfun{A}(I)}) \ar[r] & \R^{j}\torfun{A}(M) \ar[r] & 0 \quad \text{(exact)}. \\
		W^{j-1} \tenint{K} \D{A} & W^{j} \ar[u, hook]
	\end{tikzcd}
	\]
	Thus, $\Ker(d^{j}_{\torfun{A}(I)})$ is a cofinite $A$-module and so is $W^j$.
	Therefore, $W^j$ is a finite $k$-module as in the case $j=i_0$.
\end{proof}

From the proposition, we can give another characterization of $\chi$-condition.

\begin{cor}
	\label{cor:chi-condition}
	Let $A$ be a right (resp. left) noetherian connected $\Zbb$-algebra.
	Then, $A$ satisfies right (resp. left) $\chi$-condition if and only if for every $M \in \grr{A}$ (resp. $\grl{A}$) and every $i$, $\R^i\torfun{A}(M) \in \grcofr{A}$ (resp. $\R^i\torfun{A^{\op}}(M) \in \grcofl{A}$).
\end{cor}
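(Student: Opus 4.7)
The plan is to derive this corollary as an immediate consequence of \cref{prop:chi-condition}, which has already packaged both implications at the level of a single module $M$ and a single degree $i$. Since right $\chi$-condition is, by definition, a quantification over all $M \in \grr{A}$ and all $i$ of the finiteness of $\Extintr{A}^i(K,M)$, while the cofiniteness hypothesis in the corollary is the analogous quantification on the local cohomology side, the equivalence is essentially a matter of unpacking quantifiers.

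For the ``only if'' direction, I would start with any $M \in \grr{A}$ and any integer $i$, assume right $\chi$-condition so that $\Extintr{A}^i(K,M)$ is a finite $k$-module, and then directly invoke \cref{prop:chi-condition}(1) to conclude that $\R^i\torfun{A}(M) \in \grcofr{A}$. Note that $M$ is bounded below (concentrated in degree $0$), so it lies in $D^+(\Grr{A})$ as required by \cref{prop:chi-condition}.

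For the converse, I would fix $M \in \grr{A}$ and an integer $i$, and assume that $\R^j\torfun{A}(M) \in \grcofr{A}$ for every integer $j$; in particular this holds for every $j \leq i$, so \cref{prop:chi-condition}(2) gives finiteness of $\Extintr{A}^j(K,M)$ for every $j \leq i$, and in particular for $j=i$. Since $i$ was arbitrary, this establishes right $\chi$-condition. The statement for left $\chi$-condition follows by applying the right case to the opposite $\Zbb$-algebra $A^{\op}$.

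There is no real obstacle here; the serious work was already absorbed into \cref{prop:chi-condition}, whose proof used the existence of minimal injective resolutions (\cref{lem:minimal-injective-resolution-2}), the description of torsion injectives via \cref{lem:basic lemma-8}, and the artinian property of cofinite modules coming from Matlis duality (\cref{thm:matlis-duality}). The only point to be careful about when writing up is that \cref{prop:chi-condition}(2) requires cofiniteness in \emph{all} degrees $j\le i$ simultaneously to conclude finiteness in degree $i$, so the converse direction genuinely needs the hypothesis for every $i$, not just the single degree under consideration---but the statement of the corollary does quantify over every $i$, so this causes no trouble.
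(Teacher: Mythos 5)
Your argument is correct and is exactly the route the paper takes: the ``only if'' direction is \cref{prop:chi-condition}(1) and the ``if'' direction is \cref{prop:chi-condition}(2), with the quantifier bookkeeping you point out being the only content beyond those two statements.
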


\begin{proof}
	The ``only if'' part follows from \cref{prop:chi-condition} (1).
	The ``if'' part follows from \cref{prop:chi-condition} (2).
\end{proof}

We denote by $D_{(f,f)}^{\star}(\Grlr{A}{B})$ the full subcategory of 
$D^{\star}(\Grlr{A}{B})$ consisting of objects $M$ whose restrictions 
$\Resll{A}{j}H^i(M)$ and $\Resrr{B}{j}H^i(M)$ of the cohomology modules $H^i(M)$ 
are finite over $A$ and $B$, respectively. 
Here, $\star \in \{\emptyset, +, -, b\}$.

\begin{rmk}
\label{rmk:chi-condition_symmetric derived torsion}
Let $A$ be a noetherian connected $\Zbb$-algebra.
Even if $A$ satisfies $\chi$-condition, it is unclear whether an object 
$M \in D^b_{(f,f)}(\Grlr{A}{A})$ has weak symmetric derived torsion.
This phenomenon differs from the graded case 
(\cite[Proposition~16.5.19]{yekutieli2019derived}, 
\cite[Corollary~4.8]{vandenbergh1997existence}).
Note that, by \cref{prop:symmetric derived torsion functor}, 
$M$ has weak symmetric derived torsion if and only if it has symmetric derived torsion.

In the case of a noetherian connected graded algebra $R$, for any graded $R$-bimodule $M$ that is finitely generated on both sides, if $R$ satisfies $\chi$-condition in the sense of \cite{vandenbergh1997existence} or \cite{artin1994noncommutative}, then 
$\R^i\torfun{R}(M)$ and $\R^i\torfun{R^{\op}}(M)$ are right limited, that is, 
$\R^i\torfun{R}(M)_j = 0$ and $\R^i\torfun{R^{\op}}(M)_j = 0$ for $j \gg 0$. 
Here, $\R^i\torfun{R}(M)$ and $\R^i\torfun{R^{\op}}(M)$ denote the $i$-th local cohomology modules of $M$ in the theory of graded algebras. 
This implies that $\R^i\torfun{R}(M)$ and $\R^i\torfun{R^{\op}}(M)$ are torsion on both sides.

For $M \in \Grlr{A}{A}$ such that $Me_j \in \grl{A}$ and $e_jM \in \grr{A}$ for each $j \in \Zbb$, \cref{cor:chi-condition} induces that $\R^i\torfun{A}(e_jM)$ and $\R^i\torfun{A^{\op}}(Me_j)$ are right limited for any $i,j \in \Zbb$. 
However, we cannot conclude from the same argument as in the graded case that 
$\R^i\torfun{A}(M)$ and $\R^i\torfun{A^{\op}}(M)$ are torsion on both sides, 
since $M$ is $\Zbb^2$-graded.
\end{rmk}

However, for a noetherian connected $\Zbb$-algebra $A$, then we can overcome the difficulty in \cref{rmk:chi-condition_symmetric derived torsion} if we consider a $r$-periodic bigraded $A$-$A$-bimodule at least.

\begin{prop}
\label{prop:chi-condition_symmetric derived torsion-2}
Let $A$ be a noetherian connected $\Zbb$-algebra.
Let $M \in D^{\star}_{(f,f)}(\Grlr{A}{A})$, where $\star = \emptyset$ if $\cd(\torfun{A})$ is finite, and $\star = +$ otherwise.
If $A$ satisfies $\chi$-condition and for any $i$, $H^i(M)$ is $r_i$-periodic for some $r_i$, then $M$ has weak symmetric derived torsion.
\end{prop}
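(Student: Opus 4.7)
The plan is to reduce the statement to the case of a single periodic bigraded bimodule via a spectral sequence, and then to combine $\chi$-condition with periodicity to obtain a uniform diagonal vanishing bound on $\R^p\torfun{A}$. This bound upgrades the slice-wise right-limitedness coming from $\chi$-condition to the left-torsion property required by weak symmetric derived torsion.

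First, I would invoke the hypercohomology spectral sequence
\[
E_2^{p,q} = \R^p\torfun{A}(H^q(M)) \Longrightarrow H^{p+q}(\Rtorfun{A}(M)),
\]
which converges under our hypotheses by the argument in the proof of \cref{prop:basic proposition-10} (the bounded-below case, or the finite-$\cd(\torfun{A})$ case in the unbounded setting). Since $\mathfrak{m}_{A^{\op}}$-torsion left $A$-modules form a Serre subcategory of $\Grl{A}$, it suffices to prove the claim for each $N := H^q(M)$, which by hypothesis belongs to $\Grlr{A}{A}$, is finitely generated on both sides, and is $r$-periodic for some $r := r_q$.

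Second, \cref{cor:chi-condition} applied to $e_jN \in \grr{A}$ yields that $\R^p\torfun{A}(e_jN)$ is cofinite for every $j$. Since $A$ is connected, every finitely generated graded left $A$-module is bounded above, so cofiniteness produces a least integer $n_j \in \Zbb \cup \{-\infty\}$ with $\R^p\torfun{A}(N)_{j,l} = 0$ for all $l \geq n_j$. The $r$-periodicity of $N$, realised by an isomorphism $\varphi : A \to A(r,r)$ and $N \cong {}_\varphi N(r,r)_\varphi$, induces an autoequivalence of $\Grlr{A}{A}$ that commutes with $\Rtorfun{A}$, giving $\R^p\torfun{A}(N)_{j,l} \cong \R^p\torfun{A}(N)_{j+r,l+r}$ and hence $n_{j+r} = n_j + r$. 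Consequently $j \mapsto n_j - j$ is $r$-periodic, bounded above by some $N_p \in \Zbb$, and we obtain the uniform diagonal bound
\[
\R^p\torfun{A}(N)_{j,l} = 0 \quad \text{whenever } l - j \geq N_p.
\]

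Finally, for any homogeneous $m \in \R^p\torfun{A}(N)_{j,l}$ and any $a \in A_{i,j}$, the product $am \in \R^p\torfun{A}(N)_{i,l}$ vanishes as soon as $l - i \geq N_p$, i.e., as soon as $j - i \geq j - l + N_p$, so $A_{\geq j - l + N_p} \cdot m = 0$; hence $\Resl{A}(\R^p\torfun{A}(N))$ is $\mathfrak{m}_{A^{\op}}$-torsion. The symmetric claim for $\Resr{A}(\R^p\torfun{A^{\op}}(N))$ follows by applying the same argument to $A^{\op}$, which satisfies $\chi$-condition by hypothesis and with respect to which $N$ is still $r$-periodic. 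I expect the uniform-bound step to be the main obstacle: as \cref{rmk:chi-condition_symmetric derived torsion} observes, $\chi$-condition alone controls each slice $e_jN$ independently without coupling the different slices, and periodicity is precisely the hypothesis that couples them, forcing $n_j - j$ to stay bounded rather than growing uncontrollably with $j$.
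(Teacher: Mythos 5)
Your proposal is correct and follows essentially the same route as the paper's proof: reduce to a single cohomology bimodule via the (truncated) Cartan--Eilenberg spectral sequence, use \cref{cor:chi-condition} to get slice-wise right-limitedness of $\R^p\torfun{A}(e_jN)$, and then use $r$-periodicity to couple the slices and force vanishing of the components with small left index, which yields left torsionness (and symmetrically for $A^{\op}$). Your packaging of the coupling as the $r$-periodicity of $j \mapsto n_j - j$, giving a uniform diagonal bound, is just a cleaner formulation of the paper's element-wise argument that shifts a window of length $r$ by multiples of $r$.
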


\begin{proof}
	When $M$ is unbounded, as in the proof of \cref{prop:basic proposition-10}, we have the spectral sequence and isomorphisms
	\begin{align*}
		E_2^{p,q} = \R^p\torfun{A}(H^q(M))
		\Rightarrow 
		E^{p+q} &= H^{p+q}(\operatorname{Tot}(\torfun{A}(\tilde{I}^{\mydot \mydot}))) \\
		&\cong H^{p+q}(\torfun{A}(\operatorname{Tot}(\tilde{I}^{\mydot \mydot}))) \\
		&\cong \R^{p+q}\torfun{A}(M),
	\end{align*}
	where $\tilde{I}^{\mydot \mydot}$ is the truncated Cartan-Eilenberg resolution.
	So, we can assume that $M \in \Grlr{A}{A}$.
	If $M$ is bounded below, then we can also obtain deduction by considering a Cartan-Eilenberg resolution without truncation.

	We also assume that $M$ is $r$-periodic for some $r > 0$.
	We can always do this because $M$ is $r$-periodic if and only if $M$ is $(-r)$-periodic (cf. \cite[Lemma 2.14]{mori2025categorical}).
	If $\varphi :A \xlongrightarrow{\cong} A(r,r)$ is an isomorphism of $\Zbb$-algebras which gives $M \cong {}_{\varphi}M(r,r)_{\varphi}$, then we have an isomorphism
	\[
	e_{p+lr}M \xlongrightarrow{\cong} e_{p}M(-lr)_{\varphi^{-l}},
	\]
	where $l \in \Zbb$ (\cref{subsec:Z-algebras}).
	Moreover, if $M \rightarrow I$ is a K-injective resolution, then so is $M(-lr)_{\varphi^{-l}} \rightarrow I(-lr)_{\varphi^{-l}}$.
	Thus, we obtain
	\begin{align*}
		\R^i\torfun{A}(e_{p+lr}M) &\cong \R^i\torfun{A}(e_{p} M(-lr)_{\varphi^{-l}}) \\
		&\cong \R^i\torfun{A}(e_{p} M)(-lr)_{\varphi^{-l}}.
	\end{align*}
	If $\R^i\torfun{A}(e_{p} M)_n=0$ if $n > n_{i,p}$, then $\R^i\torfun{A}(e_{p+lr} M)_{n}=0$ if $n > n_{i,p}+lr$.

	Now suppose
	\[
	  m \in e_p \R^i\torfun{A}(M) e_{n_1} \cong \R^i\torfun{A}(e_p M)_{n_1},
	\]
	and assume that $\R^i\torfun{A}(e_q M)_{n_2} = 0$ for $p-r+1 \leq q \leq p$ and $n_2 > n_{i,p,r}$. 
	Then, by the above observation, $\R^i\torfun{A}(e_q M)_{n_2} = 0$ for $p+(l-1)r+1 \leq q \leq p+lr$ and $n_2 > n_{i,p,r}+lr$. 
	Hence, if we choose $l$ such that $n_1 > n_{i,p,r}+lr$, then $\R^i\torfun{A}(e_q M)_{n_1} = 0$ for $p+(l-1)r+1 \leq q \leq p+lr$. 
	Moreover,
	\[
	e_q \R^i\torfun{A}(M) e_{n_1} \cong \R^i\torfun{A}(e_q M)_{n_1} = 0
   \qquad (q \geq p+lr).
   \]
   Thus, $m$ is left $A$-torsion, and $\R^i\torfun{A}(M)$ is both left and right torsion.

	In the same way, you can show that $\R^i\torfun{A^{\op}}(M)$ is left and right torsion.
\end{proof}


\section{Dualizing complexes}
\label{sec:dualizing complexes}

In this section, we define (balanced) dualizing complexes over noetherian connected $\Zbb$-algebras and study their properties.
In particular, we prove the existence theorem of balanced dualizing complexes over a noetherian connected $\Zbb$-algebra $A$ which satisfies $\chi$-condition, has finite local cohomological dimension and has symmetric derived torsion functor as a bigraded $A$-$A$-bimodule.

\subsection{Definition of (balanced) dualizing complexes}
\label{subsec:definition of dualizing complexes}

\begin{dfn}
	\label{dfn:dualizing complexes}
	Let $A$ be a noetherian connected $\Zbb$-algebra.
	$R_A \in D^b(\Grlr{A}{A})$ is a \emph{dualizing complex} over $A$ if 
	\begin{enumerate}
		\item For every $i,j$, $\Resrr{A}{i}H^j(R_A)$ is finite over $A$ and $\Resll{A}{i}H^j(R_A)$ is finite over $A^{\op}$.
		\item There exists a complex $I \in D^b(\Grlr{A}{A})$ such that $I \cong R_A$ in $D(\Grlr{A}{A})$, $\Resr{A}(I^j)$ and $\Resl{A}(I^j)$ are injective for every $j$ in $\Grr{A}$ and $\Grl{A}$, respectively.
		\item The natural morphisms
		\begin{align*}
			\Psi_{A} : A &\rightarrow \RHomint_{A}(R_A,R_A), \\
			\Psi_{A^{\op}} : A &\rightarrow \RHomint_{A^{\op}}(R_A,R_A)
		\end{align*}
		are isomorphisms in $D(\Grlr{A}{A})$.
	\end{enumerate}
	Moreover, $R_A$ is \emph{balanced} if 
	\[
	\Rtorfun{A}(R_A) \cong \Rtorfun{A^{\op}}(R_A) \cong \D{A}
	\]
	in $D(\Grlr{A}{A})$.
\end{dfn}

\begin{rmk}
Indeed, it coincides with the definition in \cref{subsec:results}; see \cref{lem:finite injective dimension} and \cref{rmk:duality}.
\end{rmk}

\begin{dfn}
\label{dfn:finite injective dimension}
	Let $\Acal$ be an abelian category.
	An object $M$ in $D(\Acal)$ has \emph{finite injective dimension} (resp. \emph{finite projective dimension}) if there exists an integer $i_0$ such that $\Ext_{\Acal}^i(N,M) = 0$ (resp. $\Ext_{\Acal}^i(M,N) = 0$) for every $|i| > i_0$ and $N \in \Acal$.

	Moreover, for an object $M \in \Acal$, we define the \emph{injective dimension} $\injdim_{\Acal} (M)$ (resp. \emph{projective dimension} $\projdim_{\Acal}(M)$) of $M$ as the minimum integer $i_0$ such that $\Ext_{\Acal}^i(N,M) = 0$ (resp. $\Ext_{\Acal}^i(M,N) = 0$) for every $i > i_0$ and $N \in \Acal$.
\end{dfn}

(2) of \cref{dfn:dualizing complexes} is equivalent to the finiteness condition in \cref{lem:finite injective dimension}.
\cite[Proposition 2.4]{yekutieli1992dualizing} and \cite[Proposition 7.6]{hartshorne1966residues} are slightly generalized in terms of boundedness in \cref{lem:finite injective dimension} below.

\begin{lem}
	\label{lem:finite injective dimension}
	Let $A$ be a noetherian $\Zbb$-algebra and $M$ be an object in $D(\Grlr{A}{A})$.
	Then, the following conditions are equivalent:
	\begin{enumerate}
		\item There exists a complex $I \in D^b(\Grlr{A}{A})$ such that $I \cong M$ in $D(\Grlr{A}{A})$, $\Resr{A}(I^j)$ and $\Resl{A}(I^j)$ are injective for every $j$ in $\Grr{A}$ and $\Grl{A}$, respectively.
		\item $\Resr{A}(M)$ and $\Resl{A}(M)$ have finite injective dimension in $\Grr{A}$ and $\Grl{A}$, respectively. 
	\end{enumerate}
\end{lem}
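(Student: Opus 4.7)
The proof splits into two implications; the implication $(1)\Rightarrow(2)$ is essentially formal while $(2)\Rightarrow(1)$ is the substantive direction.

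For $(1)\Rightarrow(2)$, the plan is to note that the restriction functors $\Resr{A}$ and $\Resl{A}$ are exact. Applying them to the bounded complex $I\cong M$ produces bounded complexes of injective modules in $\Grr{A}$ and $\Grl{A}$ representing $\Resr{A}(M)$ and $\Resl{A}(M)$, which immediately bounds their injective dimensions in the sense of \cref{dfn:finite injective dimension}.

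For $(2)\Rightarrow(1)$, I plan a single-step upper truncation of an injective resolution in the bimodule category. First, the identifications $H^i(\Resr{A}(M))\cong \Resr{A}(H^i(M))$ and $H^i(\Resl{A}(M))\cong \Resl{A}(H^i(M))$, together with the fact that a bigraded bimodule vanishes iff its one-sided restriction vanishes, force $M$ to have bounded cohomology; so we may replace $M$ by a bounded complex. Since $\Grlr{A}{A}$ is Grothendieck (cf.~\cref{prop:existence-K-projective-K-injective}), fix an injective resolution $0\to M\to J^0\to J^1\to\cdots$ in $\Grlr{A}{A}$. By item~(3) of \cref{lem:restriction} combined with \cref{lem:basic lemma-2} (which uses the noetherian hypothesis to guarantee that direct sums of injectives remain injective), each $\Resr{A}(J^j)=\bigoplus_i e_iJ^j$ is injective in $\Grr{A}$ and similarly each $\Resl{A}(J^j)$ is injective in $\Grl{A}$. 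Setting
\[
d:=\max\bigl(\injdim_{\Grr{A}}\Resr{A}(M),\ \injdim_{\Grl{A}}\Resl{A}(M)\bigr),
\]
define the truncated complex $I$ by $I^j=J^j$ for $j<d$, $I^d=\Ker(J^d\to J^{d+1})$, and $I^j=0$ for $j>d$, with the inherited differentials. Exactness of the original resolution gives a quasi-isomorphism $M\xrightarrow{\sim} I$ in $D(\Grlr{A}{A})$.

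The key remaining verification is that $\Resr{A}(I^d)$ and $\Resl{A}(I^d)$ are injective in $\Grr{A}$ and $\Grl{A}$ respectively. Exactness of $\Resr{A}$ identifies $\Resr{A}(I^d)$ with $\Ker(\Resr{A}(J^d)\to\Resr{A}(J^{d+1}))$, the $d$-th cosyzygy in the injective resolution $0\to\Resr{A}(M)\to\Resr{A}(J^0)\to\cdots$ of $\Resr{A}(M)$. Since $d\geq\injdim_{\Grr{A}}\Resr{A}(M)$, a standard dimension-shifting argument (yielding $\Ext^{\,i}_{\Grr{A}}(N,\Resr{A}(I^d))\cong \Ext^{\,i+d}_{\Grr{A}}(N,\Resr{A}(M))=0$ for all $i\geq 1$ and $N\in\Grr{A}$) shows the cosyzygy is injective; the symmetric argument handles the left side. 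I do not anticipate a serious obstacle: the only mildly delicate point is that a single truncation degree $d$ can serve both sides at once, which is achieved simply by taking the maximum of the two one-sided injective dimensions, and the noetherian hypothesis is used only to transport injectivity from $\Grlr{A}{A}$ to $\Grr{A}$ and $\Grl{A}$ via \cref{lem:restriction} and \cref{lem:basic lemma-2}.
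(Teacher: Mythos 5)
Your overall route is the same as the paper's: prove boundedness, pass to a bounded-below complex of injective objects of $\Grlr{A}{A}$, transport termwise injectivity to $\Grr{A}$ and $\Grl{A}$ via \cref{lem:restriction} and \cref{lem:basic lemma-2} (this is indeed where the noetherian hypothesis enters, cf.\ \cref{rmk:finite injective dimension}), and truncate, proving injectivity of the top term by Ext-vanishing. The one genuine gap is the step where you claim the hypotheses ``force $M$ to have bounded cohomology.'' The two facts you cite --- that cohomology commutes with the exact restriction functors and that a bigraded bimodule vanishes iff its restriction does --- only transfer boundedness from $\Resr{A}(M)$ to $M$; they do not show that finite injective dimension of $\Resr{A}(M)$ in the two-sided sense of \cref{dfn:finite injective dimension} bounds the cohomology of $\Resr{A}(M)$ in the first place. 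That implication is exactly what the paper's first step establishes, via the diagram comparing $B^j$ and $Z^j$ of a K-injective resolution: whenever $H^j(\Resr{A}(M))\neq 0$ there is a module $N$ (e.g.\ $Z^j$ of the resolution) with $\Extr{A}^j(N,\Resr{A}(M))\neq 0$, so the vanishing for $|i|>i_0$ forces $H^i=0$ for $|i|>i_0$; note also that it is the two-sided condition $|i|>i_0$ that gives boundedness \emph{below}, which is what later entitles you to a bounded-below injective resolution. Your claim is true, but as written it is assumed rather than proved, and your stated reasons do not yield it.

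A smaller, fixable point: after the reduction you write an injective resolution $0\to M\to J^0\to J^1\to\cdots$ and speak of the $d$-th cosyzygy of $\Resr{A}(M)$, which only literally makes sense when $M$ is a module concentrated in degree $0$. For a genuine bounded complex you should instead take a bounded-below complex $J$ of injective objects of $\Grlr{A}{A}$ quasi-isomorphic to $M$ (the paper obtains such a $\tilde{I}$ from its K-injective resolution) and run the dimension shift through the stupid truncation, using that $\sigma_{\geq d}(\Resr{A}(J))$ is a bounded-below, hence K-injective, complex of injectives quasi-isomorphic to $\Resr{A}(I^d)[-d]$ once $d$ dominates both the injective-dimension bound and the top nonvanishing cohomological degree; this is precisely the content of the paper's short exact sequence of truncations and its long exact sequence. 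With that adjustment your computation $\Ext^{i}_{\Grr{A}}(N,\Resr{A}(I^d))\cong\Ext^{i+d}_{\Grr{A}}(N,\Resr{A}(M))=0$ for $i\geq 1$, and its left-sided analogue with the single truncation degree $d$ taken as a maximum, goes through and agrees with the paper's argument.
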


\begin{proof}
	Much of the proof is similar to \cite[Proposition 2.4]{yekutieli1992dualizing} and \cite[Proposition 7.6]{hartshorne1966residues}.
	However, we need some modifications because we treat unbounded complexes and modules over $\Zbb$-algebras.

	(1) $\Rightarrow$ (2):
	Since we can use the isomorphism $\Resr{A}(M) \cong \Resr{A}(I)$ in $D(\Grr{A})$ to calculate $\Extr{A}^i(N, \Resr{A}(M))$ for $N \in \Grr{A}$, $\Resr{A}(M)$ has finite injective dimension.
	Similarly, $\Resl{A}(M)$ has finite injective dimension.

	(2) $\Rightarrow$ (1):
	Take a K-injective resolution $M \rightarrow I$ of $M$ in $D(\Grlr{A}{A})$ (\cref{prop:existence-K-projective-K-injective}).
	From the assumption, there exists a non-negative integer $i_0$ such that 
	\[
	\Extr{A}^i(N_1, \Resr{A}(M)) = 0, \quad \Extl{A}^i(N_2, \Resl{A}(M)) = 0
	\] for every $|i| > i_0$ and $N_1 \in \Grr{A}$, $N_2 \in \Grl{A}$.

	Firstly, we show that $H^i(I) = 0$ for every $|i| > i_0$.
	If $H^j(I) \neq 0$ for some $|j| > i_0$, then we have the following commutative diagram:
	\[
	\begin{tikzcd}
		B^j(\Homr{A}(Z^j(\Resr{A}(I)), \Resr{A}(I))) \ar[r, "\cong"] \ar[d]& Z^j(\Homr{A}(Z^j(\Resr{A}(I)), \Resr{A}(I))) \ar[d, "\cong"] \\
		\Homr{A}(Z^j(\Resr{A}(I)),B^j(\Resr{A}(I))) \ar[r, "\subsetneqq"] & \Homr{A}(Z^j(\Resr{A}(I)),Z^j(\Resr{A}(I))),
	\end{tikzcd}
	\]
	where $B^j(C):= \Img(d_{C}), Z^j(C):= \Ker(d_{C})$ for any complex $(C, d_C)$.
	The horizontal arrow in the top comes from the assumption.
	The horizontal arrow in the bottom is a proper inclusion because $B^j(I) \subsetneqq Z^j(I)$.
	Thus, we have a contradiction from the above diagram.
	Therefore, $H^i(I) = 0$ for every $|i| > i_0$ and there exists a complex $\tilde{I}$ of injective objects in $\Grlr{A}{A}$ such that $\tilde{I} \cong I$ in $D(\Grlr{A}{A})$ and $\tilde{I}^j = 0$ for every $j < -i_0$ (\cite[Lemma 13.15.5]{stacks-project}).
	It also holds that the smart truncation $\tau_{\leq i_0+1}(\tilde{I}) \cong M$ in $D(\Grlr{A}{A})$.

	Next, we show that the smart truncation $J = \tau_{\leq i_0+1}(\tilde{I})$ is a bounded complex such that $\Resr{A}(J^j)$ and $\Resl{A}(J^j)$ are injective over $A$ and $A^{\op}$ for every $j$, respectively.
	It is enough to show that $\Resr{A}(B^{i_0+1}(J))$ and $\Resl{A}(B^{i_0+1}(J))$ are injective over $A$ and $A^{\op}$, respectively.
	Because we have the following exact sequence of a chain complex
	\[
	0 \rightarrow \tau_{\leq i_0}(\tilde{I}) \rightarrow \sigma_{\leq i_0}(\tilde{I}) \rightarrow B^{i_0+1}(J)[-i_0] \rightarrow 0,
	\]
	where $\sigma_{\leq i_0}(\tilde{I})$ is the stupid truncation of $\tilde{I}$ at $i_0$, 
	we obtain a long exact sequence 
	 \begin{align*}
	 \cdots \rightarrow  \Extr{A}^{i_0+1}(N_1, \Resr{A}(\sigma_{\leq i_0}(\tilde{I}))) 
	 \rightarrow \Extr{A}^{i_0+1}(N_1, \Resr{A}(B^{i_0+1}(J)[-i_0])) \\
	 \rightarrow \Extr{A}^{i_0+2}(N_1, \Resr{A}(\tau_{\leq i_0}(\tilde{I}))) \rightarrow \cdots.
	 \end{align*}
	 Because $\Resr{A}(\tilde{I}^j)$ is injective in $\Grr{A}$ for every $j$ from \cref{lem:restriction} and \cref{lem:basic lemma-2} (we use the assumption that $A$ is right noetherian here), we have
	 \[
	 \Extr{A}^{i_0+1}(N_1, \Resr{A}(\sigma_{\leq i_0}(\tilde{I}))) =0.
	 \]
	Since $\tau_{\leq i_0}(\tilde{I}) \cong \tilde{I}$ in $D(\Grlr{A}{A})$, we have
	 \[
	 \Extr{A}^{i_0+2}(N_1, \Resr{A}(\tau_{\leq i_0}(\tilde{I}))) = 0.
	 \]
	 Thus, we obtain
	\[
	\Extr{A}^1(N_1, \Resr{A}(B^{i_0+1}(J))) \cong \Extr{A}^{i_0+1}(N_1, \Resr{A}(B^{i_0+1}(J)[-i_0])) = 0 \quad (N_1 \in \Grr{A})
	\]
	and $\Resr{A}(B^{i_0+1}(J))$ is injective in $\Grr{A}$.
	 Similarly, we can show that $\Resl{A}(B^{i_0+1}(J))$ is injective in $\Grl{A}$.
	 Note that, in the proof, we need to use the assumption that $A$ is left noetherian.
	 Therefore, $J$ is the desired complex.	
\end{proof}

\begin{rmk}
\label{rmk:finite injective dimension}
	It is not necessary that a K-injective resolution $M \rightarrow I$ of $M \in D(\Grlr{A}{A})$ can be used to compute $\RHomr{A}(N,\Resr{A}(M))$.
	This is the reason why we assume that $A$ is noetherian in \cref{lem:finite injective dimension}.
\end{rmk}

We denote by $D_{(f,-)}^{\star}(\Grlr{A}{B})$ and  $D_{(-,f)}^{\star}(\Grlr{A}{B})$ by the full subcategory of $D^\star(\Grlr{A}{B})$ consisting the objects $M$ whose restrictions $\Resll{A}{j}H^i(M)$ and $\Resrr{B}{j}H^i(M)$ of the cohomologies $H^i(M)$ are finite over $A$ and $B$, respectively.
Here, $\star \in \{\emptyset, +, -, b\}$.

Let $A,B$ be connected $\Zbb$-algebra with $A$ noetherian.
Let $R_A$ be a dualizing complex over $A$.
Then, we define the functors $D_A, D_{A^{\op}}$ by 
\begin{align*}
	\dfun{A} : D(\Grlr{B}{A})^{\op} &\rightarrow D(\Grlr{A}{B}),\quad \dfun{A}(-) = \RHomint_{A}(-,R_A),\\
	\dfunop{A} : D(\Grlr{A}{B})^{\op} &\rightarrow D(\Grlr{B}{A}),\quad \dfunop{A}(-) = \RHomint_{A^{\op}}(-,R_A).
\end{align*}

\subsection{Properties of  (balanced) dualizing complexes}
First of all, we recall the definition of way-out functors because we often use a lemma about way-out functors.

Let $\Acal, \Bcal$ be abelian categories.
Let $\Acal', \Bcal'$ be thick (i.e. extension-closed) subcategories of $\Acal$ and $\Bcal$, respectively.
We denote by $D^{\star}_{\Acal'}(\Acal)$ and $D^{\star}_{\Bcal'}(\Bcal)$ the full subcategories of $D^{\star}(\Acal)$ and $D^{\star}(\Bcal)$ consisting of objects whose cohomology sheaves are supported on $\Acal'$ and $\Bcal'$, respectively. 
Here, $\star \in \{\emptyset, +, -\}$.

\begin{dfn}[{\cite[Chapter 1, Section 7]{hartshorne1966residues}, \cite[Definition 22.1]{hoshino1997derived}}]
	\label{dfn:way-out functors}
	$F: D^{\star}(\Acal) \rightarrow D(\Bcal)$ be a $\partial$-functor, where $\star \in \{\emptyset, +, -\}$.
	Then, $F$ is \emph{way-out right (resp. left)} if for every $n \in \Zbb$, there exists $m \in \Zbb$ such that for every $M \in D^{\star}(\Acal)$ with $H^i(M) = 0$ and all $i < m$ (resp. $i > m$), we have $H^i(F(M)) = 0$ for all $i < n$ (resp. $i > n$).
	$F$ is called \emph{way-out on both directions} if $F$ is way-out right and left.

\end{dfn}

\begin{lem}[{The dual version of \cite[Proposition 7.3]{hartshorne1966residues}, \cite[Proposition 23.6]{hoshino1997derived}}]
	\label{lem:way-out functors}
	Let $F, G: D^{\star}(\Acal) \rightarrow D(\Bcal)$ be $\partial$-functors, where $\star \in \{\emptyset, +, -\}$.
	Let $\eta: F \rightarrow G$ be a morphism of functors.
	Let $\Pcal$ be a collection of objects in $\Acal'$.
	We assume that 
	\begin{enumerate}[label=(\alph*)]
		\item Any object in $\Acal'$ admits an epimorphism from an object in $\Pcal$,
		\item $F$ is way-out left (resp. way-out on both directions).
	\end{enumerate}

	Then, the following hold:
	\begin{enumerate}
		\item If $F(P) \in D^{}_{\Bcal'}(\Bcal)$ for every $P \in \Pcal$, then $F(M) \in D^{}_{\Bcal'}(\Bcal)$ for every $M \in D^{-}_{\Acal'}(\Acal) \cap D^{\star}(\Acal)$ (resp. $D^{}_{\Acal'}(\Acal) \cap D^{\star}(\Acal)$).
		\item If $\eta_P: F(P) \rightarrow G(P)$ is an isomorphism for every $P \in \Pcal$, then $\eta_M: F(M) \rightarrow G(M)$ is an isomorphism for every $M \in D^{-}_{\Acal'}(\Acal) \cap D^{\star}(\Acal)$ (resp. $D^{}_{\Acal'}(\Acal) \cap D^{\star}(\Acal)$).
	\end{enumerate}

	The lemma also holds when we replace epimorphisms by monomorphisms, way-out left by way-out right and $D^{-}$ by $D^{+}$.
\end{lem}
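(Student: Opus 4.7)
The plan is to run a standard way-out induction, and since the two items have the same structure I would unify them via a predicate $\Phi$ on $D(\Bcal)$ that is closed under shifts and triangles. For item~1 take $\Phi(X) \Leftrightarrow X \in D_{\Bcal'}(\Bcal)$; for item~2 pass to the cone $C \coloneqq \Cone(\eta)$, which is itself a $\partial$-functor inheriting the way-out property from $F$ and $G$ via the triangle $F \to G \to C$, and take $\Phi(X) \Leftrightarrow X \cong 0$, so that $\eta_M$ is an isomorphism if and only if $C(M) \cong 0$. In both cases the goal reduces to showing $\Phi(H(M))$ for an appropriate $\partial$-functor $H$ (either $F$ or $C$) that is way-out in the same direction as $F$.

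I would first treat $M \in \Acal'$, viewed as a complex concentrated in degree $0$. Iterating hypothesis~(a) yields a resolution $P_{\mydot} \to M$ with each $P_i \in \Pcal$. For every $n$, stupid truncation gives a distinguished triangle
\[
\sigma_{\leq -n-1} P_{\mydot} \longrightarrow \sigma_{\geq -n} P_{\mydot} \longrightarrow M \longrightarrow \sigma_{\leq -n-1} P_{\mydot}[1]
\]
in $D^-(\Acal)$. The middle term is a bounded complex with entries in $\Pcal$, so iterating the naive filtration together with triangle-closure of $\Phi$ and the hypothesis on $\Pcal$ shows $\Phi(H(\sigma_{\geq -n} P_{\mydot}))$. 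By way-out left of $H$, for any fixed threshold $n_0$, choosing $n$ sufficiently large makes $H^{i}(H(\sigma_{\leq -n-1} P_{\mydot})) = 0$ for all $i \geq n_0$; the triangle above then transports $\Phi$ onto $H(M)$ in degrees $\geq n_0$, and letting $n_0 \to \infty$ gives the base case.

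I would then extend to $M \in D^b_{\Acal'}(\Acal)$ by induction on the cohomological amplitude, using the smart-truncation triangle $\tau_{<k} M \to M \to H^k(M)[-k] \to$. For $M \in D^-_{\Acal'}(\Acal) \cap D^\star(\Acal)$, the triangle $\tau_{>-n} M \to M \to \tau_{\leq -n} M \to$ reduces the problem to the bounded case: the way-out left property kills the tail $H(\tau_{\leq -n} M)$ in any fixed cohomological range once $n$ is large. Under the stronger hypothesis that $F$, and hence $G$ and $C$, are way-out on both directions, the symmetric truncation $\tau_{\leq n} M \to M \to \tau_{>n} M \to$ combined with way-out right controls the upper tail, yielding the conclusion for all $M \in D_{\Acal'}(\Acal) \cap D^\star(\Acal)$.

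The delicate point I expect to be the main obstacle is the bookkeeping for item~2, namely verifying that the cone of a natural transformation of $\partial$-functors is itself a $\partial$-functor and inherits way-out bounds from $F$ and $G$; this amounts to a routine but careful diagram chase using functoriality of mapping cones and the octahedral axiom. Once that is in place, the dual statement (monomorphisms in place of epimorphisms, way-out right in place of way-out left, and $D^+$ in place of $D^-$) follows by replacing the resolution $P_{\mydot} \to M$ with a $\Pcal$-coresolution $M \to Q^{\mydot}$ and flipping the truncations, with no essentially new argument.
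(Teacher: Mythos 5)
The paper itself gives no written proof of this lemma; it simply cites Hartshorne's Proposition 7.3 and Hoshino's Proposition 23.6, and your overall skeleton is precisely that classical argument: resolve a single object of $\Acal'$ by objects of $\Pcal$ using hypothesis (a), compare $M$ with the brutal truncations of the resolution and use the way-out estimates to control the tail degreewise, then pass to bounded complexes by induction on cohomological amplitude via smart truncations, and finally to half-bounded (resp.\ unbounded) complexes by truncating the tail and invoking way-out left (resp.\ way-out on both sides). The small slips there are harmless: the natural map goes from the bounded brutal truncation to $M$, i.e.\ the triangle is $\sigma_{\geq -n}P_{\mydot} \to M \to \sigma_{\leq -n-1}P_{\mydot} \to \sigma_{\geq -n}P_{\mydot}[1]$, so your displayed triangle is rotated and off by a shift, but the first vertex is still concentrated in arbitrarily negative degrees, which is all the argument needs.

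The genuine gap is your unification of the two items through $C:=\Cone(\eta)$. This presupposes that the cone of a natural transformation of $\partial$-functors is again a $\partial$-functor inheriting way-out bounds, but cones in a triangulated category are \emph{not} functorial: for a morphism of triangles the filling map on cones exists but is not canonical, and the octahedral axiom does not remedy this. Making $M \mapsto \Cone(\eta_M)$ into a functor would require a chain-level (enhanced) realization of $F$, $G$ and $\eta$, which is not among the hypotheses; so the step you describe as a ``routine diagram chase using functoriality of mapping cones'' is exactly the step that fails. The standard repair, and what the cited proofs do, is to run the same induction directly on $\eta$: a morphism of $\partial$-functors carries each distinguished triangle to a morphism of distinguished triangles, and if two of the three components of such a morphism are isomorphisms so is the third; at the limiting step for $M \in \Acal'$ one compares $H^i(\eta_M)$ with $H^i(\eta_{\sigma_{\geq -n}P_{\mydot}})$ for $n \gg 0$. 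Note that this argument (like your cone device, which quietly borrows bounds from $G$) uses way-out estimates for $G$ as well as $F$, in accordance with Hartshorne's hypotheses; the lemma's condition (b) names only $F$, but in all of the paper's applications both functors are way-out, so nothing is lost in practice — just be explicit that item 2 needs it.
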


\begin{prop}
	\label{prop:duality}
	Let $A,B$ be connected $\Zbb$-algebras with $A$ noetherian.
	Let $R_A$ be a dualizing complex over $A$.
	Then, 
	\begin{enumerate}
		\item If $M \in D^{}_{(-,f)}(\Grlr{B}{A})$ (resp. $D^{}_{(f,-)}(\Grlr{A}{B})$), then $\dfun{A}(M) \in D^{}_{(f,-)}(\Grlr{A}{B})$ (resp. $D^{}_{(-,f)}(\Grlr{B}{A})$).
		\item If $M \in D^{}_{(-,f)}(\Grlr{B}{A})$ (resp. $D^{}_{(f,-)}(\Grlr{A}{B})$), then the natural morphism $\varphi_1 : M \rightarrow \dfunop{A}(\dfun{A}(M))$ (resp. $\varphi_2 : M \rightarrow \dfun{A}(\dfunop{A}(M))$) is an isomorphism.

		In particular, we have an equivalence $D^{}_{(-,f)}(\Grlr{B}{A})^{\op} \cong D^{}_{(f,-)}(\Grlr{A}{B})$ (resp. $D^{}_{(f,-)}(\Grlr{A}{B})^{\op} \cong D^{}_{(-,f)}(\Grlr{B}{A})$).
	\end{enumerate}
\end{prop}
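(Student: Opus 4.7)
The plan is to reduce both assertions to a ``componentwise'' duality statement over the one-sided module category $D(\Grr{A})$, and then prove that by the way-out lemma (\cref{lem:way-out functors}) applied to the class $\Pcal_A$ of finite direct sums of $\{e_{i,A}A\}_i$, which generates the Serre subcategory $\grr{A} \subset \Grr{A}$ (Serre since $A$ is right noetherian).

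First, by \cref{lem:finite injective dimension} I may represent $R_A$ by a bounded complex $I \in C^b(\Grlr{A}{A})$ whose components are injective in $\Grr{A}$ and in $\Grl{A}$. Each $e_{p,A}I$ is then a bounded complex of injectives in $\Grr{A}$, hence K-injective, so by \cref{rmk:derived-functors} both $\dfun{A}(M) \cong \Homintr{A}(M,I)$ and $\dfunop{A}(N) \cong \Homintl{A}(N,I)$ hold without any further resolution of $M$ or $N$. Boundedness of $I$ shows that $\dfun{A}$ and $\dfunop{A}$ have finite cohomological amplitude, hence are way-out on both directions in the contravariant sense.

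Next, unwinding the index formula $\Homintr{A}(M,R_A)_{i,j} = \Hom_A(e_{j,B}M, e_{i,A}R_A)$ gives, for every $M \in D(\Grlr{B}{A})$ and $q \in \Zbb$,
\[
\Resll{A}{q}\dfun{A}(M) \;\cong\; \RHomr{A}(e_{q,B}M, R_A) \quad \text{and} \quad \Resrr{A}{q}\dfunop{A}\dfun{A}(M) \;\cong\; \RHoml{A}\!\bigl(\RHomr{A}(e_{q,B}M, R_A),\, R_A\bigr),
\]
and the natural morphism $\varphi_1$ restricts on each $e_{q,B}$-slot to the classical double-dual morphism for $e_{q,B}M \in D(\Grr{A})$. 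Since isomorphisms and the finiteness condition defining $D^{}_{(-,f)}(\Grlr{B}{A})$ can be tested slot by slot in $q$, the proposition reduces to the following one-sided statements: for $N \in D_f(\Grr{A})$, (i) $\RHomr{A}(N, R_A) \in D_f(\Grl{A})$, and (ii) the natural morphism $N \to \RHoml{A}(\RHomr{A}(N,R_A), R_A)$ is an isomorphism.

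At a generator $N = e_{i,A}A$, a direct computation gives $\RHomr{A}(e_{i,A}A, R_A) \cong \Resll{A}{i}R_A$, which has cohomology in $\grl{A}$ by axiom (1) of \cref{dfn:dualizing complexes}, and the double-dual morphism at this $N$ is identified with the $e_{i,A}$-slot of the isomorphism $\Psi_{A^{op}} : A \xrightarrow{\cong} \RHomintl{A}(R_A, R_A)$ supplied by axiom (3), hence is itself an isomorphism; both properties pass through finite direct sums, so they hold on all of $\Pcal_A$. Both (i) and (ii) then extend from $\Pcal_A$ to all of $D_f(\Grr{A})$ by \cref{lem:way-out functors} in its unbounded form. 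The main obstacle here is precisely this unboundedness: the unbounded version of \cref{lem:way-out functors} requires the relevant functors to be way-out on both directions, which is exactly what the two-sided finite injective dimension of $R_A$ -- extracted in the first step -- delivers. The ``resp.'' halves of (1) and (2) follow by the symmetric argument after interchanging $A$ and $A^{op}$, and combining both directions yields the claimed equivalence $D^{}_{(-,f)}(\Grlr{B}{A})^{\op} \cong D^{}_{(f,-)}(\Grlr{A}{B})$.
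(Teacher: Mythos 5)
Your proposal is correct, but it organizes the argument differently from the paper. The paper applies the way-out lemma directly in the bimodule category $D(\Grlr{B}{A})$, using as generating class the bimodules $\Pcal=\{\bigoplus_{i}\bigoplus_{j\in J_i}Ke_i\ten{k}e_jA\}$ (possibly infinite sums with finite slices, built from the connectedness of $B$), and then computes $\Resll{A}{l}\dfun{A}(P)$ and $\Resrr{A}{l}\dfunop{A}\dfun{A}(P)$ for such $P$. You instead reduce everything componentwise first: since the defining finiteness of $D_{(f,-)}$ and quasi-isomorphisms are both detected by the restrictions $\Resll{A}{q}$, $\Resrr{A}{q}$, you pass to the one-sided statements for $e_{q,B}M\in D_f(\Grr{A})$ (essentially \cref{cor:duality}, so your route inverts the paper's logical order of proposition and corollary), and then run the way-out lemma in $D(\Grr{A})$ with the finite generators $\Pcal_A=\{\bigoplus e_iA\}$. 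The generator-level input is identical in both arguments: $\RHomintr{A}(e_iA,R_A)\cong R_Ae_i$ together with axioms (1) and (3) of \cref{dfn:dualizing complexes}, and both need $R_A$ represented by the bounded two-sided injective complex $I$ to get way-outness in both directions, which your first step supplies. What your route buys is that the way-out lemma is only invoked over the standard finitely generated one-sided category; what it costs is two compatibility checks you should state explicitly: (a) the identification $\Resll{A}{q}\dfun{A}(M)\cong\RHomintr{A}(e_{q,B}M,R_A)$ (note the object you want is the internal Hom $\RHomintr{A}$, a complex of graded left $A$-modules, not $\RHomr{A}$ as written), justified exactly as in \cref{rmk:derived-functors} via the complex $I$; and (b) that the bimodule-level biduality morphism $\varphi_1$ restricts on each $e_{q,B}$-slot to the one-sided biduality morphism for $e_{q,B}M$ — a routine naturality check (both maps are induced by the same evaluation morphism of complexes, and restriction is exact), comparable in rigor to the paper's own identification of its displayed isomorphism chain with $e_l\varphi_1$. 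With these two points made explicit, your argument is complete, and the ``resp.'' halves and the final equivalence follow by symmetry as you say.
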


\begin{proof}
	(1) We only show that if $M \in D^{}_{(-,f)}(\Grlr{B}{A})$, then $\dfun{A}(M) \in D^{}_{(f,-)}(\Grlr{A}{B})$.

	We denote by $\Gr_{(-,f)}(B-A)$ (resp. $\Gr_{(f,-)}(A-B)$) the full subcategory of $\Grlr{B}{A}$ (resp. $\Grlr{A}{B}$) consisting of objects $N$ such that $\Resrr{A}{j}(N)$ is finite over $A$ for every $j \in \Zbb$.
	For each $N \in \Gr_{(-,f)}(B-A)$ and $i \in \Zbb$, there exists a finite set $J_i$ of integers that may contain duplicates such that
	there exists an epimorphism 
	\[
	\bigoplus_{j \in J_i} e_j A \twoheadrightarrow \Resrr{A}{i}(N).
	\]
	In addition, we have a natural homomorphism of $B$-$A$-bimodules
	\[
	Ke_i \ten{k} \Resrr{A}{i}(N) \longrightarrow N, \quad b \ten{} n \mapsto bn.
	\]
	Here, we use the connectedness of $B$.
	Thus, we have an epimorphism
	\[
	\bigoplus_{i \in \Zbb} \bigoplus_{j \in J_i} Ke_i \ten{k} e_j A \twoheadrightarrow N
	\]	
	and the set 
	\[
	\Pcal := \left\{
		\bigoplus_{i \in \Zbb} \bigoplus_{j \in J_i} Ke_i \ten{k} e_j A \ \middle| \  J_i :
		\text{
			$\begin{gathered}
			\text{a finite set of integers} \\
			\text{that may contain duplicates}
		\end{gathered}$}
		\text{ for each } i \in \Zbb
		\right\}
	\]
	satisfies the condition (a) in \cref{lem:way-out functors} when we take $\Acal = \Grlr{B}{A}$ and $\Acal' = \Gr_{(-,f)}(B-A)$.
	Here, note that $\dfun{A}$ is way-out on both direction because we can use the quasi-isomorphism $R_A \cong I$ in (2) of \cref{dfn:dualizing complexes} to calculate $\dfun{A}$ by \cref{rmk:derived-functors}.
	So, by \cref{lem:way-out functors} (1), it is enough to show that $\dfun{A}(P) \in D^{}_{(f,-)}(\Grlr{A}{B})$ for every $P \in \Pcal$.

	Take $P =\bigoplus_{i \in \Zbb} \bigoplus_{j \in J_i} Ke_i \ten{k} e_j A \in \Pcal$.
	Then, for any $l \in \Zbb$, we have
	\begin{align*}
		\Resll{A}{l}\left(\dfun{A}\left( P \right)\right)
		&\cong \RHomintr{A}\left(\bigoplus_{i \in \Zbb} \bigoplus_{j \in J_i} Ke_i \ten{k} e_j A, R_A\right)e_l \\
		&\cong \RHomintr{A}\left(\bigoplus_{j \in J_l} e_j A, R_A\right) \\
		&\cong \bigoplus_{j \in J_l} \RHomintr{A}(e_j A, R_A) \quad (\text{by } |J_l| < \infty) \\
		&\cong \bigoplus_{j \in J_l} R_A e_j.
	\end{align*}
	From (1) of \cref{dfn:dualizing complexes}, $H^k(R_A e_j) \in \grl{A}$ for all $k \in \Zbb$ and it follows that $\dfun{A}(P) \in D^{}_{(f,-)}(\Grlr{A}{B})$.
	Therefore, we obtain $\dfun{A}(M) \in D^{}_{(f,-)}(\Grlr{A}{B})$ for every $M \in D^{}_{(-,f)}(\Grlr{B}{A})$.

	\smallbreak
	(2) We only show that if $M \in D^{}_{(-,f)}(\Grlr{B}{A})$, then 
	\[
	\dfunop{A}(\dfun{A}(M)) \cong M \in D(\Grlr{B}{A}).
	\]
	As in (1), the set $\Pcal$ satisfies the condition (a) in \cref{lem:way-out functors} when we take $\Acal = \Grlr{B}{A}$ and $\Acal' = \Gr_{(-,f)}(B-A)$.
	In addition, $\dfunop{A} \circ \dfun{A}$ is way-out on both directions.
	So, it is enough to show that the natural morphism
	\[
	M \longrightarrow \dfunop{A}(\dfun{A}(M))
	\]
	is an isomorphism in $D(\Grlr{B}{A})$ when $M \in \Pcal$.

	Take $P=\bigoplus_{i \in \Zbb} \bigoplus_{j \in J_i} Ke_i \ten{k} e_j A \in \Pcal$.
	Then, we have
	\begin{align*}
		\Resrr{A}{l}\left(\dfunop{A}\left(\dfun{A}\left(P \right)\right)\right) 
		&= e_l\RHomint_{A^{\op}}\left(\RHomint_{A}\left(\bigoplus_{i \in \Zbb} \bigoplus_{j \in J_i} Ke_i \ten{k} e_j A, R_A\right), R_A\right) \\
		&\cong \RHomint_{A^{\op}}\left(\RHomint_{A}\left(\bigoplus_{i \in \Zbb} \bigoplus_{j \in J_i} Ke_i \ten{k} e_j A, R_A\right)e_l, R_A\right) \\
		&\cong \RHomint_{A^{\op}}\left(\bigoplus_{j \in J_l} \RHomintr{A}(e_j A, R_A), R_A \right) \quad (\text{see (1)})\\
		&\cong \bigoplus_{j \in J_l} \RHomint_{A^{\op}}(\RHomintr{A}(e_j A, R_A), R_A) \quad (\text{by } |J_l| < \infty)\\
		&\cong \bigoplus_{j \in J_l} \RHomint_{A^{\op}}(R_A e_j, R_A) \\ &\cong \bigoplus_{j \in J_l} e_j \RHomint_{A^{\op}}(R_A, R_A) \\
		&\cong \bigoplus_{j \in J_l} e_j A \quad (\text{by (3) of \cref{dfn:dualizing complexes}}).
	\end{align*}
	This isomorphism shows that $e_l \varphi_1$ is an isomorphism of objects in $D(\Grr{A})$ for every $l \in \Zbb$.
	Therefore, $\varphi_1$ is an isomorphism in $D(\Grlr{B}{A})$.

\end{proof}

\begin{rmk}
	Note that when we prove the proposition for $M \in D^{}_{(f,-)}(\Grlr{A}{B})$, we use the set $\Pcal^{\op}$ of objects in $\Gr_{(f,-)}(A-B)$ defined by
	\[
	\Pcal^{\op} := \left\{
		\bigoplus_{i \in \Zbb} \bigoplus_{j \in J_i} Ae_j \ten{k} e_i K \ \middle| \  J_i :
		\text{
			$\begin{gathered}
			\text{a finite set of integers} \\
			\text{that may contain duplicates}
		\end{gathered}$}
		\text{ for each } i \in \Zbb
		\right\}
	\]
	instead of $\Pcal$.

\end{rmk}

\begin{cor}
\label{cor:duality}
Let $A$ be a noetherian connected $\Zbb$-algebra.
Let $R_A$ be a dualizing complex over $A$.
Then, 
	\begin{enumerate}
		\item If $M \in D_f(\Grr{A})$ (resp. $D_f(\Grl{A})$), then $\dfun{A}(M) \in D_f(\Grl{A})$ (resp. $D_f(\Grr{A})$).
		\item If $M \in D_f(\Grr{A})$ (resp. $D_f(\Grl{A})$), then the natural morphism $\varphi_1 : M \rightarrow \dfunop{A}(\dfun{A}(M))$ (resp. $\varphi_2 : M \rightarrow \dfun{A}(\dfunop{A}(M))$) is an isomorphism.
	\end{enumerate}
\end{cor}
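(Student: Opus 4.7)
The plan is to deduce the corollary from \cref{prop:duality} by realizing a graded right (resp.\ left) $A$-module as a bigraded $K$-$A$- (resp.\ $A$-$K$-) bimodule concentrated in a single row (resp.\ column). Given $M \in D_f(\Grr{A})$, I would form $\tilde M \in \Grlr{K}{A}$ with $\tilde M_{0,j} \coloneqq M_j$ and $\tilde M_{i,j} \coloneqq 0$ for $i \neq 0$, equipped with the evident bimodule structure inherited from $M$. One then checks that $\Resrr{A}{0}(\tilde M) = M$ and $\Resrr{A}{i}(\tilde M) = 0$ for $i \neq 0$, so that $M \in D_f(\Grr{A})$ if and only if $\tilde M \in D_{(-,f)}(\Grlr{K}{A})$.

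Directly from the definition of the internal hom bimodule,
\[
\Homintr{A}(\tilde M, R_A)_{i,j} = \Homr{A}(e_{j,K}\tilde M, e_{i,A} R_A),
\]
which vanishes for $j \neq 0$ and equals $\Homr{A}(M, e_{i,A} R_A)$ for $j = 0$. Choosing a resolution $R_A \to I$ with each $e_{i,A} I$ K-injective in $D(\Grr{A})$ (as permitted by \cref{rmk:derived-functors} and \cref{prop:restriction-K-projective-K-injective}), this identification lifts to derived isomorphisms
\[
\Resll{A}{0}(\dfun{A}(\tilde M)) \cong \dfun{A}(M), \quad
\Resrr{A}{0}(\dfunop{A}(\dfun{A}(\tilde M))) \cong \dfunop{A}(\dfun{A}(M))
\]
in the appropriate derived module categories. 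Then \cref{prop:duality}(1) applied with $B = K$ yields $\dfun{A}(\tilde M) \in D_{(f,-)}(\Grlr{A}{K})$, and restriction at $j = 0$ gives $\dfun{A}(M) \in D_f(\Grl{A})$, proving (1). For (2), \cref{prop:duality}(2) supplies an isomorphism $\tilde M \xrightarrow{\sim} \dfunop{A}(\dfun{A}(\tilde M))$ in $D(\Grlr{K}{A})$, and applying $\Resrr{A}{0}$ recovers the sought natural isomorphism $M \xrightarrow{\sim} \dfunop{A}(\dfun{A}(M))$ in $D(\Grr{A})$. The dual case $M \in D_f(\Grl{A})$ is entirely symmetric, using instead $\tilde M \in \Grlr{A}{K}$ concentrated in column zero.

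The only genuinely delicate point is verifying that the restriction functors $\Resll{A}{0}$ and $\Resrr{A}{0}$ commute with the derived internal homs in the way claimed above. This is routine given the flexibility in the choice of K-injective resolution described in \cref{rmk:derived-functors}, so I do not anticipate any substantial obstacle beyond bookkeeping.
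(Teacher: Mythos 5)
Your proposal is correct and is essentially the paper's intended deduction: the corollary is left as an immediate consequence of \cref{prop:duality} with $B=K$, obtained by viewing a graded module $M$ as the bigraded $K$-$A$-bimodule $Ke_0\ten{k}M$ concentrated in row $0$ and restricting back via $e_0$, exactly the device the paper itself uses (e.g.\ in the proof of \cref{cor:local-duality-2}). The compatibility of $\Resll{A}{0}$, $\Resrr{A}{0}$ with the derived internal homs that you flag is handled precisely as you say, by choosing a resolution of $R_A$ as in \cref{rmk:derived-functors}.
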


\begin{rmk}
\label{rmk:duality}
We can replace condition (3) of \cref{dfn:dualizing complexes} by condition (2) of \cref{cor:duality}. 
Indeed, suppose $R \in D^b(\Grlr{A}{A})$ satisfies conditions (1) and (2) of \cref{dfn:dualizing complexes} together with condition (2) of \cref{cor:duality}. 
Then, $R$ satisfies condition (3) of \cref{dfn:dualizing complexes}.
To see this, take $M=\Resr{A}(A)$ (resp.\ $M=\Resl{A}(A)$) in condition (2) of \cref{cor:duality}, and use that $\Resr{A}$ and $\Resl{A}$ reflect isomorphisms, i.e., $\Phi_A$ (resp.\ $\Phi_{A^{\op}}$) is an isomorphism if and only if $\Resr{A}(\Phi_A)$ (resp.\ $\Resl{A}(\Phi_{A^{\op}})$) is an isomorphism.
\end{rmk}

\begin{thm}
	\label{thm:local duality-2}
	Let $A$ be a noetherian connected $\Zbb$-algebra and $R_A$ be a balanced dualizing complex over $A$.
	Let $B$ be another connected $\Zbb$-algebra.
	Then, we have an isomorphism in $D(\Grlr{A}{B})$
	\begin{align*}
		\RHomint_A(M,R_A) \cong \D{\Rtorfun{A}(M)}
	\end{align*}
	for all $M \in D_{(-,f)}(\Grlr{B}{A})$.
	Similarly,
	we have an isomorphism in $D(\Grlr{B}{A})$
	\begin{align*}
		\RHomint_{A^{\op}}(M,R_A) \cong \D{\Rtorfun{A^{\op}}(M)}
	\end{align*}
	for all $N \in D_{(f,-)}(\Grlr{A}{B})$.
\end{thm}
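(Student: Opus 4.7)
The plan is to reduce the first assertion to the single identification $R_A \cong \D{\Rtorfun{A}(A)}$ in $D(\Grlr{A}{A})$; once this is established, \cref{thm:local duality} immediately gives, for every $M \in D_{(-,f)}(\Grlr{B}{A})$, the chain
\[
\D{\Rtorfun{A}(M)} \cong \RHomint_A(M, \D{\Rtorfun{A}(A)}) \cong \RHomint_A(M, R_A).
\]

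First I would exploit the balanced property together with Matlis duality. Since $\Rtorfun{A}(R_A) \cong \D{A}$ and $A$ is locally finite, \cref{thm:matlis-duality} gives $\DD{A} \cong A$, so $\D{\Rtorfun{A}(R_A)} \cong A$. Applying \cref{thm:local duality} to $R_A$ yields
\[
\D{\Rtorfun{A}(R_A)} \cong \RHomint_A(R_A, \D{\Rtorfun{A}(A)}) = \dfun{A}(\D{\Rtorfun{A}(A)}),
\]
hence $\dfun{A}(\D{\Rtorfun{A}(A)}) \cong A$ in $D(\Grlr{A}{A})$. Since $\dfunop{A}(A) = \RHomint_{A^{\op}}(A, R_A) \cong R_A$, applying $\dfunop{A}$ via \cref{prop:duality}(2) will then deliver $\D{\Rtorfun{A}(A)} \cong R_A$.

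To make the last step rigorous I need $\D{\Rtorfun{A}(A)} \in D^b_{(-,f)}(\Grlr{A}{A})$. Boundedness is automatic from $\cd(\torfun{A}) < \infty$. For the finiteness, $\Resrr{A}{j} H^i(\D{\Rtorfun{A}(A)}) \cong \D{\R^i\torfun{A}(A)e_j}$, so the condition reduces to cofiniteness of $\R^i\torfun{A}(A)e_j$ as a graded left $A$-module. Combining the symmetric derived torsion $\R^i\torfun{A}(A) \cong \R^i\torfun{A^{\op}}(A)$ with the left-handed version of \cref{cor:chi-condition} applied to $Ae_j \in \grl{A}$ yields this cofiniteness. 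Both $\chi$-condition and symmetric derived torsion for $A$ should be derivable from the existence of the balanced dualizing complex via preliminary lemmas.

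The second assertion for $N \in D_{(f,-)}(\Grlr{A}{B})$ and $\Rtorfun{A^{\op}}$ follows by the mirror argument, using $\Rtorfun{A^{\op}}(R_A) \cong \D{A}$ and the left-side analogues of the same tools. The principal technical obstacle is verifying $\D{\Rtorfun{A}(A)} \in D^b_{(-,f)}(\Grlr{A}{A})$, which in turn requires showing that a noetherian connected $\Zbb$-algebra admitting a balanced dualizing complex automatically satisfies $\chi$-condition and has symmetric derived torsion as an $A$-$A$-bimodule.
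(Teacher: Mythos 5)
Your reduction has a genuine gap: it is circular within the paper's logical structure. Every tool you lean on is, in this paper, a \emph{consequence} of the very theorem you are proving. To invoke \cref{thm:local duality} (both for the final chain and already in your first step, where you apply it to $M=R_A$) you need $\cd(\torfun{A})<\infty$; to place $\D{\Rtorfun{A}(A)}$ in $D^{b}_{(-,f)}(\Grlr{A}{A})$ so that \cref{prop:duality} applies, you need the $\chi$-condition (via \cref{cor:chi-condition}) and symmetric derived torsion of $A$. None of these is a hypothesis of \cref{thm:local duality-2}; they are precisely the content of \cref{cor:local-duality-2-2}, which the paper deduces \emph{from} \cref{thm:local duality-2} (via \cref{cor:local-duality-2-1}). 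You flag this as ``the principal technical obstacle'' but offer no independent derivation, and the known derivations of these facts from the existence of a balanced dualizing complex go through exactly the balanced local duality statement at issue. The paper's actual proof needs none of them: it constructs a natural transformation $\xi:\RHomint_{A}(-,R_A)\rightarrow \D{\Rtorfun{A}(-)}$ directly from the balancing isomorphism $\Rtorfun{A}(R_A)\cong \D{A}$ and \cref{lem:basic lemma-1}, precomposes with $\dfunop{A}$ using the biduality of \cref{prop:duality} (which requires only the dualizing-complex axioms), and checks the result is an isomorphism on the generating set $\Pcal^{\op}$ by the way-out argument of \cref{lem:way-out functors}.

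There is also a concrete error in your identification step. You write $\RHomint_{A}(R_A,\D{\Rtorfun{A}(A)})=\dfun{A}(\D{\Rtorfun{A}(A)})$, but $\dfun{A}(X)=\RHomint_{A}(X,R_A)$ has $R_A$ in the \emph{second} slot, so this equality is false and the subsequent appeal to \cref{prop:duality}(2) (``apply $\dfunop{A}$'') does not apply. Knowing $\RHomint_{A}(R_A,X)\cong A$ does not by itself yield $X\cong R_A$; one would need an evaluation-map or equivalence argument (as in Step~1 of the proof of \cref{thm:Serre-functor}), which is nontrivial and again relies on material established downstream of \cref{thm:local duality-2}. So both the logical order and this step need to be repaired; as written, the proposal does not constitute a proof.
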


\begin{proof}
	We follow the strategy in the proofs of \cite[Proposition 3.4]{chan2002pre} or \cite[Theorem 17.2.7]{yekutieli2019derived}.
	We only prove the first isomorphism.
	The second isomorphism can be proved in the same way.

	Firstly, we construct a morphism from $\RHomint_A(-,R_A)$ to $\D{\Rtorfun{A}(-)}$.
	Take the following K-injective resolutions 
	\begin{align*}
		\varphi_1: R \longrightarrow J, \quad
		\varphi_2: \torfun{A}(J) \longrightarrow K , \quad 
		\varphi_M: M \longrightarrow I.
	\end{align*}
	We also have natural morphisms
	\begin{align*}
	\psi : \Homintr{A}(I,J) &\longrightarrow \Homintr{A}(\torfun{A}(I),\torfun{A}(J)), \\
	(\varphi_2)_{*} : \Homintr{A}(\torfun{A}(I),\torfun{A}(J)) &\longrightarrow \Homintr{A}(\torfun{A}(I),K).
	\end{align*}
	By considering the conposition of morphisms $\psi, (\varphi_2)_{*},  \alpha, \beta$, we obtain a morphism $\xi_M$
	\[
	\begin{tikzcd}
		\RHomintr{A}(M,R_A) \ar[rr, "(\varphi_2)_{*} \circ \psi"] \ar[ddrr, bend right=25, "\xi_M"'] 
		& & \RHomintr{A}(\Rtorfun{A}(M), \Rtorfun{A}(R)) \ar[d, "\cong", "\alpha"'] \\
		& \mathlarger{\circlearrowleft} & \RHomintr{A}(\Rtorfun{A}(M), \D{A}) \ar[d, "\cong", "\beta"']  \\
		&& \D{\Rtorfun{A}(M)}
	\end{tikzcd}
	\]
	, i.e. $\xi_M = \beta \circ \alpha \circ (\varphi_2)_{*} \circ \psi$.
	Here, $\alpha$ is an isomorphism in the balancedness of $R_A$ in \cref{dfn:dualizing complexes} and $\beta$ is \cref{lem:basic lemma-1}.
	This $\xi_M$ is functorial on $M$.
	Hence, we have a morphism $\xi$ of functors from $\RHomint_A(-,R_A)$ to $\D{\Rtorfun{A}(-)}$.

	Secondly, we show that $\xi$ is an isomorphism.
	We put $F = \D{\Rtorfun{A}(-)}$.
	From \cref{prop:duality}, $\dfun{A} \circ \dfunop{A} \cong \Id_{D_{(f,-)}(\Grlr{A}{B})}$.
	Thus, it is enough to show that the natural morphism
	\[
	 \xi \circ \Id_{\dfunop{A}} : \dfun{A} \circ \dfunop{A} \rightarrow F \circ \dfunop{A}
	\]
	is an isomorphism.
	$F$ and $\dfunop{A}$ are way-out on both directions.
	So, by \cref{lem:way-out functors} (2), it is enough to show that 
	\[
	(\xi \circ \Id_{\dfunop{A}})_P : (\dfun{A} \circ \dfunop{A})(P)\longrightarrow (F \circ \dfunop{A})(P)
	\]
	is an isomorphism for every $P \in \Pcal^{\op}$.

	Take $P = \bigoplus_{i \in \Zbb} \bigoplus_{j \in J_i} Ae_j \ten{k} e_i K \in \Pcal^{\op}$.
	Then, for any $l \in \Zbb$, we have
	\begin{align*}
		\Resll{A}{l}((F \circ \dfunop{A})(P)) &\cong \D{\Rtorfun{A}(\RHomintl{A}(P, R_A))} \ e_l \\
		&\cong \D{(e_l\Rtorfun{A}(\RHomintl{A}(P, R_A)))} \\
		&\cong \D{\Rtorfun{A}(e_l\RHomintl{A}(P, R_A))} \\
		&\cong \D{\Rtorfun{A}(\RHomintl{A}(Pe_l, R_A))} \\
        &\cong \D{\Rtorfun{A}\left(\RHomintl{A}\left(\bigoplus_{j \in J_l} Ae_j, R_A \right)\right)} 
        \\
		&\cong \bigoplus_{j \in J_l}\D{\Rtorfun{A}\left(\RHomintl{A}\left(Ae_j, R_A \right)\right)} \\
		&\cong \bigoplus_{j \in J_l}\D{\Rtorfun{A}(e_j R_A)} 
		\cong \bigoplus_{j \in J_l}\D{\Rtorfun{A}(R_A)} \ e_j \\
		&\cong \bigoplus_{j \in J_l} A e_j 
		\cong \bigoplus_{j \in J_l} \Resll{A}{l}(P).
	\end{align*}
	This isomorphism shows that $\Resll{A}{l}((\xi \circ \Id_{\dfunop{A}})_P)$ is an isomorphism of objects in $D(\Grr{A})$ for every $l \in \Zbb$.
	Therefore, $(\xi \circ \Id_{\dfunop{A}})_P$ is an isomorphism in $D(\Grlr{A}{B})$.
	Hence, $\xi \circ \Id_{\dfunop{A}}$ is an isomorphism and $\xi$ is also an isomorphism.
\end{proof}

As \cref{cor:local-duality-2}, we can show the following corollary.

\begin{cor}
	\label{cor:local-duality-2-1}
	Let $A$ be a noetherian connected $\Zbb$-algebra and $R_A$ be a balanced dualizing complex over $A$.
	Then, we have an isomorphism in $D_f(\Grl{A})$
	\[
	\RHomint_A(M,R_A) \cong \D{\Rtorfun{A}(M)}
	\]
	for all $M \in D_f(\Grr{A})$.
\end{cor}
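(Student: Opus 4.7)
The plan is to reduce the statement to the bimodule version of local duality (\cref{thm:local duality-2}) by the standard trick of embedding $M$ into the $0$-th row of a bigraded $K$-$A$-bimodule, exactly as in the proof of \cref{cor:local-duality-2}. Given $M \in D_f(\Grr{A})$, I form $N \coloneqq Ke_0 \ten{k} M \in D(\Grlr{K}{A})$: by construction $e_{i,K} N = 0$ for $i \neq 0$ and $e_{0,K} N = M$ as graded right $A$-modules, so $N$ lies in $D_{(-,f)}(\Grlr{K}{A})$, and \cref{thm:local duality-2} (with $B = K$) yields
\[
\RHomint_A(N, R_A) \;\cong\; \D{\Rtorfun{A}(N)}
\]
in $D(\Grlr{A}{K})$.

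Next I would apply the exact restriction functor $\Resll{A}{0}$, i.e.\ multiplication by $e_{0,K}$ on the right, to both sides. Three compatibilities, each a direct unpacking of definitions, do all the work. First, $\RHomint_A(N, R_A)\, e_{0,K} \cong \RHomint_A(e_{0,K} N, R_A) = \RHomint_A(M, R_A)$, proved using a resolution $R_A \to I$ with each $e_{i,A} I$ K-injective in $D(\Grr{A})$ (\cref{rmk:derived-functors}) and the formula $\Homint_A(N,I)_{i,0} = \Hom_A(e_{0,K} N, e_{i,A} I)$. Second, $\D{P}\, e_{0,K} \cong \D{(e_{0,K} P)}$ for any $P \in D(\Grlr{K}{A})$, immediate from $(\D{P})_{j,i} = \D{P_{ij}}$. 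Third, $e_{0,K} \Rtorfun{A}(N) \cong \Rtorfun{A}(e_{0,K} N) = \Rtorfun{A}(M)$, obtained by choosing a resolution $N \to J$ whose rows $e_{i,K} J$ are K-injective in $D(\Grr{A})$ (\cref{prop:restriction-K-projective-K-injective}) and observing that the exact restriction commutes with $\torfun{A}$ termwise.

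Stringing these isomorphisms together gives
\begin{align*}
\RHomint_A(M, R_A)
&\cong \RHomint_A(N, R_A)\, e_{0,K} \\
&\cong \D{\Rtorfun{A}(N)}\, e_{0,K} \\
&\cong \D{(e_{0,K}\Rtorfun{A}(N))} \\
&\cong \D{\Rtorfun{A}(M)},
\end{align*}
and the left-hand side lies in $D_f(\Grl{A})$ by \cref{cor:duality}, so the isomorphism is genuinely in $D_f(\Grl{A})$. I do not anticipate any substantive obstacle; the only care needed is to ensure that each derived functor on bigraded bimodules is computed via a resolution whose row-restrictions remain K-injective, which is precisely what \cref{rmk:derived-functors} and \cref{prop:restriction-K-projective-K-injective} provide.
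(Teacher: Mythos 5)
Your argument is correct and is essentially the paper's own proof: the paper proves this corollary "as \cref{cor:local-duality-2}", i.e.\ by exactly the trick you use of passing to $Ke_0 \ten{k} M$, invoking the bimodule local duality \cref{thm:local duality-2}, and restricting with $e_{0,K}$ using the compatibilities of $\RHomint$, $\Rtorfun{A}$ and $\D{(-)}$ with that restriction. Your verification that $Ke_0 \ten{k} M \in D_{(-,f)}(\Grlr{K}{A})$ and the appeal to \cref{cor:duality} for finiteness of the outcome are the right supporting details.
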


The following corollary is important for us.
\begin{cor}
	\label{cor:local-duality-2-2}
	Let $A$ be a noetherian connected $\Zbb$-algebra and assume that $A$ has a balanced dualizing complex $R_A$.
	Then, 
	\begin{enumerate}
		\item $R_A \cong \D{\Rtorfun{A}(A)} \cong \D{\Rtorfun{A^{\op}}(A)}$ in $D(\Grlr{A}{A})$.
		\item $A$ satisfies $\chi$-condition.
		\item $\cd(\torfun{A})$ and $ \cd(\torfun{A^{\op}})$ are finite. 
	\end{enumerate}
\end{cor}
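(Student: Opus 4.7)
All three assertions will follow from the already-developed local-duality machinery applied to suitable test objects, combined with Matlis duality. For (1), I would apply \cref{thm:local duality-2} to $M = A$ with $B = A$. Since each $e_j A = P_{j,A}$ is finitely generated as a right $A$-module (and similarly $Ae_j = Q_{j,A}$ is finitely generated on the left), the bimodule $A$ belongs to both $D_{(-,f)}(\Grlr{A}{A})$ and $D_{(f,-)}(\Grlr{A}{A})$. Combined with the internal Yoneda identifications $\RHomintr{A}(A, R_A) \cong R_A$ and $\RHomintl{A}(A, R_A) \cong R_A$ in $D(\Grlr{A}{A})$, which follow from $\Homr{A}(e_jA, N) \cong N_j$ and its left-hand analogue, \cref{thm:local duality-2} directly yields
\[
R_A \;\cong\; \D{\Rtorfun{A}(A)} \;\cong\; \D{\Rtorfun{A^{\op}}(A)}.
\]

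For (2), by \cref{cor:chi-condition} it suffices to verify that $\R^i\torfun{A}(M) \in \grcofr{A}$ for every $M \in \grr{A}$ and every $i$, together with the mirror statement for $A^{\op}$. Given such $M$, \cref{cor:local-duality-2-1} supplies the isomorphism $\D{\Rtorfun{A}(M)} \cong \dfun{A}(M)$ in $D(\Grl{A})$, while \cref{cor:duality} places $\dfun{A}(M)$ in $D_f(\Grl{A})$. Taking Matlis duals and invoking the equivalence $\D{(-)}\colon D_f(\Grl{A})^{\op} \simeq D_{cof}(\Grr{A})$ of \cref{thm:matlis-duality} then forces $\Rtorfun{A}(M) \in D_{cof}(\Grr{A})$, so that every $\R^i\torfun{A}(M)$ is cofinite. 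Running the symmetric argument with $\dfunop{A}$ in place of $\dfun{A}$ yields the left-hand $\chi$-condition.

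For (3), \cref{lem:finite injective dimension} supplies a bounded complex $I \cong R_A$ in $D(\Grlr{A}{A})$, concentrated in degrees $[a, b]$ say, with each $\Resr{A}(I^j)$ injective in $\Grr{A}$. Since direct summands of injectives are injective, each $e_i I^j$ is injective in $\Grr{A}$; hence for every $M \in \Grr{A}$ and every $i \in \Zbb$, the complex $\RHomr{A}(M, e_i R_A) \cong \Homr{A}(M, e_i I)$ has cohomology concentrated in $[a, b]$. Packaging this over $i$, $\RHomint_A(M, R_A)$ has cohomology in $[a, b]$ uniformly in $M \in \Grr{A}$. For $M \in \grr{A}$, \cref{cor:local-duality-2-1} and Matlis duality then force $\R^i\torfun{A}(M) = 0$ for $i > -a$. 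To pass to arbitrary $M \in \Grr{A}$, I would write $M = \varinjlim_\alpha M_\alpha$ as a filtered colimit of finitely generated submodules and apply the quasi-compactness \cref{prop:quasi-compactness-torfunctor} to conclude
\[
\R^i\torfun{A}(M) \;\cong\; \varinjlim_\alpha \R^i\torfun{A}(M_\alpha) \;=\; 0 \qquad (i > -a),
\]
so $\cd(\torfun{A}) < \infty$; the mirror argument bounds $\cd(\torfun{A^{\op}})$. The only mildly delicate bookkeeping throughout is ensuring that the injective-dimension bound on $R_A$ descends uniformly across the graded slices $e_i R_A$, so that $\RHomint_A(M, R_A)$ is globally bounded rather than merely bounded graded-piece by graded-piece, but this is automatic from the direct-summand argument above.
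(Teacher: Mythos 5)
Your proposal is correct and follows essentially the same route as the paper: (1) is \cref{thm:local duality-2} applied to $M=A$, (2) combines \cref{cor:duality}, \cref{cor:local-duality-2-1} and \cref{cor:chi-condition} exactly as in the text, and (3) bounds the cohomological amplitude of $\dfun{A}$ using the bounded complex with injective slices provided by \cref{lem:finite injective dimension}. The only difference is that you make explicit the passage from finitely generated to arbitrary modules in (3) via filtered colimits and \cref{prop:quasi-compactness-torfunctor}, a step the paper leaves implicit (it states the vanishing only for $M \in \grr{A}$ before concluding that $\cd(\torfun{A})$ is finite).
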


\begin{proof}
	(1) We apply \cref{thm:local duality-2} to $M = A$.
	Then, we have 
	\begin{align*}
	\D{\Rtorfun{A}(A)} &\cong \RHomint_A(A,R_A) \cong R_A, \\
	\D{\Rtorfun{A^{\op}}(A)} &\cong \RHomint_{A^{\op}}(A,R_A) \cong R_A
	\end{align*}
	in $D(\Grlr{A}{A})$.
	This shows (1).

	(2) We only show that $A$ satisfies right $\chi$-condition.
	We can show that $A$ satisfies left $\chi$-condition in the same way.

	Let $M \in \grr{A}$.
	From \cref{cor:duality}, 
	\[
	H^{-i}{\dfun{A}(M)} = \Extintr{A}^{-i}(M,R_A) \in \grl{A}
	\] 
	for all $i \in \Zbb$.
	Then, from \cref{cor:local-duality-2-1}, we have
	\[
	\R^i\torfun{A}(M) \cong \D{\Extintr{A}^{-i}(M,R_A)} \in \grcofr{A}.
	\]
	for all $i \in \Zbb$.
	Hence, $A$ satisfies right $\chi$-condition from \cref{cor:chi-condition}.

	(3) We only show that $\cd(\torfun{A})$ is finite.
	We can show that $\cd(\torfun{A^{\op}})$ is finite in the same way.

	Firstly, note that we can use the resolution $R_A \cong I$ in (2) of \cref{dfn:dualizing complexes} to calculate $\dfun{A}$ by \cref{rmk:derived-functors}.
	If $i_0$ is the minimal integer such that $I^{i_0} \neq 0$, then by using \cref{cor:local-duality-2-1},
	\[
	H^{i}(\dfun{A}(M)) \cong \R^{-i}\torfun{A}(M) = 0
	\]
	for all $i < i_0$ and $M \in \grr{A}$.
	Thus, we have $\cd(\torfun{A}) \leq -i_0$.
\end{proof}

\subsection{Existence of balanced dualizing complexes}

Finally, we show the existence of balanced dualizing complexes, which is the first main theorem of this paper.

\begin{thm}
	\label{thm:main1}
	Let $A$ be a noetherian connected $\Zbb$-algebra. 
	
	Then, $A$ has a balanced dualizing complex if and only if 
	$A$ satisfies $\chi$-condition, $\cd(\torfun{A}), \cd(\torfun{A^{\op}})$ are finite and $\Rtorfun{A}(A) \cong \Rtorfun{A^{\op}}(A)$ in $D(\Grlr{A}{A})$.
\end{thm}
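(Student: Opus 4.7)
The forward direction is immediate from \cref{cor:local-duality-2-2}: if $R_A$ is a balanced dualizing complex, the $\chi$-condition and finite cohomological dimensions are recorded there, and the symmetric derived torsion isomorphism $\Rtorfun{A}(A) \cong \Rtorfun{A^{\op}}(A)$ is obtained by Matlis-dualizing $R_A \cong \D{\Rtorfun{A}(A)} \cong \D{\Rtorfun{A^{\op}}(A)}$ and invoking the double-duality of \cref{thm:matlis-duality} (the $\R^i \torfun{A}(A)$ are locally finite by $\chi$-condition and \cref{cor:chi-condition}).

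For the converse, I set $R_A := \D{\Rtorfun{A}(A)} \in D(\Grlr{A}{A})$ and verify the three conditions of \cref{dfn:dualizing complexes} together with the balancedness. Condition (1) reduces, via the identifications
\begin{align*}
H^i(R_A)\,e_j &\cong \D{\R^{-i}\torfun{A}(e_j A)}, \\
e_j\, H^i(R_A) &\cong \D{\R^{-i}\torfun{A}(A)\,e_j} \cong \D{\R^{-i}\torfun{A^{\op}}(A e_j)}
\end{align*}
(where the last isomorphism uses symmetric derived torsion together with the decomposition $\Rtorfun{A^{\op}}(A) = \bigoplus_j \Rtorfun{A^{\op}}(A e_j)$), to the content of \cref{cor:chi-condition} applied to $A$ and $A^{\op}$. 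Condition (2), in the form of \cref{lem:finite injective dimension}, is handled by local duality (\cref{thm:local duality}):
\[
\Ext^i_{\Grr{A}}\bigl(N,\Resr{A}(R_A)\bigr) \cong \Resl{A}\bigl(\D{\R^{-i}\torfun{A}(N)}\bigr),
\]
which vanishes outside $-\cd(\torfun{A}) \le i \le 0$; rewriting $R_A \cong \D{\Rtorfun{A^{\op}}(A)}$ via symmetric derived torsion and invoking the $A^{\op}$-analogue of local duality treats the left-injective dimension in the same way.

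The core of the argument is the balancedness, which I would establish by a direct colimit computation. Combining \cref{prop:torsion-functor} with the quasi-compactness of $\Rtorfun{A}$ from \cref{prop:quasi-compactness-torfunctor} and a K-injective resolution $R_A \to I$, the underived formula $\torfun{A}(-) \cong \varinjlim_n \Homint_A(A/A_{\geq n},-)$ lifts termwise to the complex $I$, and $\varinjlim$ being exact in $\Grlr{A}{A}$ yields
\[
\Rtorfun{A}(R_A) \cong \varinjlim_n \RHomint_A(A/A_{\geq n}, R_A).
\]
Applying local duality termwise and using $\Rtorfun{A}(A/A_{\geq n}) \cong A/A_{\geq n}$ from \cref{prop:basic proposition-10}(1), since $A/A_{\geq n}$ is $\mathfrak{m}_A$-torsion, gives
\[
\Rtorfun{A}(R_A) \cong \varinjlim_n \D{\Rtorfun{A}(A/A_{\geq n})} \cong \varinjlim_n \D{(A/A_{\geq n})} \cong \D{A},
\]
the last isomorphism being a componentwise check: in bidegree $(i,j)$ the colimit stabilizes as soon as $n > i-j$. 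The same argument on the $A^{\op}$-side, using $\D{\Rtorfun{A^{\op}}(A)} \cong R_A$, produces $\Rtorfun{A^{\op}}(R_A) \cong \D{A}$, so $R_A$ is balanced. Condition (3) of \cref{dfn:dualizing complexes} then falls out by running local duality backwards:
\[
\RHomint_A(R_A, R_A) \cong \D{\Rtorfun{A}(R_A)} \cong \DD{A} \cong A
\]
(the last step being Matlis double-duality from \cref{thm:matlis-duality}, valid since $A$ is locally finite by \cref{rmk:Relation among finite, locally finite and Ext-finite}), and symmetrically for $A^{\op}$.

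The main obstacle is the rigorous derived-categorical lift of the colimit formula $\torfun{A}(-) \cong \varinjlim_n \Homint_A(A/A_{\geq n},-)$ to bigraded bimodules, which depends on exactness of $\varinjlim$ in $\Grlr{A}{A}$ and its termwise compatibility with a K-injective resolution; once this is in place, all numerical estimates follow uniformly from $\chi$-condition and the finite cohomological dimensions. A secondary point is to confirm that the abstractly produced isomorphism $A \cong \RHomint_A(R_A, R_A)$ is realized by the canonical map $\Psi_A$, which one can handle by naturality of the local-duality identification, or, more conveniently, by appealing to \cref{rmk:duality} to replace condition (3) of \cref{dfn:dualizing complexes} with the reflexivity condition of \cref{cor:duality}(2)---the latter follows from the same local-duality calculation applied to an arbitrary $M \in \grr{A}$ or $\grl{A}$.
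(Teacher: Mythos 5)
Your proposal is correct and follows the paper for most of the skeleton, but it takes a genuinely different route through the core step. Like the paper, you dispose of the forward direction via \cref{cor:local-duality-2-2} (plus the Matlis double-dual to extract $\Rtorfun{A}(A)\cong\Rtorfun{A^{\op}}(A)$, a small step the paper leaves implicit), you set $R_A=\D{\Rtorfun{A}(A)}\cong\D{\Rtorfun{A^{\op}}(A)}$, and you verify conditions (1) and (2) of \cref{dfn:dualizing complexes} exactly as the paper does, via \cref{cor:chi-condition}, \cref{cor:local-duality-2} and \cref{lem:finite injective dimension}. The divergence is in the order and method for balancedness and condition (3): you prove balancedness first, by the Van den Bergh-style computation $\Rtorfun{A}(R_A)\cong\varinjlim_n\RHomint_A(A/A_{\geq n},R_A)\cong\varinjlim_n\D{\Rtorfun{A}(A/A_{\geq n})}\cong\varinjlim_n\D{(A/A_{\geq n})}\cong\D{A}$ (using \cref{prop:torsion-functor}, termwise local duality, and \cref{prop:basic proposition-10}), and then deduce (3) by one more application of \cref{thm:local duality} together with $\DD{A}\cong A$; the paper instead proves (3) first, by a purely formal chain through $\RHomintl{A}(\Rtorfun{A^{\op}}(A),A)$ using \cref{cor:matlis-duality}, the symmetry hypothesis, \cref{prop:basic proposition-10} and \cref{thm:local duality}, and then obtains balancedness from (3) plus local duality and Matlis duality. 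The paper's route buys freedom from any colimit bookkeeping, since every step is an already-established natural isomorphism; your route makes visible that one half of balancedness, $\Rtorfun{A}(\D{\Rtorfun{A}(A)})\cong\D{A}$, needs no symmetry at all and that the hypothesis enters only through $R_A\cong\D{\Rtorfun{A^{\op}}(A)}$ on the opposite side, at the cost of the lifting you yourself flag: one must justify that the underived formula $\torfun{A}(-)\cong\varinjlim_n\Homint_A(A/A_{\geq n},-)$ applies termwise to a K-injective bimodule resolution and that the local-duality identifications are compatible with the transition maps (both are routine given exactness of filtered colimits and the functoriality of \cref{thm:local duality}, and the paper performs an analogous colimit manipulation in the proof of \cref{thm:Serre-functor}). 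One caution on your suggested shortcut: replacing (3) by the reflexivity condition via \cref{rmk:duality} is not obviously easier here, since verifying biduality on the generators $e_jA$ again reduces to $\RHomint_{A^{\op}}(R_A,R_A)\cong A$, so your direct local-duality argument is the one to keep.
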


\begin{proof}
	The "only if" part is \cref{cor:local-duality-2-2}.
	We show the "if" part.
	We prove that 
	\[
	R :=\D{\Rtorfun{A}(A)} \cong \D{\Rtorfun{A^{\op}}(A)}
	\] 
	is a balanced dualizing complex over $A$.
	Note that $R \in D^b(\Grlr{A}{A})$ because $\cd(\torfun{A})$ and $\cd(\torfun{A^{\op}})$ are finite.

	Firstly, we show that $R$ satisfies (1) of \cref{dfn:dualizing complexes}.
	Since $A$ satisfies $\chi$-condition, from \cref{cor:chi-condition}, we have 
	\begin{align*}
	e_j\R^i\torfun{A}(A) &\cong \R^i\torfun{A}(e_jA) \in \grcofr{A}, \\
	\R^i\torfun{A^{\op}}(A)e_j &\cong \R^i\torfun{A^{\op}}(Ae_j) \in \grcofl{A}
	\end{align*}
	for all $i,j \in \Zbb$.
	This means that $\Resll{A}{j}(H^i(R)) \in \grl{A} , \Resrr{A}{j}(H^i(R)) \in \grr{A}$ for all $i,j \in \Zbb$.

	Secondly, we show that $R$ satisfies (2) of \cref{dfn:dualizing complexes}.
	By \cref{cor:local-duality-2}, we have 
	\begin{align*}
	\RHomint_A(-,\Resr{A}(R)) &\cong \D{\Rtorfun{A}(-)}, \\
	 \RHomint_{A^{\op}}(-,\Resl{A}(R)) &\cong \D{\Rtorfun{A^{\op}}(-)}.
	\end{align*}
	Since $\cd(\torfun{A})$ and $\cd(\torfun{A^{\op}})$ are finite, the above formulas imply that $\Resr{A}(R)$ and $\Resl{A}(R)$ have finite injective dimensions over $A$ and $A^{\op}$, respectively.
	Hence, from \cref{lem:finite injective dimension}, $R$ satisfies (2) of \cref{dfn:dualizing complexes}.

	Finally, we show that $R$ satisfies (3) of \cref{dfn:dualizing complexes}.
	In fact, we have
	\begin{align*}
		\RHomintr{A}(R,R) &\cong \RHomintr{A}(\D{\Rtorfun{A}(A)}, \D{\Rtorfun{A}(A)}) \\
		&\cong \RHomintl{A}(\Rtorfun{A}(A), \Rtorfun{A}(A)) \quad (\text{\cref{thm:matlis-duality}, \cref{cor:matlis-duality}}) \\
		&\cong \RHomintl{A}(\Rtorfun{A^{op}}(A), \Rtorfun{A^{\op}}(A)) \quad ( \Rtorfun{A}(A) \cong \Rtorfun{A^{\op}}(A)) \\
		&\cong \RHomintl{A}(\Rtorfun{A^{op}}(A), A) \quad (\text{\cref{prop:basic proposition-10}}) \\
		&\cong \RHomintr{A}(\D{A}, \D{\Rtorfun{A^{op}}(A)}) \quad (\text{\cref{cor:matlis-duality}}) \\
		&\cong \RHomintr{A}(\D{A}, \D{\Rtorfun{A}(A)}) \quad (\Rtorfun{A}(A) \cong \Rtorfun{A^{\op}}(A)) \\
		&\cong A'' \quad (\text{\cref{thm:local duality}}) \\
		&\cong A \quad (\text{\cref{thm:matlis-duality}}).
	\end{align*}
	This show that $\Psi_A$ in (3) of \cref{dfn:dualizing complexes} is an isomorphism.
	In the same way, we can show that $\Psi'_A$ in (3) of \cref{dfn:dualizing complexes} is also an isomorphism.

	As for the balancedness of $R$, it follows from \cref{thm:matlis-duality}, \cref{thm:local duality} and (3) of \cref{dfn:dualizing complexes}.

	Therefore, $R$ is a balanced dualizing complex over $A$.
\end{proof}

\subsection{Examples}

We give a class of $\Zbb$-algebras which have balanced dualizing complexes.
We also compare the notion of balanced dualizing complexes over $\Zbb$-algebras with that over graded algebras.

The following definition is inspired by \cite[Definition 4.15]{mori2025categorical} (see also \cite[Definition 3.1]{mori2024corrigendum}).

\begin{dfn}
	\label{dfn:AS-Gorenstein}
	Let $A$ be a connected $\Zbb$-algebra.
	We say that $A$ is \emph{AS-Gorenstein} of dimension $d$ and Gorenstein parameter $l$ if the following conditions hold:
	\begin{enumerate}
		\item $\injdim_{\Grr{A}} \Resr{A}(A) = \injdim_{\Grr{A^{\op}}} \Resl{A}(A) = d$,
		\item $\D{\Rtorfun{A}(A)} \cong \D{\Rtorfun{A^{\op}}(A)} \cong A(0,-l)_{\varphi}[d]$ in $D(\Grlr{A}{A})$ for some $l,d \in \Nbb$ and some isomorphism of $\Zbb$-algebras $\varphi: A \rightarrow A(-l,-l) $.
		$\varphi$ is called a \emph{Nakayama automorphism} of $A$.
	\end{enumerate}
\end{dfn}

\begin{prop}
\label{prop:AS-Gorenstein}
Let $A$ be a noetherian connected $\Zbb$-algebra.
If $A$ is AS-Gorenstein of dimension $d$ and Gorenstein parameter $l$ with a Nakayama automorphism $\varphi$, then $A$ has a balanced dualizing complex $R_A$, which is isomorphic to $A(0,-l)_{\varphi}[d]$.
\end{prop}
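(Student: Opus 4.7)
The strategy is to verify the three hypotheses of Theorem \ref{thm:main1}: $\chi$-condition, finiteness of $\cd(\torfun{A})$ and $\cd(\torfun{A^{\op}})$, and the symmetry $\Rtorfun{A}(A) \cong \Rtorfun{A^{\op}}(A)$. Condition (2) of Definition \ref{dfn:AS-Gorenstein} gives
\[
  \D{\Rtorfun{A}(A)} \cong \D{\Rtorfun{A^{\op}}(A)} \cong A(0,-l)_{\varphi}[d] \quad \text{in } D(\Grlr{A}{A}).
\]
Since the right-hand side is locally finite (as $A$ is noetherian hence locally finite by Remark \ref{rmk:Relation among finite, locally finite and Ext-finite}), so are $\Rtorfun{A}(A)$ and $\Rtorfun{A^{\op}}(A)$. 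Matlis duality is then involutive on such objects (Theorem \ref{thm:matlis-duality}), and applying $\D{(-)}$ once more yields $\Rtorfun{A}(A) \cong \Rtorfun{A^{\op}}(A) \cong \D{A(0,-l)_{\varphi}[d]}$, a complex concentrated in cohomological degree $d$.

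For the remaining two conditions, fix $M \in \grr{A}$ and choose a finite-type free resolution $\cdots \to F_1 \to F_0 \to M \to 0$, possible by right noetherianity. Each $F_n$ is a finite direct sum of modules $e_j A$, and condition (1) of Definition \ref{dfn:AS-Gorenstein}, together with $\Resr{A}(A) = \bigoplus_j e_j A$, ensures that each $e_j A$ has injective dimension at most $d$. Choose a Cartan--Eilenberg resolution $F_\bullet \to J^{\bullet,\bullet}$ in which each vertical injective resolution has length at most $d$, so the total complex is termwise a finite direct sum of injectives. The associated hypercohomology spectral sequence
\[
  E_1^{-p,q} = \R^q \torfun{A}(F_p) \;\Longrightarrow\; \R^{-p+q}\torfun{A}(M)
\]
satisfies $\R^q \torfun{A}(F_p) \cong \bigoplus_k e_{j_{p,k}} \R^q \torfun{A}(A)$, using that $\Resrr{A}{j}$ commutes with $\Rtorfun{A}$ (Proposition \ref{prop:restriction-K-projective-K-injective}) and that $\R^q \torfun{A}$ commutes with direct sums (Proposition \ref{prop:quasi-compactness-torfunctor}). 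Since $\Rtorfun{A}(A)$ is concentrated in degree $d$, this vanishes for $q \neq d$, the spectral sequence collapses onto a single row, and $\R^n \torfun{A}(M) = 0$ for $n > d$; hence $\cd(\torfun{A}) \leq d$, and symmetrically $\cd(\torfun{A^{\op}}) \leq d$. Moreover, the degenerate spectral sequence expresses each $\R^n \torfun{A}(M)$ as a cohomology of a complex whose terms are finite direct sums of $e_j \D{A(0,-l)_{\varphi}}$, each cofinite because its Matlis dual $A(0,-l)_{\varphi} e_j$ is noetherian as a graded left $A$-module. Since $\grcofr{A}$ is a Serre subcategory (Theorem \ref{thm:matlis-duality}), $\R^n \torfun{A}(M) \in \grcofr{A}$, and Corollary \ref{cor:chi-condition} yields right $\chi$-condition; left $\chi$-condition follows symmetrically.

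Theorem \ref{thm:main1} then produces a balanced dualizing complex $R_A$, and its construction yields $R_A \cong \D{\Rtorfun{A}(A)} \cong A(0,-l)_{\varphi}[d]$ by the calculation of the first paragraph. The main technical delicacy is ensuring strong convergence of the hypercohomology spectral sequence when the free resolution $F_\bullet$ is possibly infinite on the left; this is exactly where condition (1) of AS-Gorenstein intervenes, for it allows the vertical injective resolutions in the Cartan--Eilenberg construction to be chosen of length at most $d$, rendering the total complex termwise a finite direct sum and the spectral sequence a degenerate second-quadrant one whose abutment is $\R^*\torfun{A}(M)$.
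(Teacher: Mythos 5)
Your route is genuinely different from the paper's: you try to feed the AS-Gorenstein conditions into the existence criterion \cref{thm:main1}, whereas the paper never invokes that theorem and instead verifies \cref{dfn:dualizing complexes} and balancedness for $A(0,-l)_{\varphi}[d]$ directly: condition (1) by computing $e_iA(0,-l)_{\varphi}$ and $A(0,-l)_{\varphi}e_i$, condition (2) from the finiteness of $\injdim_{\Grr{A}}\Resr{A}(A)$ and $\injdim_{\Grr{A^{\op}}}\Resl{A}(A)$ via \cref{lem:finite injective dimension}, condition (3) by the direct computation $\RHomintr{A}(A(0,-l)_{\varphi},A(0,-l)_{\varphi})\cong A$, and balancedness via \cref{thm:matlis-duality} and \cref{thm:local duality}. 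Your first step (symmetric derived torsion obtained by double Matlis duality applied to condition (2) of \cref{dfn:AS-Gorenstein}, using local finiteness) is fine, and so is the final identification $R_A\cong \D{\Rtorfun{A}(A)}\cong A(0,-l)_{\varphi}[d]$ once the hypotheses of \cref{thm:main1} are in place.

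The gap is in the verification of the remaining hypotheses. Both your bound $\cd(\torfun{A})\le d$ and your proof of the $\chi$-condition are read off from a hypercohomology spectral sequence over a possibly infinite free resolution $F_{\bullet}\to M$, whose convergence you justify by claiming that a Cartan--Eilenberg resolution of $F_{\bullet}$ can be chosen with vertical injective resolutions of length at most $d$ because each $F_p$ has injective dimension at most $d$. This is not how Cartan--Eilenberg resolutions are built: they require injective resolutions of the boundaries, cycles and cohomology of $F_{\bullet}$, i.e.\ of the syzygies of $M$, and over an AS-Gorenstein algebra that is not regular these syzygies typically have infinite injective dimension. For example, for $\zgrz{B}$ with $B=k[x]/(x^2)$, $\deg x=1$ (AS-Gorenstein of dimension $0$), the cycles of the free resolution of a simple module $S_i$ are again simple modules, whose injective dimension is infinite, so no Cartan--Eilenberg resolution of $F_{\bullet}$ has rows of length $0$. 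Without the row bound the total complex is merely a bounded-above complex of injectives, which does not compute $\R\torfun{A}$, and the spectral sequence has no convergence; the standard remedy (computing with $\torfun{A}$-acyclic or truncated Cartan--Eilenberg resolutions, as in the paper's \cref{prop:basic proposition-10}) is available only when $\cd(\torfun{A})$ is already known to be finite, which is exactly what you are trying to prove, so the argument is circular. Simple dimension shifting along $F_{\bullet}$ does not rescue it either, since it only identifies $\R^{i}\torfun{A}(M)$ with $\R^{i+n}\torfun{A}$ of the $n$-th syzygy and never yields vanishing. Since the finiteness of $\cd(\torfun{A})$, $\cd(\torfun{A^{\op}})$ and the $\chi$-condition are precisely what remains before \cref{thm:main1} can be applied, the proof is incomplete at this point. (A further, minor, point: your $\cd$ bound is only argued for finitely generated $M$, whereas $\cd(\torfun{A})$ is defined over all of $\Grr{A}$; this would additionally need \cref{prop:quasi-compactness-torfunctor} and a direct limit argument.)
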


\begin{proof}
	We prove that $A(0,-l)_{\varphi}[d]$ is a balanced dualizing complex of $A$.

	Firstly, from \cite[Lemma 2.16]{mori2025categorical}
	\begin{align*}
	e_iA(0,-l)_{\varphi} &\cong (e_{i}A)(-l)_\varphi \\
	&\cong e_{i+l}A \in \grr{A}, \\
	A(0,-l)_{\varphi}e_i &\cong Ae_{i-l} \in \grl{A}.
	\end{align*}
	This shows that $A(0,-l)_{\varphi}[d]$ satisfies (1) of \cref{dfn:dualizing complexes}.

	From the condition $\injdim_{\Grr{A}} \Resr{A}(A) = \injdim_{\Grr{A^{\op}}} \Resl{A}(A)$ and \cref{lem:finite injective dimension}, we obtain an object $I \in D^b(\Grlr{A}{A})$ and a quasi-isomorphism $A \rightarrow I$ in $D(\Grlr{A}{A})$ such that $\Resr{A}(I^i)$ and $\Resl{A}(I^i)$ are injective over $\Grr{A}$ and $\Grr{A^{\op}}$, respectively, for all $i \in \Zbb$.
	So, $I(0, -l)_{\varphi}[d]$ and  $A(0,-l)_{\varphi}[d] \rightarrow I(0, -l)_{\varphi}[d]$ have the same properties.
	This shows that $A(0,-l)_{\varphi}[d]$ satisfies (2) of \cref{dfn:dualizing complexes}.

	A direct calculation gives
	\begin{align*}
		\RHomintr{A}(A(0,-l)_{\varphi}[d], A(0,-l)_{\varphi}[d]) &\cong \RHomintr{A}(A(0,-l)_{\varphi}, A(0,-l)_{\varphi}) \\
		&\cong \Homintr{A}(A(0,-l)_{\varphi}, A(0,-l)_{\varphi}) \\
		&\cong A
	\end{align*}
	in $D(\Grlr{A}{A})$.
	In the same way, we obtain
	\[
	\RHomintl{A}(A(0,-l)_{\varphi}[d], A(0,-l)_{\varphi}[d]) \cong A
	\] 
	in $D(\Grlr{A}{A})$.
	Thus, $A(0,-l)_{\varphi}[d]$ satisfies (3) of \cref{dfn:dualizing complexes}.

	Finally, from \cref{thm:matlis-duality}, \cref{thm:local duality} and the above calculation, we have
	\begin{align*}
		\Rtorfun{A}(A(0,-l)_{\varphi}[d]) &\cong \D{\RHomint_A(A(0,-l)_{\varphi}[d], A(0,-l)_{\varphi}[d])} \\
		&\cong \D{A}
	\end{align*}
	in $D(\Grlr{A}{A})$.
	In the same way, we have an isomorphism $\Rtorfun{A^{\op}}(A(0,-l)_{\varphi}[d]) \cong \D{A}$.

	Therefore, $A(0,-l)_{\varphi}[d]$ is a balanced dualizing complex over $A$.
\end{proof}

Next, we consider a graded $k$-algebra $B = \bigoplus_{i \in \Zbb} B_i$.
We denote by $\Grr{B}$ and $\Grl{B}$ the categories of graded right and left $B$-modules, respectively, and by $\torfun{B}$ and $\torfun{B^{\op}}$ the corresponding torsion functors.
For other analogous notions, we use the same notation as in the $\Zbb$-algebra case.

For completeness, we include some basic properties relevant to our study, together with their proofs, although they are likely well known to experts.

\begin{lem}
	\label{lem:comparison of dualizing complexes}
	Let $B,C,D$ be a connected graded $k$-algebra.
	Then, the following hold:

	\begin{enumerate}
		\item For an object $M$ in $D(\Grlr{C}{B})$ (resp. $D(\Grr{B})$), we have 
		\[
		\widezgrz{\Rtorfun{B}(M)} \cong \Rtorfun{\zgrz{B}}(\zgrz{M})
		\]
		in $D(\Grlr{\zgrz{C}}{\zgrz{B}})$ (resp. $D(\Grr{\zgrz{B}})$).
		(see also \cite[Lemma 4.12]{mori2025categorical})

		\item For objects $M \in D(\Grlr{C}{B})$ and $N \in D(\Grlr{D}{B})$ (resp. $D(\Grr{B})$), we have 
		\[
		\widezgrz{\RHomintr{B}(M,N)} \cong \RHomintr{\zgrz{B}}(\zgrz{M}, \zgrz{N})
		\]
		in $D(\Grlr{\zgrz{D}}{\zgrz{C}})$ (resp. $D(\Grr{\zgrz{C}})$).
	\end{enumerate}
\end{lem}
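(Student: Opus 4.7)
The plan is to prove both items by first establishing the underived analogs and then lifting by a resolution argument, using the equivalence $\Grr{B} \cong \Grr{\zgrz{B}}$ (and its left analog) of \cref{lem:associated-Z-algebras} to transfer K-injectivity from the graded to the $\Zbb$-algebra side.

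First I would treat (1). The underived identity
\[
\widezgrz{\torfun{B}(M)} \;\cong\; \torfun{\zgrz{B}}(\zgrz{M})
\]
for $M \in \Grlr{C}{B}$ is immediate from definitions: under $\widezgrz{(-)}$ the graded ideal $B_{\geq n}$ corresponds to $\bigoplus_{j-i\geq n}(\zgrz{B})_{ij} = (\zgrz{B})_{\geq n}$, so the annihilation conditions match degree-by-degree, and naturality in $M$ is clear. To derive, I would take a K-injective resolution $M \to I$ in $\Grlr{C}{B}$ with each $\Resr{B}(I^j)$ injective in $\Grr{B}$, which is available by the graded analog of \cref{rmk:existence-K-projective-K-injective} combined with the fact that $\Resr{B}$ preserves injectives (the graded counterpart of \cref{lem:restriction}). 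The bridge is the easily verified shift identity
\[
e_i \widezgrz{N} \;\cong\; \widezgrz{N(-i)} \quad \text{in } \Grr{\zgrz{B}}
\]
for any bigraded $N$, which combined with the equivalence of \cref{lem:associated-Z-algebras} shows that each $e_i\widezgrz{I^j}$ is injective in $\Grr{\zgrz{B}}$, and more importantly that $e_i\widezgrz{I}$ is K-injective there since equivalences of abelian categories preserve K-injectivity at the level of derived categories. By \cref{rmk:derived-functors}, the complex $\widezgrz{I}$ computes $\Rtorfun{\zgrz{B}}(\zgrz{M})$, and the underived identity applied levelwise finishes (1). The variant over $\Grr{B}$ rather than $\Grlr{C}{B}$ is strictly easier since $\widezgrz{(-)}:\Grr{B}\to\Grr{\zgrz{B}}$ is already an equivalence.

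For (2), I would follow the same template. The underived statement
\[
\widezgrz{\Homintr{B}(M,N)} \;\cong\; \Homintr{\zgrz{B}}(\zgrz{M},\zgrz{N})
\]
reduces to unpacking both sides: the $(i,j)$-bigraded piece on the right is $\Homr{\zgrz{B}}(e_j\zgrz{M}, e_i\zgrz{N}) \cong \Homr{B}(M(-j), N(-i))$ via the shift identity above, which is precisely the graded component of degree $j-i$ of $\Homintr{B}(M,N)$. To derive, I replace $N$ by a K-injective resolution $N \to I$ in $\Grlr{D}{B}$ with each $\Resr{B}(I^j)$ injective; by the same reasoning as in (1), each $e_i\widezgrz{I}$ is K-injective in $\Grr{\zgrz{B}}$, so \cref{rmk:derived-functors} licenses computing $\RHomintr{\zgrz{B}}(\zgrz{M},\zgrz{N})$ as $\Homintr{\zgrz{B}}(\zgrz{M},\widezgrz{I})$. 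The underived identity then yields the derived isomorphism.

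The main obstacle is that the bimodule-level functor $\widezgrz{(-)}:\Grlr{C}{B}\to\Grlr{\zgrz{C}}{\zgrz{B}}$ is only fully faithful and not essentially surjective, so one cannot directly transport K-injective resolutions across it. The cure, built into the apparatus developed earlier in the paper, is to test K-injectivity after one-sided restriction, where the comparison \emph{is} an equivalence; the identity $e_i\widezgrz{N}\cong\widezgrz{N(-i)}$ is what makes this uniform across all $i$, and \cref{rmk:derived-functors} ensures that K-injectivity of restrictions is enough to compute the relevant derived functors.
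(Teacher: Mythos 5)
Your proposal is correct and follows essentially the same route as the paper: both arguments first establish the underived identities (your computation for item (2) is exactly the paper's, via the shift identity $e_i\widezgrz{N}\cong\widezgrz{N(-i)}$ and \cref{lem:associated-Z-algebras}) and then pass to derived categories using exactness of $\widezgrz{(-)}$. The paper is terser at the second step, simply asserting the commutation $\R(\widezgrz{(-)}\circ\torfun{B})\cong\widezgrz{(-)}\circ\R\torfun{B}$ (and its $\Homint$ analogue), whereas you make the implicit adaptedness argument explicit through K-injective resolutions and the criterion of \cref{rmk:derived-functors}; that extra detail is welcome and is exactly what the paper leaves to the reader.

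One justification in your write-up is too loose. Termwise injectivity of the $e_i\widezgrz{I^j}$ does not give K-injectivity of the unbounded complex $e_i\widezgrz{I}$, and the appeal to ``equivalences preserve K-injectivity'' presupposes that $I(-i)$, i.e.\ $\Resr{B}(I)$ up to shift, is already K-injective in $D(\Grr{B})$ --- which is precisely what needs to be shown. The correct (and available) input is the graded counterpart of \cref{prop:restriction-K-projective-K-injective}: since the restriction of graded $C$-$B$-bimodules to graded right $B$-modules has the exact left adjoint $C\ten{k}-$, it sends K-injective complexes of bimodules to K-injective complexes of graded right $B$-modules; combined with your shift identity and \cref{lem:associated-Z-algebras}, this gives the K-injectivity of each $e_i\widezgrz{I}$, after which \cref{rmk:derived-functors} applies and your argument closes. (A minor aside: the bimodule functor $\widezgrz{(-)}$ is in general not full --- endomorphisms of $\widezgrz{k}=K$ are arbitrary diagonal families of scalars --- but your proof only uses that it is exact and not an equivalence, so this does not affect anything.)
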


\begin{proof}
	(1) 
	We show the underived version of the claim.
	Let $m$ be an element of $M$ and $\zgrz{m} \in \zgrz{M}$ be the corresponding element.
	Let $b \in B$ be an element and $\zgrz{b} \in \zgrz{B}$ be the corresponding element.
	Then, the claim follows from the fact that $\widezgrz{mb} = \zgrz{m}\zgrz{b}$.
	As for the derived version, the claim follows from the isomorphism of functors $\R(\zgrz{(-)} \circ \torfun{B}) \cong \zgrz{(-)} \circ \R(\torfun{B})$.

	(2) We show the underived version of the claim.
	For $M \in \Grlr{C}{B}$ and $N \in \Grlr{D}{B}$, we have
	\begin{align*}
		\Homintr{\zgrz{B}}(\zgrz{M}, \zgrz{N})_{i,j} &\cong \Homr{\zgrz{B}}(e_j\zgrz{M}, e_i\zgrz{N}) \\
		&\cong \Homr{B}(M(-j), N(-i)) \\
		&\cong \Homr{B}(M, N(j-i)) \\
		&\cong \Homintr{B}(M,N)_{j-i} \\
		&\cong \widezgrz{\Homintr{B}(M,N)}_{i,j}.
	\end{align*}
	In this calculation, we used the fact that $\widezgrz{M(-j)} = e_j\zgrz{M}(-j)$ and $\widezgrz{N(-i)} = e_i\zgrz{N}(-i)$ (\cite[Lemma 2.17,2.18]{mori2025categorical}).
	If $N \in \Grr{B}$, we can prove the claim in the same way.
	As for the derived version, the claim follows from the isomorphism of functors $\R(\widezgrz{(-)} \circ \Homintr{B}(-,-)) \cong \widezgrz{(-)} \circ \R(\Homintr{B}(-,-))$.
\end{proof}

\begin{prop}
\label{prop:comparison of dualizing complexes}
Let $B$ be a noetherian connected graded $k$-algebra.
Then, $B$ has a balanced dualizing complex in the sense of \cite[Definition 3.3 and 4.1]{yekutieli1992dualizing} if and only if $\zgrz{B}$ has a balanced dualizing complex.
Moreover, in this case, we have $R_{\zgrz{B}} \cong \widezgrz{R_B}$.
\end{prop}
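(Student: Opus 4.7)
The plan is to reduce both directions to the existence criteria: \cref{thm:main1} on the $\Zbb$-algebra side and Van den Bergh's theorem \cite{vandenbergh1997existence} on the graded side. First I transfer the basic hypotheses between $B$ and $\zgrz{B}$. Connectedness of $\zgrz{B}$ is immediate from $\zgrz{B}_{ij} = B_{j-i}$, and noetherianness transfers through the equivalences $\Grr{B} \simeq \Grr{\zgrz{B}}$ and $\Grl{B} \simeq \Grl{\zgrz{B}}$ of \cref{lem:associated-Z-algebras}. For the $\chi$-condition, combining these equivalences with \cref{lem:comparison of dualizing complexes}(2) applied to the trivial module gives $\widezgrz{\Extintr{B}^{i}(K,M)} \cong \Extintr{\zgrz{B}}^{i}(K,\zgrz{M})$, so $\chi$ transfers in both directions; similarly \cref{lem:comparison of dualizing complexes}(1) identifies local cohomology of $M$ over $B$ with local cohomology of $\zgrz{M}$ over $\zgrz{B}$, so finiteness of the cohomological dimensions $\cd(\torfun{-})$ transfers as well.

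For the ``if'' direction, assume $\zgrz{B}$ has a balanced dualizing complex. By \cref{cor:local-duality-2-2}, $\zgrz{B}$ satisfies $\chi$-condition and has finite $\cd(\torfun{\zgrz{B}})$ and $\cd(\torfun{\zgrz{B}^{\op}})$. Transferring these back to $B$ and invoking Van den Bergh's theorem, $B$ has a balanced dualizing complex in Yekutieli's sense. Conversely, starting from a balanced dualizing complex over $B$, the Van den Bergh conditions hold on $B$ and hence, by the same transfer, on $\zgrz{B}$. To apply \cref{thm:main1}, it remains to establish symmetric derived torsion of $\zgrz{B}$ as a bigraded bimodule. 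Here I use that $\zgrz{B}$ is $1$-periodic (as noted just after \cref{lem:associated-Z-algebras}), so that $\zgrz{B} \in D^{b}_{(f,f)}(\Grlr{\zgrz{B}}{\zgrz{B}})$ has its single nonzero cohomology $1$-periodic; \cref{prop:chi-condition_symmetric derived torsion-2} then produces weak symmetric derived torsion, which \cref{prop:symmetric derived torsion functor} upgrades to symmetric derived torsion. \cref{thm:main1} now supplies a balanced dualizing complex over $\zgrz{B}$.

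For the ``moreover'' part, I combine Yekutieli's identification $R_B \cong \D{\Rtorfun{B}(B)}$ with \cref{cor:local-duality-2-2} applied to $\zgrz{B}$, using that the Matlis dual commutes with $\widezgrz{(-)}$ by direct inspection of the gradings, together with \cref{lem:comparison of dualizing complexes}(1), to compute
\[
\widezgrz{R_B} \;\cong\; \widezgrz{\D{\Rtorfun{B}(B)}} \;\cong\; \D{\widezgrz{\Rtorfun{B}(B)}} \;\cong\; \D{\Rtorfun{\zgrz{B}}(\zgrz{B})} \;\cong\; R_{\zgrz{B}}.
\]
The principal obstacle is the symmetric derived torsion hypothesis on the $\Zbb$-algebra side of the ``only if'' direction: as emphasized in \cref{rmk:chi-condition_symmetric derived torsion}, this condition does not formally follow from $\chi$-condition and finite cohomological dimension for a general $\Zbb$-algebra, so its verification genuinely relies on the $1$-periodicity of $\zgrz{B}$. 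A secondary subtlety is that the functor $\widezgrz{(-)}$ on bimodule categories is not an equivalence, which is why the comparison is routed through torsion and internal $\Ext$ via \cref{lem:comparison of dualizing complexes} rather than by directly transporting bimodules.
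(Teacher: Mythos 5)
Your proof is correct and follows essentially the same route as the paper: transfer the $\chi$-condition and finiteness of $\cd(\torfun{-})$ between $B$ and $\zgrz{B}$ via \cref{lem:comparison of dualizing complexes}, then conclude with \cref{thm:main1} on the $\Zbb$-algebra side and Van den Bergh's theorem on the graded side, and read off $R_{\zgrz{B}} \cong \widezgrz{R_B}$ from the local-cohomology description of the dualizing complex. The only difference is that you spell out the symmetric derived torsion hypothesis of \cref{thm:main1} explicitly, via the $1$-periodicity of $\zgrz{B}$ together with \cref{prop:chi-condition_symmetric derived torsion-2} and \cref{prop:symmetric derived torsion functor}, a point the paper's proof leaves implicit (it could equally be obtained by transferring $\Rtorfun{B}(B)\cong\Rtorfun{B^{\op}}(B)$ through $\widezgrz{(-)}$), so your write-up is if anything more complete.
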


\begin{proof}	
Let $\zgrz{M} \in \Grr{\zgrz{B}}$.
Then, from (2) of \cref{lem:comparison of dualizing complexes} and the fact $\zgrz{k} \cong K$, we have
\begin{align*}
	\Extintr{\zgrz{B}}^i(K, \zgrz{M}) &\cong \widezgrz{\Extintr{B}^i(k, M)} \in \Grr{\zgrz{B}}.
\end{align*}
This shows that $\zgrz{B}$ satisfies right $\chi$-condition in the sense of \cite[Definition 16.5.14]{yekutieli2019derived} (see also \cite[Definition 3.2 and 3.7, Proposition 3.11]{artin1994noncommutative}) if and only if $B$ satisfies right $\chi$-condition.
In the same way, we can show that $\zgrz{B}$ satisfies left $\chi$-condition if and only if $B$ satisfies left $\chi$-condition.

As for $\cd(\torfun{\zgrz{B}})$, from (1) of \cref{lem:comparison of dualizing complexes}, we have
\[
\R^i\torfun{\zgrz{B}}(\zgrz{M}) \cong \widezgrz{\R^i\torfun{B}(M)} \in \Grr{\zgrz{B}}.
\]
This shows that $\cd(\torfun{\zgrz{B}})$ is finite if and only if $\cd(\torfun{B})$ is finite.
In the same way, we can show that $\cd(\torfun{\zgrz{B^{\op}}})$ is finite if and only if $\cd(\torfun{B^{\op}})$ is finite.

Therefore, from \cref{thm:main1} and \cite[Theorem 6.3]{vandenbergh1997existence}, we conclude that $\zgrz{B}$ has a balanced dualizing complex if and only if $B$ has a balanced dualizing complex.
Moreover, we obtain $R_{\zgrz{B}} \cong \widezgrz{R_B}$ from the above discussion.
\end{proof}

We can also compare the notion of AS-Gorenstein $\Zbb$-algebras with that of AS-Gorenstein graded algebras.
As for the case of AS-regular algebras, see \cite[Section 4]{mori2025categorical}.

\begin{prop}
\label{prop:comparison of AS-Gorenstein}
Let $B$ be a noetherian connected graded $k$-algebra.
Then, $B$ is AS-Gorenstein of dimension $d$ and Gorenstein parameter $l$ in the sense of \cite[Definition 15.4.8]{yekutieli2019derived} if and only if $\zgrz{B}$ is AS-Gorenstein of dimension $d$ and Gorenstein parameter $l$.
\end{prop}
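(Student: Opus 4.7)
The proof proceeds by transporting each piece of the AS-Gorenstein condition between the graded and $\Zbb$-algebra settings through the functor $\widezgrz{(-)}$. By \cref{prop:comparison of dualizing complexes}, each AS-Gorenstein hypothesis implies a balanced dualizing complex on the corresponding side, and $R_{\zgrz{B}} \cong \widezgrz{R_B}$ in $D(\Grlr{\zgrz{B}}{\zgrz{B}})$. The injective dimension condition transfers via the equivalences $\Grr{B} \cong \Grr{\zgrz{B}}$ and $\Grl{B} \cong \Grl{\zgrz{B}}$ of \cref{lem:associated-Z-algebras}, which, being equivalences of abelian categories, preserve injective objects and hence injective dimensions. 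Thus $\injdim_{\Grr{B}}\Resr{B}(B) = \injdim_{\Grl{B}}\Resl{B}(B) = d$ if and only if $\injdim_{\Grr{\zgrz{B}}}\Resr{\zgrz{B}}(\zgrz{B}) = \injdim_{\Grl{\zgrz{B}}}\Resl{\zgrz{B}}(\zgrz{B}) = d$.

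The substantive content is matching the explicit form of $R_B$ (a twisted shifted $B$-bimodule of the shape $B^{\sigma}$ shifted by $-l$ and placed in cohomological degree $d$, where $\sigma$ is the Nakayama automorphism of $B$) with the form $\zgrz{B}(0,-l)_{\varphi}[d]$ required by \cref{dfn:AS-Gorenstein}. The bridge is a direct bigraded computation at the level of $k$-vector spaces:
\[
\widezgrz{B^{\sigma}(-l)}_{i,j} \;=\; B^{\sigma}_{j-i-l} \;=\; \zgrz{B}_{i,j-l} \;=\; \bigl(\zgrz{B}(0,-l)\bigr)_{i,j},
\]
and the right $\zgrz{B}$-action on $\widezgrz{B^{\sigma}(-l)}$ matches that on $\zgrz{B}(0,-l)_{\varphi}$ for an appropriate $\varphi : \zgrz{B} \to \zgrz{B}(-l,-l)$. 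The plan is to construct $\varphi$ by composing $\widezgrz{\sigma} : \zgrz{B} \to \zgrz{B}$ with the $l$-fold iterate of the canonical $1$-periodicity isomorphism of $\zgrz{B}$ (available since $\zgrz{B}$ is $1$-periodic, hence $l$-periodic); the twist of the right action by $\varphi$ then absorbs both the graded $(-l)$-shift and the left $\sigma$-twist.

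Conversely, given $\varphi : \zgrz{B} \to \zgrz{B}(-l,-l)$ as in \cref{dfn:AS-Gorenstein}, composition with the inverse $l$-fold periodicity produces a $\Zbb$-algebra automorphism of $\zgrz{B}$, which by the equivalence in \cref{lem:associated-Z-algebras} is of the form $\widezgrz{\tau}$ for a unique graded $k$-algebra automorphism $\tau : B \to B$; reversing the bigraded identification above yields $R_B \cong B^{\tau}(-l)[d]$, which is precisely the AS-Gorenstein data for $B$ with the same $d$ and the same Gorenstein parameter $l$.

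The principal technical obstacle is the careful book-keeping of sign and shift conventions when passing between graded and bigraded shifts through $\widezgrz{(-)}$, confirming in particular that the same integer $l$ indexes the graded shift $(-l)$ and the bigraded shift $(0,-l)$. This will be verified using the behavior of $\widezgrz{(-)}$ on module shifts recorded in \cite{mori2025categorical}, together with the explicit identification $e_i A(0,-l)_{\varphi} \cong e_{i+l}A$ from the proof of \cref{prop:AS-Gorenstein}.
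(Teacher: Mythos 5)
Your forward direction is essentially the paper's argument: both start from the explicit form $R_B \cong B^{\nu}(-l)[d]$ of the balanced dualizing complex of a graded AS-Gorenstein algebra (this is not free — it is \cite[Corollary 17.3.14]{yekutieli2019derived}, which you should cite rather than assert), push it through $\widezgrz{(-)}$ using \cref{lem:comparison of dualizing complexes} or \cref{prop:comparison of dualizing complexes}, and absorb the graded shift $(-l)$ into a twist by composing $\widezgrz{\nu}$ with the canonical $1$-periodicity isomorphism of $\zgrz{B}$; the injective-dimension transfer via \cref{lem:associated-Z-algebras} is also as in the paper.

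The converse direction, however, has a genuine gap. You claim that the automorphism of $\zgrz{B}$ obtained from the Nakayama isomorphism $\varphi : \zgrz{B} \to \zgrz{B}(-l,-l)$ by composing with the inverse periodicity is of the form $\widezgrz{\tau}$ for a unique graded automorphism $\tau$ of $B$, ``by the equivalence in \cref{lem:associated-Z-algebras}''. That lemma concerns module categories and says nothing about algebra automorphisms, and the claim itself is false in general: a $\Zbb$-algebra automorphism of $\zgrz{B}$ need only preserve each component $\zgrz{B}_{ij} = B_{j-i}$ separately, so it may act differently in different columns. Already for $B = k[x]$ one can rescale each $\zgrz{B}_{i,i+1}$ by an independent scalar $c_i$, and such an automorphism equals some $\widezgrz{\tau}$ only if all $c_i$ coincide; in general $\Aut(\zgrz{B})$ is strictly larger than the group of graded automorphisms of $B$ (this is the phenomenon underlying Zhang twists). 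In special cases the discrepancy is an ``inner'' diagonal rescaling that does not change the twisted bimodule, but you would have to prove that in general, and you cannot instead descend the bimodule isomorphism $\widezgrz{R_B} \cong \zgrz{B}(0,-l)_{\varphi}[d]$ directly, because $\Grlr{B}{B} \to \Grlr{\zgrz{B}}{\zgrz{B}}$ is not an equivalence (as noted after \cref{lem:associated-Z-algebras}). The paper avoids this entirely: from \cref{dfn:AS-Gorenstein} it computes $\RHomintr{\zgrz{B}}(K,\zgrz{B})$ via \cref{thm:local duality} and \cref{prop:basic proposition-10}, obtaining a twist of $K(0,l)[-d]$, and then restricts to $e_0$, where the unknown bimodule twist becomes irrelevant, yielding $\RHomintr{B}(k,B) \cong k(l)[-d]$ (and its left-sided analogue) — exactly the one-sided condition in Yekutieli's Definition 15.4.8. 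Relatedly, even granting your $\tau$, the final assertion that $R_B \cong B^{\tau}(-l)[d]$ ``is precisely the AS-Gorenstein data'' conflates that definition (stated via $\RHom_B(k,B)$ and finite injective dimension) with its characterization through the balanced dualizing complex; that implication also needs an argument or a citation.
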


\begin{proof}
Firstly, note that $\zgrz{B}$ has the injective dimension $d$ over $\zgrz{B}$ and $\zgrz{B^{\op}}$ if and only if $B$ has the injective dimension $d$ over $B$ and $B^{\op}$ from \cref{lem:associated-Z-algebras}.

Assume that $B$ is AS-Gorenstein of dimension $d$ and Gorenstein parameter $l$.
Then, there is an isomorphism $\nu: B \rightarrow B$ such that 
\[R_B \cong \D{\Rtorfun{B}(B)} \cong \D{\Rtorfun{B^{\op}}(B)} \cong B(-l)_\nu[d]
\]
(for example, see \cite[Corollary 17.3.14]{yekutieli2019derived}).
Since $\zgrz{B}$ is 1-periodic, we have the canonical isomorphism of $\Zbb$-algebras $\psi_1: \zgrz{B} \rightarrow \zgrz{B}(-l,-l)$.
Thus, from \cref{lem:comparison of dualizing complexes}, we have
\[
\D{\Rtorfun{\zgrz{B}}(\zgrz{B})} \cong \D{\Rtorfun{\zgrz{B}^{\op}}(\zgrz{B})} \simeq \zgrz{B}(0,-l)_{\psi_1 \circ \zgrz{\nu}}[d].
\]
Hence, $\zgrz{B}$ is AS-Gorenstein of dimension $d$ and Gorenstein parameter $l$.

Assume that $\zgrz{B}$ is AS-Gorenstein of dimension $d$ and Gorenstein parameter $l$.
Then, in $D(\Grlr{B}{B})$, 
\begin{align*}
\widezgrz{\RHomintr{B}(k,B)} &\cong \RHomintr{\zgrz{B}}(K,\zgrz{B})  \quad (\text{\cref{lem:comparison of dualizing complexes}})\\
&\cong \RHomintr{\zgrz{B}}(K,\D{\Rtorfun{\zgrz{B}}(\zgrz{B})}(0,l)_{\psi_2 \circ \zgrz{\nu}^{-1}}[-d]) \\
&\cong \RHomintr{\zgrz{B}}(K,\D{\Rtorfun{\zgrz{B}}(\zgrz{B})}) (0,l)_{\psi_2 \circ \zgrz{\nu}^{-1}}[-d] \\
&\cong \D{\Rtorfun{\zgrz{B}}(K)}(0,l)_{\psi_2 \circ \zgrz{\nu}^{-1}}[-d] \quad (\text{\cref{thm:local duality}}) \\
&\cong K(0,l)_{\psi_2 \circ \zgrz{\nu}^{-1}}[-d] \quad (\text{\cref{prop:basic proposition-10}}), 
\end{align*}
where $\psi_2: B \rightarrow B(l,l)$ is the canonical isomorphism of $\Zbb$-algebras.
Moreover, we have
\begin{align*}
	e_0\widezgrz{\RHomintr{B}(k,B)} \cong \RHomintr{B}(k,B), \\
	e_0K(0,l)_{\psi_2 \circ \nu^{-1}}[-d] \cong k(l)_{\psi_2 \circ \nu^{-1}}[-d].
\end{align*}
Thus, we obtain 
\[
\RHomintr{B}(k,B) \cong k(l)[-d] \in D(\Grr{k}).
\]
In the same way, we can obtain 
\[
\RHomintr{B^{\op}}(k,B) \cong k(l)[-d] \in D(\Grr{k}).
\]
Therefore, $B$ is AS-Gorenstein of dimension $d$ and Gorenstein parameter $l$.
\end{proof}

\section{An application to noncommutative projective geometry}
\label{sec:application to noncommutative projective geometry}

In this section, we give an application of \cref{thm:main1} to noncommutative projective geometry.
Especially, we show that the noncommutative projective scheme over a $\Zbb$-algebra has a Serre functor when it has a balanced dualizing complex and its global dimension is finite.

\subsection{Noncommutative projective schemes}

Let $A$ be a right noetherian connected $\Zbb$-algebra.
Then, the category $\grr{A}$ is an abelian category and the subcategory $\tor(A)$ of torsion graded right $A$-modules is a Serre subcategory of $\grr{A}$ (\cite[Lemma 2.9, Lemma 3.7]{mori2025categorical}).
Thus, we can consider the quotient category $\qgr(A) = \grr{A}/\tor(A)$.
We denote by $\pi_A: \grr{A} \longrightarrow \qgr(A)$ the natural projection functor.
Note that $\pi_A$ has a right adjoint functor $\omega_A: \qgr(A) \longrightarrow \grr{A}$ (\cite[Section 3.1]{mori2025categorical}) and $\pi_A \circ \omega_A \cong \Id_{\qgr(A)}$.
In particular, $\omega$ is fully faithful (\cite[Lemma 4.24.4]{stacks-project}).

We can also define the quotient category $\QGr(A) = \Gr(A)/\Tor(A)$ in the same way.
In addition, we denote by $\pi_A$ the natural projection functor and by $\omega_A: \QGr(A) \longrightarrow \Gr(A)$ its right adjoint functor.

\begin{dfn}[{cf. \cite[Section 2]{artin1994noncommutative}, \cite[Definition 3.8]{mori2025categorical}}]
	\label{dfn:noncommutative projective scheme}
	Let $A$ be a right noetherian connected $\Zbb$-algebra.
	Then, the \emph{noncommutative projective scheme} associated to $A$ is defined to be the category $\qgr(A)$.
\end{dfn}

\subsection{Serre functors of noncommutative projective schemes over $\Zbb$-algebras}

\begin{dfn}
\label{dfn:Serre-functor}
Let $\Tcal$ be a $k$-linear triangulated category.
A \emph{Serre functor} of $\Tcal$ is an autoequivalence $\Scal_{\Tcal}$ of $\Tcal$ such that there exists a natural isomorphism
\[
\Hom_{\Tcal}(X,Y) \cong \D{\Hom_{\Tcal}(Y, \Scal_{\Tcal}(X))}
\]
for all $X,Y \in \Tcal$, where $(-)'$ denotes the $k$-dual.
\end{dfn}

\begin{dfn}
\label{dfn:global dimension}
Let $\Ccal$ be an abelian category.
Assume that $\Ext^i(X,Y)$ is defined for all $X,Y \in \Ccal$ and $i \in \Nbb$.
The \emph{global dimension} $\gldim(\Ccal)$ of $\Ccal$ is defined to be
\[
\gldim(\Ccal) = \supr \{i \in \Nbb \mid \Ext^i_{\Ccal}(X,Y) \neq 0 \text{ for some } X,Y \in \Ccal \}.
\]
\end{dfn}

We often use the following lemma in the proof of \cref{thm:Serre-functor}.
We can prove this lemma in the same way as \cite[Lemma A.1]{de2004ideal}.
	\begin{lem}
		\label{lem:Serre-functor-1}
		For any $\pi_A(M) \in D^b(\qgr(A))$, there exists an object $P \in D^b(\gr(A))$ such that $P^i$ is projective for all $i \in \Zbb$ and $\pi_A(M) \oplus \pi_A(N) \cong \pi_A(P)$ for some $\pi_A(N) \in D^b(\qgr(A))$.
	\end{lem}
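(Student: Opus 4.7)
The plan is to adapt the truncation argument of \cite[Lemma A.1]{de2004ideal}. Given $\pi_A(M) \in D^b(\qgr(A))$, we may represent it by some $M \in D^b(\gr(A))$ whose cohomology is concentrated in a finite range $[a,b] \subset \Zbb$. Since $\gr(A)$ has the finitely generated projective generators $\{e_i A\}_{i \in \Zbb}$, we may choose a quasi-isomorphism $P^{\bullet} \to M$ in which $P^{\bullet}$ is a (possibly unbounded-below) complex of projective objects with $P^i = 0$ for $i > b$.

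Set $d := \gldim(\qgr(A)) < \infty$, fix an integer $n \geq d - a$, and put $P := \sigma^{\geq -n}P^{\bullet}$; this is a bounded complex of projectives in $\gr(A)$. From the short exact sequence of complexes
\[
0 \to P \to P^{\bullet} \to \sigma^{<-n}P^{\bullet} \to 0,
\]
together with $P^{\bullet} \cong M$ in $D(\gr(A))$ and $H^i(M) = 0$ for $i < a$, a direct cohomology computation shows that $\sigma^{<-n}P^{\bullet}$ has cohomology concentrated in degree $-n-1$, equal to $Z := \Ker(P^{-n} \to P^{-n+1})$. Applying the exact functor $\pi_A$ and rotating the resulting triangle therefore yields in $D^b(\qgr(A))$
\[
\pi_A(Z)[n] \to \pi_A(P) \to \pi_A(M) \xrightarrow{\delta} \pi_A(Z)[n+1].
\]
If the connecting morphism $\delta$ vanishes, the triangle splits and $\pi_A(P) \cong \pi_A(M) \oplus \pi_A(Z)[n]$, completing the proof with $N := Z[n]$.

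The key step is therefore to verify $\Hom_{D(\qgr(A))}(\pi_A(M), \pi_A(Z)[n+1]) = 0$. I would prove this by induction on the cohomological amplitude of $\pi_A(M)$, using the smart-truncation triangles $\tau^{<j}\pi_A(M) \to \pi_A(M) \to \pi_A(H^j(M))[-j] \to \tau^{<j}\pi_A(M)[1]$ and the associated long exact sequence of $\Hom$, which reduces the problem to the vanishing of
\[
\Ext^{n+1+j}_{\qgr(A)}\bigl(\pi_A(H^j(M)), \pi_A(Z)\bigr) = 0 \qquad \text{for every } j \in [a,b].
\]
Each such $\Ext$ vanishes because $n + 1 + j \geq n + 1 + a > d$. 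The main obstacle is exactly this last vanishing; everything else reduces to routine truncation bookkeeping. The crux is that the finite global dimension of $\qgr(A)$, combined with the choice $n \geq d - a$, pushes the ``syzygy tail'' $\pi_A(Z)[n]$ into degrees where no nontrivial extension can mix it back into $\pi_A(M)$.
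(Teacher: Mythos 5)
Your proposal is correct and is essentially the paper's own proof: the paper proves this lemma simply by appeal to \cite[Lemma A.1]{de2004ideal}, whose argument is exactly your stupid truncation $\sigma^{\geq -n}$ of a bounded-above complex of finitely generated projectives, followed by splitting the resulting triangle because $\Hom_{D(\qgr(A))}(\pi_A(M),\pi_A(Z)[n+1])$ vanishes once $n+1+a>\gldim(\qgr(A))$. The only (cosmetic) caveat is that identifying the cohomology of the tail with $Z=\Ker(P^{-n}\to P^{-n+1})$ needs $-n<a$, so take $n\geq\max(d-a,\,1-a)$ or replace $Z$ by $\Img(d^{-n-1})$, which changes nothing in the argument.
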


We define a functor 
\[
Q_A: \grr{A} \rightarrow \grr{A} \quad (\text{or } \Grr{A} \rightarrow \Grr{A})
\]
by $Q_A = \omega_A \circ \pi_A$. 
Note that as in the functor $\torfun{A}$, we can extend the functor $\Q_A$ on the category of bimodules (cf. \cite[Lemma 3.13]{mori2025categorical}).
Moreover, $\R^i Q_A$ commutes with direct limits for any $i$ from \cref{prop:quasi-compactness-torfunctor} and \cite[Lemma 3.12]{mori2025categorical}.

We also need the following proposition (cf. \cite[Theorem 6.6]{mori2025categorical}).

	\begin{prop}
		\label{prop:Serre-functor-2}
		Let $A$ be a right noetherian connected $\Zbb$-algebra.
		Let $M$ be an object in $D^b_{lf}(\Grr{A})$ with the property that $M$ has finite projective dimension, where $D^b_{lf}(\Grr{A}) := D^b(\Grr{A}) \cap D_{lf}(\Grr{A})$.
		Let $N$ be an object in $D^{-}_f(\Grr{A})$, where $D^{-}_f(\Grr{A}) := D^-(\Grr{A}) \cap D_{f}(\Grr{A})$.
		We assume that $\cd(\torfun{A})$ is finite.
		Then, we have the following isomorphism
		\[
		\RHomr{A}(N, M \Ltenint{A} \D{\R Q_A(A)}) \cong \D{\RHomr{A}(M, \R Q_A(N))}.
		\]
	\end{prop}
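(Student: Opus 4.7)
The plan is to use local duality (\cref{thm:local duality}) together with the standard distinguished triangle $\Rtorfun{A}(A) \to A \to \R Q_A(A) \to$ in $D(\Grlr{A}{A})$, after reducing to the case where $N$ is an indecomposable projective $e_l A$. Since $A$ is right noetherian and $N \in D^-_f(\Grr{A})$, $N$ admits a bounded-above resolution by finite direct sums of $\{e_l A\}_{l \in \Zbb}$. The finite projective dimension of $M$ together with the finite cohomological dimension of $\torfun{A}$ force both functors of $N$ appearing in the proposition to be way-out on both directions, so \cref{lem:way-out functors} reduces the claim to the case $N = e_l A$ for a single $l \in \Zbb$.

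For $N = e_l A$, the left-hand side becomes $M \Ltenint{A} \D{e_l \R Q_A(A)}$ and the right-hand side becomes $\D{\RHomr{A}(M, e_l \R Q_A(A))}$. Setting $Y := e_l \R Q_A(A)$, the proposition reduces to the natural identification
\[
M \Ltenint{A} \D{Y} \;\cong\; \D{\RHomr{A}(M, Y)}.
\]
I would apply the triangle above to the slot $Y$, obtaining a distinguished triangle relating $M \Ltenint{A} \D{e_l \R Q_A(A)}$, $M \Ltenint{A} \D{e_l A}$, and $M \Ltenint{A} \D{e_l \Rtorfun{A}(A)}$, together with the analogous triangle for $\D{\RHomr{A}(M,-)}$; a five-lemma argument then reduces matters to the two analogous identifications with $Y$ replaced by $e_l A$ and by $e_l \Rtorfun{A}(A)$. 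The first is handled by the tensor-hom adjunction (\cref{lem:local duality-2}) combined with $\D{(-)} \cong \RHomint_A(-, \D{A})$ from \cref{lem:basic lemma-1}. The second follows by combining \cref{lem:local duality-1}, which yields $M \Ltenint{A} \Rtorfun{A}(A) \cong \Rtorfun{A}(M)$, with local duality (\cref{thm:local duality}), which supplies $\D{\Rtorfun{A}(M)} \cong \RHomr{A}(M, \D{\Rtorfun{A}(A)})$.

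The hard part will be verifying that the natural map $M \Ltenint{A} \D{Y} \to \D{\RHomr{A}(M, Y)}$ is genuinely an isomorphism when $M$ is merely locally finite (not finitely generated) with finite projective dimension. For $M = e_j A$ both sides compute cleanly to $\D{Y_j}$; but after replacing $M$ by a (possibly infinite) bounded projective resolution, a naive computation produces sums $\bigoplus_l \D{Y_l}$ on one side and $\D{\prod_l Y_l}$ on the other, which diverge in general. The reconciliation rests on the local finiteness of $M$ together with the boundedness of $Y$, itself guaranteed by the finite $\cd(\torfun{A})$ hypothesis applied to $\R Q_A(A)$: in each graded degree of the relevant complexes, only finitely many contributions survive, so the two sides can be matched degree by degree. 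Executing this degree-by-degree matching rigorously, supported by the way-out reduction above, is where I expect the bulk of the technical effort to concentrate.
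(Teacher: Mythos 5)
Your plan diverges from the paper's proof, and the step you yourself flag as ``the hard part'' is a genuine gap rather than a technicality. After the reduction to $N=e_lA$, what you must prove is $M \Ltenint{A} \D{Y} \cong \D{\RHomr{A}(M,Y)}$ for $Y=e_l\RQ_A(A)$, and this is essentially the proposition itself: since $M$ is only locally finite (not finitely generated), a bounded projective resolution of $M$ has terms of the form $\bigoplus_{\alpha} e_{j_\alpha}A$ with infinitely many $\alpha$, so termwise one side is $\bigoplus_{\alpha}\D{(Y_{j_\alpha})}$ while the other is $\D{\bigl(\prod_{\alpha} Y_{j_\alpha}\bigr)}$. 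Your proposed rescue (``in each graded degree only finitely many contributions survive'') does not apply: once $e_lA$ has been plugged in, both sides are plain complexes of $k$-modules with no residual internal grading, and local finiteness of $M$ together with boundedness of $Y$ bounds the size of each contribution but not the number of internal degrees contributing (the $j_\alpha$, resp.\ the degrees where $Y\neq 0$, are infinite in number), so the sum-of-duals versus dual-of-product discrepancy is not resolved degreewise; it can only disappear at the level of cohomology, which is exactly what has to be proved. In addition, your treatment of the case $Y=e_l\Rtorfun{A}(A)$ cites \cref{lem:local duality-1} and \cref{thm:local duality}, but these combine to give the mirror identity $\D{(M\Ltenint{A}\Rtorfun{A}(A))}\cong\RHomintr{A}(M,\D{\Rtorfun{A}(A)})$, i.e.\ with $\Rtorfun{A}(A)$ in the tensor slot and its Matlis dual in the Hom slot; what your reduction needs has the dual in the tensor slot and $\Rtorfun{A}(A)$ itself in the Hom slot, and converting one into the other requires Matlis reflexivity of complexes such as $\Rtorfun{A}(e_lA)$ or $\RHomr{A}(M,e_l\Rtorfun{A}(A))$, which is not available under the stated hypotheses (no $\chi$-condition is assumed). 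A smaller omission: invoking \cref{lem:way-out functors} requires an actual natural transformation between the two functors of $N$, which you never construct.

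For comparison, the paper never resolves $M$ against a Hom at all, which is precisely how it sidesteps the sum/product problem: it first applies the projection-formula lemma \cref{lem:Serre-functor-2-2}, whose proof uses that $N$ --- the finitely generated variable --- admits a bounded above resolution by \emph{finite} free modules, to rewrite the left-hand side as $M\Ltenint{A}\RHomintr{A}(N,\D{\RQ_A(A)})$; then the $Q_A$-analogue of local duality \cref{lem:Serre-functor-2-1} turns this into $M\Ltenint{A}\D{\RQ_A(N)}$; finally Matlis double duality and the derived tensor--Hom adjunction (\cref{thm:matlis-duality}, \cref{lem:local duality-2}) yield $\D{\RHomintr{A}(M,\RQ_A(N))}$. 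If you want to keep your reduction-to-generators scheme, you would still need an argument of the strength of \cref{lem:Serre-functor-2-2} (or an honest proof of the perfect-complex-style duality for merely locally finite $M$), so the way-out reduction by itself buys you essentially nothing.
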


	To prove \cref{prop:Serre-functor-2}, we need some lemmas below (cf. \cite[Proposition 3.27, Lemma 3.29]{mori2025categorical}, \cite[Proposition 2.1]{jorgensen1998non-commutative}, \cite[Theorem 15.3.27]{yekutieli2019derived}).

	\begin{lem}
	\label{lem:Serre-functor-2-1}
	Let $A$ be a right Ext-finite connected $\Zbb$-algebra.
	Assume that $\cd(\torfun{A})$ is finite.
	Then, we have the following isomorphism
	\[
	\RHomintr{A}(N, \D{\R Q_A(A)}) \cong \D{\R Q_A(N)}
	\]
	for all $N \in D(\Grr{A})$.
	\end{lem}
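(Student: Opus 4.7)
The plan is to deduce the lemma from the local duality theorem (\cref{thm:local duality}) by applying $\RHomintr{A}(N,-)$ to the Matlis dual of the natural distinguished triangle $\Rtorfun{A}(A) \to A \to \R Q_A(A) \to \Rtorfun{A}(A)[1]$ in $D(\Grlr{A}{A})$, and then comparing with the dual of the analogous triangle for $N$.

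First I would recall that for every $N \in D(\Grr{A})$ there is a functorial distinguished triangle
\[
\Rtorfun{A}(N) \longrightarrow N \longrightarrow \R Q_A(N) \longrightarrow \Rtorfun{A}(N)[1]
\]
in $D(\Grr{A})$ (obtained from the unit of the adjunction $\pi_A \dashv \omega_A$ together with the fact that $\torfun{A}$ is the kernel of this unit on torsion-free objects; naturality is built in since $\Rtorfun{A}$ and $Q_A$ are functors). Applying Matlis duality $\D{(-)}$ to the special case $N=A$ and then applying $\RHomintr{A}(N,-)$ yields a distinguished triangle
\[
\RHomintr{A}(N,\D{\R Q_A(A)}) \longrightarrow \RHomintr{A}(N,\D{A}) \longrightarrow \RHomintr{A}(N,\D{\Rtorfun{A}(A)}) \longrightarrow {+1}.
\]

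Next I would identify the middle and right terms via existing results: by \cref{lem:basic lemma-1} we have $\RHomintr{A}(N,\D{A}) \cong \D{N}$, and by \cref{thm:local duality} we have $\RHomintr{A}(N,\D{\Rtorfun{A}(A)}) \cong \D{\Rtorfun{A}(N)}$. The dual of the torsion–$Q_A$ triangle for $N$ gives a second distinguished triangle
\[
\D{\R Q_A(N)} \longrightarrow \D{N} \longrightarrow \D{\Rtorfun{A}(N)} \longrightarrow {+1}.
\]
The main technical point will then be to check that, under the identifications above, the map $\RHomintr{A}(N,\D{A}) \to \RHomintr{A}(N,\D{\Rtorfun{A}(A)})$ agrees (up to sign) with the map $\D{N} \to \D{\Rtorfun{A}(N)}$ obtained by dualizing the natural transformation $\Rtorfun{A} \Rightarrow \mathrm{Id}$. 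This compatibility is just the naturality of the local duality isomorphism in its first variable, applied to the morphism $\Rtorfun{A}(N) \to N$, together with the functoriality of $\RHomintr{A}(N,-)$ on the coefficient $\D{\Rtorfun{A}(A)} \to \D{A}$.

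Once the two triangles share the same morphism between their second and third vertices, the axiom (TR3) produces a morphism between the first vertices, and it is automatically an isomorphism because the other two vertical maps are. This gives the desired isomorphism $\RHomintr{A}(N,\D{\R Q_A(A)}) \cong \D{\R Q_A(N)}$ in $D(\Grl{A})$. The only possible obstacle is the naturality check described above; but this should be a formal diagram chase, since \cref{thm:local duality} is proved by composing natural isomorphisms. No unbounded/boundedness issues arise because the local duality theorem is stated for all of $D(\Grlr{B}{A})$ under the hypothesis that $\cd(\torfun{A})$ is finite, which is precisely our assumption.
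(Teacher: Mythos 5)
Your route is correct in outline but genuinely different from the paper's. The paper simply records that $\R^i Q_A$ commutes with direct limits and that $\cd(Q_A)$ is finite, and then reruns the local-duality argument of \cref{subsec:local duality} with $Q_A$ in place of $\torfun{A}$: one proves the $Q_A$-analogue of \cref{lem:local duality-1}, namely $\R Q_A(N \Ltenint{A} M) \cong N \Ltenint{A} \R Q_A(M)$, and then chains
$\D{\R Q_A(N)} \cong \RHomintr{A}(\R Q_A(N), \D{A}) \cong \RHomintr{A}(N \Ltenint{A} \R Q_A(A), \D{A}) \cong \RHomintr{A}(N, \D{\R Q_A(A)})$,
which produces the isomorphism directly, functorially in $N$, and for unbounded $N$. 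You instead deduce the statement from \cref{thm:local duality}/\cref{cor:local-duality-2} by comparing the two triangles coming from $\Rtorfun{A} \to \Id \to \R Q_A$; this is shorter and reuses the hard work, but it has two costs. First, the commutativity of your comparison square is not literally ``naturality of local duality in its first variable'': unwinding the chain of isomorphisms defining \cref{thm:local duality}, what you actually need is that the isomorphism $\Rtorfun{A}(N) \cong N \Ltenint{A} \Rtorfun{A}(A)$ of \cref{lem:local duality-1} is compatible with the canonical maps $\Rtorfun{A}(N) \to N$ and $N \Ltenint{A} \Rtorfun{A}(A) \to N \Ltenint{A} A \cong N$. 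This does hold (on the resolutions used there the isomorphism is induced by the evident inclusion $P \tenint{A} \torfun{A}(I) \subseteq \torfun{A}(P \tenint{A} I)$), but verifying it means opening up that proof rather than quoting naturality, so you should make this check explicit. Second, a TR3 fill-in gives only an objectwise isomorphism, not one natural in $N$; this suffices for the lemma as stated and for its use in \cref{prop:Serre-functor-2}, whereas the paper's route yields a natural isomorphism of functors with no choices. Finally, note that the triangle $\Rtorfun{A}(N) \to N \to \R Q_A(N)$ for unbounded $N$ rests on the same quasi-compactness and finite cohomological dimension inputs the paper invokes, so your approach does not weaken the hypotheses.
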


	\begin{proof}
	$\R^iQ_A$ commutes any direct limit as above.
	From \cite[Lemma 3.12]{mori2025categorical}, $\cd(Q_A)$ is finite.
	By using the same argument as in \cref{subsec:local duality}, we obtain the desired isomorphism.
	\end{proof}

	\begin{lem}
	\label{lem:Serre-functor-2-2}
	Let $A$ be a right noetherian $\Zbb$-algebra.
	Let $M$ be an object in $D^b(\Grr{A})$ with the property that $M$ has finite projective dimension.
	Let $N$ be an object in $D^{-}_f(\Grr{A})$ and $L$ be an object $\in D^{+}(\Grlr{A}{A})$.

	Then, we have the following isomorphism
	\[
	\RHomr{A}(N, M \Ltenint{A} L) \cong M \Ltenint{A} \RHomintr{A}(N, L).
	\]
	\end{lem}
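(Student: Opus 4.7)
The plan is to establish this isomorphism as a derived projection-formula statement by devissage on $M$. First, I construct a natural, $\delta$-functorial morphism
\[
\eta_M : M \Ltenint{A} \RHomintr{A}(N, L) \longrightarrow \RHomr{A}(N, M \Ltenint{A} L)
\]
coming from the evaluation pairing. At the level of ordinary modules, for $m \in M$ and $f \in \Homintr{A}(N, L)_i = \Homr{A}(N, e_i L)$, the assignment $m \otimes f \mapsto (n \mapsto m \otimes f(n))$ defines a morphism $M \tenint{A} \Homintr{A}(N, L) \to \Homr{A}(N, M \tenint{A} L)$, which is well-defined over $A$ and functorial in all three variables. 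Via a K-projective resolution of $M$ (and, for $L$, a K-injective resolution of the type described in \cref{rmk:derived-functors}), this promotes to the derived $\eta_M$.

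Second, I verify $\eta_M$ is an isomorphism for $M = e_iA$: we have $(e_iA) \Ltenint{A} L \cong e_i L$ and $(e_iA) \Ltenint{A} \RHomintr{A}(N, L) \cong e_i \RHomintr{A}(N, L) \cong \RHomr{A}(N, e_i L)$, under which $\eta_{e_iA}$ is just the identity. By additivity this extends to any $M \in \Pcal_A$. To extend to \emph{arbitrary} direct sums of $e_iA$'s, I use that $A$ is right noetherian and $N \in D^{-}_f(\Grr{A})$, so $N$ can be replaced by a bounded-above complex $P_N$ with each $P_N^p \in \Pcal_A$ finitely generated; since $L$ is bounded below and $M$ has finite Tor-dimension (from $\projdim(M) < \infty$), $M \Ltenint{A} L$ is bounded below. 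Hence, in each cohomological degree of $\Homrcpx{A}(P_N, M \Ltenint{A} L)$, only finitely many factors of the defining product are nonzero, so $\RHomr{A}(N, -)$ commutes with arbitrary direct sums on this class. Since $- \Ltenint{A} \RHomintr{A}(N, L)$ trivially commutes with direct sums in $M$, this shows $\eta_M$ is an isomorphism for every bounded complex of projectives in $\Grr{A}$.

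Finally, I invoke the way-out lemma \cref{lem:way-out functors} on the thick subcategory of $D^b(\Grr{A})$ consisting of objects of finite projective dimension. Both sides of $\eta_M$, viewed as $\delta$-functors of $M$, are way-out on both directions on this subcategory (using the bounded Tor-amplitude of $M$ and the boundedness below of $\RHomintr{A}(N, L)$, which follows from $N \in D^-$ and $L \in D^+$). Since $M$ is quasi-isomorphic to a bounded complex of projectives in $\Grr{A}$, to which the previous step applies, the conclusion $\eta_M$ is an isomorphism follows in general.

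The main obstacle is managing the interplay between boundedness and infinite direct sums in the derived category: the naive tensor-Hom swap fails without the combined hypotheses ``$N \in D^-_f$'', ``$L$ bounded below'', and ``$\projdim(M) < \infty$'', and the crux of the argument is precisely the finiteness-of-products-in-each-degree observation that lets $\RHomr{A}(N, -)$ commute with arbitrary direct sums on the relevant class. Once this is in place, everything else reduces to tracking standard identifications through the evaluation map.
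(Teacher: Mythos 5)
Your argument is correct, and it reaches the isomorphism by a genuinely different organization than the paper's. The paper proceeds by direct computation: it replaces $M$ by a bounded complex $P$ of projectives and $N$ by a bounded-above complex $F$ of finite free modules, writes out both $\Homrcpx{A}(F, P\tenint{A}L)$ and $P \tenint{A} \Homintr{A}(F,L)$ in each total degree, uses the boundedness of $P$, $F$ and $L$ to reduce the infinite products and sums in each degree to finite ones, and then matches summands via $\Homr{A}(F^p, P^{q} \tenint{A} L^{j}) \cong P^{q} \tenint{A} \Homintr{A}(F^p, L^{j})$, which holds because $F^p$ is finite free. You instead build a natural evaluation morphism $\eta_M$, check it on the generators $e_iA$, and pass to arbitrary projectives by showing that $\RHomr{A}(N,-)$ commutes with direct sums of uniformly bounded-below complexes; that commutation rests on exactly the same finiteness-in-each-degree observation (a bounded-above resolution of $N$ with finitely generated terms mapped into a bounded-below target), so the analytic core coincides, but your packaging is more structural: a natural transformation checked on compact generators makes compatibility with the differentials automatic, whereas the paper's degreewise identification leaves that implicit. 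One blemish: your closing appeal to \cref{lem:way-out functors} is both misapplied and unnecessary --- that lemma concerns a thick subcategory $\Acal'$ of an abelian category together with a class $\Pcal$ of objects surjecting onto it, not the subcategory of $D^b(\Grr{A})$ of objects of finite projective dimension. What you actually need at that point is only a finite d\'evissage: $\eta$ is a natural transformation of triangulated functors in $M$, so the stupid-truncation triangles of a bounded complex of projectives reduce the statement to a single projective module in one degree, which your direct-sum argument already covers; combined with the replacement of $M$ by such a complex (used identically in the paper, via its finite projective dimension), this completes the proof without any way-out argument.
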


	\begin{proof}
	From the assumption, $M$ is quasi-isomorphic to a bounded complex $P$ of  projective graded right $A$-modules (cf. \cite[Lemma 15.69.2]{stacks-project}, projective version of \cite[Proposition 7.6]{hartshorne1966residues} or the proof of \cref{lem:finite injective dimension}).
	In addition, $N$ is quasi-isomorphic to a bounded above complex $F$ of finite free graded right $A$-modules (cf. \cite[Proposition 7.4.9]{yekutieli2019derived}).
	Thus, we obtain
	\begin{align*}
		\RHomr{A}(N, M \Ltenint{A} L) &\cong \Homr{A}(F, P \tenint{A} L), \\
		M \Ltenint{A} \RHomintr{A}(N, L) &\cong P \tenint{A} \Homintr{A}(F, L).
	\end{align*}
	We assume that $F^{i}=0$ for all $i > i_0$, $P^{i} = 0$ for all $i < i_1, i_2 < i$ and $L^{i} = 0$ for all $i < i_3$.
	Then, for any $n \in \Zbb$, we have
	\begin{align*}
		\Hom_{A}^n(F, P \tenint{A} L) &= \prod_{p \in \Zbb} \Homr{A}(F^p, (P \tenint{A} L)^{n+p}) \\
		&\cong \prod_{p \leq i_0} \Homr{A}\left(F^p, \bigoplus_{i_1 \leq q \leq i_2} P^{q} \tenint{A} L^{p-q+n}\right) \quad (\text{boundedness of } F,P) \\
		&\cong \bigoplus_{p=i_1+i_3-n}^{i_0} \Homr{A}\left(F^p, \bigoplus_{i_1 \leq q \leq i_2} P^{q} \tenint{A} L^{p-q+n}\right) \quad (\text{boundedness of } L) \\
		&\cong \bigoplus_{p=i_1+i_3-n}^{i_0} \bigoplus_{i_1 \leq q \leq i_2} \Homr{A}\left(F^p,  P^{q} \tenint{A} L^{p-q+n}\right).
	\end{align*}
	Moreover,
	\begin{align*}
		(P \tenint{A} \Homintr{A}(F, L))^n &= \bigoplus_{q+r=n} P^{q} \tenint{A} {\Homintr{A}}^r(F, L) \\
		&\cong \bigoplus_{q+r=n} P^{q} \tenint{A} \prod_{p \in \Zbb} \Homintr{A}(F^p, L^{p+r}) \\
		&\cong \bigoplus_{
				i_1 \leq q \leq i_2
			} 
			P^{q} \tenint{A} \prod_{p \leq i_0} \Homintr{A}(F^p, L^{p-q+n}) \quad (\text{boundedness of } F,P)\\
		&\cong \bigoplus_{i_1 \leq q \leq i_2} P^{q} \tenint{A} \bigoplus_{p=q+i_3-n}^{i_0} \Homintr{A}(F^p, L^{p-q+n}) \quad (\text{boundedness of } L) \\
		&\cong \bigoplus_{i_1 \leq q \leq i_2} \bigoplus_{p=q+i_3-n}^{i_0} P^{q} \tenint{A} \Homintr{A}(F^p, L^{p-q+n}).
	\end{align*}
	We can easily check that 
	\[
	\Homr{A}\left(F^p,  P^{q} \tenint{A} L^{p-q+n}\right)  \cong P^{q} \tenint{A} \Homintr{A}(F^p, L^{p-q+n})
	\]
	since $F^p$ is a finite free graded right $A$-module.
	Thus, we obtain ${\Homr{A}}^n(F, P \tenint{A} L) \cong (P \tenint{A} \Homintr{A}(F, L))^n$ for all $n \in \Zbb$.
	This shows the desired isomorphism.
	\end{proof}

\begin{proof}[Proof of \cref{prop:Serre-functor-2}]
	As in the proof of \cref{lem:Serre-functor-2-1}, $\cd(Q_A)$ is finite.
	This means that $\D{\RQ_A(A)} \in D^{b}(\Grlr{A}{A})$.
	Thus, we have
	\begin{align*}
		\RHomr{A}(N, M \Ltenint{A} \D{\R Q_A(A)}) &\cong M \Ltenint{A} \RHomintr{A}(N, \D{\R Q_A(A)}) \quad (\text{\cref{lem:Serre-functor-2-2}}) \\
		&\cong M \Ltenint{A} \D{\R Q_A(N)} \quad (\text{\cref{lem:Serre-functor-2-1}}) \\
		&\cong (M \Ltenint{A} \D{\R Q_A(N)})'' \quad (\text{\cref{thm:matlis-duality}})\\
		&\cong \D{\RHomintr{A}(M, \R Q_A(N)'')} \quad (\text{\cref{lem:local duality-2}}) \\
		&\cong  \D{\RHomintr{A}(M, \R Q_A(N))} \quad (\text{\cref{thm:matlis-duality}}).
	\end{align*}
   Note that \(\RQ_A(N)\) is locally finite, hence \(M \otimes_A^{\mathbf L} \D{\RQ_A(N)}\) is also locally finite.
   Moreover, for the fourth isomorphism, we use the same technique as in the proof of \cref{cor:local-duality-2} in order to apply \cref{lem:local duality-2}.
\end{proof}

The following theorem is the second main result in this paper (cf. \cite[Appendix A]{de2004ideal}, \cite[Theorem 6.8]{mori2025categorical}).
\begin{thm}
	\label{thm:Serre-functor}
	Let $A$ be a noetherian connected $\Zbb$-algebra.
	Assume that $A$ has a balanced dualizing complex $R_A$ and $\gldim(\qgr(A))$ is finite.
	Then, the category $D^b(\qgr(A))$ has a Serre functor $\Scal_{\qgr(A)}$ which is given by the following formula:
	\[
	\Scal_{\qgr(A)}(\pi_A(M)) = \pi_A(M \Ltenint{A} R_A)[-1].
	\]
\end{thm}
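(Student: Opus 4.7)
The plan is to verify directly the defining Serre duality isomorphism
\[
\Hom_{D^b(\qgr(A))}(X,Y) \cong \D{\Hom_{D^b(\qgr(A))}(Y, \Scal_{\qgr(A)}(X))}
\]
bifunctorially in $X,Y \in D^b(\qgr(A))$, using \cref{prop:Serre-functor-2} as the analytic heart of the argument and \cref{lem:Serre-functor-1} to reduce to projective representatives. I would first rewrite $\Scal_{\qgr(A)}$ in a form tailored to \cref{prop:Serre-functor-2}. Applying the Matlis dual to the canonical triangle $\Rtorfun{A}(A) \rightarrow A \rightarrow \RQ_A(A) \rightarrow$ in $D(\Grlr{A}{A})$ and invoking the identification $\D{\Rtorfun{A}(A)} \cong R_A$ from \cref{cor:local-duality-2-2} gives a triangle $\D{\RQ_A(A)} \rightarrow \D{A} \rightarrow R_A \rightarrow$. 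Since $\D{A}$ is torsion on both sides by connectedness of $A$, tensoring with a bounded complex $P$ of finitely generated projectives kills the middle term under $\pi_A$, yielding the natural isomorphism
\[
\pi_A(P \Ltenint{A} R_A)[-1] \cong \pi_A(P \Ltenint{A} \D{\RQ_A(A)})
\]
in $D(\qgr(A))$.

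For the main computation, by \cref{lem:Serre-functor-1} and additivity it suffices to check Serre duality when $X = \pi_A(P)$ for $P$ a bounded complex of finite projectives in $\grr{A}$, and by standard d\'evissage also when $Y = \pi_A(N)$ for $N \in \grr{A}$. The adjunction $(\pi_A, \R\omega_A)$ and the finiteness of $\cd(\Rtorfun{A})$ (from \cref{cor:local-duality-2-2}) give
\[
\RHom_{D(\qgr(A))}(\pi_A(P), \pi_A(N)) \cong \RHomr{A}(P, \RQ_A(N)),
\]
and, using also the vanishing $\Rtorfun{A}(\D{\RQ_A(A)}) \cong 0$ (obtained by applying $\Rtorfun{A}$ to the triangle above, together with $\Rtorfun{A}(\D{A}) \cong \D{A}$ and balancedness $\Rtorfun{A}(R_A) \cong \D{A}$), we also get
\[
\RHom_{D(\qgr(A))}(\pi_A(N), \pi_A(P \Ltenint{A} \D{\RQ_A(A)})) \cong \RHomr{A}(N, P \Ltenint{A} \D{\RQ_A(A)}).
\]
Now \cref{prop:Serre-functor-2}, applied with $M = P$ (which has finite projective dimension, as required) and the chosen $N$, supplies the bridging isomorphism
\[
\RHomr{A}(N, P \Ltenint{A} \D{\RQ_A(A)}) \cong \D{\RHomr{A}(P, \RQ_A(N))},
\]
and taking $H^0$ yields the desired Serre duality pairing for $(X,Y)$.

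Finally, I would check that $\Scal_{\qgr(A)}$ genuinely lands in $D^b(\qgr(A))$: using \cref{lem:Serre-functor-1} to replace any object of $D^b(\qgr(A))$ by a summand of $\pi_A(P)$ with $P$ termwise projective and bounded, the boundedness of $P \Ltenint{A} R_A$ in $D(\Grlr{A}{A})$ follows from boundedness of $R_A$ and K-flatness of $P$. Once the bifunctorial perfect pairing is established and the Hom-spaces are seen to be finite-dimensional (a consequence of the above formulas, the noetherian hypothesis, and $\gldim(\qgr(A)) < \infty$), the standard abstract theory of Serre functors (Bondal--Kapranov) implies that $\Scal_{\qgr(A)}$ is automatically an autoequivalence. \emph{The main obstacle} will be the vanishing $\Rtorfun{A}(\D{\RQ_A(A)}) \cong 0$ and the careful bookkeeping needed to ensure that the adjunction rewrites on the two sides assemble into the isomorphism provided by \cref{prop:Serre-functor-2} in a way that is natural in both $X$ and $Y$, rather than only on a restricted generating class; in particular, one must verify that the bigraded bimodule structures on $\D{\RQ_A(A)}$ are compatible with the several adjunctions and with the derived Matlis manipulations used throughout.
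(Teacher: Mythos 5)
Your overall architecture for the duality pairing is the same as the paper's Step~2 (reduce to $X=\pi_A(P)$ with $P$ a bounded complex of finite projectives via \cref{lem:Serre-functor-1}, replace $\pi_A(P\Ltenint{A}R_A)[-1]$ by $\pi_A(P\Ltenint{A}\D{\RQ_A(A)})$ using the triangle $\Rtorfun{A}(A)\to A\to \RQ_A(A)$, and let \cref{prop:Serre-functor-2} do the work), but there are two genuine gaps. First, your key input $\Rtorfun{A}(\D{\RQ_A(A)})\cong 0$ is not established by the argument you give: applying $\Rtorfun{A}$ to the triangle $\D{\RQ_A(A)}\to\D{A}\to R_A$ only tells you that $\Rtorfun{A}(\D{\RQ_A(A)})[1]$ is the cone of \emph{some} map $\D{A}\to\D{A}$; knowing that source and target are abstractly isomorphic (via torsionness of $\D{A}$ and via balancedness) does not make that particular map an isomorphism --- the balancedness condition in \cref{dfn:dualizing complexes} only asserts the existence of an isomorphism $\Rtorfun{A}(R_A)\cong\D{A}$, with no stated compatibility with the canonical morphism $\D{A}\to\D{\Rtorfun{A}(A)}$. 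This is exactly the point you flag as ``the main obstacle,'' and it is left unresolved; making it rigorous requires the kind of double-torsion/symmetry bookkeeping carried out in \cref{prop:symmetric derived torsion functor}. The paper sidesteps this entirely: instead of proving vanishing of $\Rtorfun{A}(\D{\RQ_A(A)})$, it proves the saturation isomorphism $\RQ_A(M\Ltenint{A}\D{\RQ_A(A)})\cong M\Ltenint{A}\D{\RQ_A(A)}$ directly, writing $\RQ_A(-)\cong\varinjlim_n\RHomintr{A}(A_{\geq n},-)$ and using \cref{prop:Serre-functor-2} together with $\RQ_A(e_iA_{\geq n})\cong\RQ_A(e_iA)$ and \cref{lem:local duality-2}; that is the route you would need to substitute or reprove.

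Second, the claim that the pairing plus Hom-finiteness makes $\Scal_{\qgr(A)}$ ``automatically an autoequivalence'' by Bondal--Kapranov is not correct as stated: from bifunctorial isomorphisms $\Hom(X,Y)\cong\D{\Hom(Y,\Scal X)}$ with finite-dimensional Hom spaces one gets full faithfulness formally, but \emph{essential surjectivity is not automatic} (a right Serre functor need not be dense; in Bondal--Kapranov's definition the equivalence is part of the hypothesis). The paper spends all of Step~1 on precisely this point, exhibiting the explicit quasi-inverse $G(\pi_A(M))=\pi_A(\RHomint_{A}(R_A,M))$, checking it is well defined via \cref{lem:Serre-functor-1} and the duality $\dfunop{A}\circ\dfun{A}\cong\Id$ of \cref{prop:duality}, and proving $F\circ G\cong\Id$ and $G\circ F\cong\Id$, the latter by a way-out argument (\cref{lem:way-out functors}) on the generators $\bigoplus_i\pi_A(e_iA)$. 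Your proposal omits this entirely, so even granting the pairing, the theorem as stated (an autoequivalence) is not proved. The remaining ingredients you cite (well-definedness of $F$ on $D^b(\qgr(A))$, the role of $\gldim(\qgr(A))<\infty$ through \cref{lem:Serre-functor-1}, the applicability of \cref{prop:Serre-functor-2} to $M=P$) do match the paper and are fine.
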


\begin{proof}
	We follow the strategy in the proof of \cite[Appendix A]{de2004ideal}.
	The proof is divided into three steps.

	\bigskip

	\noindent
	Step 0:
	Define the functor 
	\[
	F: D^b(\qgr(A)) \longrightarrow D^b(\qgr(A))
	\]
	by $F(\pi_A(M)) = \pi_A(M \Ltenint{A} R_A)$ for all $\pi_A(M) \in D^b(\qgr(A))$.

	We need to check that the functor $F$ is well-defined, i.e. if $\pi_A(M) \in  D^b(\qgr(A))$, then $F(\pi_A(M))$ is also in $D^b(\qgr(A))$.
	To see this, we take $P, N$ as in \cref{lem:Serre-functor-1}.
	Then, we have
	\[
	\pi_A(P \Ltenint{A} R_A) \cong F(\pi_A(M)) \oplus F(\pi_A(N)).
	\]
	Since $P^i$ is projective for all $i \in \Zbb$, we have $P \Ltenint{A} R_A \cong P \tenint{A} R_A \in D^b(\grr{A})$.
	Hence, $F(\pi_A(M))$ is also in $D^b(\qgr(A))$.

	\bigskip

	\noindent
	Step 1: 
	In this step, we show that the functor $F$ is an autoequivalence of $D^b(\qgr(A))$.

	Define the functor 
	\[
	G: D^b(\qgr(A)) \longrightarrow D^b(\qgr(A))
	\]
	by $G(\pi_A(M)) = \pi_A(\RHomint_{A}(R_A, M))$ for all $\pi_A(M) \in D^b(\qgr(A))$.
	We show that $G$ is a quasi-inverse of $F$.
	Before showing this, we need to check that the functor $G$ is well-defined.
	Let $\pi_A(M) \in D^b(\qgr(A))$.
	Then, 
	\begin{align*}
		G(\pi_A(M)) &\cong \pi_A(\RHomint_{A}(R_A, M)) \\
		&\cong \pi_A(\RHomintl{A}(\dfun{A}(M), \dfun{A}(R_A))) \quad (\text{\cref{prop:duality}}) \\
		&\cong \pi_A(\RHomintl{A}(\dfun{A}(M), A)) \quad (\text{\cref{dfn:dualizing complexes}}).
	\end{align*}
	By using $A^{\op}$-version of \cref{lem:Serre-functor-1}, we take $Q, L \in D^b(\grl{A})$ such that $Q^i$ is projective for all $i \in \Zbb$ and $\pi_A(\dfun{A}(M)) \oplus \pi_A(L) \cong \pi_A(Q)$.
	Then, we have
	\begin{align*}
	\pi_A(\RHomintl{A}(Q,A)) &\cong G(\pi_A(M)) \oplus \pi_A(\RHomintl{A}(L,A)).
	\end{align*}
	Since $Q^i$ is projective for all $i \in \Zbb$, we also have 
	\[
	\RHomintl{A}(Q,A) \cong \Homintr{A}(Q,A) \in D^b(\grr{A}).
	\]
	Hence, $G(\pi_A(M))$ is in $D^b(\qgr(A))$.

	Next, we show that $F \circ G \cong \Id_{D^b(\qgr(A))}$.
	Take any $\pi_A(M) \in D^b(\qgr(A))$.
	In fact, the claim follows from the following isomorphisms:
	\begin{align*}
		F(G(\pi_A(M))) &\cong \pi_A(\RHomint_{A}(R_A, M) \Ltenint{A} R_A) \\
		&\cong \pi_A(\RHomintl{A}(\dfun{A}(M), A) \Ltenint{A} R_A) \\
		&\cong \pi_A(\RHomintl{A}(\dfun{A}(M), R_A)) \\
		&\cong \pi_A((\dfunop{A}\circ \dfun{A})(M)) \\
		&\cong \pi_A(M) \quad (\text{\cref{prop:duality}}).
	\end{align*}

	In addtion, we show that $G \circ F \cong \Id_{D^b(\qgr(A))}$.
	As for this, we have a natural morphism $\eta : \Id_{D^b(\qgr(A))} \rightarrow G \circ F$. 
	Both $\Id_{D^b(\qgr(A))}$ and $G \circ F$ are way-out on both directions.
	Thus, from \cref{lem:way-out functors}, it suffices to show that $\eta$ is an isomorphism for the set of objects 
	\begin{align*}
		\Qcal := \left\{
		\bigoplus_{i \in I} \pi_A(e_i A) \ \middle| \ I:
		\text{a finite set of integers that may contain duplicates}
		\right\}.
	\end{align*}
	Take any $\pi_A(P) = \bigoplus_{i \in I} \pi_A(e_i A) \in \Qcal$.
	Then, we have
	\begin{align*}
		(G \circ F)(\pi_A(P)) &\cong \pi_A(\RHomint_{A}(R_A, P \Ltenint{A} R_A)) \\
		&\cong \pi_A\left(\RHomint_{A}\left(R_A,  \bigoplus_{i \in I} e_i A \tenint{A} R_A\right)\right) \\
		&\cong  \bigoplus_{i \in I}\pi_A (\RHomint_{A}(R_A, e_i A \tenint{A} R_A)) \\
		&\cong  \bigoplus_{i \in I}\pi_A \RHomint_{A}(R_A, e_i R_A) \\
		&\cong  \bigoplus_{i \in I}\pi_A(e_i \RHomint_{A}(R_A, R_A)) \\
		&\cong  \bigoplus_{i \in I}\pi_A(e_i A) 
		\cong \pi_A(P).
	\end{align*}
	Therefore, $F$ is an autoequivalence of $D^b(\qgr(A))$.

	\bigskip 

	\noindent
	Step 2: 
	We show that there exists a natural isomorphism
	\[
	\Hom_{D^b(\qgr(A))}(X,Y) \cong \D{\Hom_{D^b(\qgr(A))}(Y,F(X)[-1])}
	\]
	for all $X,Y \in D^b(\qgr(A))$.

	Take any $\pi_A(M), \pi_A(N) \in D^b(\qgr(A))$.
	We assume that $M$ has finite projective dimension.
	By adjunction,
	\[
	\Homr{A}(M, \RQ_A(N)) \cong \Hom_{D^b(\qgr(A))}(\pi_A(M), \pi_A(N)).
	\]
	On the other hand, a triangle from \cite[Lemma 3.14]{mori2025categorical}
	\[
	\Rtorfun{A}(A) \rightarrow A \rightarrow \RQ_A(A) 
	\]
	induces a triangle
	\[
	M \Ltenint{A} R_A[-1] \cong M \Ltenint{A} \D{\Rtorfun{A}(A)}[-1] \rightarrow M \Ltenint{A} \D{\RQ_A(A)} \rightarrow M \Ltenint{A} \D{A}
	\]
	in $D(\Grlr{A}{A})$.
	Because $\pi_A(M \Ltenint{A} \D{A}) =0$ from the fact that $\D{A}$ is torsion and $M \in D^b(\grr{A})$ has finite projective dimension, we have an isomorphism
	\[
	\pi_A(M \Ltenint{A} R_A)[-1] \cong \pi_A(M \Ltenint{A} \D{\RQ_A(A)}).
	\]
	Moreover, we can obtain the isomorphism
	\[
	\Homr{A}(N, M \Ltenint{A} \D{\RQ_A(A)}) \cong \Hom_{\qgr(A)}(\pi_A(N), \pi_A(M \Ltenint{A} \D{\RQ_A(A)})).
	\]
	Actually, by \cref{prop:torsion-functor} and \cite[Lemma 3.12]{mori2025categorical}, 
	\[
	\RQ_A(M \Ltenint{A} \D{\RQ_A(A)}) = \varinjlim_{n \to \infty} \RHomintr{A}(A_{\geq n}, M \Ltenint{A} \D{\RQ_A(A)}). 
	\]
	Then, we have
	\begin{align*}
		\RQ_A(M \Ltenint{A} \D{\RQ_A(A)}) &\cong \varinjlim_{n \to \infty} \RHomintr{A}(A_{\geq n}, M \Ltenint{A} \D{\RQ_A(A)}) \\
		&\cong \varinjlim_{n \to \infty} \bigoplus_{i \in \Zbb} \RHomintr{A}(e_iA_{\geq n}, M \Ltenint{A} \D{\RQ_A(A)}) \\
		&\cong \varinjlim_{n \to \infty} \bigoplus_{i \in \Zbb} \D{\RHomintr{A}(M,  \RQ_A(e_i A_{\geq n}))} \quad (\text{\cref{prop:Serre-functor-2}}) \\
		&\cong \varinjlim_{n \to \infty} \bigoplus_{i \in \Zbb} \D{\RHomintr{A}(M, \RQ_A(e_iA))} \quad (\RQ_A(e_i A_{\geq n}) \cong \RQ_A(e_i A)) \\
		&\cong \varinjlim_{n \to \infty} \bigoplus_{i \in \Zbb} \RHomintr{A}(e_iA, M \Ltenint{A} \D{\RQ_A(A)})  \quad (\text{\cref{lem:local duality-2}})\\
		&\cong \varinjlim_{n \to \infty} M \Ltenint{A} \D{\RQ_A(A)} \\
		&\cong M \Ltenint{A} \D{\RQ_A(A)}.
	\end{align*}
	From adjunction,
	\begin{align*}
		\Homr{A}(N, M \Ltenint{A} \D{\RQ_A(A)}) &\cong \Homr{A}(N, \RQ_A(M \Ltenint{A} \D{\RQ_A(A)})) \\
		&\cong \Hom_{D^b(\qgr(A))}(\pi_A(N), \pi_A(M \Ltenint{A} \D{\RQ_A(A)})).
	\end{align*}
	Hence, we obtain
	\[
	\Homr{A}(N, M \Ltenint{A} \D{\RQ_A(A)}) \cong \Hom_{D^b(\qgr(A))}(\pi_A(N), F(\pi_A(M))[-1]).
	\]
	Finally, by using \cref{prop:Serre-functor-2}, it follows that
	\[
	\Hom_{D^b(\qgr(A))}(\pi_A(M), \pi_A(N)) \cong \D{\Hom_{D^b(\qgr(A))}(\pi_A(N), F(\pi_A(M))[-1])}.
	\]

	For a general $\pi_A(M) \in D^b(\qgr(A))$, we can take $P, L$ as in \cref{lem:Serre-functor-1}.
	Then, we have
	\begin{align*}
		&\Hom_{D^b(\qgr(A))}(\pi_A(M), \pi_A(N)) \oplus \Hom_{D^b(\qgr(A))}(\pi_A(L), \pi_A(N))\\
		&\cong \Hom_{D^b(\qgr(A))}(\pi_A(M) \oplus \pi_A(L), \pi_A(N)) \\
		&\cong \D{\Hom_{D^b(\qgr(A))}(\pi_A(N), F(\pi_A(M) \oplus \pi_A(L))[-1])} \\
		&\cong \D{\Hom_{D^b(\qgr(A))}(\pi_A(N), F(\pi_A(M))[-1])} \oplus \D{\Hom_{D^b(\qgr(A))}(\pi_A(N), F(\pi_A(L))[-1])}.
	\end{align*}
	Thus, the desired isomorphism holds for $\pi_A(M) \in D^b(\qgr(A))$.

	\bigskip

	Hence, $F[-1]$ is a Serre functor of $D^b(\qgr(A))$ from Step 0, Step 1 and Step 2.
\end{proof}

\begin{rmk}
    \label{rmk:Serre-functor}
    As a final note, we explain why $\pi_A(M \Ltenint{A} R_A)[-1]$ 
    and $\pi_A(\RQ_A(M) \Ltenint{A} R_A)[-1]$ are isomorphic in $D(\qgr(A))$.
    The former appears in \cref{thm:Serre-functor}, while the latter appears in \cref{thm:intro-Serre-functor}.

    Indeed, there is a canonical triangle in $D(\Grr{A})$ 
    (\cite[Lemma 3.12]{mori2025categorical})
    \[
    \Rtorfun{A}(M) \to M \to \RQ_A(M).
    \]
    Tensoring with $R_A$ and applying $\pi_A$, we obtain
    \[
    \pi_A(\Rtorfun{A}(M) \Ltenint{A} R_A) \to 
    \pi_A(M \Ltenint{A} R_A) \to 
    \pi_A(\RQ_A(M) \Ltenint{A} R_A).
    \]
    Since $\pi_A(\Rtorfun{A}(M) \Ltenint{A} R_A) = 0$, the desired isomorphism follows.
\end{rmk}

\printbibliography 


\end{document}